\theoremstyle{plain}
\newtheorem{teo}{Theorem}[section]
\newtheorem{pro}[teo]{Proposition}
\newtheorem{cor}[teo]{Corollary}
\newtheorem{lem}[teo]{Lemma}
\newtheorem*{rep@theorem}{\rep@title}
\newcommand{\newreptheorem}[2]{%
	\newenvironment{rep#1}[1]{%
		\def\rep@title{#2 \ref{##1}}%
		\begin{rep@theorem}}%
		{\end{rep@theorem}}}
\newtheorem{thmx}{Theorem}
\newtheorem{corx}[thmx]{Corollary}
\theoremstyle{remark}
\newtheorem{oss}[teo]{Remark}
\newtheorem{de}[teo]{Definition}
\newtheorem{es}[teo]{Example}
\newcounter{step}
\xpretocmd{\proof}{\setcounter{step}{0}}{}{}
\newcommand{\step}[1]{%
	\par
	\addvspace{\medskipamount}%
	\stepcounter{step}%
	\noindent\emph{Step \thestep: #1.}\par\nobreak\smallskip
	\@afterheading
}
\newcommand{\R}{\mathbb{R}}
\newcommand{\C}{\mathbb{C}}
\newcommand{\Z}{\mathbb{Z}}
\newcommand{\N}{\mathbb{N}}
\newcommand{\hyp}{\mathbb{H}}
\newcommand{\hypd}{\mathbb{H}}
\newcommand{\hypu}{\widetilde{\mathbb{H}}}
\newcommand{\AdS}{\mathbb{A}\mathrm{d}\mathbb{S}}
\newcommand{\h}{\mathcal{H}}
\newcommand{\sph}{\mathbb{S}}
\newcommand{\dis}{\mathbb{D}}
\newcommand{\Span}{\operatorname{Span}}
\newcommand{\Isom}{\operatorname{Isom}}
\newcommand{\nablah}{\overline\nabla}
\newcommand{\sq}{\subseteq}
\newcommand{\pd}{\partial}
\newcommand{\pff}{\mathrm{I}}
\newcommand{\sff}{\mathrm{I\!I}}
\newcommand{\hess}{\operatorname{Hess}}
\newcommand{\gr}{\operatorname{graph}}
\newcommand{\osc}{\operatorname{osc}}
\newcommand{\pr}[1]{\langle #1\rangle}
\newcommand{\ch}{\mathcal{CH}}
\newcommand{\past}{\mathbf{P}}
\newcommand{\future}{\mathbf{F}}
\newcommand{\barrf}[1]{W^{\future}_{#1}}
\newcommand{\barrp}[1]{W^{\past}_{#1}}
\newcommand{\dist}{d_L}
\DeclareMathOperator*{\Inf}{\raisebox{0pt}[\ht258][\dp257]{$\inf$}}
\title[CMC hypersurfaces in $\hyp^{n,1}$]{Constant mean curvature hypersurfaces in Anti-de Sitter space}
\author[Enrico Trebeschi]{Enrico Trebeschi}
\address{Enrico Trebeschi: Università degli Studi di Pavia - Université Grenoble Alpes}
\email{enrico.trebeschi01@universitadipavia.it\\
enrico.trebeschi@univ-grenoble-alpes.fr}
\begin{document}

\begin{abstract}
	We study spacelike entire constant mean curvature hypersurfaces in Anti-de Sitter space of any dimension. First, we give a classification result with respect to their asymptotic boundary, namely we show that every admissible sphere $\Lambda$ is the boundary of a unique such hypersurface, for any given value $H$ of the mean curvature. We also demonstrate that, as $H$ varies in $\mathbb{R}$, these hypersurfaces analytically foliate the invisible domain of $\Lambda$. Finally, we extend Cheng-Yau Theorem to the Anti-de Sitter space, which establishes the completeness of any entire constant mean curvature hypersurface.
\end{abstract}

\maketitle
	
	\tableofcontents
	\section*{Introduction}	
	\subsection*{Hystorical background} 
Constant mean curvature (CMC) hypersurfaces are a classical object in differential geometry. In Lorentzian geometry, the study of CMC \emph{spacelike} hypersurfaces is motivated by general relativity (see \cite{expo,expo2} for recent surveys on the topic). Indeed, using a CMC spacelike hypersurface as initial data for Einstein equations, the associated Cauchy problem greatly simplifies: for this reason, several authors focused on existence, uniqueness and non-existence problems for CMC spacelike hypersurfaces in Lorentzian manifolds (see for example \cite{cho,ecker,bartnikH,bartnik}). Moreover, foliations by CMC spacelike hypersurfaces define natural time coordinates, useful to understand the global geometry of a Lorentzian manifold (see for example \cite{fol3,fol1,fol2}).

The constant sectional curvature cases have been largely studied: for the flat case, \textit{i.e.} the Minkowski space $\R^{n,1}$, see for example \cite{cheng-yau,trei,choi-trei,abbz,bss}; for the negatively curved space, \textit{i.e.} the Anti-de Sitter space $\hyp^{n,1}$, see for example \cite{bbz,abbz,tamb}. In geometric topology, the study of surfaces in $\R^{2,1}$ and $\hyp^{2,1}$ has become of great interest since the pioneering work of Mess \cite{mes}, mostly because of their relation with Teichm\"{u}ller theory (see \cite{bbz,benbon,bonsch,tamb,sep,bonsep}). In higher dimension, see \cite{abbz,barmer}.

Further progress have been made on specific pseudo-Riemannian spaces, notably on the so-called \textit{indefinite space-forms} of signature $(p,q)$: the pseudo-Euclidean space $\R^{p,q}$, the pseudo-hyperbolic space $\hyp^{p,q}$ and pseudo-spherical space $\sph^{p,q}$. In higher co-dimension, CMC hypersurfaces generalize to \emph{parallel} mean curvature spacelike $p-$submanifold. See the works \cite{ishi,kkn} for estimates on the geometry of parallel mean curvature spacelike hypersurfaces in this setting. Recently, maximal spacelike hypersurfaces in $\hyp^{p,q}$ have been studied in relation with the higher Teichm\"{u}ller theory (see \cite{dgk,ctt,ltw,lt,sst,beykas}).

	\subsection*{Main results} 
The Anti-de Sitter space $\hyp^{n,1}$ is a Lorentzian manifold with constant sectional curvature $-1$. It is a pseudo-Riemannian symmetric space associated to $\mathrm{O}(n,2)$ and it identifies with the space of oriented negative lines of a non-degenerate bilinear form of signature $(n,2)$. As for the hyperbolic space, it admits a conformal boundary $\pd\hyp^{n,1}$, consisting of oriented degenerate lines of such bilinear form, which is diffeomorphic to $\sph^{n-1}\times\sph^1$. 

For a fixed real number $H$ and a given subset $\Lambda$ in the asymptotic boundary of $\hyp^{n,1}$, the asymptotic $H-$Plateau's problem in $\hyp^{n,1}$ consists in finding the spacelike hypersurfaces with constant mean curvature equal to $H$, whose asymptotic boundary coincides with $\Lambda$. A hypersurface is spacelike if the induced metric is Riemannian, which gives a constraint on the subsets $\Lambda$ for which the asymptotic $H-$Plateau's problem can be \textit{a priori} solved: we call \textit{admissible} a subset $\Lambda\sq\pd\hyp^{n,1}$ satisfying such constraint. For a more intrinsic characterization of admissible boundaries, see Definition~\ref{de:adm}. Our first achievement is to solve the asymptotic $H-$Plateau's problem in $\hyp^{n,1}$: in particular, we prove that the admissibility condition is not only necessary but also sufficient.	

	\begin{thmx}\label{teo:A}
		For any $H\in\R$, any admissible boundary in $\pd\hyp^{n,1}$ bounds a unique spacelike hypersurface with constant mean curvature equal to $H$.
	\end{thmx}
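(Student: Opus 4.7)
The plan is to split the proof into existence and uniqueness, exploiting the fact that admissibility of $\Lambda$ yields both an invisible domain $\Omega(\Lambda)\sq\hyp^{n,1}$ in which the sought hypersurface must live and the pair of asymptotic barriers $\barrp{\Lambda}, \barrf{\Lambda}$ that tame the problem at infinity.

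For existence, I would use an exhaustion/compactness scheme. Approximate $\Lambda$ from inside by a sequence of smooth spacelike $(n-1)$-spheres $\Gamma_k\sq\Omega(\Lambda)$ with $\Gamma_k\to\Lambda$, and for each $k$ solve the compact Dirichlet problem: find a spacelike hypersurface $\Sigma_k$ with constant mean curvature $H$ and boundary $\Gamma_k$. This reduces to a quasilinear elliptic PDE for a graph function over a totally geodesic spacelike slice, and can be addressed either by a continuity method in $H$ (starting from an $H=0$ maximal solution, whose existence is established separately) or by invoking a Bartnik--Simon type theory adapted to $\hyp^{n,1}$. The resulting family $\{\Sigma_k\}$ then requires uniform a priori estimates: a $C^0$-estimate obtained by trapping each $\Sigma_k$ between $\barrp{\Lambda}$ and $\barrf{\Lambda}$ via the geometric maximum principle; an interior gradient, i.e.\ timelike-angle, estimate preventing $\Sigma_k$ from degenerating towards the lightcone in the interior of $\Omega(\Lambda)$; and higher-order estimates via standard elliptic bootstrap. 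A diagonal extraction yields a $C^\infty_{\mathrm{loc}}$ limit $\Sigma$, which is a spacelike CMC-$H$ hypersurface, and the barrier sandwich forces $\pd\Sigma=\Lambda$ at infinity.

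For uniqueness, let $\Sigma_1,\Sigma_2$ be two spacelike CMC-$H$ hypersurfaces with asymptotic boundary $\Lambda$. I would measure their Lorentzian displacement along a foliation of $\Omega(\Lambda)$ by spacelike slices inherited from the slab structure of $\hyp^{n,1}$, and argue that if $\Sigma_1\ne\Sigma_2$ then the supremum of this displacement is attained at an interior point $p$ where the two hypersurfaces are tangent with coherently oriented normals; the asymptotic case is ruled out because they share the boundary $\Lambda$. At $p$, both graphs satisfy the same quasilinear CMC-$H$ equation and one lies locally on one side of the other; the strong maximum principle then forces coincidence in a neighborhood of $p$, and the real-analyticity of the CMC equation propagates this to all of $\Sigma_1$.

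The main obstacle is the interior timelike-angle estimate along the approximating sequence, which must be uniform in $k$. The Cheng--Yau and Bartnik techniques developed in $\R^{n,1}$ do not transfer verbatim to $\hyp^{n,1}$: the negative ambient curvature and the non-trivial causal structure of AdS modify the Simons-type identities that drive the usual gradient bounds, and one must construct an auxiliary function tailored to the geometry of $\Omega(\Lambda)$ whose Laplacian along $\Sigma_k$ controls the timelike angle. A secondary technicality is to show that the boundary of the limit hypersurface is all of $\Lambda$ rather than a proper subset, which requires $\barrp{\Lambda}$ and $\barrf{\Lambda}$ to be sharp, i.e.\ to have asymptotic boundary exactly $\Lambda$ at every point.
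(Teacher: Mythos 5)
Your overall architecture matches the paper's: compact exhaustion plus barriers plus interior a priori estimates for existence, and an interior-tangency plus strong maximum principle argument for uniqueness. But there are two genuine gaps where the paper's actual mechanism is essential and your sketch glosses over the difficulty.

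For uniqueness, the step ``the supremum of the displacement is attained at an interior point $p$; the asymptotic case is ruled out because they share the boundary $\Lambda$'' is precisely where the work lies, and it does not follow from the shared boundary alone. The paper's Theorem~\ref{teo:A} is stated for \emph{properly embedded} hypersurfaces, with no completeness assumption, so an Omori--Yau style attack is unavailable and one must produce a genuine interior maximum of a genuine function. The paper does this by passing to the quadric model and maximizing $F(p,q)=\langle\psi(p),\psi(q)\rangle$ on the set $\mathcal A$ of time-related pairs in $\Sigma_1\times\Sigma_2$: it proves that $\mathcal A$ is open and precompact in $\Sigma_1\times\Sigma_2$, that $F$ is bounded in $[-1,1)$ on $\overline{\mathcal A}$ thanks to Corollary~\ref{lem:bound:depdom}, and that $F\equiv-1$ on the frontier, so any value $>-1$ is attained in the interior; then the Hessian of $F$ at the maximum identifies the tangent spaces and compares second fundamental forms with the correct sign (via $\sin T$). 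Your displacement-along-a-foliation function does not come with this compactness/boundedness structure for free, and you would need a substitute argument. Moreover, the strong maximum principle at the tangency point is already sufficient (Theorem~\ref{teo:maxprin}); you do not need to invoke analytic continuation.

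For existence, your diagnosis that the obstacle is a uniform interior timelike-angle estimate is correct, but your worry that Bartnik--Cheng--Yau techniques ``do not transfer verbatim to $\hyp^{n,1}$'' is misplaced: the paper shows they \emph{do} apply (Theorems~\ref{teo:bart} and~\ref{teo:eck}) once one takes the time function $\tau=\dist(p,\cdot)-\varepsilon/2$, whose gradient is uniformly timelike on $I_{\varepsilon/2}^+(p)$. The actual new ingredient is making the resulting local bound \emph{uniform}: this is done via the cosmological-time inequality $\tau_\past+\tau_\future\ge\pi/2$ (Corollary~\ref{cor:cosmo}), which guarantees that every point of a CMC entire graph sits at Lorentzian distance $\ge\pi/4$ from $\partial\Omega(\pd\Sigma)$, so one can translate by an isometry to a fixed configuration and apply the local estimate with a fixed constant. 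Finally, your ordering ``start from an $H=0$ solution and continue in $H$'' is backwards relative to the paper: the maximal case is actually the delicate one, because $\pd_\pm\ch(\Lambda)=\barrp{0},\barrf{0}$ are in general not Cauchy hypersurfaces (they may contain lightlike segments); the paper therefore solves $H\ne0$ first using the strictly spacelike level sets $\barrp\theta$, and then uses a nonzero-$H$ solution as the Cauchy barrier for $H=0$.
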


Many partial results have been previously obtained. To our knowledge, the first progress has been made by Andersson, Barbot, Béguin and Zeghib \cite{bbz,abbz}, who solved the problem for $\Lambda$ invariant by the action of a subgroup $\Gamma$ of $\Isom(\hyp^{n,1})$ that acts freely and properly discountinuously on a properly embedded spacelike hypersurface $S$ of $\hyp^{n,1}$ with $S/\Gamma$ compact. Equivalently, these subgroups are known in the literature as \emph{convex cocompact}. The result of existence and uniqueness has been extended in the 3-dimensional case $\hyp^{2,1}$ by Bonsante and Schlenker for the maximal case, \textit{i.e.} $H=0$ (\cite{bonsch}), and by Tamburelli for arbitrary values of $H$ (\cite{tamb}), to the class of quasi-symmetric boundary curves. Moreover, \cite{bonsch} proved the existence of maximal hypersurfaces in any dimension, for $\Lambda$ the graph of a strictly $1-$Lipschitz map from $\sph^{n-1}\to\sph^1$. Recentely, Seppi, Smith and Toulisse solved the Plateau problem for spacelike maximal $p-$submanifold in $\hyp^{p,q}$, for all admissible boundaries (\cite{sst}). 

It is important to remark that all the above uniqueness results hold in the class of \emph{complete} submanifolds. This is a highly non-trivial requirement in Lorentzian geometry. Indeed, a properly embedded hypersurface in $\hyp^{n,1}$ might not be complete: if the normal vector degenerates fastly enough, the metric is incomplete. The same phenomenon occurs in $\R^{n,1}$ (see also \cite{bss2}).

The asymptotic $H-$Plateau problem has been studied in the flat case, \textit{i.e.} in the Minkowski space. In particular, the maximal case has been solved in \cite{cheng-yau}, while for $H\ne0$ several achievements have been made (\cite{trei,choi-trei}), until the quite recent complete classification (\cite{bss}). It is worth noticing that, in this setting, the CMC hypersurfaces are proved to be convex (\cite{trei}). This does not hold true in the Anti-de Sitter setting: not only for $H=0$, where a maximal hypersurface is convex if and only if it is totally geodesic, but there are examples of non-convex CMC hypersurfaces also for $H\ne0$ (see Remark~\ref{oss:nonconvex} and Remark~\ref{oss:convex}).

For $\R^{n,1}$, the uniqueness result holds in the class of properly embedded hypersurfaces, due to the remarkable result \cite[Corollary of Theorem~1]{cheng-yau}, which states that any properly embedded CMC hypersurface in Minkowski space is complete. We extend this result to the Anti-de Sitter case:

	\begin{thmx}\label{teo:B}
		Any properly embedded spacelike hypersurface with constant mean curvature in $\hyp^{n,1}$ is complete.
	\end{thmx}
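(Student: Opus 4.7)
The plan is to mimic the original Cheng--Yau gradient estimate for CMC graphs in Minkowski space, adapting it to the negative curvature of $\hyp^{n,1}$ and to the absence of a global graphical representation.

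\emph{Reduction to escape at infinity.} Assume by contradiction that $M$ is not complete, and pick a unit-speed $M$-geodesic $\gamma:[0,T)\to M$ with $T<\infty$ that cannot be extended to $[0,T]$. If $\gamma(t)$ converged to some $p\in\hyp^{n,1}$ as $t\to T^{-}$, the properness of $M$ would place $p$ in $M$, and the smoothness of $M$ at $p$ would allow $\gamma$ to be prolonged as a geodesic past $p$, contradicting inextendibility. Thus $\gamma$ must eventually leave every compact subset of $\hyp^{n,1}$, so $\overline{\gamma([0,T))}$ intersects the conformal boundary $\partial\hyp^{n,1}$.

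\emph{Tilt function in a local chart.} I would work in a neighborhood of $\gamma$ foliated by totally geodesic spacelike hyperplanes $P_{s}\cong\hyp^{n}$ with unit timelike orthogonal vector field $T_{0}$; on $M$, the tilt $\nu:=-\pr{N,T_{0}}\ge 1$, where $N$ is the future-pointing unit normal, measures the angle between $M$ and the leaves. Locally $M$ is a graph of a height function over some leaf, and the only way $\gamma$ can escape to $\partial\hyp^{n,1}$ in finite intrinsic time is if $\nu(\gamma(t))\to\infty$: a bounded tilt would make the intrinsic and ambient Riemannian lengths comparable, forcing $\gamma([0,T))$ to be precompact in $\hyp^{n,1}$.

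\emph{A Cheng--Yau style gradient estimate.} The core technical step is a pointwise bound of the form $\nu(p)\le\Psi\bigl(r,H,d(p,P_{0})\bigr)$ whenever the geodesic ball $B^{M}_{r}(p)$ is contained in $M$, for a universal $\Psi$ and a reference leaf $P_{0}$. The derivation computes $\Delta_{M}\nu$ via Simons' identity and the Codazzi equations (using the CMC assumption $\mathrm{tr}\,A=nH$); the ambient curvature of $\hyp^{n,1}$ contributes a Ricci term $\mathrm{Ric}(N,N)=n$ entering with the favorable sign, producing a Bochner-type inequality of the form
\begin{equation*}
	\Delta_{M}\log\nu\ \ge\ |\nabla\log\nu|^{2}+|A|^{2}-C(n,H).
\end{equation*}
Testing this against a Lipschitz cutoff supported on $B^{M}_{r}(p)$, as in the classical Cheng--Yau argument, the maximum principle yields the quantitative bound on $\nu(p)$.

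\emph{Conclusion and main difficulty.} Since $\gamma$ has $M$-length $T<\infty$, the ball $B^{M}_{\epsilon}(\gamma(t))$ is contained in $M$ for every $t<T-\epsilon$, so the gradient estimate forces $\nu$ to be uniformly bounded along $\gamma$; combined with the preceding step, this contradicts the escape of $\gamma$ to $\partial\hyp^{n,1}$. The main difficulty is the gradient estimate itself: in the Anti-de Sitter setting, CMC hypersurfaces are not convex, so convexity-based simplifications available in Minkowski space fail, and one must rely on a careful PDE argument exploiting the specific ambient Ricci and sectional curvatures together with a well-designed cutoff to control the tilt on geodesic $M$-balls.
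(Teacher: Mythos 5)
Your overall strategy (bound the tilt, then compare intrinsic and ambient reference lengths along the incomplete geodesic) is reasonable, but the step you yourself identify as the crux -- the Cheng--Yau style interior estimate on intrinsic balls -- has a genuine gap, and it is circular as sketched. First, the hypothesis ``whenever $B^{M}_{r}(p)$ is contained in $M$'' is vacuous: an intrinsic metric ball is always a subset of $M$. What the cutoff/maximum-principle argument actually needs is that the \emph{closed} ball $\overline{B^{M}_{r}(p)}$ be compact (equivalently, that closed balls be complete), so that the function $(\text{cutoff})\cdot\nu$ attains its maximum at an interior point. That is exactly what is unknown when $M$ may be incomplete, and it fails precisely where you want to apply the estimate: the balls $B^{M}_{\epsilon}(\gamma(t))$ for $t$ close to $T-\epsilon$ contain points arbitrarily close to the ``missing'' end of $\gamma$. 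Cheng and Yau avoid this in Minkowski space by running the estimate on sublevel sets of \emph{ambient} functions, which are compact thanks to properness, not on intrinsic balls. Second, the claimed form of the bound, $\nu(p)\le\Psi(r,H,d(p,P_0))$, is asserted without justification and is delicate exactly in the regime you need it, namely $p\to\pd\hyp^{n,1}$: the gradient function is defined through the reference field $T$, which is not isometry-invariant, so constants in such estimates genuinely depend on the base point (this is remarked explicitly after Proposition~\ref{pro:locest}); a bound depending only on the distance to a fixed leaf, uniform as one escapes to the conformal boundary, is the whole difficulty and cannot be waved through ``as in the classical Cheng--Yau argument''.

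For comparison, the paper resolves both issues by exploiting properness through the causal structure rather than intrinsic balls: if $p\in D(\Sigma)$, then $\Sigma\cap I(p)$ is precompact (Proposition~\ref{pro:depdomain}), which supplies the compactness needed to apply Bartnik's and Ecker's interior estimates (Theorems~\ref{teo:bart} and~\ref{teo:eck}) with the Lorentzian distance from $p$ as time function (Proposition~\ref{pro:locest}); the inequality $\tau_\past+\tau_\future\ge\pi/2$ (Corollary~\ref{cor:cosmo}) guarantees every point of $\Sigma$ sits a definite timelike distance above some $p\in D(\Sigma)$, and an isometry moving that configuration to a fixed model position makes the resulting bound on $|\sff|$ and its derivatives uniform (Theorem~\ref{teo:boundII}), since $\sff$ -- unlike $\nu$ -- is isometry-invariant. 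Completeness then follows from the known fact that a properly embedded spacelike graph with bounded second fundamental form is complete. If you want to salvage your approach, you must replace intrinsic balls by ambient regions made compact by properness (essentially rediscovering the domain-of-dependence mechanism) and bound an isometry-invariant quantity, at which point you are back to the paper's argument.
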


In the literature, many results about CMC hypersurfaces in Anti-de Sitter space assume completeness. In particular, in Appendix~\ref{app:a} we focus on \cite{ishi,kkn}. 

Lastly, we study the behaviour of CMC hypersufaces with the same asymptotic boundary $\Lambda$, as their mean curvature $H$ varies in $\R$. Denote $\Omega(\Lambda)$ the union of entire spacelike hypersurfaces spanning $\Lambda$, \emph{i.e} $x\in\Omega(\Lambda)$ if there exists an entire spacelike hypersurface $S$ such that $x\in S$ and $\pd S=\Lambda$. It is known that $\Omega(\Lambda)$ is a globally hyperbolic geodesically convex open subset of $\hyp^{n,1}$, called \textit{invisible domain}.
	
	\begin{thmx}\label{teo:C}
		The invisible domain $\Omega(\Lambda)$ of an admissible boundary $\Lambda$ in $\pd\hyp^{n,1}$ is real-analytically foliated by complete CMC spacelike hypersurfaces spanning $\Lambda$.
	\end{thmx}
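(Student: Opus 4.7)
By Theorem~\ref{teo:A}, for every $H\in\R$ the admissible boundary $\Lambda$ bounds a unique spacelike CMC-$H$ hypersurface $S_H\sq\Omega(\Lambda)$, and by Theorem~\ref{teo:B} each $S_H$ is complete. The plan is to show that the family $\{S_H\}_{H\in\R}$ is pairwise disjoint, sweeps out $\Omega(\Lambda)$, and depends real-analytically on $H$, thereby producing the required foliation. Throughout, I would fix the convention that the mean curvature is computed with respect to the future-pointing unit normal, so that the leaves can be compared by their causal position.

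\emph{Step 1: ordering and disjointness.} The first step is to prove that $H_1<H_2$ implies $S_{H_1}$ lies in the chronological past of $S_{H_2}$. Since both hypersurfaces share the same asymptotic boundary $\Lambda$, if this inclusion failed the two hypersurfaces would admit a first interior point of tangency in $\Omega(\Lambda)$; writing them locally as graphs over a common spacelike tangent hyperplane and comparing the corresponding quasi-linear CMC elliptic equations, the strong maximum principle rules this out. The same geometric maximum principle simultaneously yields the total ordering and the pairwise disjointness of $\{S_H\}_{H\in\R}$.

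\emph{Step 2: covering of $\Omega(\Lambda)$.} The map $H\mapsto S_H$ is therefore a monotone $1$-parameter family. To show that $\bigcup_H S_H=\Omega(\Lambda)$, I would argue that as $H\to+\infty$ (respectively $H\to-\infty$) the leaves $S_H$ leave every compact subset of $\Omega(\Lambda)$ toward its future (respectively past) lightlike boundary. If this failed, combining interior curvature and gradient estimates of the type used in Theorems~\ref{teo:A} and~\ref{teo:B} with elliptic compactness would extract a subsequential limit hypersurface with infinite mean curvature, which is impossible. A convenient way to implement this is by means of explicit barriers: totally umbilical level sets of the Lorentzian distance from a point, or from a totally geodesic copy of $\hyp^n\sq\hyp^{n,1}$, realize arbitrarily large constant mean curvature and can be positioned to bound any prescribed compact region of $\Omega(\Lambda)$ from above and below. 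I expect this to be the main obstacle, since one must carefully rule out pathological accumulation of leaves inside $\Omega(\Lambda)$.

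\emph{Step 3: real-analyticity.} Finally, I would upgrade the set-theoretic foliation obtained above to a real-analytic one. The mean curvature operator, expressed as a quasi-linear elliptic operator on normal graphs over a fixed leaf $S_{H_0}$, has real-analytic coefficients because the ambient metric of $\hyp^{n,1}$ is real-analytic. Its linearization is the Jacobi operator, a self-adjoint elliptic operator on sections of the normal bundle which, using the completeness provided by Theorem~\ref{teo:B} together with the asymptotic control at $\Lambda$, I would show is an isomorphism on a suitable scale of weighted Sobolev or H\"older spaces. The analytic implicit function theorem on the corresponding Banach manifold then yields real-analytic dependence of $S_H$ on $H$, while Step~1 ensures that the normal variation is nowhere tangent to the leaves. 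Together these produce the desired real-analytic diffeomorphism $\R\times S_{H_0}\to\Omega(\Lambda)$, which is the foliation claimed in Theorem~\ref{teo:C}.
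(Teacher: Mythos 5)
Your overall strategy coincides with the paper's: establish the topological foliation via the strong maximum principle and the compactness of the family $\{\Sigma_H\}$, then upgrade regularity by working with normal graphs over a fixed leaf, linearizing the mean curvature operator to the Jacobi operator $J=\Delta-n-|\sff|^2$, and invoking the analytic inverse function theorem in a Banach space of functions on $\Sigma_H$. One small correction to Step~3: no weighted spaces are needed. The paper works in the ordinary H\"older spaces $C^{k,\alpha}(\Sigma_H)$, and the reason this suffices is precisely the uniform bound on $|\sff|$ and all its covariant derivatives furnished by Theorem~\ref{teo:boundII} (the ingredient behind Theorem~\ref{teo:B}): it makes $J$ uniformly strictly elliptic over the complete non-compact manifold $\Sigma_H$, so one can run interior Schauder estimates with constants independent of the base point and produce a bounded inverse.

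The genuine gap in your Step~3 is the jump from ``$h\mapsto v_h$ is a real-analytic path in $C^{k,\alpha}(\Sigma_H)$'' to ``the map $\Sigma_H\times\R\to\Omega(\Lambda)$ is real-analytic.'' Analyticity of the Banach-valued path gives analyticity of $v_\bullet(p)$ in $h$ for each fixed $p$, and elliptic regularity of $L^{\Sigma_H}_h$ gives analyticity of $v_h(\cdot)$ on $\Sigma_H$ for each fixed $h$; but separate analyticity in each variable does not by itself yield joint analyticity. The paper closes this gap in Subsection~\ref{sub:anal}: it uses the criterion that a function analytic in each variable separately is jointly analytic once one exhibits a uniform lower bound on the radii of convergence (citing \cite{anal}), and then obtains that uniform lower bound by reading off the quantities controlling the radius of convergence in Hopf's classical analyticity proof \cite{hopf}, which depend continuously on the coefficients of the elliptic equation and on $v_h$ up to finite order, all of which are uniformly controlled near a fixed $(p,H)$ thanks to the $C^{6,\alpha}$ regularity of the path and the uniform geometry of the leaves. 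Without this final step your argument produces only a $C^\infty$ (indeed $C^\omega$-in-$h$ and $C^\omega$-in-$p$, but not a priori $C^\omega$-jointly) trivialization of $\Omega(\Lambda)$.
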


If the boundary $\Lambda$ is equivariant by the action of a convex cocompact subgroup of $\Isom(\hyp^{n,1})$, in the quotient we recover the same foliation described in \cite{bbz,abbz}, since the uniqueness of the CMC hypersurfaces guarantees their invariance by the action of any group preserving the boundary. If $n=2$ and $\Lambda$ is a quasi-symmetric boundary, we recover the foliation of \cite{tamb}, and we improve its regularity.

In the language of \cite{bbz,abbz}, any maximal globally hyperbolic Cauchy compact Anti-de Sitter manifold admits a unique CMC time function, which is real-analytic. A \textit{time function} on a Lorentzian manifold $M$ is a submersion $\tau\colon M\to\R$, strictly monotone on timelike curves. Moreover, it is a \textit{CMC} time function if $\tau^{-1}(H)$ is an $H-$hypersurface, for any $H\in\tau(M)$. Using the same language, Theorem~\ref{teo:C} can be rephrased as follows:

\begin{corx}\label{cor:D}
	Any maximal globally hyperbolic Cauchy complete Anti-de Sitter manifold admits a unique CMC-time function, which is real-analytic.
\end{corx}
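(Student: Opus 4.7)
The plan is to translate Theorem~\ref{teo:C} to the quotient via the correspondence between MGHC Anti-de Sitter manifolds and quotients of invisible domains. Any maximal globally hyperbolic Cauchy complete AdS manifold $M$ is isometric to $\Omega(\Lambda)/\Gamma$ for some admissible $\Lambda \sq \pd\hyp^{n,1}$ and some discrete $\Gamma < \Isom(\hyp^{n,1})$ preserving $\Lambda$ and acting freely and properly discontinuously on $\Omega(\Lambda)$; this higher-dimensional extension of Mess's classification will be taken as input.

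Applying Theorem~\ref{teo:C} to $\Lambda$ produces a real-analytic foliation $\{S_H\}_{H\in\R}$ of $\Omega(\Lambda)$ by complete spacelike CMC hypersurfaces spanning $\Lambda$, with $S_H$ of mean curvature $H$. For any $\gamma \in \Gamma$, the image $\gamma(S_H)$ is again a complete spacelike CMC hypersurface of mean curvature $H$ with asymptotic boundary $\gamma(\Lambda) = \Lambda$, so the uniqueness clause of Theorem~\ref{teo:A} forces $\gamma(S_H) = S_H$. The foliation is thus $\Gamma$-invariant and descends to a real-analytic CMC foliation of $M$. Setting $\tau(x) := H$ whenever $x$ lies on the projection of $S_H$ defines a real-analytic function on $M$, and a submersion because the leaves are smooth spacelike hypersurfaces. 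Strict monotonicity along timelike curves is a consequence of the geometric maximum principle: two CMC hypersurfaces of distinct mean curvatures cannot touch tangentially from the appropriate side without contradiction, so the leaves are linearly ordered in the timelike normal direction. Since $\tau^{-1}(H)$ is the projection of $S_H$, $\tau$ is a CMC time function in the sense recalled above.

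For uniqueness, let $\sigma\colon M \to \R$ be any CMC time function. Each level set $\sigma^{-1}(H)$ is a Cauchy hypersurface of $M$ with mean curvature $H$; its lift to $\Omega(\Lambda)$ is a properly embedded spacelike CMC hypersurface, spanning $\Lambda$ because every entire spacelike hypersurface of $\Omega(\Lambda)$ does so. By Theorem~\ref{teo:B} this lift is complete, and by Theorem~\ref{teo:A} it coincides with $S_H$. Hence $\sigma$ and $\tau$ have the same level sets with the same parametrisation, so $\sigma = \tau$. The main obstacle is the preliminary identification $M \cong \Omega(\Lambda)/\Gamma$; once that is granted, the proof reduces to a clean assembly of Theorems~\ref{teo:A}, \ref{teo:B} and \ref{teo:C} together with the maximum principle that orders the CMC leaves.
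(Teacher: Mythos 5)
Your outline follows the paper's argument closely: identify $M$ with $\Omega(\Lambda)/\Gamma$, invoke Theorem~\ref{teo:C} for the analytic CMC foliation of $\Omega(\Lambda)$, and use the uniqueness in Theorem~\ref{teo:A} to force $\Gamma$-invariance of each leaf. There is, however, one step you pass over too quickly. When you assert that $\gamma(S_H)$ is a CMC hypersurface of mean curvature $H$, you are implicitly assuming that $\gamma$ preserves the time-orientation; a time-orientation-reversing isometry fixing $\Lambda$ would send $S_H$ to $S_{-H}$, and the invariance argument would fail. The paper handles this in Remark~\ref{oss:globhyp}: since $M$ is globally hyperbolic, it is time-orientable, so $\Gamma$ must consist of time-orientation-preserving isometries. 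You should make this point explicit, since the sign of the mean curvature is precisely what is being matched.

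Your uniqueness discussion for the CMC time function goes beyond what the paper writes out, and the idea (level sets of any CMC time function lift to properly embedded CMC hypersurfaces spanning $\Lambda$, which by Theorems~\ref{teo:A} and~\ref{teo:B} must coincide with the $\Sigma_H$) is sound. Be aware, though, that the definition of a CMC time function in the paper does not require the level sets to be Cauchy hypersurfaces, so if you want to state that they span $\Lambda$ and are entire you need a brief argument (for instance, that a closed spacelike hypersurface in $\Omega(\Lambda)$ that is the level set of a submersion strictly monotone on timelike curves must be an entire acausal graph). This is a small point the paper's (terse) treatment of uniqueness likewise elides.
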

	
	\subsection*{Main ingredients}
As previously mentioned, completeness is a highly non-trivial condition in the Lorentzian setting. Specifically, in the case of CMC hypersurfaces in Anti-de Sitter space, it has been proved to be equivalent to having bounded second fundamental form. Indeed, any properly embedded spacelike hypersurface with bounded second fundamental form is complete, as proved in \cite[Proposition~6.2.1]{benbon}, \cite[Corollary~3.30]{ltw} and \cite[Lemma~3.11]{sst}. Conversely, the main result of \cite{ishi,kkn} provides a universal bound on the second fundamental form of any properly embedded complete CMC hypersurface in $\hyp^{n,1}$, only depending on the mean curvature $H$. See Appendix~\ref{app:a} for details.\\

\noindent\textit{Bound on the second fundamental form.} Through geometrical arguments of a global nature, relying on the local estimates contained in \cite{bartnik,ecker}, we provide uniform bounds on the norm of the second fundamental form and its derivatives (Theorem~\ref{teo:boundII}). More specifically, we specialize these estimates to the Anti-de Sitter space, proving that, for CMC properly embedded hypersurfaces, there exists a bound only depending on the distance between the hypersurface and the boundary of its domain of dependence. Right after, we prove that such distance is uniformly bounded from below, \emph{i.e.} the bound is uniform. Our bounds are not explicit but, \textit{a posteriori}, we retrieve the bounds of \cite{ishi,kkn}, by applying Theorem~\ref{teo:B}, which is in fact a corollary of Theorem~\ref{teo:boundII}.\\

\noindent\textit{Uniqueness.} In the context of Anti-de Sitter and, more generally, pseudo-hyperbolic geometry, all the previous results of uniqueness of maximal and CMC submanifolds (\cite{bonsch,tamb,ltw,sst}) are proved in the class of \emph{complete} submanifolds. The proofs rely on an application of the Omori-Yau maximum principle, which indeed requires completeness. Hence, \textit{a priori}, there could exist several properly embedded $H-$hypersurfaces sharing the same boundary, among which only one complete. In light of Theorem~\ref{teo:B}, this is not possible, since there exists no incomplete properly embedded CMC hypersurface.

Here, instead of using Omori-Yau maximum principle, we prove uniqueness by a different method: we prove a maximum principles describing the mutual position of spacelike hypersurfaces, depending on their boundaries and their mean curvature (Theorem~\ref{teo:ord}). Roughly speaking, we show that the bigger the mean curvature, the more curved in the past is the hypersurface. More precisely, let $\Sigma_1$ and $\Sigma_2$ be two properly embedded hypersurfaces such that $\pd\Sigma_1$ lies in the past of $\pd\Sigma_2$. If $H_1\ge H_2$, then $\Sigma_1$ lies in the past of $\Sigma_2$.

The proof consists in using isometries and topological arguments to ensure that the maximum of the Lorentzian distance between $\Sigma_1$ and $\Sigma_2$ is reached in the interior of $\Sigma_1\times\Sigma_2$. In this case, we can apply a classical maximum principle, which does not require the completeness assumption.

This method has the advantage of, on the one hand, providing barriers (Proposition~\ref{lem:umbilical}) which are needed in the proof of the existence of CMC hypersurfaces. On the other hand, the result applies to a more general context, and could be useful in the study of hypersurfaces with prescribed mean curvature in the Anti-de Sitter space.\\

\noindent\textit{Existence and compactness.} An important ingredient is to show that the limit of a suitable sequence of spacelike hypersurfaces with constant mean curvature is a properly embedded spacelike CMC hypersurface. This result is achieved by the means of \textit{barriers}, \emph{i.e.} hypersurfaces that uniformly bound the geometry of elements of the sequence, whose existence is a consequence of Theorem~\ref{teo:ord}.

In particular, for the existence part of Theorem~\ref{teo:A}, we obtain the entire solution $\Sigma$ as the limit of compact solutions $\Sigma_k$, whose existence is guaranteed by a result of \cite{ecker}. 

Incidentally, the compactness result (Proposition~\ref{pro:compact}), allows us to describe the topology of the moduli space of CMC entire hypersurfaces of $\hyp^{n,1}$ (Corollary~\ref{cor:compact}).\\

\noindent\textit{Foliation.} The invisible domain of $\Lambda$ is topologically foliated by CMC hypersurfaces as a consequence of two results: the compactness result and the strong maximum principle (Proposition~\ref{lem:umbilical}), which is a special case of Theorem~\ref{teo:ord}.

To promote the regularity of the foliation, we describe the space of spacelike deformations of a single leaf $\Sigma_H$ as an open subset $A$ of the Banach space of regular functions on $\Sigma_H$. Indeed, a function $v\colon\Sigma_H\to\R$ identifies with its normal graph over $\Sigma_H$, namely \begin{equation*}
	S_v=\{\exp_x\left(v(x)N(x)\right), x\in\Sigma_H\}.\end{equation*}

We prove that the mean curvature, seen as a differential operator on $A$, is analytically invertible at $\Sigma_H$. As a consequence, we deduce that the foliation of CMC hypersurfaces is analytic.
	
	\subsection*{Organization of the paper}
From Section~\ref{sec:ads} to Section~\ref{sec:causal}, we provide the necessary background. In Section~\ref{sec:uni}, we prove the maximum principle and the uniqueness part of Theorem~\ref{teo:A}. In Section~\ref{sec:reg}, the bounds on the second fundamental form are given and Theorem~\ref{teo:B} follows. In Section~\ref{sec:ex}, we conclude the proof of Theorem~\ref{teo:A}. In Section~\ref{sec:comp}, we describe the topology of the moduli space of entire CMC hypersurfaces through a compactness result. In Section~\ref{sec:foliation}, we prove Theorem~\ref{teo:C} and Corollary~\ref{cor:D}. Finally, in Appendix~\ref{app:a}, we give a different proof of \cite[Theorem~1]{kkn}.

	\subsection*{Acknowledgements}
	This paper would have not been possible without the constant support of Francesco Bonsante and Andrea Seppi: I am deeply grateful for their continuous encouragement, the enlightening discussions and the numerous yet careful readings of this article. I would also like to thank Paolo Bignardi and Alex Moriani for related discussions.

	\section{Anti de-Sitter space}\label{sec:ads}	
Anti-de Sitter manifolds are the Lorentzian analogous of hyperbolic manifolds, \emph{i.e.} pseudo-Riemannian manifolds with signature $(n,1)$ and constant sectional curvature $-1$. In this section, two models of Anti-de Sitter geometry are presented, namely the quadric model $\hypd^{n,1}$ and the universal cover $\hypu^{n,1}$.

	\subsection{Lorentzian geometry}\label{sub:lorgeo}
A Lorentzian metric $g$ on a manifold $M$ of dimension $n+1$ is a symmetric non-degenerate $2-$tensor with signature $(n,1)$. We distinguish tangent vectors by their causal properties: a vector $v$ is \textit{timelike}, \textit{lightlike} or \textit{spacelike} if $g(v,v)$ is respectively negative, null or positive. Furthermore, a vector is called \textit{causal} if it is not spacelike. A curve $c\colon I\to M$ is called timelike (resp. lightlike, spacelike, causal), if its tangent vector $c'(t)$ is timelike (resp. lightlike, spacelike, causal) for all $t\in I$. 

\begin{de}
	Two points are \textit{time-related} (resp. \textit{light-related}, \textit{space-related}) if there exists a timelike (resp. lightlike, spacelike) geodesic joining them.
\end{de}

\begin{de}\label{de:space}
	A $C^1-$submanifold of $M$ is \textit{spacelike} if the induced metric is Riemannian.
\end{de}

A Lorentzian manifold is \textit{time-orientable} if the set of timelike vectors in $TM$ has two connected components, which will always be our case. A time-orientation is the choice of one of the connected component: vectors are called \textit{future-directed} if they belong to the chosen connected component, \textit{past-directed} otherwise. 

\begin{de}\label{de:cone}
	The \textit{cone} of a subset $X$ of $M$ is the set $I(X)$ of points of $M$ that can be joined to $X$ by a timelike curve. 
	
	Once a time-orientation is set, one can distinguish the \textit{future} cone $I^+(X)$ and the \textit{past} cone $I^-(X)$, containing respectively the points that can be reached from $X$ along a future-directed or past-directed timelike curve.
\end{de}

	\subsection{Quadric model}\label{sub:quadric}
The quadric model for Anti-de Sitter geometry is the equivalent of the hyperboloid model of hyperbolic space, \emph{i.e.}
\[\hypd^{n,1}:=\{x\in\R^{n,2}, \pr{x,x}=-1\},\]
$\R^{n,2}$ being the pseudo-Euclidean space of signature $(n,2)$, namely $\R^{n+2}$ endowed with the bilinear form
\begin{equation*}
	\pr{x,y}:=x_1y_1+\dots+x_n y_n-x_{n+1}y_{n+1}-x_{n+2}y_{n+2}.
\end{equation*}

As for the hyperbolic space, $\hypd^{n,1}$ admits a conformal boundary, which consists of the oriented isotropic lines for the bilinear form $\pr{\cdot,\cdot}$, and it is conformal to $\pd\hyp^n\times\sph^1$, endowed with the Lorentzian metric $g_{\sph^{n-1}}-g_{\sph^{1}}$. One can prove it directely or see it as a consequence of Lemma~\ref{lem:conformal}.

The tangent space $T_x\hypd^{n,1}$ identifies with $x^\perp=\{y\in\R^{n,2},\pr{x,y}=0\}$ and the restriction of the scalar product to $T\hypd^{n,1}$ is a time-orientable Lorentzian metric with constant sectional curvature $-1$. 

One can show that $\mathrm{O}(n,2)$ is the isometry group of $\hypd^{n,1}$, and that it is \textit{maximal}: any linear isometry from $T_x\hypd^{n,1}$ to $T_y\hypd^{n,1}$ is the tangent map of an isometry of $\hypd^{n,1}$ sending $x$ to $y$. In particular, the isometry group acts transitively. Totally geodesic $k-$submanifolds of $\hypd^{n,1}$ are precisely open subsets of the intersection between $\hypd^{n,1}$ and $(k+1)-$vector subspaces of $\R^{n,2}$. In the following, we will abusively refer to \textit{maximal} totally geodesic submanifold simply as totally geodesic submanifold, where maximality is to be intended in the sense of inclusion for connected submanifolds.

In particular, geodesics of $\hypd^{n,1}$ starting from $x$ are of the form
	\begin{equation}\label{eq:geod}
		\exp_x(tv)=\begin{cases}
			\cos(t)x+\sin(t)v & \text{if $\pr{v,v}=-1$};\\
			x+tv & \text{if $\pr{v,v}=0$};\\
			\cosh(t)x+\sinh(t)v & \text{if $\pr{v,v}=1$}.
		\end{cases}
	\end{equation}
Indeed, any curve $\gamma(t)$ satisfying one of the equations above is contained in $\hypd^{n,1}$, and an easy computation shows that $\gamma''(t)$, namely the covariant derivative of $\gamma$ with respect to the flat metric of $\R^{n,2}$, is proportional to $\gamma(t)$, \emph{i.e.} normal to $T_{\gamma(t)}\hypd^{n,1}$.

\begin{oss}\label{oss:P+-}
	For $x\in\hypd^{n,1}$, consider $T_x\hypd^{n,1}$ as the linear subspace $x^\perp\sq\R^{n,2}$. By definition, $\hypd^{n,1}\cap x^\perp$ is the set of unitary timelike vectors in $T_x\hypd^{n,1}$. By the above discussion, $\hypd^{n,1}\cap x^\perp$ is a totally geodesic hypersurface, and a direct calculation shows that it is isometric to two disjoint copies of the hyperbolic space $\hyp^n$. We will denote $P_+(x)$ and $P_-(x)$ these copies of $\hyp^n$: $P_+(x)$ is the set of \emph{future-directed} unitary timelike vectors and $P_-(x)$ is the set of \emph{past-directed} unitary timelike vectors in $T_x\hypd^{n,1}$, once a time-orientation is set.
	
	Equation~\eqref{eq:geod} shows that timelike geodesics starting at $x$ are periodic curves $\gamma\colon\R\to\hypd^{n,1}$ such that $\gamma(k\pi)=(-1)^k x$. Moreover, as
		\[\exp_x\left(\pm\frac{\pi}{2}v\right)=\pm v,\]
	future-directed timelike geodesics intersect orthogonally $P_+(x)$ at $t=\pi/2$ and $P_-(x)$ at $t=-\pi/2$ (see Figure~\ref{fig:prelated}).
\end{oss}

For the purposes of this article, the main advantage of this model is that its causal structure is encoded by the scalar product of $\R^{n,2}$. 

\begin{lem}\label{lem:causalprod}
	Let $p,q\in\hypd^{n,1}$, $p\ne\pm q$. There exists a geodesic joining $p$ and $q$ if and only if $\pr{p,q}<1$. Moreover, $p,q$ are
	\begin{enumerate}
		\item time-related $\iff-1<\pr{p,q}<1$;
		\item light-related $\iff\pr{p,q}=-1$;
		\item space-related $\iff\pr{p,q}<-1$.		
	\end{enumerate}
	
	Finally, $\pr{p,q}=1$ (resp. $\pr{p,q}>1$) if and only if $(p,-q)$ and $(-p,q)$ are light-related (resp. space-related).
\end{lem}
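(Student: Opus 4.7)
The plan is to reduce the entire statement to a short computation using the explicit geodesic formulas in \eqref{eq:geod}, combined with the orthogonal decomposition of $q$ along $p$ in $\R^{n,2}$. Given any $p\in\hypd^{n,1}$, the tangent space $T_p\hypd^{n,1}$ coincides with $p^\perp$, so $\R^{n,2}=\R p\oplus p^\perp$. Writing $q=\alpha p + w$ with $w\in p^\perp$, the identities $\pr{p,p}=-1$ and $\pr{q,q}=-1$ immediately yield $\alpha=-\pr{p,q}$ and $\pr{w,w}=\pr{p,q}^2-1$. This single relation pins down the causal type of $w$, and hence the type of geodesic that could join $p$ to $q$, purely in terms of the number $\pr{p,q}$.

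For the forward implication, I would split into the three cases $|\pr{p,q}|<1$, $\pr{p,q}=-1$, and $\pr{p,q}<-1$. In each, the decomposition produces an explicit $v\in T_p\hypd^{n,1}$ and parameter $t$ such that $q=\exp_p(tv)$ via the corresponding line of \eqref{eq:geod}: for the timelike case, rescale $w$ to a unit timelike vector and choose $t\in(0,\pi)$ with $\cos t=-\pr{p,q}$; for the lightlike case, take $v=w$ and $t=1$; for the spacelike case, rescale $w$ to a unit spacelike vector and pick $t>0$ with $\cosh t=-\pr{p,q}$. For the converse, plugging each geodesic type back into $\pr{p,\exp_p(tv)}$ yields $-\cos t$, $-1$, and $-\cosh t$ respectively; this simultaneously shows that a geodesic can exist only when $\pr{p,q}<1$ (equality would force $q=-p$ via the timelike case at $t=\pi$, contradicting $p\neq\pm q$) and matches the causal type of the geodesic with the precise range of $\pr{p,q}$, completing the three equivalences.

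The final statement about $(p,-q)$ and $(-p,q)$ is then automatic from $\pr{-p,q}=-\pr{p,q}=\pr{p,-q}$: the conditions $\pr{p,q}=1$ and $\pr{p,q}>1$ translate into $\pr{-p,q}=-1$ and $\pr{-p,q}<-1$, that is, light- and space-related respectively, by what has just been proved. The only point that will require minor care---more bookkeeping than a true obstacle---is the excluded case $p=\pm q$: the values $\pr{p,q}=\pm 1$ paired with $w=0$ correspond precisely to $p=q$ or $p=-q$, so the hypothesis $p\neq\pm q$ is exactly what guarantees that the constructed lightlike geodesic is non-constant. Apart from this, the argument is essentially mechanical once the orthogonal decomposition has been written down.
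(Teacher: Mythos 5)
Your proof is correct and takes the same route the paper intends: the paper dispenses with the lemma in one line, saying it "follows directly from Equation~\eqref{eq:geod}", and your argument is precisely a worked-out version of that computation, organized via the orthogonal decomposition $q=-\pr{p,q}\,p+w$ with $w\in p^\perp$ and $\pr{w,w}=\pr{p,q}^2-1$. All cases, the treatment of the excluded pairs $p=\pm q$, and the final reduction via $\pr{-p,q}=-\pr{p,q}$ check out.
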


The proof follows directly from Equation~\eqref{eq:geod}. See Figure~\ref{fig:prelated} to visualize the set of time-related (resp. light-related, space-related) points to a point $p$.

\begin{figure}[h]
	\centering
	\begin{minipage}[c]{.33\textwidth}
		\centering
		\includegraphics{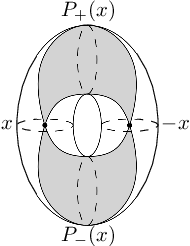}
	\end{minipage}%
	\begin{minipage}[c]{.33\textwidth}
		\centering
		\includegraphics{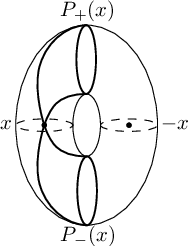}
	\end{minipage}%
	\begin{minipage}[c]{.33\textwidth}
		\centering
		\includegraphics{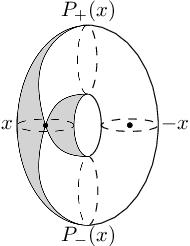}
	\end{minipage}%
	\caption{From the left to the right, the set of points time-related, light-related and space-related to $p$.}\label{fig:prelated}
\end{figure}

\begin{oss}\label{oss:Cp}
	Denote $C(-p)$ the cone of points light-related to $-p$. Lemma~\ref{lem:causalprod} implies the connected component of $\hypd^{n,1}\setminus C(-p)$ containing $p$ corresponds to the set of points satisfying $\pr{p,\cdot}<1$.
\end{oss}

	\subsection{The universal cover}
Let $P$ be a totally geodesic spacelike hypersurface of $\hypd^{n,1}$, and $p\in P$. Denote $N$ the unitary future-directed normal vector to $P$ at $p$, and define the map
\begin{equation}\label{eq:split}
	\begin{tikzcd}[row sep=1ex]
		\psi_{(p,P)}\colon P\times\R\arrow[r] &\hypd^{n,1}\\
		(x,t)\arrow[r,maps to] & R_t(x),
	\end{tikzcd}
\end{equation}
where $R_t$ is the linear map which is a rotation of angle $t$ restricted to $\Span(p,N)$ and fixes its orthogonal complement $\Span(p,N)^\perp$.

\begin{es}\label{es:psi}
	If we choose $p=(0,\dots,0,1,0)$ and $N=(0,0,\dots,0,1)$, namely \[P=\hypd^{n,1}\cap N^\perp=\hyp^n\times\{0\}\sq\R^{n,2},\] then $\psi_{(p,P)}(x,t)=(x_1,\dots,x_n,x_{n+1}\cos t,x_{n+1}\sin t)$.		
\end{es}

One can easily check that $\psi_{(p,P)}$ is a covering map whose domain is simply connected for any pair $(p,P)$. The pull-back metric can be explicitly computed:
\begin{equation*}\label{eq:pullback}
	\begin{split}
		g_{\hypu^{n,1}}:&=\psi_{(p,P)}^*g_{\hypd^{n,1}}=g_{P}+\pr{p,\cdot}^2dt^2\\
		&=g_{\hyp^n}-\cosh\left(\mathrm{d}_{\hyp^n}(p,\cdot)\right)^2dt^2,
	\end{split}
	\end{equation*}
where $g_P$ is the restriction of $g_{\hypd^{n,1}}$ to $P$, which is isometric to $\hyp^n$, and $\cosh\left(\mathrm{d}_{\hyp^n}(p,\cdot)\right)=\pr{p,\cdot}$ by Equation~\eqref{eq:geod}. Hence, the universal cover for the Anti-de Sitter space is $\hypu^{n,1}:=\hyp^n\times\R$, endowed with the metric $g_{\hypu^{n,1}}$.

\begin{de}\label{de:split}
	A \textit{splitting} of $\hypu^{n,1}$ is the choice of a pair $(p,P)$ identifing $\hypu^{n,1}$ with $\hyp^{n}\times\R$. We denote $x_0:=(0,\dots,0,1)\in\hyp^n$, namely $\{x_0\}\times\R=\psi_{(p,P)}^{-1}(\gamma)$, for $\gamma$ the timelike geodesic normal to $P$ at $p$.
\end{de}

By lifting the isometries of $\hypd^{n,1}$, it turns out that $\hypu^{n,1}$ has maximal isometry group, too. Moreover, since $\psi_{(p,P)}$ restricted to the slices $P\times\{t\}$ is linear, $P\times\{t\}$ is a totally geodesic spacelike hypersurface, for all $t\in\R$. In contrast, $\{x_0\}\times\R$ is the only fiber which is a (timelike) geodesic. 

We fix once and for all a time-orientation in the two models for $\AdS-$geometry: in the universal cover $\hypu^{n,1}$, we choose the one coinciding with the natural orientation of $\R$, and for the quadric model $\hypd^{n,1}$ the one induced by any covering map $\psi$ as in Equation~\eqref{eq:split}.

\begin{lem}\label{lem:conformal}
	Let $\sph_+^n\sq\sph^n$ be an open hemisphere, $\hypu^{n,1}$ is conformal to $\sph_+^n\times\R$ endowed with the metric $g_{\sph^n}-dt^2$, hence it has the same causal structure.
\end{lem}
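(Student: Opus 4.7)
The plan is to exhibit an explicit conformal diffeomorphism realising $\hyp^n$ as an open hemisphere $\sph_+^n$, and then to check that the conformal factor matches exactly the coefficient of $dt^2$ appearing in the metric of $\hypu^{n,1}$.

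First I would fix a splitting $(p,P)$ of $\hypu^{n,1}$ as in Definition~\ref{de:split} and introduce geodesic polar coordinates $(r,\theta)\in(0,\infty)\times\sph^{n-1}$ on $\hyp^n$ centred at $p$, so that
\begin{equation*}
	g_{\hyp^n}=dr^2+\sinh^2(r)\,g_{\sph^{n-1}},\qquad \cosh(\mathrm{d}_{\hyp^n}(p,\cdot))=\cosh(r).
\end{equation*}
Introducing, on the target side, geodesic polar coordinates $(\rho,\theta)\in(0,\pi/2)\times\sph^{n-1}$ centred at the north pole of $\sph_+^n$, we have $g_{\sph^n}=d\rho^2+\sin^2(\rho)\,g_{\sph^{n-1}}$ on $\sph_+^n$.

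Next I would search for a radial change of variable $\rho=\rho(r)$ making $g_{\hypu^{n,1}}$ conformal to $g_{\sph^n}-dt^2$ with conformal factor $\cosh^2(r)$. Matching the spherical parts forces $\sin\rho=\tanh r$; differentiating yields $d\rho/dr=1/\cosh(r)$, which automatically makes $dr^2=\cosh^2(r)\,d\rho^2$. Under the resulting diffeomorphism $(r,\theta,t)\mapsto(\rho(r),\theta,t)$ from $\hypu^{n,1}$ onto $\sph_+^n\times\R$, one obtains the pointwise conformal identity
\begin{equation*}
	g_{\hypu^{n,1}}=\cosh^2(r)\,\bigl(g_{\sph^n}-dt^2\bigr).
\end{equation*}

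Since a conformal diffeomorphism between time-oriented Lorentzian manifolds preserves the causal character of tangent vectors, and hence the set of causal curves, the two spaces share the same causal structure. No serious obstacle is expected here: the essential point is that the conformal factor depends only on $r$, so it factors cleanly against the $dt^2$ term, and the computation $\sin\rho=\tanh r$ is entirely elementary.
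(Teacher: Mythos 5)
Your argument is correct. The radial change of variable $\sin\rho=\tanh r$ does define a diffeomorphism $\hyp^n\to\sph_+^n$ with $g_{\hyp^n}=\cosh^2(r)\,g_{\sph^n}$, and since the splitting metric is $g_{\hyp^n}-\cosh^2\!\left(\mathrm{d}_{\hyp^n}(p,\cdot)\right)dt^2$, the factor $\cosh^2(r)$ pulls out of both terms, giving the stated conformal identity; the only cosmetic point is that polar coordinates degenerate at $r=0$, but the radial map $\rho=\arcsin(\tanh r)$ is odd and analytic in $r$, so it extends smoothly across the centre (or one simply extends the conformal identity by continuity). The paper obtains the same conformal factor by a slightly different computational route: it passes through the Poincar\'e disk model, writing $g_{\hyp^n}=\left(\tfrac{2}{1-r^2}\right)^2 g_{\dis}$ and $g_{\sph^n}=\left(\tfrac{1-r^2}{1+r^2}\right)^2 g_{\dis}$ under stereographic projection, and uses the identity $-\pr{x_0,x}=\tfrac{1+r^2}{1-r^2}$ to recognise the coefficient of $dt^2$; there $r$ is the Euclidean radius in the disk, and $\left(\tfrac{1+r^2}{1-r^2}\right)^2$ is exactly your $\cosh^2$ of the hyperbolic distance from the centre. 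Your version avoids the intermediate disk model at the cost of a direct coordinate computation, while the paper's version keeps everything expressed through the quantity $\pr{x_0,\cdot}$, which is the form in which the $dt^2$ coefficient naturally arises from the covering map; the two proofs are equivalent in substance.
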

\begin{proof}
	Let $f\colon\hyp^{n}\to\dis^n$ be an isometry between $\hyp^{n,1}$ and the Poincaré disk model, namely
	\[
	f_*g_{\hyp^n}=\left(\frac{2}{1-r^2}\right)^2 g_{\dis}.
	\]
	Explicitly, one can take $f$ to be the radial map such that, for $r\ge0$,
	\[
		r^2=\frac{x_{n+1}^2-1}{(1+x_{n+1})^2}.
	\]
	A direct computation shows that
	\[-\pr{x_0,x}=x_{n+1}=\frac{1+r^2}{1-r^2}.\]
	Using that the spherical metric on a hemisphere has the expression	
	\[g_{\sph^n}=\left(\frac{1-r^2}{1+r^2}\right)^2 g_{\dis}\]
	under the stereographic projection, we get
	\begin{equation*}\label{eq:poicaré}
		g_{\hypu^{n,1}}=g_{\hyp^n}-\pr{x_0,x}^2dt^2=\left(\frac{1+r^2}{1-r^2}\right)^2\left(g_{\sph^n}-dt^2\right).
	\end{equation*}
\end{proof}
\begin{oss}
	In particular, the conformal boundary $\pd\hyp^{n,1}$ identifies with $\pd\hyp^{n}\times\R$, endowed with the Lorentzian conformal structure of the metric $g_{\sph^{n-1}}-dt^2$.
\end{oss}

	\subsection{Fundamental regions}\label{sub:Up}
The lenght of a piecewise $C^1-$timelike curve $c\colon(a,b)\to\hypu^{n,1}$ is defined as
\[
\ell(c):=\int_a^b \sqrt{-g_{\hypu^{n,1}}\left(\dot{c}(t),\dot{c}(t)\right)}dt.
\]
\begin{de}
	Let $p\in\hypu^{n,1}$ and $q\in I(p)$. Their Lorentzian distance is
	\begin{equation*}
		\dist(p,q):=\sup\{\ell(c),c\text{ timelike curve joining $p$ and $q$}\},
	\end{equation*} 
\end{de}
The Lorentzian distance satisfies the reverse triangle inequality
\begin{equation}\label{eq:triangleineq}
	\dist(p,q)\ge\dist(p,r)+\dist(r,q),
\end{equation}
provided that the three quantities are well defined and $r$ is chronologically between $p$ and $q$, that is either $r\in I^+(p)\cap I^-(q)$ or $r\in I^+(q)\cap I^-(p)$ (Definition~\ref{de:cone}). 

\begin{de}\label{de:Pp}
	For $p\in\hypu^{n,1}$, we denote $\mathcal{P}_+(p)$ (resp. $\mathcal{P}_-(p)$) the set at Lorentzian distance $\pi/2$ from $p$ in the future (resp. in the past). We will call it \textit{dual hypersurface} to $p$ in the future (resp. in the past) in $\hypu^{n,1}$. We also denote $p_\pm$ the unique point time-related to $p$ contained in $\{\dist(p,\cdot)=\pi\}\cap I^\pm(p)$.
\end{de}

\begin{oss}\label{oss:dist}
	One can prove that the distance between $p,q$ is achieved through a timelike geodesic if $q\in I^-(p_+)\cap I^+(p_-)$ (\cite[Corollary~2.13, Lemma~2.14]{bonsch}). This condition is not restrictive for our purposes, as we will show in Corollary~\ref{lem:invdom}.
\end{oss}

\begin{pro}\label{pro:distprod}
	Let $\psi\colon\hypu^{n,1}\to\hypd^{n,1}$ be a splitting. Then,
	\[
	\pr{\psi(p),\psi(q)}=-\cos\left(\dist(p,q)\right),
	\]
	for any pair $p,q$ of time-related points such that $q\in I^-(p_+)\cap I^+(p_-)$.
\end{pro}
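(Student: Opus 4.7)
The plan is to use the quadric model's explicit geodesic formula from Equation~\eqref{eq:geod} together with the fact stated in Remark~\ref{oss:dist} that, under the hypothesis $q\in I^-(p_+)\cap I^+(p_-)$, the Lorentzian distance is realized by a timelike geodesic segment. First I would set $\delta:=\dist(p,q)$ and note that $\delta<\pi$. Indeed, assuming without loss of generality that $q\in I^+(p)$, the reverse triangle inequality~\eqref{eq:triangleineq} applied to the chronologically ordered triple $p, q, p_+$ gives $\pi=\dist(p,p_+)\ge \dist(p,q)+\dist(q,p_+)>\dist(p,q)$, since $q\ne p_+$ and $q,p_+$ are time-related (and similarly with $p_-$ if $q\in I^-(p)$).

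Next I would transport the problem to the quadric model. Let $\gamma\colon[0,\delta]\to\hypu^{n,1}$ be the unit-speed future-directed timelike geodesic realizing the distance from $p$ to $q$, whose existence is given by Remark~\ref{oss:dist}. Since $\psi$ is a local isometry and hence sends geodesics to geodesics, the curve $\psi\circ\gamma$ is a unit-speed future-directed timelike geodesic in $\hypd^{n,1}$ starting at $\psi(p)$. By Equation~\eqref{eq:geod}, there exists a unit timelike vector $v\in T_{\psi(p)}\hypd^{n,1}$ (i.e.\ $\pr{v,v}=-1$ and $\pr{\psi(p),v}=0$) such that
\begin{equation*}
\psi(\gamma(s))=\cos(s)\,\psi(p)+\sin(s)\,v
\qquad\text{for all } s\in[0,\delta].
\end{equation*}

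Evaluating at $s=\delta$ yields $\psi(q)=\cos(\delta)\psi(p)+\sin(\delta)v$, and then a direct computation using $\pr{\psi(p),\psi(p)}=-1$ and $\pr{\psi(p),v}=0$ gives
\begin{equation*}
\pr{\psi(p),\psi(q)}=\cos(\delta)\pr{\psi(p),\psi(p)}+\sin(\delta)\pr{\psi(p),v}=-\cos(\delta),
\end{equation*}
which is precisely the claim.

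The only subtle point, and the one I would be most careful with, is the periodicity of timelike geodesics in $\hypd^{n,1}$ noted in Remark~\ref{oss:P+-}: the map $s\mapsto \cos(s)\psi(p)+\sin(s)v$ is $2\pi$-periodic and passes through $-\psi(p)$ at $s=\pi$, so the right-hand side $-\cos(\delta)$ is only an invariant of $\delta\bmod 2\pi$ (and the value at $\delta$ and $\pi-\delta$ differ in sign). This is exactly why the hypothesis $q\in I^-(p_+)\cap I^+(p_-)$ is needed: it forces $\delta<\pi$, so that the parameter $s$ stays in the range where the exponential map at $\psi(p)$ along the timelike direction $v$ is injective and the identification of $\delta$ with $\dist(p,q)$ is unambiguous.
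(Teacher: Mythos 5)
Your proposal is correct and follows essentially the same route as the paper's proof: take the distance-realizing timelike geodesic (Remark~\ref{oss:dist}), push it forward by the local isometry $\psi$, apply the explicit geodesic formula of Equation~\eqref{eq:geod}, and evaluate the scalar product at the endpoint. The extra remarks you add (the bound $\dist(p,q)<\pi$ via the reverse triangle inequality and the discussion of periodicity) are sound and only make explicit the role of the hypothesis $q\in I^-(p_+)\cap I^+(p_-)$, which the paper leaves implicit.
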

\begin{proof}
	Consider two time-related points $p,q$ such that $q\in I^-(p_+)\cap I^+(p_-)$ and let  
	\[
	\gamma\colon[0,\dist(p,q)]\to\hypu^{n,1}
	\]
	be the timelike geodesic realizing the distance. Since $\psi(\gamma)$ is a timelike-geodesic, $\psi(p)$ and $\psi(q)$ are time-related, too. By Equation~\eqref{eq:geod},
	\[\psi\circ\gamma(s)=\cos(s)\psi(p)+\sin(s)v,\]
	for a unitary timelike tangent vector $v\in T_{\psi(p)}\hypd^{n,1}=\psi(p)^\perp$. By construction, $\psi(q)=\psi\left(\gamma(\dist(p,q))\right)$, hence
	\[\pr{\psi(p),\psi(q)}=-\cos\left(\dist(p,q)\right).\]
\end{proof}

\begin{cor}\label{cor:P}
	In any splitting, $\psi(\mathcal{P}_\pm(p))=P_\pm(\psi(p))$ and $\psi(p_\pm)=-\psi(p)$. In particular, $\mathcal{P}_\pm(p)$ is a totally geodesic spacelike hypersurface.
\end{cor}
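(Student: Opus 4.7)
The plan is to use Proposition~\ref{pro:distprod} together with the description of timelike geodesics in the quadric (Equation~\eqref{eq:geod} and Remark~\ref{oss:P+-}). For any $q\in\mathcal{P}_+(p)$, I first need to verify that $q\in I^-(p_+)\cap I^+(p_-)$, which is the standard fact that a point at Lorentzian distance $\pi/2<\pi$ from $p$ lies in the ``fundamental region'' in the sense of Remark~\ref{oss:dist}. Then Proposition~\ref{pro:distprod} yields $\pr{\psi(p),\psi(q)}=-\cos(\pi/2)=0$, so $\psi(q)\in\hypd^{n,1}\cap\psi(p)^\perp=P_+(\psi(p))\sqcup P_-(\psi(p))$ by Remark~\ref{oss:P+-}. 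To determine which component, I would lift to $\hypu^{n,1}$ the timelike geodesic realizing $\dist(p,q)$: it projects to a future-directed unit-speed timelike geodesic from $\psi(p)$ to $\psi(q)$, which by Equation~\eqref{eq:geod} has the form $s\mapsto\cos(s)\psi(p)+\sin(s)v$ with $v\in P_+(\psi(p))$, and at $s=\pi/2$ equals $v\in P_+(\psi(p))$.

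For the reverse inclusion, starting from any $x\in P_+(\psi(p))$, I lift the quadric geodesic $s\mapsto\cos(s)\psi(p)+\sin(s)x$ to a future-directed timelike geodesic $\gamma$ from $p$ in $\hypu^{n,1}$; then $q:=\gamma(\pi/2)$ lies in $\mathcal{P}_+(p)$ and satisfies $\psi(q)=x$. Setting $s=\pi$ in the same formula gives $-\psi(p)$, independent of $x$, so every future-directed timelike geodesic from $p$ of length $\pi$ ends at the same lift; by the uniqueness stated in Definition~\ref{de:Pp}, this lift must be $p_+$, whence $\psi(p_+)=-\psi(p)$. The corresponding identity for $p_-$ and $\mathcal{P}_-(p)$ follows by reversing time-orientation.

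Finally, to obtain that $\mathcal{P}_\pm(p)$ is a totally geodesic spacelike hypersurface, I would argue that the map $v\mapsto\exp_p\bigl(\tfrac{\pi}{2}v\bigr)$ from the set of future-directed unit timelike vectors at $p$ (a copy of $\hyp^n$) onto $\mathcal{P}_+(p)$ is a diffeomorphism, since no conjugate point of $p$ appears before parameter $\pi$; composing with $\psi$ and Equation~\eqref{eq:geod} identifies it with the inclusion $P_+(\psi(p))\hookrightarrow\hypd^{n,1}$, which is known to be totally geodesic and spacelike (Remark~\ref{oss:P+-}). Since $\psi$ is a local isometry and the restriction to $\mathcal{P}_+(p)$ is bijective onto $P_+(\psi(p))$, the isometric, totally geodesic, spacelike structure pulls back to $\mathcal{P}_+(p)$. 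The main obstacle is the careful justification that $\mathcal{P}_\pm(p)$ indeed lies in the domain where Proposition~\ref{pro:distprod} applies, but this ultimately reduces to the standard observation that timelike geodesics of length strictly less than $\pi$ from $p$ remain inside $I^-(p_+)\cap I^+(p_-)$.
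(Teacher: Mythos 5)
Your proposal is correct and, for the main identity $\psi(\mathcal{P}_\pm(p))=P_\pm(\psi(p))$, it follows the same route as the paper: check that $\mathcal{P}_\pm(p)\sq I^-(p_+)\cap I^+(p_-)$, apply Proposition~\ref{pro:distprod} to get $\pr{\psi(p),\psi(q)}=0$, and use preservation of the time-orientation (equivalently, your explicit geodesic $s\mapsto\cos(s)\psi(p)+\sin(s)v$) to single out the correct component of $\hypd^{n,1}\cap\psi(p)^\perp$; you are in fact a bit more explicit than the paper on surjectivity, which you obtain by lifting the quadric geodesics through $x\in P_+(\psi(p))$. Where you genuinely diverge is the identity $\psi(p_\pm)=-\psi(p)$: the paper deduces it by a duality trick, namely that $\mathcal{P}_+(p)$ is the dual hypersurface \emph{in the past} of $p_+$, so the first part of the statement applied to $p_+$ gives $\psi(\mathcal{P}_+(p))=P_-(\psi(p_+))=P_+(\psi(p))$ and hence $\psi(p_+)=-\psi(p)$; you instead evaluate the lifted geodesics at $s=\pi$ and argue that they all terminate at the same point, which you then identify with $p_+$ via Definition~\ref{de:Pp}. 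Your route works, but note two small points you pass over: the step from ``all length-$\pi$ future-directed geodesics from $p$ project to $-\psi(p)$'' to ``they all end at the \emph{same} lift'' needs the connectedness of the set of initial unit timelike directions together with the discreteness of the fiber of $\psi$; and placing $\gamma(\pi/2)$ in $\mathcal{P}_+(p)$ (and the endpoint at parameter $\pi$ at distance exactly $\pi$) uses that these geodesics realize the Lorentzian distance, which is the content of Remark~\ref{oss:dist}. The paper's duality argument buys you economy (no lifting or uniqueness-of-lift discussion), while yours is more self-contained and makes the refocusing of timelike geodesics at $p_\pm$ explicit; the final totally geodesic/spacelike assertion is in both cases immediate from Remark~\ref{oss:P+-} and the fact that $\psi$ is a local isometry.
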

\begin{proof}
	The function $\dist(p,\cdot)\colon I(p)\to\R$ is strictly monotone along timelike paths, and $\dist(p,p_\pm)=\pi$, hence 
	\[\mathcal{P}_+(p)\cup\mathcal{P}_-(\psi(p))\sq I^-(p_+)\cap I^+(p_-).\]
	Hence, we can apply Proposition~\ref{pro:distprod}: it follows that
	\[\psi\left(\mathcal{P}_+(p)\cup\mathcal{P}_-(p)\right)=\{\pr{\psi(p),\cdot}=0\}=P_+(\psi(p))\cup P_-(\psi(p)).\]
	In particular, since $\psi$ preserves the time-orientation, $\psi\left(\mathcal{P}_\pm(p)\right)=P_\pm(\psi(p))$.
	
	By construction, the dual hyperplane to $p_+$ in the past is $\mathcal{P}_+(p)$. The same argument proves that $\psi(\mathcal{P}_+(p))=P_-(\psi(p_+))$, hence $\psi(p_+)=-p$. The same applies to $p_-$, observing that its dual hyperplane in the future is $\mathcal{P}_-(p)$.
\end{proof}

\begin{de}\label{de:Up}
	For $p\in\hypu^{n,1}$ (resp. $x\in\hypd^{n,1}$) we define 
	\begin{align*}
		\mathcal{U}_p&:=I^-\left(\mathcal{P}_+(p)\right)\cap I^+\left(\mathcal{P}_-(p)\right)\\
		U_x&:=\{y\in\hypd^{n,1},\,\pr{x,y}<0\}
	\end{align*}
\end{de}
\begin{es}
	To visualise these object, see Figure~\ref{fig:Up}: one can check that in a splitting $(p,P)$, $\mathcal{P}_\pm(p)=\hyp^{n}\times\{\pm\pi/2\}$, $p_\pm=(p,\pm\pi)$ and $\mathcal{U}_p=\hyp^{n}\times(-\pi/2,\pi/2)$.
\end{es}

These sets are a bridge between the two models: indeed, $\mathcal{U}_p$ isometrically embeds in $\hypd^{n,1}$, and its image is precisely $U_{\psi(p)}$ (Corollary~\ref{cor:embU}). Moreover, any properly embedded spacelike hypersurface in $\hypu^{n,1}$ is contained in $\mathcal{U}_p$, for a suitable $p$ (Lemma~\ref{lem:osc}). Hence, the study of properly embedded spacelike hypersurfaces does not depend on the choice of the model of $\AdS-$geometry introduced so far (Remark~\ref{oss:spaceUp}).

\begin{cor}\label{cor:embU}
	Let $p\in\hypu^{n,1}$, $\psi$ be a splitting. The restriction of $\psi$ to $\mathcal{U}_p$ is an isometric embedding, whose image is $U_{\psi(p)}$.
\end{cor}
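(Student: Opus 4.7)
The plan is to reduce to a convenient concrete splitting and then verify all three claims (injectivity, image, isometry) by direct computation in coordinates.

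First, by the homogeneity of $\hypu^{n,1}$ and $\hypd^{n,1}$ (both have a maximal isometry group acting transitively), and because the constructions $p \mapsto \mathcal{U}_p$ and $x \mapsto U_x$ are equivariant under isometries and any two splittings differ by pre- and post-composition with isometries, it suffices to prove the statement for a single splitting of our choice. I would take $\psi=\psi_{(p,P)}$ with $P\ni p$ arranged so that $\psi(p)=(0,\dots,0,1,0)$ and $\psi$ has the form of Example~\ref{es:psi}, namely $\psi(x,t)=(x_1,\dots,x_n,x_{n+1}\cos t,x_{n+1}\sin t)$. Corollary~\ref{cor:P} then identifies $\mathcal{P}_\pm(p)$ with the totally geodesic slices $\hyp^n\times\{\pm\pi/2\}$, so that $\mathcal{U}_p=\hyp^n\times(-\pi/2,\pi/2)$.

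Second, I would establish injectivity of $\psi|_{\mathcal{U}_p}$ directly from the coordinate expression. If $\psi(x,t)=\psi(x',t')$, equality of the first $n$ coordinates gives $x_i=x'_i$ for $i\le n$; squaring and summing the last two coordinates yields $x_{n+1}^2=x'^2_{n+1}$, and since both are $\ge 1$, $x_{n+1}=x'_{n+1}$, hence $x=x'$. Then $(\cos t,\sin t)=(\cos t',\sin t')$, so $t\equiv t'\pmod{2\pi}$; the constraint $t,t'\in(-\pi/2,\pi/2)$ forces $t=t'$.

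Third, I would verify $\psi(\mathcal{U}_p)=U_{\psi(p)}$. For the forward inclusion, a direct evaluation of the bilinear form gives $\pr{\psi(p),\psi(x,t)}=-x_{n+1}\cos t$, which is strictly negative exactly when $t\in(-\pi/2,\pi/2)$. For the reverse inclusion, given $y\in U_{\psi(p)}$, which amounts to $y_{n+1}>0$, I solve explicitly by setting $x_i=y_i$ for $i\le n$, $x_{n+1}=\sqrt{y_{n+1}^2+y_{n+2}^2}>0$, and choosing the unique $t\in(-\pi/2,\pi/2)$ with $(\cos t,\sin t)=(y_{n+1},y_{n+2})/x_{n+1}$; the identity $\pr{y,y}=-1$ rearranges exactly to $\pr{x,x}_{\hyp^n}=-1$, so $x\in\hyp^n$, and by construction $\psi(x,t)=y$.

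Finally, since $\psi$ is a covering map whose defining metric is the pull-back of $g_{\hypd^{n,1}}$, it is a local isometry everywhere; combined with the injectivity established above, $\psi|_{\mathcal{U}_p}$ is an isometric embedding onto $U_{\psi(p)}$. The only mild subtlety is the very first reduction step, which relies on the equivariance of both constructions under the maximal isometry group; once that is in place, the rest is a transparent coordinate computation.
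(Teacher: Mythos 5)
Your proposal is correct and follows essentially the same route as the paper: fix the concrete splitting of Example~\ref{es:psi}, identify $\mathcal{U}_p$ as the slab $\hyp^n\times(-\pi/2,\pi/2)$ via Corollary~\ref{cor:P}, check injectivity and compute the image in coordinates, and invoke that a covering map is a local isometry. You are somewhat more explicit than the paper in two places: the algebraic verification of injectivity (the paper simply cites the form of the fibers) and the reverse inclusion $U_{\psi(p)}\subseteq\psi(\mathcal{U}_p)$, which the paper asserts from Example~\ref{es:psi} without spelling out the solve-for-$(x,t)$ step; these additions are welcome but not conceptually different. Incidentally, your computation $\pr{\psi(p),\psi(x,t)}=-x_{n+1}\cos t<0$ confirms that the inequality appearing in the paper's displayed equation, $\{\pr{\psi(p),\cdot}<1\}$, is a typo for $\{\pr{\psi(p),\cdot}<0\}$, as required by Definition~\ref{de:Up}.
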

\begin{proof}
	Without loss of generality, let $p=(x_0,0)$ and $P=\hyp^{n}\times\{0\}$. By Corollary~\ref{cor:P}, $\mathcal{P}_\pm(p)=\hyp^{n}\times\{\pm\pi/2\}$, hence
	\[
	\mathcal{U}_p:=\left\{(x,t)\in\hypu^{n,1},-\frac{\pi}{2}<t<\frac{\pi}{2}\right\}.
	\]
	It follows that $\psi$ is injective on $\mathcal{U}_p$, since points in the same fiber as $(x,t)$ are of the form $(x,t+2\pi k)$, $k\in\Z$. Since $\psi$ is a local isometry and it is injective on the open set $\mathcal{U}_p$, the restriction $\psi|_{\mathcal{U}_p}$ is an isometric embedding. To conclude, by Example~\eqref{es:psi},
	\[\psi(\mathcal{U}_p)=\{x_{n+1}>0\}=\{\pr{\psi(p),\cdot}<0\}=U_{\psi(p)}.\]
\end{proof}
\begin{figure}[h]
	\begin{minipage}[c]{.45\textwidth}
		\centering
		\includegraphics{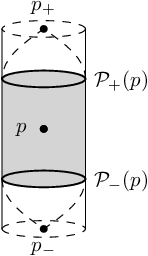}
	\end{minipage}%
	\begin{minipage}[c]{.45\textwidth}
		\centering
		\includegraphics{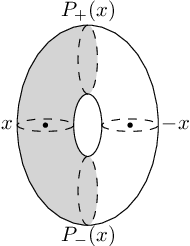}
	\end{minipage}%
	\caption{The fundamental regions $\mathcal{U}_p\sq\hypu^{n,1}$ in a splitting $(p,P)$ (left) and $U_p\sq\hypd^{n,1}$ (right).}\label{fig:Up}
\end{figure}

	\section{Graphs in Anti-de Sitter space}\label{sec:graphs}
All properly embedded spacelike hypersurfaces in $\hypu^{n,1}$ are graphs of functions $\hyp^n\to\R$ (Proposition~\ref{pro:emb}). Moreover, any properly embedded spacelike hypersurface is contained in a fundamental region (Remark~\ref{oss:spaceUp}): in other words, properly embedded spacelike hypersurfaces can be studied equivalently in each model of $\AdS-$geometry.

In the following, we consider $\hypu^{n,1}\cup\pd\hypu^{n,1}$ endowed with the conformal metric $g_{\sph^{n}}-dt^2$.

	\subsection{Achronal and acausal graphs}\label{sub:causalgraph}
The proofs of the following results can be found in \cite{bonsep}: even though they are stated for the 3-dimensional case, one can easily check that the arguments do not depend on the dimension.

\begin{de}\label{de:proj}
	A subset $X$ of $\hypu^{n,1}\cup\pd\hypu^{n,1}$ is \textit{achronal} (resp. \textit{acausal}) if no pair of points of $X$ can be joined by a timelike (resp. causal) curve of $\hypu^{n,1}\cup\pd\hypu^{n,1}$.
\end{de}

\begin{oss}\label{oss:Up}
	In other words, an achronal subset $X$ is contained in the complement of $I(p)$, for any $p\in X$. Since $\hypu^{n,1}\setminus I(p)\sq\mathcal{U}_p$, an achronal set $X$ of $\hypu^{n,1}$ is contained in the fundamental region $\mathcal{U}_p$, for any $p\in X$.
\end{oss}

	A splitting $(p,P)$ induces a map
	\[
	\begin{tikzcd}[row sep=1ex]
		\pi_{(p,P)}\colon\hypu^{n,1}\cup\pd\hypu^{n,1}=(\hyp^{n}\cup\pd\hyp^n)\times\R\arrow[r] & \hyp^{n}\cup\pd\hyp^n\\
		(x,t)\arrow[r,mapsto] & x
	\end{tikzcd},
	\]
	which is the projection on the first coordinate.

\begin{de}\label{de:graph}	
	An immersed subset $X\sq\hypu^{n,1}\cup\pd\hypu^{n,1}$ is a \emph{graph} if there exists a splitting $(p,P)$ such that the restriction of $\pi_{(p,P)}$ to $X$ is injective. Equivalently, $\pi_{(p,P)}$ restricted to $X$ admits an inverse $u\colon\pi_{(p,P)}(X)\to\R$ and $X=\gr u$. If there exists a splitting $(p,P)$ such that $\pi_{(p,P)}(X)=\hyp^n$, $X$ is called an \textit{entire} graph.
\end{de}

\begin{lem}{\cite[Lemma~4.1.2]{bonsep}}\label{lem:graph}
	For a subset $X$ of $\hypu^{n,1}\cup\pd\hypu^{n,1}$, the following statements are equivalent:
	\begin{enumerate}
		\item $X$ is achronal (resp. acausal);\label{it:1}
		\item there exists a splitting $(p,P)$ such that $X$ is the graph of a $1-$Lipschitz (resp. strictly $1-$Lipschitz) function;\label{it:2}
		\item for any splitting $(p,P)$, $X$ is the graph of a $1-$Lipschitz (resp. strictly $1-$Lipschitz) function.\label{it:3}
	\end{enumerate}
\end{lem}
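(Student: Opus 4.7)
The plan is to work in the conformal model $\sph_+^n\times\R$ with Lorentzian conformal class $g_{\sph^n}-dt^2$ provided by Lemma~\ref{lem:conformal}, so that a vector $(v,\tau)$ is causal iff $|\tau|\ge|v|_{g_{\sph^n}}$ and timelike iff the inequality is strict, and the Lipschitz conditions are understood with respect to $d_{g_{\sph^n}}$. Because causality is conformally invariant, this reading is equivalent, in any splitting $(p,P)$, to the statement one would read directly off $g_{\hypu^{n,1}}$. I would then prove the cycle $(2)\Rightarrow(1)\Rightarrow(3)\Rightarrow(2)$, handling the achronal and acausal cases in parallel by tracking strict versus non-strict inequalities.

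For $(2)\Rightarrow(1)$: suppose $X=\gr u$ in some splitting with $u$ $1$-Lipschitz, and let $\gamma(s)=(x(s),t(s))$ be a timelike curve whose endpoints $(x_i,u(x_i))$ lie on $X$. Timelikeness gives $|\dot t(s)|>|\dot x(s)|_{g_{\sph^n}}\ge 0$, so $\dot t$ has constant sign, and
\[
|u(x_2)-u(x_1)|=\int|\dot t|\,ds>\int|\dot x|_{g_{\sph^n}}\,ds\ge d_{g_{\sph^n}}(x_1,x_2),
\]
contradicting the Lipschitz assumption. The acausal refinement is identical with $\ge$ replacing $>$, after noting that a nonconstant causal curve still has $\dot t$ nowhere vanishing.

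For $(1)\Rightarrow(3)$: fix an arbitrary splitting $(p,P)$. Injectivity of $\pi_{(p,P)}$ on $X$ is immediate, since each vertical fiber $\{x\}\times\R$ is a timelike curve (its tangent has squared Lorentzian norm $-\dot t^{\,2}<0$), so two graph points above the same base are time-related, contradicting achronality. Given $x_1,x_2\in\pi_{(p,P)}(X)$, take a minimal $g_{\sph^n}$-geodesic $\alpha\colon[0,L]\to\overline{\sph_+^n}$ parametrized by arclength ($L=d_{g_{\sph^n}}(x_1,x_2)$), set $c=(u(x_2)-u(x_1))/L$, and consider the explicit curve
\[
\gamma(s):=\bigl(\alpha(s),\,u(x_1)+sc\bigr),
\]
whose squared Lorentzian norm is constantly $1-c^{\,2}$. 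If $|c|>1$ then $\gamma$ is timelike and joins two points of $X$, contradicting achronality, hence $|c|\le 1$; in the acausal case $|c|=1$ would make $\gamma$ lightlike, hence $|c|<1$. This gives (strict) $1$-Lipschitzianity of $u$, and $(3)\Rightarrow(2)$ is trivial.

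The main technical point I foresee is the treatment of the conformal boundary $\pd\hypu^{n,1}=\pd\sph_+^n\times\R$, which $X$ is allowed to meet: the conformal class and the spherical distance extend continuously to $\overline{\sph_+^n}\times\R$, so the above computations transfer verbatim once one checks that the minimal $g_{\sph^n}$-geodesic used in $(1)\Rightarrow(3)$ remains in $\overline{\sph_+^n}$. This follows from geodesic convexity of the closed hemisphere together with the observation that, since $X$ is achronal, the endpoints $x_1,x_2$ are at spherical distance strictly less than $\pi$ from each other (otherwise the vertical slab between them would force the points to be time-related after a splitting change), so the connecting geodesic is unique and lies in $\overline{\sph_+^n}$.
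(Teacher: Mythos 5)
The paper itself gives no proof of this lemma; it is cited verbatim from \cite[Lemma~4.1.2]{bonsep}, with only the remark that the argument there is dimension-independent. So there is no internal proof to compare against. On its own merits, your argument is essentially correct and self-contained: passing to the conformal model $\sph_+^n\times\R$ via Lemma~\ref{lem:conformal}, proving $(2)\Rightarrow(1)$ by integrating $|\dot t|>|\dot x|_{g_{\sph^n}}$ along a timelike curve (with $\dot t$ of constant sign since a causal curve with nonvanishing tangent forces $\dot t\ne 0$), deducing injectivity of $\pi_{(p,P)}$ on $X$ from the fibers $\{x\}\times\R$ being timelike, and then extracting $(3)$ from the explicit test curve $\gamma(s)=(\alpha(s),u(x_1)+sc)$ — all of this is the standard route and the strict/non-strict bookkeeping is tracked correctly.

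The one genuine slip is in your last paragraph: achronality does \emph{not} force $d_{\sph^n}(x_1,x_2)<\pi$ at the boundary. The boundary of a totally geodesic degenerate hypersurface is achronal, realizes $\osc(u)=\pi$ (Lemma~\ref{lem:osc}), and contains pairs of points lying over antipodal points of $\pd\sph_+^n$ — these are light-related, not time-related, so no splitting change will contradict achronality. Fortunately your argument never actually needs uniqueness of the connecting geodesic: what is needed is only \emph{existence} of a minimizing $g_{\sph^n}$-geodesic $\alpha$ from $x_1$ to $x_2$ remaining in $\overline{\sph_+^n}$, and that is guaranteed by geodesic convexity of the closed hemisphere alone (when $d=\pi$, take the arc through the interior pole). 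So the conclusion stands once you drop the false claim about strictness of the distance and the remark about uniqueness, and rely purely on convexity.
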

In particular, an entire achronal hypersurface of $\hypu^{n,1}$ has a unique $1-$Lipschitz extension to the asymptotic boundary (\cite[Theorem~1]{mcs}).
\begin{pro}{\cite[Lemma~4.1.3]{bonsep}}\label{pro:emb}
	An achronal hypersurface $\Sigma$ in $\hypu^{n,1}$ is properly embedded if and only if it is an entire graph.
\end{pro}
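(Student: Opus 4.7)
The easy direction is to show that an entire achronal graph $\Sigma$ is properly embedded. Choose a splitting $(p,P)$ in which $\Sigma=\gr(u)$ with $u\colon\hyp^n\to\R$ a $1$-Lipschitz function. The projection $\pi=\pi_{(p,P)}$ is continuous, and the section $s\colon x\mapsto(x,u(x))$ is a continuous right inverse of $\pi|_{\Sigma}$, so $\pi|_{\Sigma}\colon\Sigma\to\hyp^n$ is a homeomorphism. Given a compact set $K\sq\hypu^{n,1}$, the preimage $\Sigma\cap K$ is closed in $K$ (since $\Sigma$ is closed as the graph of a continuous map), hence compact; this proves proper embedding.

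For the converse, suppose $\Sigma$ is a properly embedded achronal hypersurface and fix any splitting $(p,P)$. By Lemma~\ref{lem:graph}, $\Sigma=\gr(u)$ for some $1$-Lipschitz function $u$ defined on $A:=\pi(\Sigma)\sq\hyp^n$. The plan is to show that $A$ is both open and closed in $\hyp^n$ and non-empty, so that by connectedness $A=\hyp^n$. Openness is immediate: since $\Sigma$ is a topological $n$-hypersurface and $\pi|_{\Sigma}$ is a homeomorphism onto $A$, invariance of domain forces $A$ to be open in $\hyp^n$.

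The main point is closedness. Let $x_k\in A$ be a sequence converging to some $x_\infty\in\hyp^n$. Using the conformal identification of Lemma~\ref{lem:conformal}, the $1$-Lipschitz condition on $u$ translates into an honest Lipschitz estimate with respect to the spherical distance $d_{\sph^n}$ in the hemisphere model; in particular,
\begin{equation*}
|u(x_k)-u(x_j)|\le d_{\sph^n}(x_k,x_j)\xrightarrow[k,j\to\infty]{}0,
\end{equation*}
so $u(x_k)$ is Cauchy and converges to some $t_\infty\in\R$. Hence $(x_k,u(x_k))\to(x_\infty,t_\infty)$ in $\hypu^{n,1}$. Fixing a compact neighbourhood $K$ of $(x_\infty,t_\infty)$, the sequence eventually lies in $\Sigma\cap K$, which is compact by proper embedding. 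Passing to a subsequence we find a limit in $\Sigma$; this limit must coincide with $(x_\infty,t_\infty)$, and in particular $x_\infty=\pi(x_\infty,t_\infty)\in A$. Thus $A$ is closed.

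The only subtlety I expect is making sure that the $1$-Lipschitz property of Lemma~\ref{lem:graph} is the right tool to extract a convergent subsequence of $u(x_k)$: one needs to argue with respect to a metric for which $\sph^n_+$ is totally bounded (the spherical distance from Lemma~\ref{lem:conformal}), rather than the hyperbolic distance on $\hyp^n$. Once this translation is made, the topological dichotomy open/closed/connected closes the argument.
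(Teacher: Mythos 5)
Your proof is correct. Since the paper cites \cite[Lemma~4.1.3]{bonsep} rather than proving the statement itself, there is no in-paper argument to compare against, but your route---establishing that the projection image $A=\pi(\Sigma)\sq\hyp^n$ is nonempty, open (invariance of domain) and closed (properness), then concluding $A=\hyp^n$ by connectedness---is the standard one and almost certainly matches the cited reference. The easy direction (an entire graph of a continuous function is closed, hence properly embedded) is also handled correctly.

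Two small remarks. First, the final paragraph slightly misattributes the relevant property: total boundedness of $\sph^n_+$ is not what makes the closedness argument work. What matters is that the $1$-Lipschitz condition in Lemma~\ref{lem:graph} is with respect to the spherical distance on $\sph^n_+$ (as is clear from the proof of Proposition~\ref{pro:spaceacausal}), that convergence $x_k\to x_\infty$ in $\hyp^n$ implies $d_{\sph^n}(x_k,x_j)\to0$, and that $\R$ is complete; any metric inducing the same topology on the hemisphere would serve, provided the Lipschitz bound holds in it. Second, the subsequence extraction at the end is unnecessary: $\Sigma\cap K$ is compact hence closed, and $(x_\infty,t_\infty)$ is a limit of points of $\Sigma\cap K$, hence already lies in $\Sigma$. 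These are cosmetic; the argument as written is sound.
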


\begin{de}
	For a bounded map $u\colon\hyp^n\to\R$, we define \[\osc(u):=\sup_{\hyp^n}u-\Inf_{\hyp^n}u.\]
\end{de}

\begin{lem}{\cite[Lemma~4.1.7]{bonsep}}\label{lem:osc}
	Let $\Sigma$ be an achronal properly embedded hypersurface in $\hypu^{n,1}$, and let $\Sigma=\gr u$ in a splitting. Then $\osc(u)\le\pi$, with equality if and only if $\Sigma$ is a totally geodesic degenerate hypersurface.
\end{lem}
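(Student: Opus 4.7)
The plan is to reduce the oscillation bound to a diameter bound in the conformal model of Lemma~\ref{lem:conformal}. First, I would invoke Proposition~\ref{pro:emb} to write $\Sigma=\gr u$ as an entire graph and Lemma~\ref{lem:graph} to get that $u$ is $1$-Lipschitz. Exploiting the conformal equivalence of $\hypu^{n,1}$ with $\sph^n_+\times\R$ endowed with the metric $g_{\sph^n}-dt^2$, the fact that causal structures are conformally invariant means that achronality of $\gr u$ translates into the cleaner statement
\begin{equation*}
|u(x)-u(y)|\le d_{\sph^n}(x,y)\qquad\text{for all }x,y\in\sph^n_+,
\end{equation*}
at which point the inequality $\osc(u)\le\pi$ is immediate: $\sph^n_+$ has intrinsic diameter $\pi$, being dense in the closed hemisphere, whose diameter in $\sph^n$ is exactly $\pi$.

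For the equality case, my plan is to show that $\osc(u)=\pi$ rigidifies $u$ as a spherical distance function. I would take sequences $x_k,y_k\in\sph^n_+$ with $u(x_k)\to M:=\sup u$ and $u(y_k)\to m:=M-\pi$. The Lipschitz bound forces $d_{\sph^n}(x_k,y_k)\to\pi$, hence, up to subsequences, $x_k\to x_\infty$ and $y_k\to y_\infty$ at antipodal points of $\pd\sph^n_+$. Passing to the limit in the inequalities $u(x_k)-u(z)\le d_{\sph^n}(x_k,z)$ and $u(z)-u(y_k)\le d_{\sph^n}(z,y_k)$ yields
\begin{equation*}
M-d_{\sph^n}(x_\infty,z)\le u(z)\le m+d_{\sph^n}(z,y_\infty)
\end{equation*}
for every $z\in\sph^n_+$. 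On $\sph^n$, every point $z$ lies on some great-circle arc joining the antipodal pair $x_\infty,y_\infty$, so $d_{\sph^n}(x_\infty,z)+d_{\sph^n}(z,y_\infty)=\pi$, and the two bounds collapse to $u(z)=M-d_{\sph^n}(x_\infty,z)$.

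To close the argument, I would then observe that the graph of this spherical distance function is the past lightcone in $\sph^n_+\times\R$ from the boundary point $(x_\infty,M)$, which under Lemma~\ref{lem:conformal} corresponds to the intersection of $\hypu^{n,1}$ with a degenerate hyperplane of $\R^{n,2}$ lifted to the universal cover, namely a totally geodesic degenerate hypersurface; the converse direction is then immediate by direct inspection. The main obstacle I expect is this last identification: making precise, in the quadric model of Section~\ref{sub:quadric}, that the extremal profile $u(z)=M-d_{\sph^n}(x_\infty,z)$ really is the graph representation of a degenerate totally geodesic hypersurface. The upper bound and the rigidity of $u$ are purely metric bookkeeping on the hemisphere.
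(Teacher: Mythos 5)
The paper does not supply its own proof here (it cites \cite[Lemma~4.1.7]{bonsep}), so the comparison is against the cited reference rather than against an in-paper argument. Your route — work in the conformal model $\sph^n_+\times\R$ of Lemma~\ref{lem:conformal}, read achronality of $\gr u$ as $1$-Lipschitz-ness of $u$ with respect to $d_{\sph^n}$ (Lemma~\ref{lem:graph} together with the argument of Proposition~\ref{pro:spaceacausal}), get $\osc(u)\le\pi$ from $\mathrm{diam}(\sph^n_+)=\pi$, and then run the extremizing sequences $x_k,y_k$ to antipodal boundary points to rigidify $u(z)=M-d_{\sph^n}(z,x_\infty)$ — is correct as far as it goes, and is the natural approach for this lemma.

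The step you flag as an "obstacle" is a genuine gap in the write-up, and it is needed for \emph{both} directions of the equality statement: to conclude from $u(z)=M-d_{\sph^n}(z,x_\infty)$ that $\Sigma$ is a totally geodesic degenerate hypersurface, and conversely to see that a totally geodesic degenerate hypersurface produces exactly such an extremal $u$. It is not deep, but it must be written. Here is the shortest way to close it. In the conformal metric $g_{\sph^n}-dt^2$, the graph of $u(z)=M-d_{\sph^n}(z,x_\infty)$ is ruled by the curves $s\mapsto(\gamma(s),M-s)$, $\gamma$ a unit-speed $\sph^n$-geodesic issued from $x_\infty$; these are null geodesics, and since null geodesics are conformal invariants, $\gr u$ is the past lightcone in $\hypu^{n,1}$ of the ideal point $(x_\infty,M)\in\pd\hypu^{n,1}$. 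Pushing forward by a splitting $\psi$, the ideal point $(x_\infty,M)$ corresponds to a null direction $[\xi]\in\pd\hypd^{n,1}$ with $\pr{\xi,\xi}=0$, and, using Equation~\eqref{eq:geod}, every null geodesic of $\hypd^{n,1}$ asymptotic to $[\xi]$ has the form $s\mapsto x+s\xi$ with $\pr{x,\xi}=0$, hence is contained in $\hypd^{n,1}\cap\xi^\perp$. Thus $\psi(\gr u)\sq\hypd^{n,1}\cap\xi^\perp$, which is totally geodesic (being the trace of a linear hyperplane) and degenerate (since $\xi\in\xi^\perp$ is a null normal). Conversely, for $\xi$ null the set $\hypd^{n,1}\cap\xi^\perp$ is ruled in exactly this way, so each connected component of its preimage under $\psi$ is a past (or future) lightcone of a boundary point, giving $u$ of the above form and $\osc(u)=\pi$. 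With this inserted, your proof is complete.
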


Motivated by this result, we give the following definition:
\begin{de}\label{de:adm}
	A set $\Lambda\sq\pd\hypu^{n,1}$ is an \textit{admissible boundary} if it is the graph of a $1-$Lipschitz map $f\colon\pd\hyp^n\to\R$ and $\osc(f)<\pi$.
\end{de}

\begin{oss}
	Even though $\osc(f)$ is not invariant by isometries and depends on the splitting, Lemma~\ref{lem:osc} implies that the definition of admissible is intrinsic.
\end{oss}

\begin{oss}\label{oss:max}
	A direct consequence is the sharpness of Theorem~\ref{teo:A}: indeed, if a boundary is not admissible, it bounds no spacelike hypersurface. In particular, no CMC entire spacelike hypersurface.
\end{oss}

	\subsection{Spacelike graphs}\label{sub:graph}
As anticipated in Definition~\ref{de:space}, a $C^1-$embedded hypersurface $\Sigma$ is spacelike if the induced metric is Riemannian, or equivalently if the normal vector is timelike at every point.

We want to stress that being spacelike is a local property, while being achronal or acausal is a global one. In general, the notions are only partially related. Nonetheless, one can prove that any embedded spacelike hypersurface in $\hypu^{n,1}$ is locally the graph of a strictly $1-$Lipschitz function, but this is not true globally.

\begin{pro}{\cite[Lemma~4.1.5]{bonsep}}\label{pro:spaceacausal}
	An entire spacelike graph $\Sigma$ in $\hypu^{n,1}$ is acausal.
\end{pro}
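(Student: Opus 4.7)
The plan is to pass to the conformal model of Lemma~\ref{lem:conformal} and promote the infinitesimal spacelike condition to a global strict Lipschitz bound via integration along geodesics, which then rules out causal self-connections.

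First, I would apply Lemma~\ref{lem:conformal} to transfer the problem to $(\sph^n_+\times\R,g_{\sph^n}-dt^2)$. Since conformal changes of metric preserve the causal type of tangent vectors, acausality in $\hypu^{n,1}$ is equivalent to acausality in this product model. In a fixed splitting $\Sigma$ is the graph of an entire function $u\colon\hyp^n\to\R$; under the identification of $\hyp^n$ with $\sph^n_+$ from the proof of Lemma~\ref{lem:conformal} this becomes the graph of a function $\tilde u\colon\sph^n_+\to\R$. Tangent vectors to the graph take the form $(v,d\tilde u(v))$, and the spacelike hypothesis translates into the pointwise strict estimate $g_{\sph^n}(v,v)>d\tilde u(v)^2$ for every nonzero $v$, equivalently $|d\tilde u|_{\sph^n}<1$ at every point.

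Next, for any two distinct $x,y\in\sph^n_+$, the geodesic convexity of the open hemisphere in $\sph^n$ provides a unit-speed minimizing geodesic $\gamma\colon[0,L]\to\sph^n_+$ with $L=d_{\sph^n}(x,y)$. Since $|d\tilde u|_{\sph^n}<1$ pointwise, the continuous function $s\mapsto|(\tilde u\circ\gamma)'(s)|$ is strictly less than $1$ everywhere on $[0,L]$, so
\begin{equation*}
|\tilde u(y)-\tilde u(x)|\le\int_0^L|(\tilde u\circ\gamma)'(s)|\,ds<\int_0^L 1\,ds=d_{\sph^n}(x,y).
\end{equation*}
Hence $\tilde u$ is strictly $1$-Lipschitz with respect to $d_{\sph^n}$.

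Finally, suppose two points of the graph were joined by a causal curve $c(s)=(\alpha(s),\beta(s))$, $s\in[0,1]$. The causal condition $\dot\beta^2\ge g_{\sph^n}(\dot\alpha,\dot\alpha)$ together with nowhere-vanishing $\dot c$ forces $\dot\beta$ to be nowhere zero, whence $\beta$ is strictly monotone and
\begin{equation*}
|\beta(1)-\beta(0)|=\int_0^1|\dot\beta|\,ds\ge\int_0^1|\dot\alpha|_{\sph^n}\,ds\ge d_{\sph^n}(\alpha(0),\alpha(1)).
\end{equation*}
Since the endpoints lie on the graph, $\beta(i)=\tilde u(\alpha(i))$; if $\alpha(0)\ne\alpha(1)$ this contradicts the strict Lipschitz bound, while $\alpha(0)=\alpha(1)$ would force the two endpoints to coincide. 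I expect the main obstacle to be the upgrade from the non-strict $1$-Lipschitz bound (which only gives achronality, by Lemma~\ref{lem:graph}) to the strict one corresponding to acausality: this is where the strict pointwise inequality $|d\tilde u|_{\sph^n}<1$, genuinely stronger than the achronal bound $|d\tilde u|\le1$, plays its role, combined with the geodesic convexity of $\sph^n_+$ in the conformal model that lets the infinitesimal estimate integrate to a strict global one.
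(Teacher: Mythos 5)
Your proof is correct and takes essentially the same route as the paper: pass to the conformal model of Lemma~\ref{lem:conformal}, translate spacelikeness into the pointwise bound $|\nabla^{\sph^n}u|<1$, and conclude acausality through strict $1$-Lipschitz continuity as in Lemma~\ref{lem:graph}. The only difference is that you write out the integration along geodesics of $\sph^n_+$ and the final causal-curve argument explicitly, steps the paper leaves to the citation of Lemma~\ref{lem:graph}.
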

\begin{proof}
	Let $\Sigma=\gr u$. By Lemma~\ref{lem:graph}, it suffices to prove that $u\colon\sph_+^n\to\R$ is strictly $1-$Lipschitz. The hypersurface $\Sigma$ can be described as the zeros of the function
	\[
	\begin{tikzcd}[row sep=1ex]
		\hyp^n\times\R\arrow[r] &\R\\
		(x,t)\arrow[r,mapsto] & u(x)-t
	\end{tikzcd}
	\] 
	
	Since being spacelike only depend on the conformal structure, we consider the normal space of $\Sigma$ with respect to the conformal metric $g_{\sph_+^{n}}-dt^2$. In this setting, $N\Sigma$ is generated by the gradient $\nabla^{\sph^{n}}u-\pd_t$, whose norm is equal to $|\nabla^{\sph^{n}}u|^2-1$. It follows that $\Sigma$ is spacelike if and only if $u$ is strictly $1-$Lipschitz.
\end{proof}	

\begin{oss}\label{oss:spaceUp}
	It follows by Proposition~\ref{pro:spaceacausal} and Remark~\ref{oss:Up} that any properly embedded spacelike hypersurface is contained in a fundamental region $\mathcal{U}_p$, for a suitable choice of $p\in\hypu^{n,1}$. In other words, there is a natural correspondence between properly embedded spacelike hypersurfaces in the two models.
\end{oss}

	\section{Causal structure}\label{sec:causal}
The causal structure of a Lorentzian manifold $M$ is the data of the causal vectors in the tangent space $TM$. First, we introduce the invisible domain, then the domain of dependence. It turns out that the domain of dependence of a properly embedded achronal hypersurface coincides with the invisible domain of its boundary (Proposition~\ref{cor:invdep}).

Finally, we define the convex hull of an admissible boundary, its past and its future part. In particular, we introduce the time functions (Proposition~\ref{pro:benbon}) that will provide barriers for Section~\ref{sec:ex}.

	\subsection{Invisible domain}\label{sub:invdom}

The invisible domain makes sense only on \emph{causal} Lorentzian manifold, \emph{i.e.} having no closed causal curve. Since $\hypd^{n,1}$ is not causal, we define the invisible domain only in the universal cover.

\begin{de}\label{de:invdom}
	The \textit{invisible domain} of an achronal subset $X$ of $\hypu^{n,1}\cup\pd\hypu^{n,1}$ is the set $\Omega(X)\sq\hypu^{n,1}$ of points that are connected to $X$ by no causal curve.
\end{de}

Roughly speaking, $\Omega(X)$ is the union of all acausal subset containing $X$. In light of Lemma~\ref{lem:graph}, let $X=\gr u$ in a splitting $(p,P)$: we can equivalently say that $\Omega(X)$ is the union of all acausal graphs of strictly $1-$Lipschitz extensions of $u$. In particular, one can consider the so called \textit{extremal extension} of $u$, denoted $u^-$ and $u^+$. They are extremal in the sense that for any $1-$Lipschitz extension $\tilde{u}$ of $u$, 
\[
u^-\le\tilde{u}\le u^+.
\]

Such extensions exist as they are respectively the supremum and the infimum of a set of bounded $1-$Lipschitz functions. One can prove that their graphs do not depend on the splitting: indeed, for two functions $f,g\colon\hyp^n\to\R$,
\[
f\le g \iff\gr f\sq I^-(\gr g),
\]
which does not depend on the splitting.

These ideas are summarized in the following two statements, whose proofs can be found in \cite[Lemma~4.2.2]{bonsep}.

\begin{lem}\label{lem:extremal}
	Let $X$ be an achronal subset of $\hypu^{n,1}\cup\pd\hypu^{n,1}$, $u^\pm$ its extremal extensions.
	\begin{enumerate}
		\item $\Omega(X)=I^+(\gr u^-)\cap I^-(\gr u^+)$;
		\item $\gr u^\pm$ is an achronal entire graph.
	\end{enumerate}
\end{lem}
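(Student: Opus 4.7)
I would fix a splitting $(p,P)$ and work in the conformal model $\sph_+^n\times\R$ of Lemma~\ref{lem:conformal}, in which causal relations between $(y_1,t_1)$ and $(y_2,t_2)$ reduce to the comparison of $|t_1-t_2|$ with $d:=d_{\sph^n}(y_1,y_2)$. By Lemma~\ref{lem:graph}, $X=\gr u$ for a $1$-Lipschitz $u\colon\pi_{(p,P)}(X)\to\R$. The first step of the plan is to identify $u^\pm$ with the classical McShane--Whitney extensions
\[
u^+(x)=\Inf_{y\in\pi_{(p,P)}(X)}\bigl(u(y)+d(x,y)\bigr),\qquad u^-(x)=\sup_{y\in\pi_{(p,P)}(X)}\bigl(u(y)-d(x,y)\bigr);
\]
a direct check, using the compactness of $\sph^n$ to ensure finiteness, shows that these are $1$-Lipschitz on all of $\sph_+^n\cup\pd\sph_+^n$, extend $u$, and dominate (respectively, are dominated by) every other $1$-Lipschitz extension. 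Item~(2) then follows at once from Lemma~\ref{lem:graph} applied to $\gr u^\pm$.

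For item~(1), the plan is to unwind both sides in coordinates. From the $1$-Lipschitz character of $u^\pm$ one gets
\[
I^+(\gr u^-)=\{(x,t):t>u^-(x)\},\qquad I^-(\gr u^+)=\{(x,t):t<u^+(x)\},
\]
while by the translation of the causal relation recalled above, $q=(q_x,q_t)\in\Omega(X)$ is equivalent to
\[
u(y)-d(q_x,y)<q_t<u(y)+d(q_x,y)\qquad\forall y\in\pi_{(p,P)}(X).
\]
The inclusion $I^+(\gr u^-)\cap I^-(\gr u^+)\subseteq\Omega(X)$ is then immediate from the definition of $u^\pm$.

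The reverse inclusion is the only delicate step, which I expect to be the main obstacle: one has to promote a family of strict inequalities parametrised by $y$ to strict inequalities for the supremum and infimum. The plan here is first to reduce to the case where $\pi_{(p,P)}(X)$ is closed, hence compact in $\sph_+^n\cup\pd\sph_+^n$, by replacing $u$ with its unique continuous $1$-Lipschitz extension to the closure of its domain; the extremal extensions $u^\pm$ are unchanged under this enlargement. Compactness then guarantees that the $\Inf$ and $\sup$ defining $u^\pm(q_x)$ are attained at some points $y^\pm$, and evaluating the displayed strict inequalities at $y=y^\pm$ yields $u^-(q_x)<q_t<u^+(q_x)$, which concludes the proof.
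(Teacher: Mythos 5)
Your identification of $u^\pm$ with the McShane--Whitney formulas, your proof of item (2), and the inclusion $I^+(\gr u^-)\cap I^-(\gr u^+)\sq\Omega(X)$ are all fine; note that the paper itself gives no proof of this lemma but refers to \cite[Lemma~4.2.2]{bonsep}, so the comparison is only with correctness. The genuine gap is exactly in the step you flagged. After replacing $u$ by its continuous $1$-Lipschitz extension $\bar u$ to $K:=\overline{\pi_{(p,P)}(X)}$, the hypothesis $q\in\Omega(X)$ gives the strict inequalities $|q_t-u(y)|<d(q_x,y)$ only for $y\in\pi_{(p,P)}(X)$; at the added limit points of $K$ they survive only in non-strict form. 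So if the infimum defining $u^+(q_x)$ is attained at some $y^+\in K\setminus\pi_{(p,P)}(X)$, you only obtain $q_t\le u^+(q_x)$, not the strict inequality you need. What the reduction implicitly uses is that no causal curve joins $q$ to $\overline X$ whenever none joins $q$ to $X$, and with the literal Definition~\ref{de:invdom} this is false: along a fixed geodesic ray from a point $q_x\in\sph^n_+$, take $\pi_{(p,P)}(X)=\{y_s:\ s\in(1,6/5]\}$ with $d(q_x,y_s)=s$ and $u(y_s)=s-1$, and $q=(q_x,1)$. Then $|1-u(y_s)|=2-s<s=d(q_x,y_s)$ for every $s>1$, so $q$ is joined to $X$ by no causal curve, while $u^+(q_x)=\inf_s(2s-1)=1=q_t$, so $q\notin I^-(\gr u^+)$: $q$ is lightlike-related only to the limit point $(y_1,0)\in\overline X\setminus X$. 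Hence for non-closed $X$ the asserted identity itself is delicate, and no refinement of the attainment argument alone can close the step.

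Two repairs are available, and either one completes your proof. First, one may add the hypothesis that $X$ is closed, which is implicit in the paper and satisfied in every application (admissible boundaries, entire achronal graphs, $\pd\mathcal{P}_\pm(p)$): since $u$ is bounded, a closed $X$ is a closed subset of $\overline{\sph^n_+}\times[a,b]$, hence compact, so $\pi_{(p,P)}(X)$ is already compact and your attainment argument works verbatim with $y^\pm\in\pi_{(p,P)}(X)$, no enlargement needed. Second, one may read the invisible domain as the complement of the \emph{closure} of the causal cone of $X$, which is the reading the paper itself adopts in Example~\ref{es:invdom} (there $\Omega(\{p\})=\hypu^{n,1}\setminus\overline{I(p)}$); then compactness is unnecessary: a whole neighbourhood of $q$ misses the causal cone, so the strict inequalities also hold for $(q_x,q_t\pm\varepsilon)$ with $\varepsilon>0$ small, and taking the infimum and supremum yields $q_t+\varepsilon\le u^+(q_x)$ and $q_t-\varepsilon\ge u^-(q_x)$, hence the strict inequalities for $q$. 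With either fix the rest of your argument (the coordinate description of the causal relation via Lemma~\ref{lem:conformal} and Lemma~\ref{lem:graph}, and the extremality of the McShane--Whitney extensions) is correct.
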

\begin{es}\label{es:invdom}
	The invisible domain of a point $p$ is the set of space-related points to $p$, namely $\Omega(\{p\})=\hypu^{n,1}\setminus\overline{I(p)}$. The invisible domain of the boundary of the totally geodesic spacelike hypersurface $\mathcal{P}_+(p)$ is $I^-(p_+)\cap I^+(p)$ (see Figure~\ref{fig:invdom}).
\end{es}
\begin{figure}[h]
		\begin{minipage}[c]{.45\textwidth}
			\centering
			\includegraphics{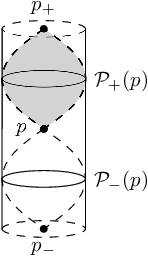}
		\end{minipage}%
		\begin{minipage}[c]{.45\textwidth}
			\centering
			\includegraphics{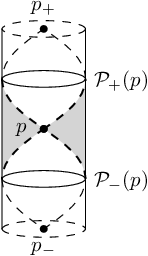}
		\end{minipage}%
\caption{Invisible domain of $\pd\mathcal{P}_+(p)$ (left) and $p$ (right).}\label{fig:invdom}
\end{figure}

\begin{lem}\label{lem:light}
	For $X$ an achronal closed subset of $\hypu^{n,1}\cup\pd\hypu^{n,1}$,
	\[\gr u^+\cap\gr u^-\]
	is the union of $X$ and all lightlike geodesic segments joining points of $X$.
\end{lem}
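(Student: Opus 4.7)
The plan is to prove the two inclusions separately, working in the conformal model of Lemma~\ref{lem:conformal}, where $\hypu^{n,1}\cup\pd\hypu^{n,1}$ is identified with $(\sph_+^n\cup\pd\sph_+^n)\times\R$ endowed with the metric $g_{\sph^n}-dt^2$. In this model, Lemma~\ref{lem:graph} identifies achronality of $X$ with its defining function $f\colon\pi(X)\to\R$ being $1$-Lipschitz for the spherical distance, and lightlike (pre-)geodesics are precisely curves projecting onto unit-speed spherical geodesics with $t$ varying at unit rate.

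For the inclusion $\supseteq$: the containment $X\subseteq\gr u^+\cap\gr u^-$ is immediate since $u^\pm$ both extend $f$. If $\gamma$ is a future-directed lightlike geodesic segment joining $a=(y_a,f(y_a))$ and $b=(y_b,f(y_b))$ in $X$, then $f(y_b)-f(y_a)=d_{\sph^n}(y_a,y_b)$ and $\pi(\gamma)$ is a minimizing spherical geodesic from $y_a$ to $y_b$. For any $1$-Lipschitz extension $\tilde u$ of $f$ and any intermediate $x\in\pi(\gamma)$, the chain
\[f(y_b)-f(y_a)\le(f(y_b)-\tilde u(x))+(\tilde u(x)-f(y_a))\le d_{\sph^n}(x,y_b)+d_{\sph^n}(y_a,x)=d_{\sph^n}(y_a,y_b)\]
forces both middle inequalities to be equalities, pinning down $\tilde u(x)=f(y_a)+d_{\sph^n}(y_a,x)$ uniquely. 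In particular $u^+(x)=u^-(x)$ on $\pi(\gamma)$, whence $\gamma\subseteq\gr u^+\cap\gr u^-$.

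For the inclusion $\subseteq$: I would use the McShane-Whitney representation of the extremal $1$-Lipschitz extensions,
\[u^+(x)=\inf_{y\in\pi(X)}\bigl(f(y)+d_{\sph^n}(x,y)\bigr),\qquad u^-(x)=\sup_{y\in\pi(X)}\bigl(f(y)-d_{\sph^n}(x,y)\bigr),\]
which follow because any $1$-Lipschitz extension $\tilde u$ obeys $f(y)-d(x,y)\le\tilde u(x)\le f(y)+d(x,y)$ for every $y\in\pi(X)$, while these two explicit formulas do define $1$-Lipschitz extensions. Let $q=(x_0,t_0)\in\gr u^+\cap\gr u^-\setminus X$. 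Since $X$ is closed in $\hypu^{n,1}\cup\pd\hypu^{n,1}$, the set $\pi(X)$ is closed in the compact hemisphere $\overline{\sph_+^n}$, so the inf and sup are attained at some $y_a,y_b\in\pi(X)$. The equalities $u^+(x_0)=t_0=u^-(x_0)$ give
\[f(y_b)-f(y_a)=d_{\sph^n}(x_0,y_a)+d_{\sph^n}(x_0,y_b)\ge d_{\sph^n}(y_a,y_b)\]
by the triangle inequality, while achronality of $X$ (Lemma~\ref{lem:graph}) yields the reverse bound $f(y_b)-f(y_a)\le d_{\sph^n}(y_a,y_b)$. All inequalities collapse to equalities: $x_0$ lies on a minimizing spherical geodesic from $y_a$ to $y_b$ with $t_0-f(y_a)=d_{\sph^n}(y_a,x_0)$, which means that $q$ lies on the lightlike geodesic segment in $\hypu^{n,1}$ joining $(y_a,f(y_a))\in X$ to $(y_b,f(y_b))\in X$.

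The main obstacle I expect is the careful handling of the extremal points $y_a,y_b$: ensuring the inf and sup are genuinely attained, and that the achronality bound on $X$ remains valid at points possibly lying on $\pd\sph_+^n$. Both issues are resolved by $X$ being closed in the conformal compactification, which makes $\pi(X)$ a closed subset of the compact hemisphere and lets Lemma~\ref{lem:graph} apply up to the boundary; with this control, the McShane-Whitney argument runs without further complication.
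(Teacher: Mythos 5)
Your proof is correct, and it is worth noting that the paper itself does not prove this lemma at all: it is quoted from \cite[Lemma~4.2.2]{bonsep} (stated there in dimension $2+1$, with the remark that the argument is dimension-independent). So your McShane--Whitney argument in the conformal model is a self-contained substitute rather than a reproduction of anything in the text; it is essentially the standard mechanism underlying the cited result, namely that the extremal $1$-Lipschitz extensions are given by $u^+=\inf_{y\in\pi(X)}\bigl(f(y)+d_{\sph^n}(\cdot,y)\bigr)$ and $u^-=\sup_{y\in\pi(X)}\bigl(f(y)-d_{\sph^n}(\cdot,y)\bigr)$, together with the equality case of the triangle inequality. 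Two steps deserve an explicit line, although neither is a genuine gap. First, in the inclusion $\supseteq$ you assert that a lightlike segment joining $a,b\in X$ projects to a \emph{minimizing} arc with $f(y_b)-f(y_a)=d_{\sph^n}(y_a,y_b)$; this is exactly where achronality enters: the projection is a great-circle arc of some length $\ell$ with $t$-increment equal to $\ell$, so $\ell=f(y_b)-f(y_a)\le d_{\sph^n}(y_a,y_b)\le\ell$ by the $1$-Lipschitz bound, forcing equality. Second, closedness of $X$ alone does not make a projection closed; what saves you is again achronality, which bounds $\osc(f)$ by $\pi$, so $X$ lies in a compact slab $\overline{\sph^n_+}\times[c,c+\pi]$, hence is compact, hence $\pi(X)$ is compact and the (continuous, since $1$-Lipschitz) functions $y\mapsto f(y)\pm d_{\sph^n}(x_0,y)$ attain their extrema. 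With these two remarks inserted, the argument is complete, and it transparently covers the degenerate configurations (segments with endpoints, or lying entirely, in $\pd\hypu^{n,1}$) that the paper later exploits in Proposition~\ref{pro:admissible}.
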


We are interested in acausal entire graphs, and we can get information through the study of the invisible domain of their asymptotic boundaries: for a graph in $\pd\hypu^{n,1}$, admissibility is a necessary condition to bound an achronal (or acausal) entire hypersurface (Lemma~\ref{lem:osc}). In fact, the converse is also true.

\begin{pro}\label{pro:admissible}
	For an achronal graph $\Lambda\sq\pd\hypu^{n,1}$, the following are equivalent:
	\begin{enumerate}
		\item $\Lambda$ is an admissible boundary;\label{adm:1}
		\item $\Omega(\Lambda)$ is not empty;\label{adm:2}
		\item there exists a properly embedded achronal hypersurface $\Sigma$ which is not a totally geodesic degenerate hypersurface such that $\pd\Sigma=\Lambda$.\label{adm:3}	\end{enumerate}
\end{pro}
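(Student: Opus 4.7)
The plan is to organize everything around the pair of extremal $1$-Lipschitz extensions $u^\pm \colon \overline{\hyp^n} \to \R$ of $f$, defined by $u^+(x) = \inf_{y \in \pd\hyp^n}(f(y) + d_{\sph^n}(x,y))$ and $u^-(x) = \sup_{y \in \pd\hyp^n}(f(y) - d_{\sph^n}(x,y))$; these are the largest and smallest $1$-Lipschitz extensions, and Lemma~\ref{lem:extremal} identifies $\Omega(\Lambda) = I^+(\gr u^-) \cap I^-(\gr u^+)$. Hence condition $(2)$ amounts to $u^- < u^+$ at some interior point. The quantitative fact I will exploit is that the closed hemisphere $\overline{\hyp^n}$, in the conformal metric $g_{\sph^n}$, has diameter exactly $\pi$, attained only by antipodal pairs on the equator $\pd\hyp^n$.

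For $(1) \Leftrightarrow (3)$ I take $\Sigma = \gr u^+$ as the candidate hypersurface. The forward direction reduces to showing $\osc(u^+) < \pi$: since $u^+$ is $1$-Lipschitz on $\overline{\hyp^n}$, one has $\osc(u^+) \le \pi$, and equality would force antipodal equator points $y_1, y_2$ with $|f(y_1) - f(y_2)| = \pi$, contradicting admissibility. Once $\osc(u^+) < \pi$ is established, Lemmas~\ref{lem:graph} and \ref{lem:osc} together with Proposition~\ref{pro:emb} show that $\Sigma$ is a properly embedded achronal hypersurface, not totally geodesic degenerate, with $\pd\Sigma = \Lambda$. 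Conversely, $(3) \Rightarrow (1)$ is immediate: write $\Sigma = \gr u$ with $u$ $1$-Lipschitz, apply Lemma~\ref{lem:osc} to obtain $\osc(u) < \pi$, and conclude $\osc(f) \le \osc(u) < \pi$ since $f$ is the boundary restriction of $u$.

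For $(1) \Rightarrow (2)$ I evaluate the extremal extensions at the pole $x_0$ of the splitting (Definition~\ref{de:split}), where $d_{\sph^n}(x_0, y) = \pi/2$ for every $y \in \pd\hyp^n$: then $u^+(x_0) = \inf f + \pi/2$ and $u^-(x_0) = \sup f - \pi/2$, so $u^+(x_0) - u^-(x_0) = \pi - \osc(f) > 0$, and any point $(x_0, t)$ with $u^-(x_0) < t < u^+(x_0)$ lies in $\Omega(\Lambda)$. The harder direction, $(2) \Rightarrow (1)$, I argue by contraposition: if $\Lambda$ is not admissible then $\osc(f) = \pi$ (since $f$ is $1$-Lipschitz on the diameter-$\pi$ equator), so there exist $y_1, y_2 \in \pd\hyp^n$ with $f(y_2) - f(y_1) = \pi$, forcing $d_{\sph^n}(y_1, y_2) = \pi$, i.e. $y_1, y_2$ antipodal in $\sph^n$. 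For any $1$-Lipschitz extension $u$ and any $x \in \hyp^n$, summing $u(x) - f(y_1) \le d_{\sph^n}(x, y_1)$ and $f(y_2) - u(x) \le d_{\sph^n}(x, y_2)$ yields $d_{\sph^n}(x, y_1) + d_{\sph^n}(x, y_2) \ge \pi$, and the triangle inequality forces equality, so $x$ lies on a length-$\pi$ geodesic from $y_1$ to $y_2$. This rigidly determines $u(x) = f(y_1) + d_{\sph^n}(x, y_1)$, so $u^+ = u^-$ everywhere and $\Omega(\Lambda) = \emptyset$.

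The main obstacle is the geometric input used in $(2) \Rightarrow (1)$: when $y_1, y_2$ are antipodal equator points, the family of great semicircles joining them inside the closed hemisphere $\overline{\sph^n_+}$ must in fact foliate all of $\overline{\sph^n_+}$, which is precisely what guarantees that the triangle-inequality bound applies to every $x \in \hyp^n$ and pins down $u$ uniquely. Once this is verified, the implications assemble into $(1) \Leftrightarrow (2) \Leftrightarrow (3)$.
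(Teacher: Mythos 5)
Your proof is correct, but it follows a route that differs from the paper's in two places. The paper argues cyclically $(1)\Rightarrow(2)\Rightarrow(3)\Rightarrow(1)$ and, for the first implication, works by contraposition through Lemma~\ref{lem:light}: if $\Omega(\Lambda)=\emptyset$ then $u^+=u^-$, so $\gr u^+$ contains a lightlike geodesic joining two points of $\Lambda$, and a direct computation then gives $\osc(u^+)=\pi$; the implication $(2)\Rightarrow(3)$ is also run through Lemma~\ref{lem:light}. You instead never invoke Lemma~\ref{lem:light}: you make the extremal extensions explicit via the McShane formulas $u^+(x)=\inf_y\left(f(y)+d_{\sph^n}(x,y)\right)$, $u^-(x)=\sup_y\left(f(y)-d_{\sph^n}(x,y)\right)$ (which indeed coincide with the paper's extremal extensions, being the largest and smallest $1$-Lipschitz extensions), prove $(1)\Leftrightarrow(3)$ directly through the oscillation of $u^+$ and Lemma~\ref{lem:osc}, get $(1)\Rightarrow(2)$ quantitatively from $u^+(x_0)-u^-(x_0)=\pi-\osc(f)>0$ at the pole, and get $(2)\Rightarrow(1)$ from a rigidity argument: if $\osc(f)=\pi$, compactness gives $y_1,y_2$ with $f(y_2)-f(y_1)=\pi$, the Lipschitz bound forces them antipodal, and then every $1$-Lipschitz extension is pinned to $u(x)=f(y_1)+d_{\sph^n}(x,y_1)$, so $u^+=u^-$ and $\Omega(\Lambda)=I^+(\gr u^+)\cap I^-(\gr u^+)=\emptyset$ by achronality of $\gr u^+$. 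Your approach buys an explicit lower bound for the "thickness" of $\Omega(\Lambda)$ over the pole and avoids the causal-geometric input of Lemma~\ref{lem:light}, at the cost of relying on the explicit spherical-distance model (Lemma~\ref{lem:conformal}, Lemma~\ref{lem:graph}) and on the identification of the McShane extensions with the paper's $u^\pm$; the paper's argument is more model-independent. Two small remarks: the "main obstacle" you flag is in fact automatic, since for antipodal $y_1,y_2$ one has $d_{\sph^n}(x,y_1)+d_{\sph^n}(x,y_2)=\pi$ for \emph{every} $x$ in the closed hemisphere, so equality in your two inequalities is forced with no separate foliation statement needed; and it is worth one explicit line that $u^+=u^-$ yields $\Omega(\Lambda)=\emptyset$ because no two points of the achronal graph $\gr u^+$ are joined by a timelike curve (the paper is equally terse on the corresponding step).
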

\begin{proof}
	We first show $\eqref{adm:1}\implies\eqref{adm:2}$. If $\Omega(\Lambda)$ is empty, $u^+=u^-$ by Lemma~\ref{lem:extremal}. In particular, $\gr u^+$ contains a lightlike geodesic connecting two points of $\Lambda$ (Lemma~\ref{lem:light}). By direct computation, this implies that $\osc(u^+)=\pi$, \emph{i.e} $\Lambda$ is not admissible.
	
	The graph of the extremal extensions $u^+$ of $\Lambda$ is an achronal properly embedded hypersurface (Lemma~\ref{lem:extremal}), hence $\eqref{adm:2}\implies\eqref{adm:3}$: by contradiction, assume that $\gr u^+$ is a totally geodesic degenerate hypersurface, then any point of $\gr u^+$ is connected to $\Lambda$ by a lightlike geodesic. By Lemma~\ref{lem:light}, it follows that $\gr u^+=\gr u^-$, hence $\Omega(\Lambda)=\emptyset$ (Lemma~\ref{lem:extremal}), contradicting the assumption.
	
	To conclude, $\eqref{adm:3}\implies\eqref{adm:1}$ it is trivial: indeed, the boundary of a properly embedded achronal which is not a totally geodesic degenerate hypersurface is admissible by Lemma~\ref{lem:osc}. 
\end{proof}

	Finally, we show that the invisible domain of an admissible boundary isometrically embeds in $\hypd^{n,1}$, via any projection $\psi$ as in Equation~\eqref{eq:split}.
	
	\begin{lem}\label{lem:monoton}
		Let $\Lambda_1$, $\Lambda_2$ two achronal graphs in $\pd\hypu^{n,1}$. 
		\[
		\Lambda_1\sq I^-(\Lambda_2)\iff\pd_\pm\Omega(\Lambda_1)\sq I^-\left(\pd_\pm\Omega(\Lambda_2)\right).
		\]
	\end{lem}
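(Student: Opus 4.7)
The plan is to fix a splitting $(p,P)$ and translate both sides of the equivalence into pointwise inequalities of $1$-Lipschitz functions. By Lemma~\ref{lem:graph}, $\Lambda_i=\gr f_i$ for some $1$-Lipschitz $f_i\colon\pd\hyp^n\to\R$. By Lemma~\ref{lem:extremal}, $\pd_\pm\Omega(\Lambda_i)=\gr u_i^\pm$, where $u_i^\pm\colon\hyp^n\to\R$ are the extremal (\emph{i.e.}\ largest and smallest) $1$-Lipschitz extensions of $f_i$. Using the identity $f\le g\iff\gr f\sq I^-(\gr g)$ recalled in the text just before Lemma~\ref{lem:extremal}, the desired equivalence reduces to
\[
f_1\le f_2\iff u_1^\pm\le u_2^\pm.
\]

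The direction $u_1^\pm\le u_2^\pm\Rightarrow f_1\le f_2$ is immediate: restricting the pointwise inequality to $\pd\hyp^n$, where each $u_i^\pm$ coincides with $f_i$, produces $f_1\le f_2$.

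For the converse, I would exploit the extremal characterization of $u_2^+$ itself. Assume $f_1\le f_2$, and let $v$ be any $1$-Lipschitz extension of $f_1$. Set
\[
w:=\max(v,u_2^-).
\]
Then $w|_{\pd\hyp^n}=\max(f_1,f_2)=f_2$ by hypothesis, so $w$ extends $f_2$; moreover, $w$ is $1$-Lipschitz since the pointwise maximum of two $1$-Lipschitz functions is $1$-Lipschitz. By the maximality of $u_2^+$, we have $w\le u_2^+$, hence $v\le w\le u_2^+$. Applying this with $v=u_1^+$ yields $u_1^+\le u_2^+$. The entirely symmetric construction $w=\min(v,u_1^+)$, taken for $v$ an arbitrary $1$-Lipschitz extension of $f_2$, gives $u_1^-\le u_2^-$.

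No serious obstacle arises; the argument is essentially bookkeeping. The only point deserving care is to check that the identity $\gr f\sq I^-(\gr g)\iff f\le g$ is being applied correctly in both settings (for the boundary graphs $\Lambda_i\sq\pd\hypu^{n,1}$ and for the entire graphs $\gr u_i^\pm\sq\hypu^{n,1}$), but in each case it reduces to the same elementary translation of causal relations into a comparison of the height function.
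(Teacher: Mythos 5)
The weak point is exactly the one you flag and then wave through: the displayed identity ``$f\le g\iff\gr f\sq I^-(\gr g)$'' cannot be taken literally, because $I^-$ is the \emph{chronological} (open) past. For $1$-Lipschitz graphs, $\gr f\sq I^-(\gr g)$ is equivalent to the \emph{strict} pointwise inequality $f<g$: at a touching point $(x,f(x))=(x,g(x))$, the $1$-Lipschitz bound on $g$ prevents any point of $\gr g$ from lying in the chronological future of $(x,g(x))$, so that point is not in $I^-(\gr g)$. Hence the lemma is equivalent to ``$u_1<u_2\iff u_1^\pm<u_2^\pm$'' (strict inequalities), which is how the paper's proof phrases it, and not to the non-strict statement you prove. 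Your max/min comparison (which, for the non-strict inequality, is essentially the paper's own first step with $v:=\max\{u_1^+,u_2^+\}$) only yields $u_1^\pm\le u_2^\pm$, and this does not give $\pd_\pm\Omega(\Lambda_1)\sq I^-\left(\pd_\pm\Omega(\Lambda_2)\right)$: any interior point where $u_1^+=u_2^+$ lies on $\pd_+\Omega(\Lambda_2)$ itself rather than in its chronological past. Such touching really occurs as soon as the boundary data satisfy only $f_1\le f_2$ with equality somewhere (e.g.\ $f_2\equiv0$ and $f_1=-\min(d(\cdot,y_0),a)$ makes $u_1^-=u_2^-$ along a whole geodesic), so strictness is not a cosmetic matter. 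The same faulty reduction also affects your converse direction, where the correct conclusion to reach is $f_1<f_2$, not $f_1\le f_2$.

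The missing ingredient, which is the actual content of the paper's proof beyond the comparison you did, is the promotion of $\le$ to $<$: since $\pd\hyp^n$ is compact, $\varepsilon:=\tfrac12\min_{\pd\hyp^n}(f_2-f_1)$ is strictly positive when $f_1<f_2$; applying the comparison to the extension problem for $f_1+\varepsilon\le f_2$ and using that extremal extensions commute with adding constants gives $u_1^+<u_1^++\varepsilon=(u_1+\varepsilon)^+\le u_2^+$, and similarly for $u^-$. (Equivalently, one can read strictness off the explicit McShane formulas for $u_i^\pm$.) With that step added, your bookkeeping is fine and matches the paper's argument; without it, the forward implication of the lemma is not established.
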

	\begin{proof}
		By Lemma~\ref{lem:extremal}, the boundary of the invisible domain of $\Lambda_i=\gr u_i$ is the graph of its extremal extensions, namely \[\pd_\pm\Omega(\Lambda_i)=\gr u_i^\pm.\]
		
		In term of graphs, the statement is then equivalent to prove
		\[u_1<u_2\iff u_1^\pm<u_2^\pm.\]
		
		The implication $(\Leftarrow)$ is trivial, since $u_i$ is the restriction of $u_i^\pm$ to the boundary. Conversely, if $u_1<u_2$, then $v:=\max\{u_1^+,u_2^+\}$ is a $1-$Lipschitz function extending $u_2$. By definition of extremal extension $v\le u_2^+$, that is $u_1^+\le u_2^+$. To obtain the strict inequality, set \[\varepsilon:=\frac{1}{2}\min_{\pd\hyp^n}(u_2-u_1)>0\] and denote $u_\varepsilon:=u_1+\varepsilon$, which is still a $1-$Lipschitz map, strictly smaller than $u_2$. The same argument proves that
		\[
			u_1^+<u^+_\varepsilon\le u_2^+,
		\] 
		which concludes the proof.
	\end{proof}

	\begin{cor}\label{lem:invdom} 
			For an admissible boundary $\Lambda$ in $\pd\hypu^{n,1}$, \[\overline{\Omega(\Lambda)}\sq I^-(p_+)\cap I^+(p_-),\quad\forall p\in\Omega(\Lambda).\]
		\end{cor}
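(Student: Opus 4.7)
\textbf{Proof plan for Corollary~\ref{lem:invdom}.}
The plan is to combine the monotonicity Lemma~\ref{lem:monoton} with the explicit description of the invisible domain of $\pd\mathcal{P}_\pm(p)$ contained in Example~\ref{es:invdom}. I would begin by fixing a splitting $\psi_{(p,P)}$ centred at $p$, so that in the coordinates $(x,t)\in\hyp^n\times\R$ one has $p=(x_0,0)$, $p_\pm=(x_0,\pm\pi)$ and $\mathcal{P}_\pm(p)=\hyp^n\times\{\pm\pi/2\}$. By Lemma~\ref{lem:graph} I write $\Lambda=\gr f$ for a 1-Lipschitz $f\colon\pd\hyp^n\to\R$, and by Lemma~\ref{lem:extremal} the past and future boundaries of $\Omega(\Lambda)$ are the graphs of the extremal 1-Lipschitz extensions $u^\pm$ of $f$. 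The assumption $p\in\Omega(\Lambda)$ then reads $u^-(x_0)<0<u^+(x_0)$.

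The key geometric step, which I expect to be the main (and really the only) obstacle, is to translate this hypothesis into a strict pinching of $\Lambda$ between $\pd\mathcal{P}_-(p)$ and $\pd\mathcal{P}_+(p)$. Since $u^\pm$ are 1-Lipschitz on $\overline{\hyp^n}$ with respect to the conformal spherical distance of Lemma~\ref{lem:conformal}, for which $d_{\sph^n}(x_0,y)=\pi/2$ whenever $y\in\pd\hyp^n$, and since $u^\pm|_{\pd\hyp^n}=f$, one obtains
\begin{equation*}
    -\tfrac{\pi}{2}<u^+(x_0)-\tfrac{\pi}{2}\le f(y)\le u^-(x_0)+\tfrac{\pi}{2}<\tfrac{\pi}{2}
\end{equation*}
for every $y\in\pd\hyp^n$. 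In the splitting this is exactly the statement that $\Lambda\subseteq I^-(\pd\mathcal{P}_+(p))\cap I^+(\pd\mathcal{P}_-(p))$, viewing $\pd\mathcal{P}_\pm(p)$ as achronal graphs in $\pd\hypu^{n,1}$.

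Applying Lemma~\ref{lem:monoton} with $\Lambda_2=\pd\mathcal{P}_+(p)$ then yields $\pd_+\Omega(\Lambda)\subseteq I^-\bigl(\pd_+\Omega(\pd\mathcal{P}_+(p))\bigr)$. By Example~\ref{es:invdom}, $\Omega(\pd\mathcal{P}_+(p))=I^+(p)\cap I^-(p_+)$, so $\pd_+\Omega(\pd\mathcal{P}_+(p))$ lies in the past light cone of $p_+$; the push-up property then gives $\pd_+\Omega(\Lambda)\subseteq I^-(p_+)$. A symmetric argument with $\pd\mathcal{P}_-(p)$ produces $\pd_-\Omega(\Lambda)\subseteq I^+(p_-)$. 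The pointwise inequality $u^-\le u^+$ promotes both inclusions to $\pd_\pm\Omega(\Lambda)\subseteq I^-(p_+)\cap I^+(p_-)$, while the interior is handled by $\Omega(\Lambda)\subseteq I^-(\pd_+\Omega(\Lambda))\cap I^+(\pd_-\Omega(\Lambda))$ from Lemma~\ref{lem:extremal}. Combining these inclusions one concludes $\overline{\Omega(\Lambda)}\subseteq I^-(p_+)\cap I^+(p_-)$, as required.
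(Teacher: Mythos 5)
Your proof is correct and takes essentially the same route as the paper: both arguments reduce to showing that $\Lambda$ lies strictly between $\pd\mathcal{P}_-(p)$ and $\pd\mathcal{P}_+(p)$ and then apply Lemma~\ref{lem:monoton} together with the description of $\Omega(\pd\mathcal{P}_\pm(p))$ from Example~\ref{es:invdom}. The only (harmless) difference is that you obtain this pinching by the explicit $1$-Lipschitz estimate in a splitting, whereas the paper reads it off from the duality $p\in\Omega(\Lambda)\iff\Lambda\sq\Omega(\{p\})$.
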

	\begin{proof}
		By definition, for any $X$ achronal subset, $X\sq\Omega(\Lambda)$ if and only if $\Lambda\sq\Omega(X)$.
		
		Take $X=\{p\}$, hence $\Omega(X)=\mathcal{U}_p\setminus\overline{I(p)}$. The frontier of $\mathcal{U}_p$ in $\hypu^{n,1}$ is $\pd\mathcal{P}_+(p)\cup\pd\mathcal{P}_-(p)$ (see Example~\ref{es:invdom}). We apply Lemma~\ref{lem:monoton} to get
			\[\overline{\Omega(\Lambda)}\sq I^-\left(\pd_+\Omega(\pd\mathcal{P}_+(p))\right)\cap I^+\left(\pd_-\Omega(\pd\mathcal{P}_-(p))\right).\]
			
		As mentioned in Example~\ref{es:invdom}, since $p,p_\pm$ are the dual points of $\mathcal{P}_\pm(p)$, we have
			\begin{align*}
					I^-\left(\pd_+\Omega(\pd\mathcal{P}_+(p))\right)&=I^-(p_+)\\
					I^+\left(\pd_-\Omega(\pd\mathcal{P}_-(p))\right)&=I^+(p_-)
				\end{align*}
		which concludes the proof.
		\end{proof}
	
	\begin{cor}\label{lem:bound:depdom}
		For any admissible boundary $\Lambda$ in $\pd\hypu^{n,1}$, $\overline{\Omega(\Lambda)}$ isometrically embeds in $\hyp^{n,1}$. Moreover, $\pr{\psi(p),\psi(q)}<1,\quad\forall p,q\in\Omega(\Lambda).$
	\end{cor}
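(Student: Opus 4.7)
The plan is to deduce \Cref{lem:bound:depdom} from an explicit description of $I^-(p_+)\cap I^+(p_-)$ combined with \Cref{lem:invdom}. Fix $p\in\Omega(\Lambda)$ and choose a splitting $(p,P)$ identifying $p$ with $(x_0,0)\in\hyp^n\times\R$. Via the conformal identification of \Cref{lem:conformal} with $(\sph^n\times\R,\,g_{\sph^n}-dt^2)$ on the open hemisphere, I would integrate the pointwise timelike condition $|\dot y|_{\sph^n}^2<\dot t^2$ along future-directed curves joining $(y,s)$ to $p_+=(x_0,\pi)$ (respectively $p_-=(x_0,-\pi)$ to $(y,s)$) to obtain
\[
I^-(p_+)\cap I^+(p_-)=\{(y,s)\in\hyp^n\times\R:|s|<\pi-d_{\sph^n}(x_0,y)\}.
\]
Since $d_{\sph^n}(x_0,y)\in[0,\pi/2)$, this set is contained in the open slab $\hyp^n\times(-\pi,\pi)$, and by \Cref{lem:invdom} so is $\overline{\Omega(\Lambda)}$.

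For the first part of the statement, it suffices to prove that $\psi$ is injective on $\overline{\Omega(\Lambda)}$: being a local isometry, $\psi$ will then restrict to an isometric embedding. The deck group of the covering $\psi_{(p,P)}\colon\hyp^n\times\R\to\hypd^{n,1}$ acts by $2\pi$-translations of the $t$-coordinate, as read off from \Cref{es:psi}. If two distinct points $(y,s_1),(y,s_2)\in\overline{\Omega(\Lambda)}$ had the same $\psi$-image, then $|s_1-s_2|$ would be a positive integer multiple of $2\pi$; but the previous step gives $|s_i|<\pi-d_{\sph^n}(x_0,y)$, hence $|s_1-s_2|<2\pi$, a contradiction.

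The inequality $\pr{\psi(p),\psi(q)}<1$ for $q=(y,s)\in\Omega(\Lambda)$ follows by a direct computation in the same splitting. The explicit formula $\psi(q)=(y_1,\dots,y_n,y_{n+1}\cos s,y_{n+1}\sin s)$ from \Cref{es:psi} gives
\[
\pr{\psi(p),\psi(q)}=-y_{n+1}\cos s=-\cosh\bigl(d_{\hyp^n}(x_0,y)\bigr)\cos s.
\]
The identity $\cosh(d_{\hyp^n})\cos(d_{\sph^n})=1$ obtained in the proof of \Cref{lem:conformal} rewrites this as $-\cos s/\cos(d_{\sph^n}(x_0,y))$; the bound $|s|<\pi-d_{\sph^n}(x_0,y)$ and the monotonicity of $\cos$ on $[0,\pi]$ then yield $\cos s>-\cos(d_{\sph^n}(x_0,y))$, whence $\pr{\psi(p),\psi(q)}<1$.

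The critical step is obtaining the \emph{strict} upper bound $|s|<\pi-d_{\sph^n}(x_0,y)$ in the first paragraph, so that the $2\pi$-periodicity of the deck group does not identify points of $\overline{\Omega(\Lambda)}$. It is therefore essential that \Cref{lem:invdom} places $\overline{\Omega(\Lambda)}$ inside the \emph{open} set $I^-(p_+)\cap I^+(p_-)$, and not merely its closure.
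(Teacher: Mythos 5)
Your proof is correct and follows the same strategy as the paper: reduce via \Cref{lem:invdom} to showing that $I^-(p_+)\cap I^+(p_-)$ maps injectively under $\psi$ into $\{\pr{\psi(p),\cdot}<1\}$. The difference is in how this latter fact is established. The paper asserts the isometric embedding of $I^-(p_+)\cap I^+(p_-)$ into the connected component of $\hypd^{n,1}\setminus C(-\psi(p))$ synthetically, pointing to Figure~\ref{fig:emb} and \Cref{oss:Cp}, and treats it as an evident extension of \Cref{cor:embU} from $\mathcal{U}_p$ to the larger open region. You instead verify it by explicit coordinates: the conformal description $I^-(p_+)\cap I^+(p_-)=\{|s|<\pi-d_{\sph^n}(x_0,y)\}$, the $2\pi$-periodicity of the deck group of $\psi_{(p,P)}$ read off from \Cref{es:psi}, and the computation $\pr{\psi(p),\psi(q)}=-\cosh(d_{\hyp^n})\cos s=-\cos s/\cos(d_{\sph^n})$. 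All three steps are correct, and your closing observation — that strict containment of $\overline{\Omega(\Lambda)}$ in the \emph{open} set $I^-(p_+)\cap I^+(p_-)$ is what keeps the deck group from identifying points and gives the strict inequality — is precisely the point the paper's picture is meant to convey. The only slight imprecision is the attribution of the identity $\cosh(d_{\hyp^n})\cos(d_{\sph^n})=1$ to the proof of \Cref{lem:conformal}: that proof records $\cosh(d_{\hyp^n})=(1+r^2)/(1-r^2)$ but leaves $\cos(d_{\sph^n})=(1-r^2)/(1+r^2)$ implicit in the stereographic-projection formula (whose printed conformal factor, incidentally, carries a typo), so the identity is extractable but not stated there.
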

	\begin{proof}
		For $p\in\hypu^{n,1}$, $I^-(p_+)\cap I^+(p_-)$ isometrically embeds in the connected component of $\hyp^{n,1}\setminus C(-\psi(p))$ containing $\psi(p)$ (see Figure~\ref{fig:emb}). We recall that this connected component is the set satisfying $\pr{\psi(p),\cdot}<1$ (Remark~\ref{oss:Cp}). 
		
		One concludes because $\overline{\Omega(\Lambda)}\sq I^-(p_+)\cap I^+(p_-)$, for all $p\in\Omega(\Lambda)$ (Corollary~\ref{lem:invdom}).
	\end{proof}
	\begin{figure}[h]
		\begin{minipage}[c]{.45\textwidth}
			\centering
			\includegraphics{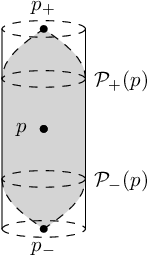}
		\end{minipage}
		\begin{minipage}[c]{.45\textwidth}
			\centering
			\includegraphics{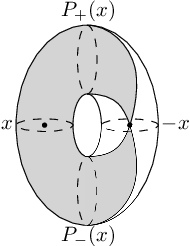}
		\end{minipage}
		\caption{$I^-(p_+)\cap I^+(p_-)$ isometrically embeds in $\{\pr{x,\cdot}<1\}$, for $x=\psi(p)$.}\label{fig:emb}
	\end{figure}
	
	\subsection{Domain of dependence}\label{sub:dep}
		Let $X$ be an acausal subset of a Lorentzian manifold $M$.	
		\begin{de}
			The \textit{domain of dependence} of $X$ is the set $D(X)$ of points $p\in M$ with the property that any inextensible causal path passing through $p$ intersects $X$.
		\end{de}
	\begin{oss}\label{oss:inext}
		Any inextensible causal path in $\hypu^{n,1}$ is properly embedded: indeed, in the same way as in Lemma~\ref{lem:graph}, one can check that a causal path is the graph of a $1-$Lipschitz map $f\colon(a,b)\sq\R\to\sph_+^n$. If the path is not properly embedded, without loss of generality we can assume that $a>-\infty$ and $f(t)\to x\in\sph^n_+$ for $t\to a^-$. By defining $f(t)=x$ for $t\le a$, we build a $1-$Lipschitz extension of $f$, hence $\gr f$ was not an inextensible causal path.
	\end{oss}
	
	\begin{de}\label{de:GH}
		A spacetime $(M,g)$ is called \textit{globally hyperbolic} if there exists an acausal subset $X$ such that $M=D(X)$. In this case, $X$ is called a \textit{Cauchy hypersurface} for $M$.
	\end{de}

	\begin{pro}\label{pro:depdomain}
		Let $\Sigma$ be an entire acausal graph in $\hypu^{n,1}$. A point $p\in\hypu^{n,1}$ belongs to $D(\Sigma)$ if and only if $I(p)\cap\Sigma$ is precompact in $\hypu^{n,1}$.
	\end{pro}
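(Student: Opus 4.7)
Plan. I work in the conformal model $\hypu^{n,1}\cong\sph^n_+\times\R$ (Lemma \ref{lem:conformal}), where by Proposition \ref{pro:spaceacausal} and Lemma \ref{lem:osc} the hypersurface $\Sigma$ is the graph of a strictly $1$-Lipschitz bounded function $u\colon\sph^n_+\to\R$, and by Remark \ref{oss:inext} any inextensible causal curve through $p$ is parameterized as $c(t)=(x(t),t)$ on a maximal interval $(a,b)\ni t_p$ with $x$ a $1$-Lipschitz map to $\sph^n_+$. The case $p\in\Sigma$ is trivial and the two sides of $\Sigma$ are symmetric under time-reversal, so I may assume $p=(x_p,t_p)$ lies strictly above $\Sigma$; acausality of $\Sigma$ then forces $I(p)\cap\Sigma=I^-(p)\cap\Sigma$.

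For the reverse implication, assume $I^-(p)\cap\Sigma$ is precompact and let $c$ be an inextensible causal curve through $p$, with past portion defined on $(a,t_p]$. The function $g(t):=t-u(x(t))$ is strictly increasing---combining $d_{\sph^n}(x(s),x(t))\le t-s$ with the strict $1$-Lipschitzness of $u$---and $g(t_p)>0$. If $a=-\infty$, boundedness of $u$ forces $g(t)\to-\infty$ and $c$ meets $\Sigma$ at the unique zero of $g$. If $a>-\infty$, inextensibility forces $x(t)\to x_\infty\in\pd\sph^n_+$; supposing toward a contradiction that $c$ avoids $\Sigma$ (i.e.\ $g>0$ on $(a,t_p]$), the triangle inequality $d_{\sph^n}(x_p,x(t))\le t_p-t$ gives $t_p-u(x(t))=(t_p-t)+g(t)>d_{\sph^n}(x_p,x(t))$, so the points $(x(t),u(x(t)))$ lie in $I^-(p)\cap\Sigma$. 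But they escape every compact subset of $\hypu^{n,1}$ as $t\to a^+$, contradicting precompactness.

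For the direct implication, assume $p\in D(\Sigma)$ and suppose toward a contradiction that $I^-(p)\cap\Sigma$ is not precompact. Choose $q_n=(x_n,u(x_n))\in I^-(p)\cap\Sigma$ with $x_n\to x_\infty\in\pd\sph^n_+$ and, by boundedness of $u$, also $u(x_n)\to u_\infty\in\R$. For each $n$, $q_n\in I^-(p)$ yields a timelike curve $c_n$ from $q_n$ to $p$ which I parameterize by the time coordinate $t\in[u(x_n),t_p]$ as $c_n(t)=(y_n(t),t)$. The projections $y_n$ are $1$-Lipschitz into the compact $\sph^n$, so Arzel\`a--Ascoli plus a diagonal argument on the exhaustion $[u_\infty+\epsilon,t_p]$ produces a causal limit curve $c_\infty(t)=(y_\infty(t),t)$ on $(u_\infty,t_p]$ with $c_\infty(t_p)=p$ and $c_\infty(t)\to(x_\infty,u_\infty)\in\pd\hypu^{n,1}$ as $t\to u_\infty^+$, making $c_\infty$ past-inextensible. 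The crux is to show $c_\infty\cap\Sigma=\emptyset$ in $\hypu^{n,1}$: by acausality of $\Sigma$ each $c_n$ meets $\Sigma$ only at $q_n$, so $u(y_n(t))<t$ on $(u(x_n),t_p]$, and the limit yields $u(y_\infty(t))\le t$ on $(u_\infty,t_p]$. If equality held at some $t^*\in(u_\infty,t_p)$, then for $t<t^*$ close to $t^*$, either $y_\infty(t)=y_\infty(t^*)$, whence $u(y_\infty(t))=t^*>t$, or else strict $1$-Lipschitzness of $u$ together with $d_{\sph^n}(y_\infty(t),y_\infty(t^*))\le t^*-t$ gives $u(y_\infty(t^*))-u(y_\infty(t))<t^*-t$, i.e.\ $u(y_\infty(t))>t$; both contradict $u\circ y_\infty\le\mathrm{id}$. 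Extending $c_\infty$ into the future through $p$ by any future-inextensible timelike curve (which stays above $\Sigma$) produces an inextensible causal curve through $p$ avoiding $\Sigma$, contradicting $p\in D(\Sigma)$. The main obstacle is precisely this interior non-intersection step, where upgrading the limiting weak inequality $u\circ y_\infty\le\mathrm{id}$ to a strict gap forces genuine use of the spacelike (that is, strictly $1$-Lipschitz) nature of $\Sigma$ rather than merely its achronality.
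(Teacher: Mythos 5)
Your proof is correct, but it follows a more explicit route than the paper's. The paper argues softly with causal theory: for the direction (precompact $\Rightarrow p\in D(\Sigma)$) it observes that a future-directed causal curve from $p$ avoiding $\Sigma$ would be trapped in the compact set $\overline{I^+(p)}\cap\overline{I^-(\Sigma)}$ and hence could not be inextensible by Remark~\ref{oss:inext}; for the converse it extracts an ideal point $q\in\overline{I^+(p)}\cap\pd\hypu^{n,1}\cap\overline{I^-(\pd\Sigma)}$ and asserts that an inextensible causal curve from $p$ to $q$ misses $\Sigma$. You instead work entirely in the graph picture: the monotone crossing function $g(t)=t-u(x(t))$ replaces the trapping argument (and in the case $a=-\infty$ it even yields the crossing without using precompactness), while for the converse you construct the offending curve as an Arzel\`a--Ascoli limit of timelike segments ending at escaping points of $I^-(p)\cap\Sigma$, and you upgrade the limiting inequality $u\circ y_\infty\le\mathrm{id}$ to a strict one using strict $1$-Lipschitzness. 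This buys a self-contained verification of exactly the step the paper leaves terse (why the limiting inextensible curve avoids $\Sigma$), at the cost of length. Two small points to tighten: the strict $1$-Lipschitz property of $u$ comes from acausality via Lemma~\ref{lem:graph} (Proposition~\ref{pro:spaceacausal} goes in the opposite direction, from spacelike to acausal); and the limit map $y_\infty$ a priori takes values in the closed hemisphere, so you should either choose the $c_n$ to be constant-speed geodesic interpolations, whose limit only meets $\pd\sph^n_+$ at $t=u_\infty$, or truncate $c_\infty$ at the last time it touches $\pd\sph^n_+$; the truncated curve is still past-inextensible and your non-intersection argument applies to it verbatim.
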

	\begin{proof}
		Without loss of generality, we assume $p$ is in the past of $\Sigma$.
		
		First, assume that $I(p)\cap\Sigma$ is precompact in $\hypu^{n,1}$. It follows that 
		\[\overline{I(p)}\cap\Sigma=\overline{I^+(p)}\cap\Sigma\] 
		is compact in $\hypu^{n,1}$, and so too is $\overline{I^+(p)}\cap\overline{I^-(\Sigma)}$. Hence, any future-directed causal curve starting at $p$ and not intersecting $\Sigma$, is contained in a compact set. Therefore it is not inextensible (Remark~\ref{oss:inext}). It follows that any inextensible future-directed causal curve starting at $p$ must intersect $\Sigma$, that is $p\in D(\Sigma)$.
		
		Conversely, if the intersection is not compact, there exists a point 
		\[
		q\in\overline{I^+(p)}\cap\pd\hypu^{n,1}\cap\overline{I^-(\pd\Sigma)}\ne\emptyset.
		\]
		Any inextensible causal line joining $p$ and $q$ does not meet $\Sigma$, hence $p\notin D(\Sigma)$.
	\end{proof}
	
	In fact, for an entire acausal graph, the domain of dependence only depends on its asymptotic boundary (see \cite[Corollary~3.8]{bonsch}, \cite[Proposition~4.4.6]{bonsep}):	
	\begin{pro}\label{cor:invdep}
		Let $\Sigma$ be an entire acausal graph, then $D(\Sigma)=\Omega(\pd\Sigma)$. In particular, two entire acausal graphs in $\hypu^{n,1}$ share the same domain of dependence if and only if they share the same boundary.
	\end{pro}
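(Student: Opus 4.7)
The plan is to work in the conformal model $\sph_+^n\times\R$ from Lemma~\ref{lem:conformal}, in which $\Sigma=\gr u$ for a strictly $1$-Lipschitz $u\colon\sph_+^n\to\R$ (Proposition~\ref{pro:spaceacausal}), continuously extended to $\overline{\sph_+^n}$ so that $u|_{\partial\sph_+^n}$ parametrises $\partial\Sigma$. The causal relations in this model take an explicit form: a future-directed causal curve from $(x_0,t_0)$ to $(y,s)$ exists if and only if $s-t_0\ge d_{\sph^n}(x_0,y)$, with strict inequality corresponding to the chronological relation. I will prove the two inclusions $D(\Sigma)\subseteq\Omega(\partial\Sigma)$ and $\Omega(\partial\Sigma)\subseteq D(\Sigma)$ separately, relying on Proposition~\ref{pro:depdomain}, which characterizes $D(\Sigma)$ as the set of $p$ for which $I(p)\cap\Sigma$ is precompact in $\hypu^{n,1}$.

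For $\Omega(\partial\Sigma)\subseteq D(\Sigma)$, pick $p=(x_0,t_0)\in\Omega(\partial\Sigma)$ and, up to reversing time, assume $p$ lies below $\Sigma$. If $I^+(p)\cap\Sigma$ were not precompact in $\hypu^{n,1}$, a diverging sequence $(x_n,u(x_n))\in I^+(p)\cap\Sigma$ would have $x_n\to q_1\in\partial\sph_+^n$; passing to the limit in $u(x_n)-t_0>d_{\sph^n}(x_0,x_n)$ yields $u(q_1)-t_0\ge d_{\sph^n}(x_0,q_1)$, exhibiting a causal curve from $p$ to $(q_1,u(q_1))\in\partial\Sigma$ and contradicting $p\in\Omega(\partial\Sigma)$. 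Proposition~\ref{pro:depdomain} then gives $p\in D(\Sigma)$.

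For the reverse inclusion, suppose $p\in D(\Sigma)\setminus\Omega(\partial\Sigma)$ with $p$ below $\Sigma$; the $1$-Lipschitzness of $u$ rules out a past causal curve from $p$ to $\partial\Sigma$, so there exists $(q_1,u(q_1))\in\partial\Sigma$ reached from $p$ by a future causal curve, i.e.\ $u(q_1)-t_0\ge d_{\sph^n}(x_0,q_1)$. The strategy is to produce an inextensible causal curve through $p$ disjoint from $\Sigma$: let $\beta$ be the unit-speed spherical geodesic through $x_0$ aimed at $q_1$, defined on its maximal interval $(t_{**},T_*)$, where $T_*=t_0+d_{\sph^n}(x_0,q_1)\le u(q_1)$ and $t_{**}<t_0$ with $\beta(t_{**}),\beta(T_*)\in\partial\sph_+^n$. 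Then $\gamma(t)=(\beta(t),t)$ is a null curve through $p$, inextensible in $\hypu^{n,1}$. The key computation is that $\phi(t):=t-u(\beta(t))$ has $\phi'(t)=1-\langle\nabla u,\dot\beta\rangle_{\sph^n}>0$ by strict $1$-Lipschitzness and $|\dot\beta|_{\sph^n}=1$; together with $\phi(t_0)=t_0-u(x_0)<0$ and $\lim_{t\to T_*^-}\phi(t)=T_*-u(q_1)\le 0$, strict monotonicity forces $\phi<0$ throughout $(t_{**},T_*)$, whence $\gamma\cap\Sigma=\emptyset$, contradicting $p\in D(\Sigma)$.

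The ``in particular'' statement is a formal consequence of the identity $\overline{\Omega(\Lambda)}\cap\partial\hypu^{n,1}=\Lambda$, immediate from the description $\Omega(\Lambda)=I^+(\gr u^-)\cap I^-(\gr u^+)$ in Lemma~\ref{lem:extremal} together with $u^-=u^+$ on $\partial\sph_+^n$: if $D(\Sigma_1)=D(\Sigma_2)$, then $\Omega(\partial\Sigma_1)=\Omega(\partial\Sigma_2)$ by the first part, and taking asymptotic boundaries yields $\partial\Sigma_1=\partial\Sigma_2$. I expect the main obstacle to be the construction of the disjoint inextensible null geodesic in $D(\Sigma)\subseteq\Omega(\partial\Sigma)$: it relies crucially on \emph{strict} $1$-Lipschitzness (equivalently, $\Sigma$ being spacelike, not merely acausal) to upgrade the non-strict bound $\phi'\ge 0$ to strict monotonicity, avoiding any approximation argument.
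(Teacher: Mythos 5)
The paper does not prove this proposition at all: it is quoted from \cite[Corollary~3.8]{bonsch} and \cite[Proposition~4.4.6]{bonsep}, so your self-contained argument in the conformal model (characterizing $D(\Sigma)$ via Proposition~\ref{pro:depdomain}, and building an inextensible null curve below the graph for the hard inclusion) is a legitimate route and close in spirit to the cited references. Two steps need repair, however. First, the monotonicity of $\phi$: an entire \emph{acausal} graph is the graph of a strictly $1$-Lipschitz function by Lemma~\ref{lem:graph} (not Proposition~\ref{pro:spaceacausal}, which concerns spacelike graphs), so $u$ need not be differentiable; moreover, even for smooth $u$, strict $1$-Lipschitzness in the two-point sense does not give the pointwise bound $|\nabla u|_{\sph^n}<1$ that your inequality $\phi'(t)=1-\langle\nabla u,\dot\beta\rangle_{\sph^n}>0$ requires. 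The fix is to avoid derivatives altogether: for $t_{**}<s<t<T_*$ the points $\beta(s),\beta(t)$ lie in the open hemisphere and $d_{\sph^n}(\beta(s),\beta(t))=t-s<\pi$, so the two-point strict inequality gives $\phi(t)-\phi(s)=(t-s)-\bigl(u(\beta(t))-u(\beta(s))\bigr)>0$ directly, and your limit argument at $T_*$ then goes through verbatim.

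Second, your proof of $D(\Sigma)\sq\Omega(\pd\Sigma)$ only treats $p$ strictly below (and, by time reversal, strictly above) $\Sigma$: if $p\in\Sigma$ then $\phi(t_0)=0$ and the null curve you construct passes through $p\in\Sigma$, so it does not witness $p\notin D(\Sigma)$. You must check separately that $\Sigma\sq\Omega(\pd\Sigma)$, i.e.\ that no causal curve joins a point $(x_0,u(x_0))$ of $\Sigma$ to a point $(q_1,u(q_1))$ of $\pd\Sigma$. This follows from the two ingredients you already use: splitting the geodesic from $x_0$ to $q_1$ at an interior point $\beta(s)$, the strict inequality $u(\beta(s))-u(x_0)<s$ together with the non-strict one $u(q_1)-u(\beta(s))\le d_{\sph^n}(x_0,q_1)-s$ (the extension of $u$ to $\overline{\sph_+^n}$ is only $1$-Lipschitz) gives $u(q_1)-u(x_0)<d_{\sph^n}(x_0,q_1)$, and symmetrically for the past-directed case; so points of $\Sigma$ are never causally related to $\pd\Sigma$. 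With these two adjustments the proof is complete; the ``in particular'' step via $\overline{\Omega(\Lambda)}\cap\pd\hypu^{n,1}=\Lambda$ is fine, and is the same identity the paper itself invokes in Subsection~\ref{sub:ch} and in the existence proof.
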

	
	\subsection{Convex hull}\label{sub:ch}
	A subset $\mathcal{C}$ of $\hypu^{n,1}\cup\pd\hypu^{n,1}$ is \textit{geodesically convex} if any pair of points in $\mathcal{C}$ is joined by at least one geodesic of $\hypu^{n,1}$ and any geodesic connecting them lies in $\mathcal{C}$. It follows that the intersection of geodesically convex sets is still geodesically convex.
	\begin{de}
		For a subset $X$ of $\hypu^{n,1}\cup\pd\hypu^{n,1}$ contained in a geodesically convex set $\mathcal{C}$, the \textit{convex hull} of $X$, denoted $\ch(X)$, is the smallest geodesically convex subset of $\hypu^{n,1}\cup\pd\hypu^{n,1}$ containing $X$.
	\end{de}
	\begin{oss}
		In general, a subset $X$ of $\hypu^{n,1}\cup\pd\hypu^{n,1}$ might admit no convex neighbourhood: indeed, $\hypu^{n,1}\cup\pd\hypu^{n,1}$ is not convex. To check that, take two points $p,q\in\hyp^{n,1}$ which can be connected by no geodesic, whose existence is due to Lemma~\ref{lem:causalprod}: there exists no geodesic connecting any pair of lifting $\tilde{p}$ and $\tilde{q}$ in $\hypu^{n,1}$.
	\end{oss}
	
	Nonetheless, the closure of the invisible domain of an admissible boundary is geodesically convex (\cite[Proposition~3.9]{bonsch}). It follows that, for $X$ an entire spacelike graph or an admissible boundary, the convex hull is well defined. Moreover, one can prove (\cite[Lemma~4.7]{bonsch}) that
	\[\ch(X)=\bigcap_{p\in X}\overline{\mathcal{U}_p}.\] 
	In particular, the convex hull is contained in a fundamental region, so it can be projected to $\hypd^{n,1}$. It turns out that the projection $\psi(\ch(X))\sq\hypd^{n,1}$ is the intersection of $\hypd^{n,1}$ and the convex hull of $X$ in $\R^{n,2}$ (see \cite[Section~4.6]{bonsep}).
	
	One can prove that $\Omega(\Lambda)$ is convex, for any admissible boundary $\Lambda$ (\cite[Proposition~4.6.1]{bonsep}). It follows that $\overline{\Omega(\Lambda)}$ is convex, which implies that $\ch(\Lambda)$ intersects the asymptotic boundary $\pd\hypu^{n,1}$ exactly in $\Lambda$: indeed, by minimality,
		\[
		\Lambda\sq\ch(\Lambda)\cap\pd\hypu^{n,1}\sq\overline{\Omega(\Lambda)}\cap\pd\hypu^{n,1}=\Lambda.
		\]
	\subsection{Past and future part}\label{sub:pastfut}
	We introduce two relevant functions on $\Omega(\Lambda)$ and state some of their main properties. 
	\begin{de}
		For $p\in\Omega(\Lambda)$, we denote $\tau_\past(p)$ the Lorentzian distance of $p$ from $\pd_-\Omega(\Lambda)$, that is
		\[\tau_\past(p):=\sup_{q\in\pd_-\Omega(\Lambda)\cap I^-(q)}\dist(p,q).\]
		Analogously, $\tau_\future$ stands for the Lorentzian distance from $\pd_+\Omega(\Lambda)$.
	\end{de}
	These functions are \textit{time functions}, namely
	\begin{de}\label{de:time}
		A \textit{time function} on a time-oriented Lorentzian manifold $(M,g)$ is a map $\tau\colon M\to\R$ strictly monotone along timelike paths.
	\end{de}
	\begin{oss}
		Usually, in the literature there is a distinction between the cases of strictly increasing and strictly decreasing functions (called \textit{reverse} time functions).
	\end{oss}
	
	Moreover, $\tau_\past$ and $\tau_\future$ have further remarkable properties, for which are known in the literature as cosmological times (see for example \cite{time}), when restricted respectively to	the past and the future of an admissible boundary.
	\begin{de}\label{de:pastfutpart}
	For an admissible boundary $\Lambda$, we define its \textit{past part} and its \textit{future part} to be 
		\begin{align*}
			\past(\Lambda)&:=I^-\left(\pd_+\ch(\Lambda)\right)\cap\Omega(\Lambda);\\
			\future(\Lambda)&:=I^+\left(\pd_-\ch(\Lambda)\right)\cap\Omega(\Lambda).
		\end{align*}
	\end{de}
	To visualize the past and the future of an admissible boundary, see Figure~\ref{fig:pastfut}.
	
	\begin{figure}[h]
		\begin{minipage}[c]{.23\textwidth}
			\centering
			\includegraphics{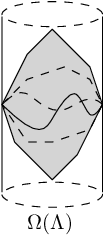}
		\end{minipage}
		\begin{minipage}[c]{.23\textwidth}
			\centering
			\includegraphics{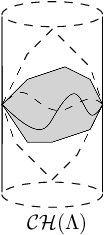}
		\end{minipage}
		\begin{minipage}[c]{.23\textwidth}
			\centering
			\includegraphics{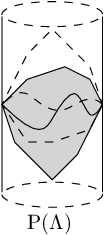}
		\end{minipage}
		\begin{minipage}[c]{.23\textwidth}
			\centering
			\includegraphics{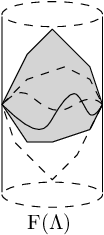}
		\end{minipage}
		\caption{From the left to the right, the invisible domain $\Omega(\Lambda)$, the convex core $\ch(\Lambda)$, the past part $\past(\Lambda)$ and the future part $\future(\Lambda)$ of an admissible boundary $\Lambda$.}\label{fig:pastfut}
	\end{figure}
	
	The following result has been proved in \cite[Proposition~6.19]{benbon} for the 3-dimensional case. However, the argument does not depend on the dimension, as already remarked in \cite{bonsch}.
	
	\begin{pro}\label{pro:benbon}	
		Let $\Lambda$ be an admissible boundary. Then $\tau_\past$ is a cosmological time for $\past(\Lambda)$, taking values in $(0,\pi/2)$. Specifically, for every point $p\in\past(\Lambda)$, there exist exactly two points $\rho^\past_-(p)\in\pd_-\Omega(\Lambda)$ and $\rho^\past_+(p)\in\pd_+\ch(\Lambda)$ such that:
		\begin{enumerate}
			\item $p$ belongs to the timelike segment joining $\rho^\past_-(p)$ and $\rho^\past_+(p)$;
			\item $\tau_\past(p)=\dist(\rho^\past_-(p),p)$;
			\item $\dist(\rho^\past_-(p),\rho^\past_+(p))=\pi/2$;
			\item $P(\rho^\past_\pm(p))$ is a support plane for $\past(\Lambda)$ passing through $\rho^\past_\mp(p)$;
			\item $\tau_\past$ is $C^1$ and $\nablah\tau_\past(p)$ is the unitary timelike tangent vector such that
			\[\exp_p\left(\tau_\past(p)\nablah\tau_\past(p)\right)=\rho^\past_-(p).\]
		\end{enumerate}
	for $\nablah\tau_\past$ the gradient of $\tau_\past$.
	\end{pro}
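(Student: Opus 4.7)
The strategy is to work in the quadric model $\hypd^{n,1}$, where by Corollary~\ref{lem:bound:depdom} the closure $\overline{\Omega(\Lambda)}$ embeds isometrically into a relatively compact region. In this setting, Proposition~\ref{pro:distprod} expresses the Lorentzian distance as $\dist(p,q)=\arccos(-\langle\psi(p),\psi(q)\rangle)$, Corollary~\ref{cor:P} identifies the dual hyperplanes $\mathcal{P}_\pm(q)$ with the linear sections $\hypd^{n,1}\cap\psi(q)^\perp$, and by Section~\ref{sub:ch} the support hyperplanes of $\overline{\ch(\Lambda)}$ are of the form $\psi^{-1}(P_\pm(x))$ for suitable $x\in\hypd^{n,1}$. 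Under this translation the whole problem becomes a question about linear functionals on a compact convex body in $\R^{n,2}$.

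\emph{Existence of $\rho^\past_-(p)$.} Fix $p\in\past(\Lambda)$. The set $\pd_-\Omega(\Lambda)\cap\overline{I^-(p)}$ is nonempty, and its $\psi$-image has compact closure in $\hypd^{n,1}$. Since $q\mapsto\dist(p,q)$ extends continuously through the formula above, the supremum defining $\tau_\past(p)$ is attained at some $\rho^\past_-(p)\in\pd_-\Omega(\Lambda)$.

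\emph{Support plane property and construction of $\rho^\past_+(p)$.} The key geometric claim is that the dual hyperplane $\mathcal{P}_+(\rho^\past_-(p))$ supports $\past(\Lambda)$ from the future, equivalently $\past(\Lambda)\sq\psi^{-1}\{\langle\psi(\rho^\past_-(p)),\cdot\rangle\le 0\}$. Combining the convexity of $\overline{\Omega(\Lambda)}$ with Lemma~\ref{lem:monoton}, a violation of this inclusion would permit a first-order variation of $\rho^\past_-(p)$ along $\pd_-\Omega(\Lambda)$ strictly increasing the distance from $p$, contradicting maximality. The future-directed timelike geodesic from $\rho^\past_-(p)$ through $p$ then meets $\mathcal{P}_+(\rho^\past_-(p))$ at a unique point $\rho^\past_+(p)\in\pd_+\ch(\Lambda)$, at distance $\pi/2$ from $\rho^\past_-(p)$ by Remark~\ref{oss:P+-}. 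This yields items (1)--(4) and the bounds $\tau_\past(p)\in(0,\pi/2)$, the upper bound being strict because $p$ lies strictly in the past of $\pd_+\ch(\Lambda)$.

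\emph{Uniqueness and regularity.} Uniqueness of $\rho^\past_-(p)$ follows from the strict concavity of $\dist(p,\cdot)$ in directions transverse to the calibrating timelike geodesic, together with the support-plane property; uniqueness of $\rho^\past_+(p)$ is then a direct consequence of the geometric description. For item (5), letting $v:=\nablah\tau_\past(p)$ denote the past-directed unit tangent vector to the calibrating geodesic from $p$, the reverse triangle inequality~\eqref{eq:triangleineq} gives $\tau_\past(\exp_p(sv))\ge\tau_\past(p)+s$ for small $s\ge 0$, while standard calibration arguments control the first-order deficit in any other direction. Combined with the continuity of $p\mapsto\rho^\past_-(p)$, deduced from uniqueness and compactness, this yields both the $C^1$-regularity and the exponential formula. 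The main obstacle is the support-plane property of the second step: upgrading the pointwise maximality of $\rho^\past_-(p)$ into a global inclusion of $\past(\Lambda)$ inside a half-space of $\hypd^{n,1}$ requires carefully combining the first-order variational data with the global convexity of $\ch(\Lambda)$, especially since $\pd_-\Omega(\Lambda)$ is only known to be Lipschitz and variations along it must be interpreted with care.
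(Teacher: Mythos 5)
The paper does not actually prove this proposition: it is quoted as a known result, citing \cite[Proposition~6.19]{benbon} for dimension three with the remark that the argument is dimension-independent. So there is no in-paper proof to compare your sketch against; it has to stand on its own, and the gap you acknowledge is real while several others are unflagged.

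The ``first-order variation along $\pd_-\Omega(\Lambda)$'' step is the crux and cannot be carried out as stated: $\pd_-\Omega(\Lambda)$ is only a Lipschitz graph, generically creased along lightlike chords of $\Lambda$, so there is no well-defined tangent at the maximizer along which to vary, and the variational picture collapses exactly where it is most needed; the robust argument in \cite{benbon} uses convexity of $\overline{\Omega(\Lambda)}$ and achronality of its boundary directly, with no differentiation. You also assert without argument that the point at parameter $\pi/2$ along the calibrating geodesic lies in $\pd_+\ch(\Lambda)$: the support-plane property only places $\past(\Lambda)$ weakly on one side of $\mathcal{P}_+(\rho^\past_-(p))$, so it takes a separate step (that the calibrating geodesic stays in $\Omega(\Lambda)$ for all parameters in $(0,\pi/2)$, combined with the duality between support planes of $\pd_-\Omega(\Lambda)$ and points of $\pd_+\ch(\Lambda)$) to pin the exit point onto the support plane. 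A smaller issue: the half-space $\{\pr{\psi(\rho^\past_-(p)),\cdot}\le0\}$ you write down is the closure of the fundamental region $U_{\psi(\rho^\past_-(p))}$, bounded by both dual planes, not the past side of $\mathcal{P}_+(\rho^\past_-(p))$, so the inclusion you state does not by itself encode the support-plane property. Finally, in item~(5) your calibration inequality $\tau_\past(\exp_p(sv))\ge\tau_\past(p)+s$ has the wrong sign --- moving in the past-directed direction $v$ toward $\pd_-\Omega(\Lambda)$ decreases $\tau_\past$, and the reverse triangle inequality~\eqref{eq:triangleineq} in fact gives $\tau_\past(\exp_p(sv))=\tau_\past(p)-s$ along the calibrating geodesic --- and the $C^1$-regularity needs a genuine semi-concavity estimate on $\tau_\past$ rather than an appeal to ``standard calibration arguments.''
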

	\begin{oss}
		A symmetric result holds for $\tau_\future$ in $\future(\Lambda)$.
	\end{oss}
	
	\begin{cor}\label{cor:cosmo}
		Let $\Lambda$ be an admissible boundary. For any $p\in\Omega(\Lambda)$, 
		\[\tau_\future(p)+\tau_\past(p)\ge\frac{\pi}{2}.\]
	\end{cor}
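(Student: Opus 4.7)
The plan is to produce, for any $p\in\Omega(\Lambda)$, a timelike geodesic segment through $p$ with endpoints $q_\pm \in \pd_\pm\Omega(\Lambda)$ of total Lorentzian length at least $\pi/2$. Once this is achieved, the reverse triangle inequality \eqref{eq:triangleineq} yields
\[
	\tau_\past(p)+\tau_\future(p)\ge \dist(q_-,p)+\dist(p,q_+)=\dist(q_-,q_+)\ge\frac{\pi}{2}.
\]
By the symmetry between past and future (time reversal exchanges the roles of $\tau_\past$ and $\tau_\future$), it suffices to treat $p\in\past(\Lambda)$: together with $\future(\Lambda)$ this exhausts $\Omega(\Lambda)$, since in any splitting the vertical fiber through $p=(x,t)$ meets $\pd_\pm\ch(\Lambda)$ at heights $c^\pm(x)$ with $c^-(x)\le c^+(x)$, forcing either $t\le c^+(x)$ or $t\ge c^-(x)$.

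Given $p\in\past(\Lambda)$, Proposition~\ref{pro:benbon} supplies $\rho^\past_-(p)\in\pd_-\Omega(\Lambda)$ and $\rho^\past_+(p)\in\pd_+\ch(\Lambda)\sq\overline{\Omega(\Lambda)}$ lying on a timelike geodesic through $p$ of total length $\pi/2$, so $\dist(p,\rho^\past_+(p))=\pi/2-\tau_\past(p)$. The point $\rho^\past_+(p)$ cannot lie on $\pd_-\Omega(\Lambda)$, as it is strictly in the chronological future of $\rho^\past_-(p)$ on the achronal hypersurface $\pd_-\Omega(\Lambda)$; nor on $\Lambda$, as it is at finite Lorentzian distance from $\rho^\past_-(p)$. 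Therefore either $\rho^\past_+(p)\in\pd_+\Omega(\Lambda)$ and we set $q_+:=\rho^\past_+(p)$, or $\rho^\past_+(p)$ is in the interior of $\Omega(\Lambda)$, in which case Lemma~\ref{lem:extremal} tells us that $\pd_+\Omega(\Lambda)$ is the graph of $u^+$, so the timelike vertical fiber through $\rho^\past_+(p)$ exits $\Omega(\Lambda)$ at some point $q_+\in\pd_+\Omega(\Lambda)\cap I^+(\rho^\past_+(p))$.

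In either case $q_+\in\pd_+\Omega(\Lambda)\cap I^+(p)$, and the reverse triangle inequality (applied with $\rho^\past_+(p)$ chronologically between $p$ and $q_+$, or trivially if $q_+=\rho^\past_+(p)$) gives
\[
	\tau_\future(p)\ge\dist(p,q_+)\ge\dist(p,\rho^\past_+(p))+\dist(\rho^\past_+(p),q_+)\ge\frac{\pi}{2}-\tau_\past(p),
\]
which is the claim. The only subtle point is guaranteeing the existence of $q_+\in\pd_+\Omega(\Lambda)$ in the causal future of $\rho^\past_+(p)$; this is the role of the vertical-fiber argument, which relies crucially on the description of $\Omega(\Lambda)$ as the region strictly between the graphs of the extremal extensions $u^\pm$ (Lemma~\ref{lem:extremal}).
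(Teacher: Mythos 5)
Your core argument is sound and is a genuine variant of the paper's proof: for $p\in\past(\Lambda)$ you use only the past cosmological time of Proposition~\ref{pro:benbon}, push the retraction point $\rho^\past_+(p)$ up to some $q_+\in\pd_+\Omega(\Lambda)\cap I^+(p)$ (trivially, or along a vertical fiber), and conclude via $\tau_\future(p)\ge\dist(p,q_+)\ge\dist(p,\rho^\past_+(p))\ge\pi/2-\tau_\past(p)$, whereas the paper reduces to $p\in\ch(\Lambda)$ and plays the two retractions $\rho^\past_+(p)\in\pd_+\ch(\Lambda)$ and $\rho^\future_-(p)\in\pd_-\ch(\Lambda)$ against each other with the reverse triangle inequality, never needing to reach $\pd_+\Omega(\Lambda)$. (The opening ``plan'' display contains an unjustified equality $\dist(q_-,p)+\dist(p,q_+)=\dist(q_-,q_+)$ --- the reverse triangle inequality only gives $\le$ --- but your executed argument does not rely on it.)

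There is, however, a genuine gap in the reduction. You claim $\Omega(\Lambda)=\past(\Lambda)\cup\future(\Lambda)$, justified by ``either $t\le c^+(x)$ or $t\ge c^-(x)$''; but $\past(\Lambda)$ and $\future(\Lambda)$ are defined through the \emph{open} chronological sets $I^-\left(\pd_+\ch(\Lambda)\right)$ and $I^+\left(\pd_-\ch(\Lambda)\right)$, so membership requires the strict inequalities $t<c^+(x)$, respectively $t>c^-(x)$. Both fail when $c^-(x)=c^+(x)=t$, i.e.\ when $p\in\pd_+\ch(\Lambda)\cap\pd_-\ch(\Lambda)$: since each boundary component is achronal, such a $p$ lies in neither set. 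This case does occur for admissible boundaries: if $\Lambda=\pd\mathcal{P}$ for a totally geodesic spacelike hyperplane $\mathcal{P}$, then $\ch(\Lambda)=\mathcal{P}\cup\Lambda$, $\pd_\pm\ch(\Lambda)=\mathcal{P}$, and $\past(\Lambda)\cup\future(\Lambda)=\Omega(\Lambda)\setminus\mathcal{P}$, so your proof says nothing for $p\in\mathcal{P}$, the most basic example of all. The repair is exactly the paper's reduction step: if $p\notin\past(\Lambda)$, then $\tau_\past(p)\ge\pi/2$ (by the cosmological-time description of Proposition~\ref{pro:benbon}, $\past(\Lambda)$ is where $\tau_\past<\pi/2$), and since $\tau_\future\ge0$ the inequality holds trivially; symmetrically if $p\notin\future(\Lambda)$. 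With that one observation added, your argument covers all of $\Omega(\Lambda)$.
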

\begin{figure}[h]
	\includegraphics{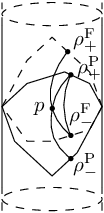}
	\caption{The two longest lines are the geodesic realizing the distance $\pi/2$ between the two pairs $\rho^\past_\pm(p)$ and $\rho^\future_\pm(p)$. The third one realizes the distance between $\rho^\past_+(p)$ and $\rho^\future_-(p)$.}\label{fig:dimtimesomma}
\end{figure}
	\begin{proof}
		It suffices to check the statement for $p\in\ch(\Lambda)$, that is $\past(\Lambda)\cap\future(\Lambda)$ (see Figure~\ref{fig:dimtimesomma}): indeed, if $p$ is not contained in $\past(\Lambda)$, Proposition~\ref{pro:benbon} ensures that $\tau_\past(p)\ge\pi/2$, and $\tau_\future$ is non-negative by construction. The same argument applies if $p\notin\future(\Lambda)$.
		
		By Proposition~\ref{pro:benbon}, it follows also that
		\begin{align*}
			\tau_\past(p)&=\frac{\pi}{2}-\dist(p,\rho^\past_+(p)),\\
			\tau_\future(p)&=\frac{\pi}{2}-\dist(p,\rho^\future_-(p)),
		\end{align*}
		where $\rho^\past_+(p)$ and $\rho^\future_-(p)$ are respectively the retractions on $\pd_+\ch(\Lambda)$ and $\pd_-\ch(\Lambda)$) induced by $\tau_\past$ and $\tau_\past$. We deduce that
		\begin{equation*}
			\tau_\past(p)+\tau_\future(p)=\pi-\dist(p,\rho^\past_+(p))-\dist(p,\rho^\future_-(p)).
		\end{equation*}
		The inverse triangle inequality (Equation~\eqref{eq:triangleineq}) concludes the proof:
		\begin{align*}
			&\dist\left(p,\rho^\past_+(p)\right)+\dist\left(\rho^\future_-(p),p\right)\le\dist\left(\rho^\future_-(p),\rho^\past_+(p)\right)\\
			&\le\dist\left(\rho^\future_-(p),\pd_+\ch(\Lambda)\right)=\frac{\pi}{2}-\tau_\past\left(\rho^\future_-(p)\right)\le\frac{\pi}{2}.
		\end{align*}
	\end{proof}
	
	The level sets of these time functions will be used as barriers in Section~\ref{sec:ex}.
	\begin{de}\label{de:w}
		For $\theta\in[0,\pi/2]$, we denote $\barrp{\theta}$ the hypersurface at Lorentzian distance $\theta$ from $\pd_+\ch(\Lambda)$, that is the level sets \[\left\{\tau_\past=\frac{\pi}{2}-\theta\right\}.\] 
		In particular, $\barrp0=\pd_+\ch(\Lambda)$, $\barrp{\pi/2}=\pd_-\Omega(\Lambda)$.
		
		Analogously, we denote $\barrf{\theta}$ the hypersurface at Lorentzian distance $\theta$ from $\pd_-\ch(\Lambda)$.
	\end{de}
	
	\begin{lem}
		Let $\Lambda$ be an admissible boundary, then $\barrp{\theta}$ and $\barrf{\theta}$ are spacelike Cauchy hypersurfaces for $\Omega(\Lambda)$, for any $\theta\in(0,\pi/2)$.
	\end{lem}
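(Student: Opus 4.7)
The plan is to verify both claims for $\barrp\theta$; the argument for $\barrf\theta$ is symmetric, via the analog of Proposition~\ref{pro:benbon} for $\tau_\future$ on $\future(\Lambda)$. The strategy is to realize $\barrp\theta$ as a regular level set of $\tau_\past$, promote it to an entire acausal graph whose asymptotic boundary is exactly $\Lambda$, and then invoke Proposition~\ref{cor:invdep}.

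First I would check that $\barrp\theta$ is spacelike. By item~(5) of Proposition~\ref{pro:benbon}, $\tau_\past$ is $C^1$ on $\past(\Lambda)$ with unit timelike gradient $\nablah\tau_\past$, so $\pi/2-\theta\in(0,\pi/2)$ is a regular value and $\barrp\theta=\tau_\past^{-1}(\pi/2-\theta)$ is a $C^1$ hypersurface whose tangent space at every point is the $g$-orthogonal of the timelike vector $\nablah\tau_\past$; hence $\barrp\theta$ is spacelike.

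Second I would show that $\barrp\theta$ is an entire acausal graph. Achronality is immediate: if two points of $\barrp\theta$ were joined by a timelike curve, strict monotonicity of $\tau_\past$ along timelike paths (a consequence of $\nablah\tau_\past$ being timelike) would force their $\tau_\past$-values to differ, contradicting that both equal $\pi/2-\theta$. For the entire-graph property, fix a splitting so that $\pd_-\Omega(\Lambda)=\gr u^-$ (Lemma~\ref{lem:extremal}), and consider any vertical timelike line $\ell_x=\{x\}\times\R$. Combining items~(1)--(3) of Proposition~\ref{pro:benbon} gives $\tau_\past(p)=\pi/2-\dist(p,\rho^\past_+(p))$ on $\past(\Lambda)$, so $\tau_\past$ continuously tends to $0$ approaching $\pd_-\Omega(\Lambda)$ and to $\pi/2$ approaching $\pd_+\ch(\Lambda)$. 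Since the projection of $\ch(\Lambda)$ to $\hyp^n$ is onto by geodesic convexity, $\ell_x$ meets $\ch(\Lambda)$ in a non-degenerate interval, and the portion of $\ell_x$ in $\past(\Lambda)$ effectively runs between $\pd_-\Omega(\Lambda)$ and $\pd_+\ch(\Lambda)$. The intermediate value theorem then produces a unique intersection with $\barrp\theta$, so $\barrp\theta$ is an entire graph. Proposition~\ref{pro:emb} yields proper embeddedness and Proposition~\ref{pro:spaceacausal} upgrades achronality to acausality; write $\barrp\theta=\gr u_\theta$.

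To identify the asymptotic boundary, the inclusion $\barrp\theta\sq\Omega(\Lambda)$ forces $u^-\le u_\theta\le u^+$ pointwise on $\hyp^n$; extending all three as $1$-Lipschitz maps to $\pd\hyp^n$ (Lemma~\ref{lem:graph}) and using $u^\pm|_{\pd\hyp^n}=u$ where $\Lambda=\gr u$, the sandwich forces $u_\theta|_{\pd\hyp^n}=u$, whence $\pd\barrp\theta=\Lambda$. Proposition~\ref{cor:invdep} applied to the entire acausal graph $\barrp\theta$ then gives $D(\barrp\theta)=\Omega(\pd\barrp\theta)=\Omega(\Lambda)$, which is precisely the Cauchy property of Definition~\ref{de:GH}. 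The main technical obstacle is the entire-graph step: one must use the foliation-like structure of $\past(\Lambda)$ from Proposition~\ref{pro:benbon} together with the geodesic convexity of $\ch(\Lambda)$ to ensure the intermediate value argument is legitimate.
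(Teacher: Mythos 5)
Your route is genuinely different from the paper's. The paper verifies the Cauchy property directly: it takes an arbitrary inextensible future-directed causal curve $c$ in $\Omega(\Lambda)$, uses Remark~\ref{oss:inext} together with the definition of the invisible domain to show $c$ has finite endpoints on $\pd_-\Omega(\Lambda)$ and $\pd_+\Omega(\Lambda)$, notes that $c$ must cross $\pd_+\ch(\Lambda)$ at some parameter $t_+$, and then applies the intermediate value theorem to $\tau_\past\circ c$ on $(a,t_+)$ to produce an intersection with $\barrp\theta$. At no point does the paper need to know \emph{a priori} that $\barrp\theta$ is an entire graph. You instead try to establish that $\barrp\theta$ is an entire acausal graph, identify its asymptotic boundary with $\Lambda$, and then invoke Proposition~\ref{cor:invdep}. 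That scheme would work, but your justification of the entireness step has a gap.

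Concretely, the sentence ``since the projection of $\ch(\Lambda)$ to $\hyp^n$ is onto by geodesic convexity, $\ell_x$ meets $\ch(\Lambda)$ in a non-degenerate interval'' is not correct. Geodesic convexity alone says nothing about the projection being onto, and even granting ontoness, $\ell_x\cap\ch(\Lambda)$ can degenerate to a single point: if $\Lambda$ bounds a totally geodesic spacelike hypersurface $\mathcal{P}$, then $\ch(\Lambda)=\mathcal{P}$ and $\ell_x\cap\ch(\Lambda)$ is a point for every $x$. What your intermediate-value argument actually needs is that $\ell_x\cap\past(\Lambda)\neq\emptyset$, i.e.\ that the fiber $\pd_+\ch(\Lambda)\cap\ell_x$ lies strictly in the future of $\pd_-\Omega(\Lambda)\cap\ell_x$; this is the substantive point (it amounts to saying $\pd_+\ch(\Lambda)$ never touches $\gr u^-$ inside $\hypu^{n,1}$), and it is not established by what you wrote. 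It is in fact a \emph{consequence} of the statement being proved — once $\barrp\theta$ is known to be a Cauchy hypersurface of $\Omega(\Lambda)$, every $\ell_x\cap\Omega(\Lambda)$ is an inextensible causal curve in $\Omega(\Lambda)$ and so must meet $\barrp\theta\subset\past(\Lambda)$ — so deriving entireness first, the way you do, risks circularity unless you give an independent argument (e.g.\ via the retraction and support-plane structure of Proposition~\ref{pro:benbon}). The paper's ordering of the two facts is what makes the proof go through cleanly.
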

	\begin{proof}
		Without loss of generality, we fix $\theta\in(0,\pi/2)$ and prove the statement for $\barrp\theta$, which is the level set of a $C^1-$submersion. Its normal vector is $\nablah\tau_\past$, which is timelike by Proposition~\ref{pro:benbon}, hence $\barrp\theta$ is a spacelike hypersurface.
		
		To prove that it is a Cauchy hypersurface, take a point $p\in\Omega(\Lambda)$ and an inextensible future-directed causal path $c\colon(a,b)\to\Omega(\Lambda)$
		such that $c(0)=p$: we want to show that $c$ meets $\barrp\theta$.
		
		Being contained in $\Omega(\Lambda)$, $c$ is future-inextensible in $\hypu^{n,1}$ if and only if $c$ accumulates at $\Lambda$ (Remark~\ref{oss:inext}), which is impossible, since there exists no causal path connecting $p$ to $\Lambda$, by definition of invisible domain. Hence, up to reparameterization, we can assume $a,b\in\R$ are finite values, $c(a)\in\pd_-\Omega(\Lambda)$ and $c(b)\in\pd_+\Omega(\Lambda)$. 
		
		The boundary $\pd_+\ch(\Lambda)$ disconnects the future and the past boundary of $\Omega(\Lambda)$, hence there exists $t_+\in(a,b]$ such that $c(t_+)\in\pd_+\ch(\Lambda)$. It follows that \[\tau_\past\circ c\colon(a,t_+)\to(0,\pi/2)\]
		is a continuous function from a connected interval whose limit values are $0$ and $\pi/2$, hence it is surjective, namely $c(t)$ crosses $\barrp\theta$.
	\end{proof}
	
	\begin{cor}\label{cor:admspace}
		$\Lambda$ is an admissible boundary in $\hypu^{n,1}$ if and only if there exists a properly embedded spacelike hypersurface $\Sigma$ such that $\pd\Sigma=~\Lambda$.
	\end{cor}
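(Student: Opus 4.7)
The plan is to treat the two implications separately. The $(\Leftarrow)$ direction is book-keeping of earlier facts: if $\Sigma$ is a properly embedded spacelike hypersurface with $\pd\Sigma=\Lambda$, Proposition~\ref{pro:spaceacausal} gives acausality, and Proposition~\ref{pro:emb} together with Lemma~\ref{lem:graph} identifies $\Sigma$ with the graph of a strictly $1-$Lipschitz $u\colon\hyp^n\to\R$. Since $\Sigma$ is spacelike, it is not a totally geodesic degenerate hypersurface, so Lemma~\ref{lem:osc} yields $\osc(u)<\pi$; tracing on $\pd\hyp^n$ produces a $1-$Lipschitz $f$ with $\osc(f)\le\osc(u)<\pi$, so $\Lambda$ is admissible.

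For the $(\Rightarrow)$ direction I would take $\Sigma:=\barrp\theta$ for any fixed $\theta\in(0,\pi/2)$. The previous lemma already provides $\Sigma$ as a spacelike Cauchy hypersurface of $\Omega(\Lambda)$, so the task reduces to promoting it to a properly embedded hypersurface of $\hypu^{n,1}$ with $\pd\Sigma=\Lambda$. By Proposition~\ref{pro:emb} it suffices to check that $\Sigma$ is achronal in $\hypu^{n,1}$ and that it is an entire graph whose trace at infinity equals $\Lambda$. Achronality in $\hypu^{n,1}$ is obtained by combining the geodesic convexity of $\Omega(\Lambda)$ with the fact that $\tau_\past$ is a time function on $\Omega(\Lambda)$: two points of $\Sigma$ time-related in $\hypu^{n,1}$ would be joined by a timelike geodesic which, by convexity, remains in $\Omega(\Lambda)$, contradicting the strict monotonicity of $\tau_\past$ along it. For the graph property, fix a splitting with $\Lambda=\gr f$ and use the description $\Omega(\Lambda)=\{(x,t):u^-(x)<t<u^+(x)\}$ coming from Lemma~\ref{lem:extremal}: each vertical segment $\{x\}\times(u^-(x),u^+(x))$ is an inextensible timelike curve in $\Omega(\Lambda)$, which by the Cauchy property meets $\Sigma$, uniquely by achronality, so $\Sigma=\gr v$ with $u^-\le v\le u^+$. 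Both $u^\pm$ extend $f$ continuously to $\pd\hyp^n$ by their Lipschitz regularity, hence so does $v$, yielding $\pd\Sigma=\gr f=\Lambda$.

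The main obstacle I anticipate is precisely this passage from \emph{Cauchy in $\Omega(\Lambda)$} to \emph{properly embedded in $\hypu^{n,1}$}: closedness in the open subset $\Omega(\Lambda)$ is strictly weaker than in the ambient space, and neither global achronality nor the entire graph property follows formally from the Cauchy hypothesis. These two points rely on genuinely different inputs—the geodesic convexity of $\Omega(\Lambda)$ for the former and the extremal extension description of its interior for the latter—but once both are settled Proposition~\ref{pro:emb} finishes the argument with no further work.
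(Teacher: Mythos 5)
Your proof takes the same route as the paper: for $(\Leftarrow)$ you invoke acausality and the oscillation bound, and for $(\Rightarrow)$ you take $\Sigma=\barrp\theta$, which is exactly what the paper does. The paper's proof is one sentence long and simply asserts that the level sets $\barrp\theta$ are properly embedded hypersurfaces with boundary $\Lambda$; you correctly recognise that passing from \emph{spacelike Cauchy hypersurface of $\Omega(\Lambda)$} to \emph{properly embedded in $\hypu^{n,1}$ with $\pd\Sigma=\Lambda$} is a genuine gap and fill it in. Your entire-graph argument via the vertical segments $\{x\}\times(u^-(x),u^+(x))$ and the Cauchy property is clean and correct.

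Two small imprecisions in your achronality step are worth fixing. First, you rule out time-\emph{related} pairs, i.e.\ pairs joined by a timelike \emph{geodesic}, whereas achronal means no timelike \emph{curve}; to bridge this, note that any two points of $\Omega(\Lambda)$ joined by a timelike curve lie in $I^-(p_+)\cap I^+(p_-)$ by Corollary~\ref{lem:invdom}, so by Remark~\ref{oss:dist} they are joined by a maximising timelike geodesic, and then geodesic convexity keeps it inside $\Omega(\Lambda)$. Second, you appeal to "$\tau_\past$ being a time function on $\Omega(\Lambda)$", but Proposition~\ref{pro:benbon} only establishes this on $\past(\Lambda)$, and the connecting geodesic may exit $\past(\Lambda)$. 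It is cleaner to bypass the time-function language entirely and apply the reverse triangle inequality~\eqref{eq:triangleineq} directly: if $p,q\in\barrp\theta$ with $q\in I^+(p)$, then for $r\in\pd_-\Omega(\Lambda)\cap I^-(p)$ one has $\dist(r,q)\ge\dist(r,p)+\dist(p,q)$, whence $\tau_\past(q)\ge\tau_\past(p)+\dist(p,q)>\tau_\past(p)$, contradicting that both lie on the same level set. With these two repairs your argument is complete and considerably more careful than the paper's terse justification.
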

	\begin{proof}
		The boundary of a properly embedded spacelike hypersurface is admissibile by Proposition~\ref{pro:spaceacausal}. Conversely, the level sets $W^\pm_\theta$ are properly embedded spacelike hypersurfaces with boundary $\Lambda$.
	\end{proof}
	
	\section{Maximum principles for mean curvature}\label{sec:uni}
	The main object of this article are properly embedded spacelike hypersurfaces with constant mean curvature (hereafter CMC). The second fundamental form of a spacelike $C^2-$hypersurface $\Sigma$ is the projection of the ambient Levi-Civita connection $\nablah$ on the future-directed normal space $N\Sigma$, which is the symmetric $(0,2)-$tensor on $T\Sigma$ defined by
	\begin{equation*}\label{de:IIf}
		\sff(v,w):=\pr{\nablah_v N,w},
	\end{equation*}
for $N$ the unitary future-directed vector field on $\Sigma$.
	
	\begin{de}\label{de:curvmedia}
		The \textit{mean curvature} $H$ of $\Sigma$ is the trace of the second fundamental form with respect to the induced metric. For an orthonormal frame $v_i$ of $T\Sigma$, 
		\begin{equation*}
			H:=\sum_{i=1}^n\sff(v_i,v_i).
		\end{equation*}
	\end{de}
	
	\begin{oss}
		The mean curvature is a function $H\colon\Sigma\to\R$, which is invariant under time-orientation preserving isometries of $\hypu^{n,1}$. Time-orientation reversing isometries change the sign of the mean curvature.
	\end{oss}	

	The first part of this section is devoted to prove a maximum principle (Theorem~\ref{teo:ord}), and to show that the uniqueness part of Theorem~\ref{teo:A} follows as a corollary of Theorem~\ref{teo:ord}. Finally, we give a stronger version of Theorem~\ref{teo:ord}, where one of the two hypersurfaces is a CMC (Proposition~\ref{lem:umbilical}), whose corollary (Corollary~\ref{cor:diam-barriera})  will play a key role in the proof of the existence part of Theorem~\ref{teo:A}, provided in Section~\ref{sec:ex}.
	
	\begin{oss}\label{oss:causalextended}
		Hereafter, a hypersurface $\Sigma$ will be a $C^2-$submanifold of co-dimension 1 without boundary. We denote $\pd\Sigma$ its topological frontier in $\hypu^{n,1}\cup\pd\hypu^{n,1}$. Moreover, we will consider the causal structure extended to the boundary, \emph{i.e} for $X\sq\hypu^{n,1}\cup\pd\hypu^{n,1}$, $I(X)$ refers to cone of $X$ in $\hypu^{n,1}\cup\pd\hypu^{n,1}$.
	\end{oss}
	
	\subsection{Weak maximum principle}
	Hereafter, for a hypersurface $\Sigma$, we will denote $\nabla$ the intrinsic Levi-Civita connection and $\nablah$ the exterior Levi-Civita connection, namely the connection of $\hypu^{n,1}$ and $\hypd^{n,1}$.
	The main result of this section is the following maximum principle:
	\begin{teo}\label{teo:ord}
		Let $\Sigma_1$ be a spacelike graph and $\Sigma_2$ be an entire spacelike graph with mean curvature respectively $H_1,H_2$.
		\begin{enumerate}
			\item If $H_1(p)\ge H_2(q)$, for every pair $(p,q)\in\Sigma_1\times\Sigma_2$ of time-related points, and $\pd\Sigma_1\sq\overline{I^-(\Sigma_2)}$, then $\Sigma_1\sq\overline{I^-(\Sigma_2)}$.
			\item If $H_1(p)\le H_2(q)$, for every pair $(p,q)\in\Sigma_1\times\Sigma_2$ of time-related points, and $\pd\Sigma_1\sq\overline{I^+(\Sigma_2)}$, then $\Sigma_1\sq\overline{I^+(\Sigma_2)}$.
		\end{enumerate}
	\end{teo}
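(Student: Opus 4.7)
Focus on statement~(1), as~(2) follows by reversing the time orientation, which preserves spacelikeness and swaps the sign of mean curvature. The plan is a proof by contradiction: assume $\Sigma_1\not\sq\overline{I^-(\Sigma_2)}$, so the open set $\Sigma_1\cap I^+(\Sigma_2)$ is non-empty, and drive this to an inequality incompatible with the hypothesis on mean curvatures by producing a pair of tangent spacelike hypersurfaces on which to apply a classical $C^2$ comparison principle.

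The first and most delicate step is to locate an interior maximizing pair. Since $\Sigma_2$ is entire and acausal (Proposition~\ref{pro:spaceacausal}), hence Cauchy for $\Omega(\pd\Sigma_2)$ (Proposition~\ref{cor:invdep}), the Lorentzian distance $d(p):=\sup_{q\in\Sigma_2\cap I^-(p)}\dist(q,p)$ is well-defined on $\Sigma_1\cap\overline{I^+(\Sigma_2)}$, bounded above by $\pi/2$ in view of Corollary~\ref{lem:invdom}, and realized by a timelike geodesic meeting $\Sigma_2$ orthogonally by Remark~\ref{oss:dist}. Set $D:=\sup_{\Sigma_1}d$, which we are assuming is positive. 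The claim is that $D$ is attained at a pair $(p_0,q_0)$ in the interior of $\Sigma_1\times\Sigma_2$. The boundary hypothesis $\pd\Sigma_1\sq\overline{I^-(\Sigma_2)}$ forces $d\to 0$ near $\pd\Sigma_1$, so any maximizing sequence $p_k$ stays in a compact subset of $\Sigma_1$ and admits an interior accumulation point $p_0$. The harder piece is ruling out that the realizing points $q_k\in\Sigma_2$ escape to $\pd\hypu^{n,1}$, and this is where isometries enter: by acting with a suitable element of $\Isom(\hypu^{n,1})$ one can renormalize so the realizing points stay in a fixed compact region, and a standard limit argument then produces the desired interior pair $(p_0,q_0)$.

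Once $(p_0,q_0)$ is in hand, interior maximality of $d$ at $p_0$ forces the realizing geodesic $\gamma$ of length $D$ to meet $\Sigma_1$ orthogonally at $p_0$, so that $T_{p_0}\Sigma_1$ coincides with the parallel transport of $T_{q_0}\Sigma_2$ along $\gamma$. Since $D<\pi/2$ is below the focal distance of timelike geodesics in $\hypu^{n,1}$ (Remark~\ref{oss:P+-}), the future timelike normal exponential flow of $\Sigma_2$ by parameter $D$ produces a $C^2$ spacelike hypersurface $\Sigma_2^D$ through $p_0$ tangent to $\Sigma_1$. For $p\in\Sigma_1$ close to $p_0$, the inequality $d(p)\le D$ translates into $p\in\overline{I^-(\Sigma_2^D)}$, so $\Sigma_1$ is locally in the past of $\Sigma_2^D$ at $p_0$.

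The conclusion comes from the classical geometric maximum principle: two $C^2$ spacelike hypersurfaces tangent at a point with one locally in the past of the other satisfy the corresponding inequality on the mean curvatures computed with the future-directed normal, giving $H_1(p_0)\le H_{\Sigma_2^D}(p_0)$. Combined with the Riccati equation $B'(t)=-B(t)^2-\mathrm{Id}$ for the shape operator along the timelike normal flow in the constantly curved $(-1)$ space $\hypu^{n,1}$, each principal curvature strictly decreases with $t>0$, hence $H_{\Sigma_2^D}(p_0)<H_2(q_0)$; chaining the two estimates yields $H_1(p_0)<H_2(q_0)$, contradicting the hypothesis. I expect the main obstacle to lie in the interior-maximum step: since $\Sigma_2$ is non-compact, the combination of the finite bound $D\le\pi/2$ with the transitive action of $\Isom(\hypu^{n,1})$ is the subtle ingredient needed to rule out escape of the realizing points to the conformal boundary, while the concluding comparison is a purely local statement requiring no completeness assumption.
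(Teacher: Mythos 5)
Your overall architecture mirrors the paper's: both arguments locate an interior maximizing pair $(\bar p,\bar q)\in\Sigma_1\times\Sigma_2$ for a scalar quantity measuring temporal separation, then apply a local comparison at that pair. Where you diverge is in the choice of quantity and the comparison mechanism. The paper projects the whole problem into the quadric $\hypd^{n,1}$ and maximizes $F(p,q)=\pr{p,q}=-\cos\dist(p,q)$, which lives naturally on $\Sigma_1\times\Sigma_2$ and whose Hessian at a critical point can be computed directly in terms of the two second fundamental forms; the mean curvature hypothesis then drops out of a one-line trace estimate. You instead maximize $d(p)=\sup_q\dist(q,p)$ over $\Sigma_1$, slide $\Sigma_2$ forward along its normal flow by the optimal parameter $D$, and compare $\Sigma_1$ against the equidistant leaf $\Sigma_2^D$ via a tangency principle plus the Riccati equation $B'=-B^2-\mathrm{Id}$. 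This is a legitimate alternative route in spirit, but it carries extra analytic burden that the paper's $F$ sidesteps, and as sketched it has real gaps.

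The most serious gap is the interior-maximum step. You wave at it with ``by acting with a suitable element of $\Isom(\hypu^{n,1})$ one can renormalize so the realizing points stay in a fixed compact region.'' That is not what is needed and not what works: the issue is to show that the set $\mathcal{A}=\{(p,q):p\in I^+(q)\}$ is relatively compact in $\Sigma_1\times\Sigma_2$, and the paper's Step~2 does this by a purely topological argument (the boundary hypothesis makes $\Sigma_1\cap I^-(\Sigma_2)$ an open collar of $\pd\Sigma_1$, forcing $\mathcal{A}_1$ precompact; a slightly modified argument handles $\mathcal{A}_2$). Isometries play no role here, and invoking them without saying which element and how is a placeholder for the argument rather than the argument itself. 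You also skip the paper's Step~1 reduction from $\pd\Sigma_1\sq\overline{I^-(\Sigma_2)}$ to the strict inclusion $\pd\Sigma_1\sq I^-(\Sigma_2)$, which is needed to make $d\to0$ at $\pd\Sigma_1$; without it, your claim ``the boundary hypothesis forces $d\to0$'' is not literally true.

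The second gap is your regularity claim for $\Sigma_2^D$. You justify it by asserting $D\le\pi/2$ ``in view of Corollary~\ref{lem:invdom}'' and that this is ``below the focal distance.'' Both halves are off. Corollary~\ref{lem:invdom} bounds $\dist$ by $\pi$, not $\pi/2$, and nothing in the hypotheses forces $D<\pi/2$ in general. More to the point, the first focal distance of $\Sigma_2$ along the normal geodesic is \emph{not} universally $\pi/2$: solving $B'=-B^2-\mathrm{Id}$ gives blow-up at $t=\pi/2+\arctan\lambda_0$ for an eigenvalue $\lambda_0$ of $B(q_0)$, so it can be anywhere in $(0,\pi)$ since $\Sigma_2$ is merely spacelike (no curvature bound is assumed in Theorem~\ref{teo:ord}). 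What actually rescues the equidistant hypersurface is that an interior maximizer of the Lorentzian distance to $\Sigma_2$ cannot be preceded by a focal point along the realizing geodesic (a Morse-index/second-variation argument), and one must further rule out a focal point exactly at $t=D$ using the additional maximization over $p\in\Sigma_1$. None of this is free, and you do not supply it. The paper's $F$-based argument avoids the issue entirely: it needs only that the maximizing pair lies strictly inside the time-related region (so $F\in(-1,1)$), with no constraint involving focal loci.

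The final comparison step is sound: at a tangency with $\Sigma_1$ locally in the past of the equidistant leaf, the weak comparison gives $H_1(p_0)\le H_{\Sigma_2^D}(p_0)$, and the Riccati monotonicity gives $H_{\Sigma_2^D}(p_0)<H_2(q_0)$; this matches the paper's sign convention $H_\pm(\theta)=\mp n\tan\theta$ from Lemma~\ref{lem:equi}. But because the two gaps above sit upstream of it, the proof as a whole does not yet stand.
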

	
First, let us show how uniqueness follows from Theorem~\ref{teo:ord}.
	\begin{repthmx}{teo:A}[Uniqueness]
		Let $\Sigma_1$, $\Sigma_2$ be two entire spacelike graphs in $\hypu^{n,1}$ sharing the same boundary and having the same constant mean curvature. Then $\Sigma_1=\Sigma_2$.
	\end{repthmx}
	\begin{proof}
	The pair $(\Sigma_1,\Sigma_2)$ satisfies the hypotheses of both items of Theorem~\ref{teo:ord}, hence 
		\[\Sigma_1\sq\overline{I^-(\Sigma_2)}\cap\overline{I^+(\Sigma_2)}=\Sigma_2.\]
		
		The other inclusion can be obtained by symmetry or observing that for entire graphs inclusion is equivalent to equality.
	\end{proof}
	
To demonstrate Theorem~\ref{teo:ord}, we will apply the maximum principle to the distance between the hypersurfaces, as in \cite{bonsch} and \cite{ltw}. In addition, a topological argument is used to maximize the distance over a compact set, in order to avoid the completeness hypothesis required in the cited approaches. 

\begin{proof}[Proof of Theorem~\ref{teo:ord}]
We focus on the first part of Theorem~\ref{teo:ord}: the second one follows because the map $\phi(x,t)=(x,-t)$ is an isometry of $\hypu^{n,1}$ which reverses the time-orientation, hence the sign of the mean curvature. 

\step{Stronger assumption}
We prove the statement assuming that $\pd\Sigma_1\sq I^-(\Sigma_2)$, instead of $\pd\Sigma_1\sq\overline{I^-(\Sigma_2)}$. The general statement follows directly by a continuity argument: indeed, if $\pd\Sigma_1\sq\overline{I^-(\Sigma_2)}$, it suffices to fix a splitting where $\Sigma_2=\gr u_2$, to apply the argument to $\Sigma_2(\delta)=\gr(u_2+\delta)$, $\delta>0$ and to take the limit as $\delta\to0$.

The statement reduces to prove that 
\begin{equation*}
	\mathcal{A}:=\{(p,q)\in\Sigma_1\times\Sigma_2|\, p\in I^+(q)\}=\emptyset.
\end{equation*}		

\step{$\mathcal{A}$ is precompact in $\Sigma_1\times\Sigma_2$}
We claim that the projection $\mathcal{A}_i$ of $\mathcal{A}$ over $\Sigma_i$ is precompact $\Sigma_i$. If so, $\mathcal{A}\sq\mathcal{A}_1\times\mathcal{A}_2$ is precompact in $\Sigma_1\times\Sigma_2$.
		
By definition, $\mathcal{A}_1=\Sigma_1\cap I^+(\Sigma_2)$ and by assumption $\pd\Sigma_1\sq I^-(\Sigma_2)$. Hence, $(\Sigma_1\cup\pd\Sigma_1)\cap I^-(\Sigma_2)$ is an open neighbourhood of $\pd\Sigma_1$ in $\Sigma_1\cup\pd\Sigma_1$. It follows $\mathcal{A}_1$ is precompact in $\Sigma_1$. A symmetric argument does not apply directly: $\Sigma_1$ might not be entire, and in that case $I(\Sigma_1)\ne\hypu^{n,1}\setminus\Sigma_1$, hence we can not state that $\pd\Sigma_2\sq I^+(\Sigma_2)$. Denote $S_2:=\Sigma_2\cap I(\Sigma_1)$:
 \[\mathcal{A}_2=\Sigma_2\cap I^-(\Sigma_1)=S_2\cap I^-(\Sigma_1),\] and $\pd S_2\sq I^+(\Sigma_1)$: by the same argument, it follows that $\mathcal{A}_2$ is precompact in $S_2$, hence in $\Sigma_2$, which proves the claim (see Figure~\ref{fig:Acomp} to visualize the proof).

\begin{figure}[h]
	\includegraphics{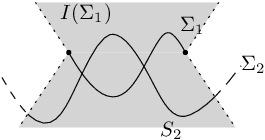}
	\caption{$\mathcal{A}$ is precompact in $\Sigma_1\times\Sigma_2$.}\label{fig:Acomp}
\end{figure}

\step{$\mathcal{A}$ is open in $\Sigma_1\times\Sigma_2$}
The fact that $\mathcal{A}$ is open follows directly from a more general result of Lorentzian geometry. Indeed, the same proof applies to any time-orientable Lorentzian manifold not containing closed causal curve, namely for any time-orientable \emph{causal} Lorentzian manifold. See Figure~\ref{fig:Aopen} to visualize the proof.

Fix $(x,y)\in\mathcal{A}$ and pick four points $a,b,c,d\in\hypu^{n,1}$ such that \[a<y<b<c<x<d,\]
where the order is given by the time-orientation, \emph{e.g.} $a<b$ means $a\in I^-(b)$, and the existence of such points $a,b,c,d$ is ensured by the hypothesis $y<x$.

Now, $V'_x:=I^+(c)\cap I^-(d)$ is an open neighbourhood of $x$ contained in $I^+(b)$, and $V'_y:=I^+(a)\cap I^-(b)$ is an open neighbourhood of $y$ contained in $I^-(c)$. In particular $V_x:=V'_x\cap\Sigma_1$ (resp. $V_y:=V'_x\cap\Sigma_2$) is an open neighbourhoods of $x$ in $\Sigma_1$ (resp. of $y$ in $\Sigma_2$). By transitivity of the order relation $<$, it follows that
\begin{align*}
	V_x\times \{w\}&\sq\mathcal{A},\quad \forall w\in V_y,\\
	\{z\}\times V_y&\sq\mathcal{A},\quad\forall z\in V_x,
\end{align*}
that is $V_x\times V_y\sq\mathcal{A}$ is an open neighbourhood of $(x,y)$.

\begin{figure}[h]
	\includegraphics{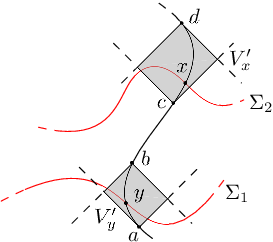}
	\caption{$\mathcal{A}$ is open in $\Sigma_1\times\Sigma_2$.}\label{fig:Aopen}
\end{figure}
\step{Project the problem in $\hyp^{n,1}$}
If $\Sigma_1\cap\Sigma_2=\emptyset$, the statement follows directly: indeed, $\Sigma_2\cup\pd\Sigma_2$ disconnects $\hypu^{n,1}\cup\pd\hypu^{n,1}$ by entireness, $\pd\Sigma_1\sq I^-(\Sigma_2)$ by hypothesis and $\Sigma_1$ lies in the same connected component as $\pd\Sigma_1$. Otherwise, $\Sigma_1$ and $\Sigma_2$ are contained in the fundamental region $\mathcal{U}_p$, for $p\in\Sigma_1\cap\Sigma_2$ (Remark~\ref{oss:Up}). Since $\mathcal{U}_p$ embeds in $\hypd^{n,1}$ (Corollary~\ref{cor:embU}), the distance can be computed through the scalar product (Proposition~\ref{pro:distprod}). In particular, we denote $A$ the image of $\mathcal{A}$ through the embedding $\mathcal{U}_p\hookrightarrow\hypd^{n,1}$.

For the rest of the proof, $F$ will denote the scalar product, namely $F(p,q):=\pr{p,q}$. The restriction of $F$ to $A$ takes value in $(-1,1)$, because $A$ is composed by pairs of time-related points in $\hypd^{n,1}$ (Lemma~\ref{lem:causalprod}). By continuity, $F(\overline{A})\sq[-1,1]$, and $F(\pd{A})\sq\{\pm1\}$ since, again by Lemma~\ref{lem:causalprod}, $F(p,q)\in(-1,1)$ if and only if $p,q$ are time-related. Furthermore, $\pd\Sigma_1\sq I^-(\Sigma_2)$ implies that $\Sigma_1\cap\overline{I^+(\Sigma_2)}$ is contained in the domain of dependence of $\Sigma_2$. Since
	\[\overline{\mathcal{A}}\sq\left(\Sigma_1\cap I^+(\Sigma_2)\right)\times\Sigma_2,\]
Corollary~\ref{lem:bound:depdom} ensures that $F(p,q)<1$ on $\overline{A}$, namely $F|_{\pd A}\equiv-1$. It follows that $F(\overline{A})$ is a compact subset of $[-1,1)$ and, more precisely, $F(\overline{A})=[-1,\max_{\overline{A}}F]$. In particular, $A=\emptyset$ if and only if $\max_{\overline{A}}F=-1$.
	
\step{Maximum principle}
By contradiction, assume $\max_{\overline{A}}F>-1$, namely $F$ reaches its maximum at $(\bar p,\bar q)\in A$. We showed in Step~3 that $A$ is open in $\Sigma_1\times\Sigma_2$: a direct computation leads to
\begin{equation}\label{eq:diff}
	d_{(p,q)}F(v,w)\big|_A=\pr{p,w}+\pr{v,q}.
\end{equation}

At the maximum $(\bar p,\bar q)$, $dF$ vanishes: by Equation~\eqref{eq:diff}, 
\begin{align*}
	&\bar p\in(T_{\bar q}\Sigma_2)^{\perp}=\Span\left(\bar q, N_2(\bar q)\right)\\
	&\bar q\in(T_{\bar p}\Sigma_1)^{\perp}=\Span\left(\bar p, N_1(\bar p)\right).
\end{align*}

Since both $N_1$ and $N_2$ are future-directed and $\bar p\in I^{+}(\bar q)$, there exists $T>0$ such that
\begin{equation}\label{eq:pq}
	\begin{cases}
		\bar p=\cos(T)\bar q+\sin(T)N_{2}(\bar q)\\
		\bar q=\cos(T)\bar p-\sin(T)N_{1}(\bar p)
	\end{cases}
\end{equation}
Equation~\eqref{eq:pq} has two important consequences: first, since $T\ne0$, at the maximum the tangent spaces are identified. Indeed,
\begin{equation*}
	T_{\bar q}\Sigma_2=\Span\left(\bar q, N_2(\bar q)\right)^{\perp}=\Span\left(\bar p, N_1(\bar p)\right)^{\perp}=T_{\bar p}\Sigma_1.
\end{equation*}

Moreover, we have the following equation:
	\begin{equation}\label{eq:normpt}
			\pr{\bar q,N_1(\bar p)}=\sin T=-\pr{\bar p,N_2(\bar q)}.
	\end{equation}
	
	To compute $\hess F$, we follow \cite[Lemma~4.3]{ltw} and add the proof for completeness. If $\gamma_1$ is a geodesic of $\Sigma_1$ such that $\gamma_1(0)=p$, $\dot{\gamma_1}(0)=v$,
	\[\frac{d^2}{dt^2}\gamma_1(t)|_{t=0}=\sff_1(v,v)N_1(p)+\pr{v,v}p.\]
	
	The formula can be easily derived by comparing the covariant derivative of $\gamma_1$ in $\Sigma$, $\hypd^{n,1}$ and $\R^{n,2}$. The same applies to a geodesic of $\Sigma_2$ such that $\gamma_2(0)=q$, $\dot{\gamma_2}(0)=w$, hence
	\begin{align*}
		&\hess_{(p,q)}F\left((v,w),(v,w)\right)=\frac{d^2}{dt^2}\pr{\gamma_1(t),\gamma_2(t)}|_{t=0}\\
		&=(\pr{v,v}+\pr{w,w})F(p,q)+2\pr{v,w}+\sff_1(v,v)\pr{N_1(p),q}+\sff_2(w,w)\pr{N_2(q),p}.
	\end{align*}
	Fix an orthonormal basis $(v_1,\dots,v_n)$ of $T_{\bar p}\Sigma_1=T_{\bar q}\Sigma_2$. At the maximum $(\bar p,\bar q)$, the Hessian is semi-negative definite: for every $i=1,\dots,n$, it holds
	\begin{align*}
		0&\ge\hess_{(\bar p,\bar q)}F\left((v_i,v_i),(v_i,v_i)\right)\\
		&=2F(\bar p,\bar q)+2+\sff_1(v_i,v_i)\pr{N_1(\bar p),\bar q}+\sff_2(v_i,v_i)\pr{N_2(\bar q),\bar p}\\
		&=2F(\bar p,\bar q)+2+\left(\sff_1(v_i,v_i)-\sff_2(v_i,v_i)\right)\sin T,
	\end{align*}
	where the last equation is due to Equation~\eqref{eq:normpt}. Summing over $i=1,\dots,n$, one obtains
	\begin{align*}
		0&\ge 2nF(\bar p,\bar q)+2n+\underbrace{\left(H_1(\bar p)-H_2(\bar q)\right)}_{\ge0}\sin T\ge 2nF(\bar p,\bar q)+2n\ge0.
	\end{align*}
	It follows that $\max_A F=-1$, which is absurd and concludes the proof.
\end{proof}
	
	\subsection{Strong maximum principle}
	If $\Sigma_2$ has constant mean curvature, one can promote Theorem~\ref{teo:ord} to a strong maximum principle.

	\begin{pro}\label{lem:umbilical}
		Let $\Sigma_1$ be a spacelike graph, $\Sigma_2$ be an entire CMC spacelike graph with mean curvature respectively $H_1,H_2$.
		\begin{enumerate}
			\item If $H_1\ge H_2$, and $\pd\Sigma_1\sq\overline{I^-(\Sigma_2)}$, then either $\Sigma_1\sq I^-(\Sigma_2)$ or $\Sigma_1\sq\Sigma_2$.
			\item If $H_1\le H_2$, and $\pd\Sigma_1\sq\overline{I^+(\Sigma_2)}$, then either $\Sigma_1\sq I^+(\Sigma_2)$ or $\Sigma_1\sq\Sigma_2$.
		\end{enumerate}
	\end{pro}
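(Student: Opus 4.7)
The plan is to upgrade Theorem~\ref{teo:ord} to the strong maximum principle of Proposition~\ref{lem:umbilical} by combining it with Hopf's strong maximum principle and a connectedness argument. The isometry $(x,t)\mapsto(x,-t)$ of $\hypu^{n,1}$ reverses both the time-orientation and the sign of the mean curvature, and exchanges items~(1) and~(2); it thus suffices to treat~(1).

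Since $H_2$ is constant and $H_1\ge H_2$ pointwise on $\Sigma_1$, the hypotheses of Theorem~\ref{teo:ord}(1) are met, so $\Sigma_1\sq\overline{I^-(\Sigma_2)}$. It remains to show that the set
\[
T:=\{p\in\Sigma_1\mid p\in\Sigma_2\}
\]
is either empty or all of $\Sigma_1$. It is closed in $\Sigma_1$ by continuity, and $\Sigma_1$ is connected (being a graph), so it suffices to prove $T$ is open. Fix $p_0\in T$. The inclusion $\Sigma_1\sq\overline{I^-(\Sigma_2)}$ forces the two hypersurfaces to share the same tangent plane at $p_0$ with coinciding future-directed unit normals: otherwise $\Sigma_1$ would cross $\Sigma_2$ transversely near $p_0$ and some points of $\Sigma_1$ would enter $I^+(\Sigma_2)$.

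Choose local coordinates adapted to this common tangent plane: in a splitting whose horizontal slice is tangent to both hypersurfaces at $p_0$, there is a neighbourhood in which $\Sigma_i=\gr u_i$ for $C^2$ functions, with $u_1\le u_2$, $u_1(0)=u_2(0)=0$ and $\nabla u_1(0)=\nabla u_2(0)=0$. The mean curvature of a spacelike graph in $\hypu^{n,1}$ is a quasilinear elliptic operator $\mathcal{H}(u,\nabla u,\nabla^2 u)$, smooth on strictly $1$-Lipschitz configurations. Setting $v:=u_2-u_1\ge 0$ and interpolating the coefficients of $\mathcal{H}$ along the segment from $u_1$ to $u_2$ gives
\[
Lv=\mathcal{H}(u_2)-\mathcal{H}(u_1)=H_2-H_1\le 0,
\]
where $L$ is a linear second-order operator, uniformly elliptic near $0$, with bounded coefficients and no zero-th order term. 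Since $v\ge 0$ attains its minimum at the interior point $0$, Hopf's strong maximum principle yields $v\equiv 0$ in a neighbourhood, i.e.\ $\Sigma_1=\Sigma_2$ near $p_0$. Hence $T$ is open, and connectedness completes the proof.

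The main obstacle is the PDE bookkeeping: writing the mean curvature of a spacelike graph in $\hypu^{n,1}$ explicitly as a quasilinear elliptic operator, and checking that the linearization along the segment from $u_1$ to $u_2$ has the sign conventions needed to apply Hopf's lemma in the correct direction. This is standard for CMC hypersurfaces in Lorentzian manifolds; the only genuinely new ingredient here is the \emph{a priori} tangency of $\Sigma_1$ and $\Sigma_2$ at $p_0$, which is a direct consequence of the one-sided position $\Sigma_1\sq\overline{I^-(\Sigma_2)}$ furnished by Theorem~\ref{teo:ord}.
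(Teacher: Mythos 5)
Your proof is correct, and its first half is exactly the paper's: both apply Theorem~\ref{teo:ord} to get $\Sigma_1\subseteq\overline{I^-(\Sigma_2)}$. You diverge in how the weak inclusion is promoted to the dichotomy. The paper simply verifies the hypotheses of Eschenburg's strong maximum principle for spacelike hypersurfaces (\cite[Theorem~1]{eschen}, quoted as Theorem~\ref{teo:maxprin}), applied to the open sets $\{t<u_1\}$ and $\{t>u_2\}$ over the domain of $u_1$, and uses it as a black box; you instead reprove the relevant local statement by hand (tangency at a contact point, local graph representation, interpolation of the quasilinear mean curvature operator, Hopf's strong maximum principle) and then propagate by an open--closed argument. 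Your route is more self-contained -- it is essentially Eschenburg's argument specialized to vertical graphs in a splitting -- at the cost of the PDE bookkeeping that the citation avoids; note also that your transversality heuristic for tangency is superfluous, since $v=u_2-u_1\ge0$ vanishing at an interior point already forces $dv=0$ there.

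Two small points in your write-up deserve care, neither of which is a fatal gap. First, ``$\Sigma_1$ is connected (being a graph)'' is not a justification: a graph over a disconnected domain is disconnected. Connectedness of $\Sigma_1$ is implicitly assumed here (as it is, in effect, in the quoted form of Theorem~\ref{teo:maxprin}), and it holds in all the applications in the paper, but you should state it rather than derive it from the graph property. Second, the absence of a zeroth order term in $L$ is true but not automatic: it follows from the fact that in a splitting the metric $g_{\hyp^n}-\cosh^2\left(d_{\hyp^n}(x_0,\cdot)\right)dt^2$ is invariant under vertical translations, so the graph mean curvature operator depends on $\nabla u$, $\nabla^2u$ and the base point but not on the value of $u$; alternatively, since $v\ge0$ attains the minimum value $0$, the version of the strong maximum principle for operators with bounded zeroth order coefficient of nonpositive sign (after replacing $c$ by $-c^-$) would do just as well.
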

	The proof follows by combining Theorem~\ref{teo:ord} with \cite[Theorem~1]{eschen}, that is
	\begin{teo}\label{teo:maxprin}
		Let $(M,g)$ be a Lorentzian manifold. Consider two disjoint open sets $\Omega_1$, $\Omega_2$ of $M$ with $C^2$ spacelike boundaries $\Sigma_1$, $\Sigma_2$, respectively. Assume that $\Omega_1\sq I^-(\Omega_2)$ and that $\Sigma_1\cap\Sigma_2\ne\emptyset$. If there exists a constant $c\in\mathbb\R$ such that 
		\[H_1\ge c\ge H_2,\]
		for $H_i$ the mean curvature of $\Sigma_i$, then $\Sigma_1=\Sigma_2$ and $H_1=H_2=c$.
	\end{teo}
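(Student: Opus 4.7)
The plan is to reduce the statement to the classical Hopf strong maximum principle for linear uniformly elliptic operators by writing both hypersurfaces as graphs near a common touching point and comparing their mean curvature equations. Fix $p\in\Sigma_1\cap\Sigma_2$ and a convex normal neighborhood $U$ of $p$. Using the flow of a local timelike vector field extending a chosen unit future-directed timelike vector at $p$, introduce coordinates $U\cong(-\varepsilon,\varepsilon)\times V$ with $t$-direction timelike and slices $\{t\}\times V$ spacelike. Since $\Sigma_1,\Sigma_2$ are $C^2$ spacelike and the $t$-direction is transverse to both tangent planes at $p$, after shrinking $U$ each $\Sigma_i$ is a graph $t=u_i(x)$ of a $C^2$ function $u_i\colon V'\to(-\varepsilon,\varepsilon)$ on a common neighborhood $V'\ni 0$ with $u_i(0)=0$. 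A short case analysis shows that the causal hypothesis $\Omega_1\sq I^-(\Omega_2)$, combined with the disjointness $\Omega_1\cap\Omega_2=\emptyset$ and $\Sigma_i=\pd\Omega_i$, forces $\Omega_1$ to occupy the past side $\{t<u_1\}$ of $\Sigma_1$ and $\Omega_2$ the future side $\{t>u_2\}$ of $\Sigma_2$ near $p$, and hence $u_1\le u_2$ on $V'$. Setting $w:=u_2-u_1\ge 0$, the equality $w(0)=0$ at an interior minimum forces $\nabla w(0)=0$, so $\Sigma_1$ and $\Sigma_2$ are tangent at $p$ with the same future-directed unit normal.

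Next, the mean curvature of a spacelike graph $t=u(x)$ in a Lorentzian metric is a quasi-linear operator $Q(u)=a^{ij}(x,u,\nabla u)\,\pd_i\pd_j u+b(x,u,\nabla u)$, whose principal part $a^{ij}$ is uniformly positive definite on the open set of spacelike data. Interpolating $u_\tau:=(1-\tau)u_1+\tau u_2$ and using the fundamental theorem of calculus yields
\begin{equation*}
Lw:=\tilde a^{ij}(x)\,\pd_i\pd_j w+\tilde b^i(x)\,\pd_i w=Q(u_2)-Q(u_1)=H_2-H_1,
\end{equation*}
on a compact subneighborhood of $0$ where every $u_\tau$ is spacelike (by openness and compactness), with $\tilde a^{ij}$ uniformly elliptic and $\tilde b^i$ bounded continuous. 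The hypothesis $H_1\ge c\ge H_2$ gives $Lw\le 0$, so $w\ge 0$ is a non-negative supersolution of $L$ attaining its interior minimum at $0$. Hopf's strong maximum principle then yields $w\equiv 0$ in a connected neighborhood of $0$, i.e.\ $\Sigma_1$ and $\Sigma_2$ coincide near $p$.

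To globalize, let $E\sq\Sigma_1$ be the set of points at which $\Sigma_1$ and $\Sigma_2$ agree in a neighborhood. The previous step shows $E$ is open; it is also closed, because if $q_n\to q\in\Sigma_1$ with $q_n\in E\sq\Sigma_2$, then $q\in\Sigma_2$ by closedness of $\Sigma_2=\pd\Omega_2$, so $q\in\Sigma_1\cap\Sigma_2$, and the previous step applied at $q$ gives $q\in E$. Hence $E$ is a union of connected components of $\Sigma_1$; the symmetric argument for $\Sigma_2$ yields $\Sigma_1=\Sigma_2$. Consequently $H_1\equiv H_2$ on $\Sigma_1=\Sigma_2$, and combining with $H_1\ge c\ge H_2=H_1$ forces $H_1=H_2=c$. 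The main obstacle is the causal-to-order reduction in the first step: translating the global causal hypothesis $\Omega_1\sq I^-(\Omega_2)$ into the pointwise inequality $u_1\le u_2$ requires a careful use of disjointness of the $\Omega_i$ and two-sidedness of $C^2$ spacelike hypersurfaces, together with the above case analysis; once this local ordering is secured, the uniform ellipticity of $L$ and the application of Hopf's lemma are standard.
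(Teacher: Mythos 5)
The paper does not actually prove this statement: it is quoted from \cite[Theorem~1]{eschen} and used as a black box in the proof of Proposition~\ref{lem:umbilical}, so there is no internal argument to compare yours against. Your route is the standard proof of the geometric strong maximum principle under $C^2$ regularity --- write both hypersurfaces as graphs over a common spacelike slice near a touching point, subtract the two quasilinear mean curvature equations, linearize by interpolation to get a uniformly elliptic linear equation for $w=u_2-u_1\ge0$, apply the Hopf strong minimum principle at the interior zero, and propagate by a connectedness argument --- and this is indeed how the result is proved in the literature for $C^2$ hypersurfaces. (Eschenburg's actual theorem is stronger, requiring only $C^0$ hypersurfaces with mean curvature inequalities in the sense of support hypersurfaces, but the $C^2$ version is all the paper needs.)

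The genuine soft spot is exactly the step you flag and then wave through: the ``short case analysis''. Disjointness of the $\Omega_i$ does rule out their lying locally on the \emph{same} side of their boundaries near $p$ (they would overlap for $t$ large or small in the slab coordinates), but it leaves two configurations: the one you want ($\Omega_1$ the past side of $\Sigma_1$, $\Omega_2$ the future side of $\Sigma_2$, hence $u_1\le u_2$) and its mirror image ($\Omega_1$ the future side of $\Sigma_1$, $\Omega_2$ the past side of $\Sigma_2$, hence $u_2\le u_1$), in which your inequality becomes $Lw\ge0$ at an interior minimum and Hopf gives nothing. Excluding the mirror case must use $\Omega_1\sq I^-(\Omega_2)$, and $I^-$ is a \emph{global} object: the future-directed timelike curve witnessing $q\in I^-(\Omega_2)$ may leave your convex normal neighbourhood and land at a point of $\Omega_2$ far from $p$, so no purely local reasoning applies, and for a causality-violating ambient manifold the mirror configuration is not excluded by the hypotheses as literally stated. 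Either one adds a one-sidedness or causality assumption (which is what Eschenburg's formulation effectively does, and what holds in the paper's only application, where $\Omega_1=\{t<u_1\}$ and $\Omega_2=\{t>u_2\}$ are prescribed explicitly), or the missing argument must be supplied; as written, this is a gap. Two smaller points: the interpolated operator generically carries a zeroth-order term $\tilde c\,w$ of indefinite sign (the metric coefficients depend on $t$), which is harmless because the interior minimum value is $0$ but should be absorbed before invoking Hopf; and the concluding step $\Sigma_1=\Sigma_2$ requires the $\Sigma_i$ to be connected (or an argument component by component), a defect inherited from the statement rather than introduced by your proof.
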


	\begin{proof}[Proof of Proposition~\ref{lem:umbilical}]
		As always, we prove only the first part. First, by Theorem~\ref{teo:ord}, $\Sigma_1\sq\overline{I^-(\Sigma_2)}$.
		
		If $\Sigma_1$ does not intersect $\Sigma_2$, we are done. Otherwise, fix a splitting $\hypu^{n,1}=\hyp^n\times\R$ and denote $u_i$ the function whose graph is $\Sigma_i$. Let $\Omega$ be the domain of $u_1$, \emph{i.e.} the projection of $\Sigma_1$ to $\hyp^n$, and consider $M:=\Omega\times\R$, which is an open subset of $\hypu^{n,1}$, hence a Lorentzian manifold.
		
		The condition $\Sigma_1\sq\overline{I^-(\Sigma_2)}$ translates as $u_1\le u_2$ on $\Omega$, hence $\Omega_1:=\{(x,t)\in M, t<u_1(x)\}$ and $\Omega_2:=\{(x,t)\in M, t>u_2(x)\}$ satisfy the hypotheses of Theorem~\ref{teo:maxprin}: indeed, they are disjoint, their boundaries are $C^2$, spacelike and intersect by assumption. Moreover, for $c:=H_2$, the inequality on the mean curvatures is satisfied: it follows that $\Sigma_1=\Sigma_2\cap M$, that is $\Sigma_1\sq\Sigma_2$.
	\end{proof}
	
	\subsection{Equidistant hypersurfaces}\label{sub:equidistant}
	The first example of non-maximal CMC entire graphs we introduce are the ones sharing the boundary with a totally geodesic spacelike hypersurface.
	\begin{lem}\label{lem:equi}
		Let $\mathcal{P}$ be a totally geodesic spacelike hypersurface of $\hypu^{n,1}$, $\theta\in(0,\pi/2)$. The set
		\[\{p\in\hypu^{n,1}, \dist(p,P)=\theta\}\]
		is the disjoint union of two entire totally umbilical graphs $\mathcal{P}^{+}_\theta$ and $\mathcal{P}^{-}_\theta$, contained respectively in the future and in the past of $\mathcal{P}$, with constant mean curvature $H_{\pm}(\theta)=\mp n\tan\theta$.
	\end{lem}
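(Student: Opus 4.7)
The plan is to reduce the problem to an explicit linear computation in the quadric model $\hypd^{n,1}$, exploiting the fact that totally geodesic submanifolds are linear intersections. By choosing a splitting adapted to $\mathcal{P}$ (Definition~\ref{de:split}) and using Corollary~\ref{cor:embU}, it suffices to work inside a fundamental region $\mathcal{U}_p$, where $\mathcal{P}$ becomes $\hypd^{n,1}\cap v^{\perp}$ for a unit future-directed timelike vector $v\in\R^{n,2}$. By construction, $v$ is simultaneously the future-directed unit normal to $\mathcal{P}$ at every one of its points.

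By Equation~\eqref{eq:geod}, the timelike geodesic issuing from $x\in\mathcal{P}$ with initial velocity $v$ is $s\mapsto\cos(s)\,x+\sin(s)\,v$, which has unit speed. Hence the equidistant set at oriented Lorentzian distance $\theta\in(0,\pi/2)$ in the future of $\mathcal{P}$ admits the linear parametrization
\[
\mathcal{P}_{\theta}^{+}=\{\cos\theta\cdot x+\sin\theta\cdot v\,:\,x\in\mathcal{P}\},
\]
and symmetrically $\mathcal{P}_{\theta}^{-}$ with $\sin\theta$ replaced by $-\sin\theta$. Because this parametrization is a linear bijection onto an image contained in $U_{\psi(p)}$, both sets lift through Corollary~\ref{cor:embU} to entire spacelike graphs in $\hypu^{n,1}$, sitting on the two sides of $\mathcal{P}$.

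The key computation is then to pin down the shape operator. At $y=\cos\theta\cdot x+\sin\theta\cdot v\in\mathcal{P}_{\theta}^{+}$ the tangent space coincides with $T_{x}\mathcal{P}$, and the unit future-directed timelike normal is $N_{\theta}(y)=-\sin\theta\cdot x+\cos\theta\cdot v$. Eliminating $x$ in favour of the position vector $y$ yields the clean expression
\[
N_{\theta}(y)=-\tan\theta\cdot y+\sec\theta\cdot v,
\]
so the flat $\R^{n,2}$-derivative satisfies $D_{W}N_{\theta}=-\tan\theta\cdot W$ for every $W\in T_{y}\mathcal{P}_{\theta}^{+}\subset y^{\perp}$. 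Since this vector is already tangent to $\hypd^{n,1}$, it coincides with $\nablah_{W}N_{\theta}$, whence
\[
\sff(W_{1},W_{2})=\pr{\nablah_{W_{1}}N_{\theta},W_{2}}=-\tan\theta\cdot\pr{W_{1},W_{2}}.
\]
This is the umbilicity statement, and tracing over an orthonormal frame of $T\mathcal{P}_{\theta}^{+}$ gives $H_{+}(\theta)=-n\tan\theta$. For $\mathcal{P}_{\theta}^{-}$, the isometry $(x,t)\mapsto(x,-t)$ of $\hypu^{n,1}$ swaps the two equidistants while reversing the sign of the mean curvature, yielding $H_{-}(\theta)=n\tan\theta$.

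The only genuine care required is bookkeeping: tracking time orientation, the sign conventions for $\sff$, and verifying that $\mathcal{P}_{\theta}^{\pm}$ exhaust the equidistant set rather than being local patches. All of these become transparent once the picture is anchored inside the fundamental region $\mathcal{U}_{p}$ via the linear parametrization above, which automatically produces bijective projection onto $\mathcal{P}$ for $\theta\in(0,\pi/2)$.
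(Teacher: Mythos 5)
Your proof is correct and takes essentially the same route as the paper: both parametrize the equidistant sets linearly in the quadric model via the dual point/unit normal ($e$ in the paper, your $v$) and read off the umbilical shape operator $\mp\tan\theta\,\mathrm{Id}$, with you merely making the normal field $N_\theta(y)=-\tan\theta\,y+\sec\theta\,v$ and its flat derivative explicit and handling $\mathcal{P}^-_\theta$ by a time-reversing isometry instead of a second direct computation. The only caveat is the sign bookkeeping you already flag: $\hypd^{n,1}\cap v^\perp$ has two components, and $v$ is the future-directed normal along $\mathcal{P}$ only if $v$ is chosen as the \emph{future} dual point of $\mathcal{P}$, a choice that makes your computation and the resulting signs $H_\pm=\mp n\tan\theta$ consistent.
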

	\begin{proof}
		By construction, $\mathcal{P}^\pm_\theta\sq I^\pm(\mathcal{P})$, and $\mathcal{P}$ separates $\mathcal{P}^+_\theta$ and $\mathcal{P}^-_\theta$.
		
		To compute the mean curvature, consider the dual past point of $\mathcal{P}$, namely the point $e\in\hypu^{n,1}$ such that $\mathcal{P}_+(e)=P$. Denote $P^\pm_\theta:=\psi(\mathcal{P}^\pm_\theta)$, by Proposition~\ref{pro:distprod}
		\[P^\pm_\theta=\left\{\cos(\theta)p\pm\sin(\theta)e, p\in P\right\}.\]
		It follows that $\pff(v,w)=\cos\theta\pr{v,w}$ and $\sff(v,w)=\mp\sin\theta\pr{v,w}$, hence $B(v)=\mp\tan\theta(v)$, concluding the proof.
		\end{proof}
	
	\subsection{Barriers} We extend the maximum principle to the leaves of the cosmological time functions $\tau_\past$ and $\tau_\future$.
	\begin{pro}\label{pro:bar}
		Let $\Sigma$ be an entire spacelike graph, $\theta\in[0,\pi/2)$ and $\barrp{\theta},\barrf{\theta}$ as in Definition~\ref{de:w}, for $\Lambda=\pd\Sigma$.
		\begin{enumerate}
			\item If $H\ge n\tan\theta$, then either $\Sigma\sq I^{-}\left(\barrp{\theta}\right)$ or $\Sigma=\mathcal{P}^-_\theta$, for a spacelike hyperplane $\mathcal{P}$;
			\item if $H\le-n\tan\theta$, then either $\Sigma\sq I^{+}\left(\barrf\theta\right)$ or $\Sigma=\mathcal{P}^+_\theta$, for a spacelike hyperplane $\mathcal{P}$.
		\end{enumerate}
	\end{pro}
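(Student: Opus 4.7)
I would prove only the first statement, since the second follows by composing with the time-reversing isometry $(x,t)\mapsto(x,-t)$, which swaps $\barrp{\theta}$ and $\barrf{\theta}$ and negates the mean curvature. The strategy is to sweep $\barrp{\theta}$ by a family of equidistant hypersurfaces provided by Lemma~\ref{lem:equi} and to compare $\Sigma$ with each of them via Proposition~\ref{lem:umbilical}.

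\textbf{Construction of the barriers.} Given $q\in\barrp{\theta}$, Proposition~\ref{pro:benbon} furnishes $\rho^{\past}_\pm(q)$ joined to $q$ by the timelike geodesic realizing $\tau_{\past}(q)$, with $\dist(\rho^{\past}_-(q),\rho^{\past}_+(q))=\pi/2$. By item (4), the dual hyperplane $\mathcal{P}:=\mathcal{P}_+(\rho^{\past}_-(q))$ is a spacelike totally geodesic support plane for $\past(\Lambda)$ passing through $\rho^{\past}_+(q)\in\pd_+\ch(\Lambda)$, with $\ch(\Lambda)\sq\overline{I^-(\mathcal{P})}$. Since $q$ lies at Lorentzian distance $\theta$ in the past of $\rho^{\past}_+(q)\in\mathcal{P}$ along the geodesic orthogonal to $\mathcal{P}$, we have $q\in\mathcal{P}^-_\theta$ and $T_q\mathcal{P}^-_\theta=(\nablah\tau_{\past}(q))^\perp=T_q\barrp{\theta}$.

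\textbf{Comparison with $\Sigma$.} By Lemma~\ref{lem:equi}, $\mathcal{P}^-_\theta$ is an entire totally umbilical spacelike graph of constant mean curvature $n\tan\theta$, and a direct computation in the quadric model shows $\pd\mathcal{P}^-_\theta=\pd\mathcal{P}$. Passing the inclusion $\ch(\Lambda)\sq\overline{I^-(\mathcal{P})}$ to asymptotic boundaries gives $\Lambda\sq\overline{I^-(\pd\mathcal{P})}$ in $\pd\hypu^{n,1}$, so $\pd\Sigma\sq\overline{I^-(\mathcal{P}^-_\theta)}$ in the extended causal sense. As $H_{\Sigma}\ge n\tan\theta=H_{\mathcal{P}^-_\theta}$, Proposition~\ref{lem:umbilical} yields two alternatives: either $\Sigma=\mathcal{P}^-_\theta$, giving the second case of the statement, or $\Sigma\sq I^-(\mathcal{P}^-_\theta)$.

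\textbf{Conclusion and main difficulty.} Assuming the latter alternative for every $q$, I would verify $\Sigma\sq I^-(\barrp{\theta})$. If some $x\in\Sigma\cap\barrp{\theta}$ existed, taking $q=x$ would give $x\in\mathcal{P}^-_\theta\cap\Sigma$, contradicting $\Sigma\sq I^-(\mathcal{P}^-_\theta)$; and if some $x\in\Sigma$ lay in $I^+(\barrp{\theta})\cap\Omega(\Lambda)$, flowing backward along $\nablah\tau_{\past}$ from $x$ would hit $\barrp{\theta}$ at some $q$, and the orthogonality of $\mathcal{P}^-_\theta$ to this flow line at $q$ would force $x\in I^+(\mathcal{P}^-_\theta)$, again a contradiction. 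The subtle point is identifying $\mathcal{P}^-_\theta$ as the hypersurface tangent to $\barrp{\theta}$ at $q$ from the correct side: this reduces to a careful unpacking of Proposition~\ref{pro:benbon} together with the point-hyperplane duality of the quadric model and the equality case of the reverse triangle inequality along the timelike segment realizing $\tau_{\past}(q)$.
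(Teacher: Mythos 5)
Your proof follows essentially the same approach as the paper's: for each $q\in\barrp{\theta}$ you build the support hyperplane $\mathcal{P}=\mathcal{P}_+(\rho^\past_-(q))$ via Proposition~\ref{pro:benbon}, observe that $q\in\mathcal{P}^-_\theta$ with $\pd\mathcal{P}^-_\theta=\pd\mathcal{P}$ lying in the future of $\Lambda$, and invoke the strong maximum principle (Proposition~\ref{lem:umbilical}), exactly as in the paper. Your concluding step, which explicitly rules out $\Sigma\sq I^+(\barrp{\theta})$ by flowing along $\nablah\tau_\past$ and using the tangency of $\mathcal{P}^-_\theta$ with $\barrp{\theta}$ at $q$, is somewhat more detailed than the paper's terse ``since $p$ was arbitrary,'' but the key construction and comparison are the same.
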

	\begin{oss}
		If $\theta=0$, namely the mean curvature of $\Sigma$ is non-negative (resp. non-positive), $\Sigma$ is contained in $\past(\pd\Sigma)$ (resp. $\future(\pd\Sigma)$), introduced in Subsection~\ref{sub:pastfut}. In particular, if $\Sigma$ is maximal, \emph{i.e.} the mean curvature identically vanishes, we recover a well known fact (see for instance \cite[Lemma~4.1]{bonsch}), that is that $\Sigma\sq\ch(\pd\Sigma)$: indeed, $\past(\pd\Sigma)\cap\future(\pd\Sigma)=\ch(\pd\Sigma)$, by definition (see Figure~\ref{fig:pastfut}).
	\end{oss}
\begin{figure}[h]
	\includegraphics{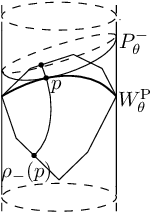}
	\caption{Any point in $\barrp\theta$ is contained in an equidistant hypersurface $\mathcal{P}_\theta^-$, for a suitable support hyperplane $\mathcal{P}$ of $\pd_+\ch(\Lambda)$.}\label{fig:bar}
\end{figure}
	\begin{proof}
		We focus on the first item, and we consider the problem in the quadric model $\hypd^{n,1}$.
		
		The idea of the proof is to find, for any point $p\in\barrp\theta$, a totally geodesic spacelike hypersurface $\mathcal{P}$ such that $\pd\mathcal{P}\sq I^+(\Sigma)$ and $p\in \mathcal{P}^-_\theta$, in order to apply the strong maximum principle (see Figure~\ref{fig:bar}).
		
		We recall that by Proposition~\ref{pro:benbon}, we can associates to $p$ the retraction $\rho^\past_-(p)\in\pd_-\Omega(\pd\Sigma)$. In particular, the future dual hyperplane $\mathcal{P}_+(\rho^\past_-(p))$ is a support hyperplane for $\pd_+\ch(\pd\Sigma)$ containing $\rho^\past_+(p)$. By construction, its boundary lies in the future of $\pd\Sigma$ and its distance from $p$ is 
		\[\dist\left(p,\rho^\past_+(p)\right)=\frac{\pi}{2}-\tau_\future(p)=\theta,\]
		that is $P(\rho^\past_-(p))_\theta^-$ contains $p$.
		
		Applying Proposition~\ref{lem:umbilical} to the two entire spacelike hypersurfaces $\Sigma$ and $\mathcal{P}(\rho^\past_-(p))_\theta^-$, we deduce that either $\Sigma$ does not contain $p$ or $\Sigma$ coincides with $\mathcal{P}(\rho^\past_-(p))_\theta^-$. Since $p$ was arbitrary, that concludes the proof.
	\end{proof}
	With the same argument, one can prove a local version of the proposition.
	\begin{cor}\label{cor:diam-barriera}
		Let $\Lambda$ be an admissible boundary, and $\theta\in[0,\pi/2)$. Let $\Sigma$ be a spacelike graph with mean curvature $H$.
		\begin{enumerate}
			\item If $\pd\Sigma\sq I^{-}\left(\barrp{\theta}\right)$ and $H\ge n\tan\theta$, then $\Sigma\sq I^{-}\left(\barrp{\theta}\right)$;
			\item if $\pd\Sigma\sq I^{+}\left(\barrf{\theta}\right)$ and $H\le-n\tan\theta$, then $\Sigma\sq I^{+}\left(\barrf{\theta}\right)$;
		\end{enumerate}
	\end{cor}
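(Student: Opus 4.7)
The plan is to adapt the strategy of Proposition~\ref{pro:bar}: instead of using the entirety of $\Sigma$ globally, I would construct, at each point $p \in \barrp\theta$, a local umbilical barrier $\mathcal{P}_\theta^-$ and then apply the strong maximum principle (Proposition~\ref{lem:umbilical}). By the time-reversing isometry $(x,t) \mapsto (x,-t)$, which reverses the sign of the mean curvature and exchanges $I^+$ with $I^-$ (as in the proof of Theorem~\ref{teo:ord}), item~(2) reduces to item~(1), so I focus on the first.

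The first step is to build the barrier. For $p \in \barrp\theta$, Proposition~\ref{pro:benbon} provides $\rho^\past_-(p) \in \pd_-\Omega(\Lambda)$; the dual spacelike hyperplane $\mathcal{P} := \mathcal{P}_+(\rho^\past_-(p))$ supports $\pd_+\ch(\Lambda)$ from above through $\rho^\past_+(p)$. By Lemma~\ref{lem:equi}, the past equidistant hypersurface $\mathcal{P}_\theta^-$ is an entire totally umbilical spacelike graph of constant mean curvature $n\tan\theta$, and $p$ belongs to it (since $p$ lies at Lorentzian distance $\theta$ below $\rho^\past_+(p) \in \mathcal{P}$). Since $\mathcal{P}$ is above $\pd_+\ch(\Lambda)$, the past equidistant $\mathcal{P}_\theta^-$ lies in $\overline{I^+(\barrp\theta)}$, tangent to $\barrp\theta$ at $p$; therefore $\barrp\theta \sq \overline{I^-(\mathcal{P}_\theta^-)}$ and hence $\pd\Sigma \sq I^-(\barrp\theta) \sq \overline{I^-(\mathcal{P}_\theta^-)}$.

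With $H \ge n\tan\theta$ equal to the mean curvature of $\mathcal{P}_\theta^-$, Proposition~\ref{lem:umbilical} yields the dichotomy $\Sigma \sq I^-(\mathcal{P}_\theta^-)$ strictly, or $\Sigma \sq \mathcal{P}_\theta^-$. The main obstacle I anticipate is ruling out the degenerate second case: in Proposition~\ref{pro:bar} it was excluded automatically by entireness, but here $\Sigma$ is only assumed to be a spacelike graph. If $\Sigma \sq \mathcal{P}_\theta^-$, then $\pd\Sigma \sq \overline{\mathcal{P}_\theta^-} = \mathcal{P}_\theta^- \cup \pd\mathcal{P}$; both $\mathcal{P}_\theta^- \sq \overline{I^+(\barrp\theta)}$ and $\pd\mathcal{P} \sq \overline{I^+(\Lambda)}$ are disjoint from the open set $I^-(\barrp\theta)$, so the hypothesis $\pd\Sigma \sq I^-(\barrp\theta)$ forces $\pd\Sigma = \emptyset$, and then $\Sigma$ is clopen in the connected hypersurface $\mathcal{P}_\theta^-$. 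The only two possibilities are $\Sigma = \emptyset$ (where the conclusion is vacuous) or $\Sigma = \mathcal{P}_\theta^-$ (which contradicts $\pd\Sigma = \emptyset$ because $\pd\mathcal{P} \ne \emptyset$).

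It remains to conclude. Since $\Sigma \sq I^-(\mathcal{P}(\rho^\past_-(p))_\theta^-)$ holds strictly for every $p \in \barrp\theta$, any $q \in \Sigma \cap \barrp\theta$ would lie on $\mathcal{P}(\rho^\past_-(q))_\theta^-$ by the tangency at the base point, contradicting the strict inclusion $q \in I^-(\mathcal{P}(\rho^\past_-(q))_\theta^-)$. Hence $\Sigma \cap \barrp\theta = \emptyset$. Since $\barrp\theta$ is an entire spacelike graph, it splits $\hypu^{n,1}$ into the two open half-spaces $I^\pm(\barrp\theta)$, so each connected component of $\Sigma$ lies entirely on one side; the boundary condition $\pd\Sigma \sq I^-(\barrp\theta)$ together with openness of $I^-(\barrp\theta)$ forces every component into $I^-(\barrp\theta)$, completing the proof.
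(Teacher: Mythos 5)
Your proposal is correct and follows essentially the same approach as the paper, which only states that the corollary follows ``with the same argument'' as Proposition~\ref{pro:bar}: build the umbilical barrier $\mathcal{P}_\theta^-$ at each point of $\barrp\theta$ and apply the strong maximum principle (Proposition~\ref{lem:umbilical}). You fill in precisely the details that the paper elides in passing from the proposition to its local version, namely (i) the verification that $I^-(\barrp\theta)\sq\overline{I^-(\mathcal{P}_\theta^-)}$, which is what lets the hypothesis $\pd\Sigma\sq I^-(\barrp\theta)$ feed into the boundary requirement of Proposition~\ref{lem:umbilical} (in the proposition this is replaced by $\pd\Sigma=\Lambda$ directly), (ii) the topological exclusion of the degenerate alternative $\Sigma\sq\mathcal{P}_\theta^-$, which the proposition simply keeps as a second outcome, and (iii) the final connectedness argument to pass from $\Sigma\cap\barrp\theta=\emptyset$ to $\Sigma\sq I^-(\barrp\theta)$, using that $\pd\Sigma_0\ne\emptyset$ for each component $\Sigma_0$. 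All of these are correct and in the spirit of the original argument; this is a faithful and more detailed rendering rather than a different route.
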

	
	\section{Estimates and completeness}\label{sec:reg}
	This section presents estimates on the gradient function, the norm of the second fundamental form and its derivatives. The results rely on more general estimates contained in \cite{bartnik} and \cite{ecker}. 
	
	First, Proposition~\ref{pro:locest} furnishes a local estimate for CMC spacelike graphs, crucial in Subsection~\ref{sec:ex}, to prove a compactness result for CMC (not necessarily entire) graphs.
	
	Theorem~\ref{teo:boundII} provides a global estimate for entire CMC spacelike graphs. This result is probably the most important contained in this article: indeed, the uniform bound on the second fundamental form implies completeness. On the other hand, the uniform bounds on the norm $\sff$ and $\nabla\sff$ play a key role in Section~\ref{sec:foliation}, to establish a uniform Schauder-type inequality (Proposition~\ref{pro:invert}).
	
	We denote $|\cdot|$ the norm induced by the Lorentzian metric. In particular, for a spacelike hypersurface $\Sigma$,
	\[|\sff|^2(x)=\sum_{i,j=1}^n\sff(v_i,v_j)^2,\]
	where $v_1,\dots,v_n$ forms an orthonormal basis of $T_x\Sigma$.
		
	First, some preliminary definitions: a unitary future-directed timelike $C^2(M)-$vector field $T\in\Gamma(TM)$ induces a Riemannian metric on $M$, given by
	\begin{align*}
		g_E(v,w):=g(v,w)+2g(T,v)g(T,w).	
	\end{align*}
	This reference metric is used to measure the size of tensors: for a tensor field $\Phi$ on $M$, we denote
	\begin{itemize}
		\item $\|\Phi\|:=\sup_{x\in M}g_E(\Phi_x,\Phi_x)^{1/2}$;
		\item $\|\Phi\|_{k}:=\sum_{i=0}^{k}\|\nablah^{i}\Phi\|=\|\Phi\|+\|\nablah\Phi\|+\|\nablah^2\Phi\|+\dots+\|\nablah^k\Phi\|$.
	\end{itemize}
		Here, $\nablah$ is the Levi-Civita connection of $(M,g)$ and we abusively denote $g_E$ the extension of the metric to the space of tensors on $M$.
		
		To observe how fast the metric of a spacelike hypersurface $\Sigma$ tends to a degenerate one, it is common to fix a suitable reference unitary future-directed timelike vector field $T$ and study the angle between $T$ and the normal vector field $N\Sigma$ via the so called \textit{gradient function}, namely
	\begin{equation}\label{eq:gradfun}
		\nu_\Sigma(p):=-g(N_p,T_p),
	\end{equation} 
	where $N$ denotes the unitary future-directed vector field normal to $\Sigma$. Hereafter, once a splitting is fixed, we take as $T$ the future-directed unitary vector field spanning $\pd_t$, \emph{i.e.}

	\begin{equation}\label{eq:T}
		T:=-\frac{1}{\sqrt{-g(\pd_t,\pd_t)}}\pd_t\in\Gamma(T\hypu^{n,1})=\Gamma(T\hyp^n\times T\R).
	\end{equation}

	Moreover, for a point $p\in\hypu^{n,1}$, we recall that $p_+$ is the unique point whose dual past hyperplane is $\mathcal{P}_+(p)$ (see Definition~\ref{de:Pp}).
	
	\subsection{Local estimates}
	The following estimate generalizes \cite[Lemma~4.13]{bonsch}, where the statement is given only for maximal hypersurfaces.
	
	\begin{pro}\label{pro:locest}
		For any $p\in\hypu^{n,1}$, $\varepsilon>0$, $L\ge0$ and $K\sq I^{-}(p_+)$ compact set, there exist constants $C_m=C_m(p,\varepsilon,L,K)$, $m\ge-1$, such that
		\begin{align*}
			&\sup_{\Sigma\cap I_\varepsilon^+(p)\cap K}\nu_\Sigma\le C_{-1};\\
			&\sup_{\Sigma\cap I_\varepsilon^+(p)\cap K}|\nabla^{m}\sff|^2\le C_m, m\ge0;
		\end{align*}
		for any spacelike graph $\Sigma$ with constant mean curvature $H\in[-L,L]$ such that $p\in D(\Sigma)$.
	\end{pro}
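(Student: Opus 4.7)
The plan is to localize the problem and combine the classical gradient estimate of Bartnik~\cite{bartnik} with the interior curvature estimates of Ecker~\cite{ecker}, using the geometric constraints $K\sq I^-(p_+)$, the separation $\varepsilon$, and the hypothesis $p\in D(\Sigma)$ to extract constants that are uniform in the hypersurface.

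The first step is to establish the gradient function bound $\nu_\Sigma\le C_{-1}$. The condition $p\in D(\Sigma)$ forces $\Sigma$ to meet the timelike geodesic through $p$ at a point in $I^+(p)\cap I^-(p_+)$; combined with $K\sq I^-(p_+)$ (so that $K$ is bounded away from the focal locus of timelike geodesics through $p$) and with $I^+_\varepsilon(p)$ being bounded away from $\pd I^+(p)$, this provides uniform control on the Lorentzian height of $\Sigma$ above $p$ throughout $K\cap I^+_\varepsilon(p)$. I would then apply a Bartnik-type maximum principle with auxiliary function the Lorentzian distance from $p$, equivalently the scalar product $\pr{\cdot,\psi(p)}$ in the quadric model, whose Hessian is computable in closed form by Corollary~\ref{cor:embU} and Proposition~\ref{pro:distprod}, to produce the bound $C_{-1}$ depending only on $(p,\varepsilon,L,K)$. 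Once $\nu_\Sigma$ is controlled, $\Sigma$ is locally the graph of a function with uniformly bounded gradient over a fixed spacelike slice, and the CMC equation becomes a uniformly elliptic quasilinear PDE with coefficients depending on $L$ and on the bound for $\nu_\Sigma$. Ecker's interior curvature estimates then yield $|\sff|^2\le C_0$ on a slightly shrunk region, and iterated Schauder estimates---differentiating the CMC equation and bootstrapping with the bounds already in hand---produce $|\nabla^m\sff|^2\le C_m$ for all $m\ge 1$.

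The main obstacle is the first step: producing a gradient bound whose constant depends only on $(p,\varepsilon,L,K)$ and not on $\Sigma$. The hypothesis $p\in D(\Sigma)$ pins $\Sigma$ near $p$, the compactness of $K\sq I^-(p_+)$ keeps $\Sigma$ away from the focal locus of timelike geodesics through $p$, and the separation $\varepsilon$ keeps us away from lightlike directions; assembling these ingredients into an auxiliary function that bounds $\nu_\Sigma$ uniformly is the technical core. Once this is done, the higher-order steps are essentially routine applications of the standard elliptic regularity machinery from \cite{bartnik, ecker}.
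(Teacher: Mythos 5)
Your plan is essentially the paper's proof: it takes $\tau=\dist(p,\cdot)-\varepsilon/2$ as the time function, applies Bartnik's gradient estimate (Theorem~\ref{teo:bart}) and Ecker's curvature estimates (Theorem~\ref{teo:eck}), and uses the compactness of $K\sq I^{-}(p_+)$ together with the $\varepsilon$-separation from the light cone of $p$ to bound the structural constants $c_0,c_1,c_2,c_3$ in terms of $(p,\varepsilon,K)$ alone. The only points worth noting are that the precise role of $p\in D(\Sigma)$ is to make $\Sigma\cap I(p)$ precompact (Proposition~\ref{pro:depdomain}) — which is exactly the hypothesis ``$\Sigma\cap\{\tau\ge0\}$ compact, $\pd\Sigma\cap\{\tau>0\}=\emptyset$'' needed to invoke both theorems, rather than a ``height control'' as you describe it — and that the paper obtains the bounds on $|\nabla^{m}\sff|^2$ for all $m\ge0$ directly from Ecker's theorem, so no separate Schauder bootstrap is needed.
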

	\begin{figure}[h]
		\includegraphics{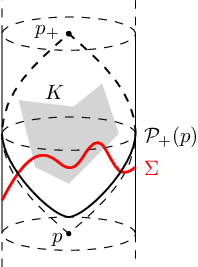}
		\caption{Setting of Proposition~\ref{pro:locest}.}\label{fig:loc}
	\end{figure}
	
	The proof of the first item relies on \cite[Theorem~3.1]{bartnik}. In the original result, the hypersurfaces are required to satisfy the so called \textit{mean curvature structure condition}, \textit{i.e.} there exists a constant $L$ such that
	\begin{equation}\label{mcsc}
		\begin{cases}
			|H_\Sigma|\le L\nu_\Sigma\\
			|\nabla H_\Sigma|\le L\left(\nu_\Sigma^2+\nu_\Sigma|\sff|\right)
		\end{cases}
	\end{equation}
	where $\nu_\Sigma$ is the gradient function associated to a spacelike hypersurface, as defined in Equation~\eqref{eq:gradfun}. In our setting, $H_\Sigma$ is constant and $\nu_\Sigma\ge1$, so Equation~\eqref{mcsc} is satisfied by any $L\ge|H_\Sigma|$. Hence, the result can be stated as
	\begin{teo}[\cite{bartnik}]\label{teo:bart}
		Let $\tau\in C^2(\hypu^{n,1})$ a time function in the region $\{\tau\ge0\}$. Assume there exist constants $c_0,c_1,c_2,c_3>0$ such that on $\{\tau\ge0\}$ it holds
		\begin{align*}
			&\langle\nablah\tau,\nablah\tau\rangle\le-c_0^{-2}, &\|\tau\|_{2}\le c_1,\\
			&\|T\|_2\le c_2, &\|\mathrm{Ric}\|\le c_3.
		\end{align*}
		$\forall L,\varepsilon>0$, there exists a constant $C=C(L,\varepsilon^{-1},c_0,c_1,c_2,c_3,\tau_{\max})$ such that
		\[\sup_{\Sigma\cap\{\tau\ge\varepsilon\}}\nu_\Sigma\le C,\]	
		for any $\Sigma\sq\hypu^{n,1}$ entire spacelike graph in $\hypu^{n,1}$, with constant mean curvature $H\in[-L,L]$ such that $\Sigma\cap\{\tau\ge0\}$ is compact and $\pd\Sigma\cap\{\tau>0\}=\emptyset$.
	\end{teo}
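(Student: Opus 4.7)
The strategy, in the spirit of the Bartnik--Simon a priori estimates, is to apply the maximum principle to an auxiliary function on $\Sigma\cap\{\tau\geq 0\}$ coupling the gradient function $\nu_\Sigma$ with the time function $\tau$. Fix a smooth increasing $\eta\colon[0,\tau_{\max}]\to[0,\infty)$ with $\eta(0)=0$ and $\eta(\varepsilon)\geq 1$, to be calibrated at the end, and set $F:=\eta(\tau)\,\nu_\Sigma$. Since $\Sigma\cap\{\tau\geq 0\}$ is compact with $\partial\Sigma\cap\{\tau>0\}=\emptyset$, its topological boundary in $\Sigma$ lies in $\{\tau=0\}$, where $F$ vanishes. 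Therefore $F$ attains its maximum at an interior point $p_0$ with $\tau(p_0)>0$, and a bound on $\nu_\Sigma$ over $\{\tau\geq\varepsilon\}\cap\Sigma$ will follow from $\nu_\Sigma\leq F(p_0)/\eta(\varepsilon)$.

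At $p_0$, passing to $\log F$ and using $\nabla_\Sigma F=0$ to substitute $\nabla_\Sigma\nu_\Sigma=-(\eta'/\eta)\nabla_\Sigma\tau$, the condition $\Delta_\Sigma F\leq 0$ transforms into
\[
\eta(\tau)\,\Delta_\Sigma\nu_\Sigma+\nu_\Sigma\!\left[\eta'(\tau)\Delta_\Sigma\tau+\bigl(\eta''(\tau)-\tfrac{2\eta'(\tau)^2}{\eta(\tau)}\bigr)|\nabla_\Sigma\tau|^2\right]\leq 0.
\]
The intrinsic Laplacians are then expanded via ambient identities: $\Delta_\Sigma\tau=\overline\Delta\tau+\hess^{\hypu}\tau(N,N)-H\,\overline g(\overline\nabla\tau,N)$, bounded in terms of $c_1$, $L$ and $\nu_\Sigma$, together with the Jacobi-type identity for a CMC spacelike hypersurface
\[
\Delta_\Sigma\nu_\Sigma=\bigl(|\sff|^2+\overline{\mathrm{Ric}}(N,N)\bigr)\nu_\Sigma+\mathcal{E}(T,\overline\nabla T,\hess^{\hypu}T;N),
\]
where $\mathcal{E}$ collects the terms arising from the non-parallel reference field $T$ and is bounded linearly in $\nu_\Sigma$ with constants depending on $c_2,c_3$. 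The timelike lower bound $\overline g(\overline\nabla\tau,\overline\nabla\tau)\leq -c_0^{-2}$, combined with the spacelike decomposition $|\nabla_\Sigma\tau|^2=\overline g(\overline\nabla\tau,\overline\nabla\tau)+\overline g(\overline\nabla\tau,N)^2$, forces $|\nabla_\Sigma\tau|^2\geq\kappa\nu_\Sigma^{\,2}-K$ for explicit constants $\kappa>0,\,K\geq 0$ depending only on $c_0,c_1,c_2$: when $N$ approaches the null cone, $\overline g(\overline\nabla\tau,N)$ is comparable to $\nu_\Sigma$ up to a bounded factor.

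The function $\eta$ is then calibrated so that $\eta''-2(\eta')^2/\eta\geq A$ on $[\varepsilon,\tau_{\max}]$ for a sufficiently large $A=A(L,c_0,c_1,c_2,c_3,\varepsilon^{-1},\tau_{\max})$; this can be arranged by solving a Bernoulli-type ODE for $\eta$. Substituting back, the dominant positive contribution to the inequality at $p_0$ is of order $A\nu_\Sigma^{\,3}$, while every other term, including the favorable $\eta\,|\sff|^2\,\nu_\Sigma$ coming from the Jacobi identity, is at most quadratic in $\nu_\Sigma$. This forces $\nu_\Sigma(p_0)\leq C(L,\varepsilon^{-1},c_0,c_1,c_2,c_3,\tau_{\max})$, completing the proof. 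The main technical obstacle is twofold: first, the explicit derivation of the Jacobi identity for $\nu_\Sigma$ in a general Lorentzian background with non-parallel $T$, which requires careful bookkeeping so that every connection-induced error is controlled by $c_2,c_3$ and at most linearly by $\nu_\Sigma$; and second, the calibration of $\eta$, as naive polynomial or purely exponential choices fail to produce the strict positivity of $\eta''-2(\eta')^2/\eta$ needed to dominate the subleading contributions.
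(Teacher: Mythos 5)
The paper does not prove Theorem~\ref{teo:bart} at all; it cites it directly as \cite[Theorem~3.1]{bartnik}, and the only verification in the text is that the \emph{mean curvature structure condition} required in Bartnik's original statement is automatically satisfied when $H_\Sigma$ is constant and $\nu_\Sigma\ge 1$. Your proposal, by contrast, attempts a from-scratch reconstruction of Bartnik's interior gradient estimate via a maximum principle applied to $\eta(\tau)\nu_\Sigma$. That is a genuinely different route from the paper, which offers no proof to compare against: you are essentially reproving the cited theorem rather than verifying its hypotheses.

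As a sketch of Bartnik's argument the overall architecture you describe is right, but one of your explicit claims is inaccurate in a way that matters. You assert that the error term $\mathcal{E}(T,\overline\nabla T,\hess T;N)$ arising from the non-parallel reference field is ``bounded linearly in $\nu_\Sigma$.'' This is not correct: when $\Sigma$ tilts, an intrinsic $g_\Sigma$-orthonormal frame $\{e_i\}$ has $g_E$-norm growing like $\nu_\Sigma$, so the trace terms $\sum_i \hess T(e_i,e_i)$ contracted against $N$ are of order $\nu_\Sigma^2\,\|T\|_2$, and the cross term $\sum_i g(B(e_i),\overline\nabla_{e_i}T)$ carries an additional uncontrolled factor $|\sff|$. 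The latter has to be absorbed into the favorable $\eta\,|\sff|^2\nu_\Sigma$ term by Cauchy--Schwarz, at the cost of an extra cubic-in-$\nu_\Sigma$ contribution; this is precisely why the calibration of $\eta$ must be quantitative and why $\eta''-2(\eta')^2/\eta$ must dominate a constant determined by $c_1,c_2,c_3$ and $L$, not just be positive. Your ``every other term is at most quadratic'' assertion therefore overstates what holds before the absorption step, and the sketch would need to be reworked to track the $|\sff|\nu_\Sigma^2$ contributions explicitly before the ODE calibration of $\eta$ is set up.
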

	
	The second item follows from \cite[Theorem~2.2]{ecker}. The original setting is the analysis of the mean curvature evolution in Lorentzian manifold satisfying the so called \emph{timelike convergence condition}, namely
	\begin{equation*}
		\mathrm{Ric}(X,X)\ge0 \quad\text{for any timelike vector field $X\in\Gamma(TM)$}.
	\end{equation*}
	A direct computation shows that any $\AdS-$manifold fulfills this condition.
	
	Given a spacelike immersion $F_{0}\colon\Sigma\to M$ in a Lorentzian manifold and a function
	\[\h\colon M\times[0,t_0]\to\R\]
	such that $X(\h)\ge0$ for any future-directed timelike vector field $X$, one considers the solution $F\colon M\times[0,t_0]\to M$ of the prescribed mean curvature flow
	
	\begin{equation}\label{MCF}\tag{$\text{MCF}_{\mathcal{H}}$}
		\begin{cases}
			\frac{\pd F}{\pd t}(x,t)=\left[(H-\h)N\right](x,t), & \\
			F|_{\Sigma\times\{0\}}=F_0. &
		\end{cases}
	\end{equation}
	$H(x,t)$ and $N(x,t)$ being respectively the mean curvature and the future-directed normal vector of $\Sigma_t:=F_t(\Sigma)$ at $x$.
	
	We state \cite[Theorem~2.2]{ecker} in the stationary case, \emph{i.e.} for $\h$ a constant function: it follows that $X(\h)=0$ for any vector field, $F_t=F_0$ and $\Sigma_t=\Sigma$, for all $t\in\R$.
	
	\begin{teo}[\cite{ecker}]\label{teo:eck}
		Let $\tau\in C^{2}(\hypu^{n,1})$ be a time function in the region $\{\tau\ge0\}$ and assume there exist constants $c_0,c_1,c_2>0$ such that on $\{\tau\ge0\}$ it holds
		\begin{align*}
			&\langle\nablah\tau,\nablah\tau\rangle\le-c_0^{-2}, & \|\tau\|_{2}\le c_1,\\
			&\|T\|_2\le c_2.
		\end{align*}
		$\forall L,\varepsilon>0$, there exist constants
		\[
		C_m=C_m(L,n,\varepsilon^{-1},c_0,c_1,c_2,\|\mathrm{Riem}\|_{m+1}),\quad m\in\N,
		\]
		such that
		\[\sup_{\Sigma\cap\{\tau\ge\varepsilon\}}|\nabla^{m}\sff|^2\le C_m.\]	
		for any smooth spacelike hypersurface  $\Sigma$ in $\hypu^{n,1}$, with constant mean curvature $H\in[-L,L]$ such that $\Sigma\cap\{\tau\ge0\}$ is compact and $\pd\Sigma\cap\{\tau>0\}=\emptyset$. 
	\end{teo}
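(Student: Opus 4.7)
The plan is to adapt the parabolic interior estimates of Ecker for the prescribed mean curvature flow to the stationary case, where the family $\Sigma_t\equiv\Sigma$ trivially solves \eqref{MCF} with $\mathcal{H}\equiv H$. Since CMC spacelike hypersurfaces appear here as time-independent solutions, the parabolic evolution equations for geometric quantities degenerate into elliptic PDEs on $\Sigma$, and the a priori estimates become interior elliptic estimates. The gradient function bound of Theorem~\ref{teo:bart} plays the role of the initial control: it replaces the $C^1$ barrier normally required to start the iteration.

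Concretely, I would begin with $m=0$. For a spacelike CMC hypersurface in a Lorentzian manifold with bounded curvature tensor, a Simons-type identity yields an equation of the form
\begin{equation*}
\Delta_{\Sigma}|\sff|^{2}=2|\nabla\sff|^{2}+Q\bigl(\sff,H,\mathrm{Riem},\nablah\mathrm{Riem}\bigr),
\end{equation*}
where $Q$ is polynomial of controlled degree in its arguments. In $\hypu^{n,1}$ the Riemann tensor is parallel, so $\nablah\mathrm{Riem}=0$, which simplifies the right-hand side. To exploit the localization on $\{\tau\geq\varepsilon\}$, I would multiply $|\sff|^{2}$ by a cut-off of the form $\phi=\bigl(\tau-\tfrac{\varepsilon}{2}\bigr)_{+}^{\alpha}\nu_{\Sigma}^{-\beta}$ with suitable exponents, and apply the maximum principle to $\phi\,|\sff|^{2}$. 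At an interior maximum, the $2|\nabla\sff|^{2}$ term combined with a judicious use of the bound $\sup\nu_{\Sigma}\leq C$ from Theorem~\ref{teo:bart} absorbs the bad terms coming from $\nabla\phi$ and $\Delta\phi$, leaving a pointwise quadratic inequality on $|\sff|^{2}$ whose roots yield a bound depending only on $L,\varepsilon^{-1},c_{0},c_{1},c_{2}$ and $\|\mathrm{Riem}\|_{1}$.

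For $m\geq 1$, I would proceed by induction. Assuming bounds on $|\nabla^{k}\sff|$ for $k<m$, commuting $m$ covariant derivatives through the Simons identity gives
\begin{equation*}
\Delta_{\Sigma}|\nabla^{m}\sff|^{2}=2|\nabla^{m+1}\sff|^{2}+R_{m},
\end{equation*}
where the remainder $R_{m}$ is polynomial in $\nabla^{k}\sff$ for $k\leq m$ and in $\nablah^{j}\mathrm{Riem}$ for $j\leq m+1$. Using the inductive hypothesis, $R_{m}$ is bounded by a linear expression in $|\nabla^{m}\sff|^{2}$ with constants depending on $\|\mathrm{Riem}\|_{m+1}$, and the same cut-off-plus-maximum-principle argument produces the bound $C_{m}$. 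This explains the appearance of $\|\mathrm{Riem}\|_{m+1}$ in the statement.

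The main obstacle is the bookkeeping: one must carefully derive the Simons-type identity in the pseudo-Riemannian setting (the signs differ from the Riemannian case), commute covariant derivatives while tracking curvature-induced error terms, and choose cut-offs with enough flexibility that the gradient estimates on $\tau$ (controlled by $c_{0},c_{1},c_{2}$) suffice to absorb $|\nabla\phi|^{2}/\phi$ into the $|\nabla^{m+1}\sff|^{2}$ term at the maximum. The choice of the exponent $\beta$ on $\nu_{\Sigma}^{-\beta}$ is delicate: it must be large enough to dominate the positive terms coming from $\sff$ contracted with $T$, yet compatible with the reverse-Cauchy-Schwarz estimates in signature $(n,1)$. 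Once these technicalities are handled, the whole scheme is a direct transcription of Ecker's parabolic proof to the static CMC setting.
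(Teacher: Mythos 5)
The paper does not prove this statement at all: it is quoted verbatim from \cite[Theorem~2.2]{ecker}, and the only work done in the text is the observation that a CMC hypersurface is a \emph{stationary} solution of the prescribed mean curvature flow \eqref{MCF} with $\h\equiv H$ constant (so $X(\h)=0$, $F_t=F_0$, $\Sigma_t=\Sigma$), together with the remark that $\hypu^{n,1}$ satisfies the timelike convergence condition; Ecker's parabolic interior estimates then specialize immediately to the static statement. Your proposal instead attempts to re-derive the estimates from scratch by the elliptic Simons-identity/cut-off/maximum-principle scheme. That is a genuinely different route: the paper's approach buys a one-line, rigorous reduction at the cost of importing a black box, while yours would be self-contained but must reproduce the hard analytic core of \cite{ecker}.

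As written, your sketch is a plan rather than a proof: the decisive steps --- the precise Lorentzian Simons identity, the commutation formulas for $\nabla^m\sff$, the choice of exponents in the cut-off $\phi=(\tau-\varepsilon/2)_+^\alpha\nu_\Sigma^{-\beta}$, and the absorption of the $\nabla\phi$, $\Delta\phi$ terms (which involve $\Delta_\Sigma\tau$ and $\Delta_\Sigma\nu_\Sigma$, hence $\sff$ contracted with ambient data controlled by $c_0,c_1,c_2$) --- are exactly where the content lies, and you defer all of them. Two concrete points to fix if you pursue this route. First, the base case: you start the iteration from the gradient bound of Theorem~\ref{teo:bart}, but that constant also depends on $c_3=\|\mathrm{Ric}\|$ and on $\tau_{\max}$, which do not appear in the dependency list of Theorem~\ref{teo:eck}; in $\hypu^{n,1}$ this is harmless (the curvature is constant and $\tau$ is bounded on the compact region where $\Sigma$ meets $\{\tau\ge0\}$), but you must either say so or build the gradient estimate into the same localized maximum-principle argument, as Ecker does. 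Second, the interior maximum of $\phi\,|\nabla^m\sff|^2$ exists only because $\Sigma\cap\{\tau\ge0\}$ is compact and $\pd\Sigma\cap\{\tau>0\}=\emptyset$; this hypothesis should be invoked explicitly, since without it the whole scheme collapses. With these gaps the proposal is a credible reconstruction of Ecker's argument in the stationary case, but it does not yet constitute a proof, whereas the paper's intended justification is simply the specialization of the cited theorem.
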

	
	\begin{oss}\label{oss:prolocest}
		The idea of Proposition~\ref{pro:locest} is to apply Theorem~\ref{teo:bart} and Theorem~\ref{teo:eck} to suitably chosen $(\tau,T)$, and use the geometry of $\hypu^{n,1}$, in order to obtain bounds that only depend on ($p,K$).
	\end{oss}
	\begin{de}\label{de:conoeps}
		For $\varepsilon>0$, define $I_\varepsilon(p):=\{q\in I(p),\,\dist(p,q)>\varepsilon\}$. As for the cone, we denote to $I^+_\varepsilon(p)$ its future component and $I^-_\varepsilon(p)$ the past one  (see Figure~\ref{fig:conoeps}).
	\end{de}
	\begin{figure}[h]
		\includegraphics{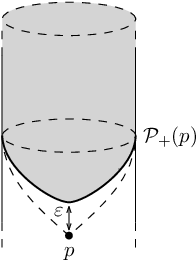}
		\caption{The future $\varepsilon-$cone $I^+_\varepsilon(p)$ at Lorentzian distance $\varepsilon$ from $p$.}\label{fig:conoeps}
	\end{figure}
	\begin{proof}[Proof of Proposition~\ref{pro:locest}]
		Fix $p\in\hypu^{n,1}$, $\varepsilon>0$, $L\ge0$ and $K\sq I^-(p_+)$. Let $\Sigma$ be a spacelike graph such that $p\in D(\Sigma)$. Denote $\tau:=\dist(p,\cdot)-\varepsilon/2$. By definition, $\{\tau>0\}=I_{\varepsilon/2}(p)$ and $\{\tau\ge0\}=\overline{I_{\varepsilon/2}(p)}$. Both sets are contained in $I(p)$, and $I(p)\cap\Sigma$ is precompact in $\hypu^{n,1}$ because $p\in D(\Sigma)$ (Proposition~\ref{pro:depdomain}). It follows that the function $\tau$ satisfies the conditions of Theorem~\ref{teo:bart} and Theorem~\ref{teo:eck}, namely
		\begin{equation*}
			\Sigma\cap\{\tau\ge0\}\ \text{is compact},\quad \pd\Sigma\cap\{\tau>0\}=\emptyset.
		\end{equation*}
		The region $\{\tau\ge0\}\cap K$ is contained in $I^{+}(p)\cap I^{-}(p_+)$, which we can isometrically embed in $\hypd^{n,1}$ so that
		\[
		\tau=\dist(p,\cdot)=\arccos(-\pr{p,\cdot})-\varepsilon/2\in C^{\infty}(I^+(p)).
		\] 
		Remark that the conditions on the constants $c_0,c_1,c_2$ of Theorem~\ref{teo:bart} and Theorem~\ref{teo:eck} coincide. Hence, it suffices to prove that $c_0,c_1,c_2,c_3$ only depend on $p,\varepsilon$ and $K$.
		
		Denoting $u(x)=\pr{p,x}$, $\nablah u(x)=p+\pr{p,x}x$. It follows
		\begin{align*}
			&\nablah\tau(x)=-\frac{1}{\sqrt{1+\pr{p,x}^2}}\nablah u=-\frac{p+\pr{p,x}x}{\sqrt{1+\pr{p,x}^2}}.
		\end{align*}
		Over $I^+(p)$, $\tau\ge0\iff\pr{p,x}^2\le\cos^2(\varepsilon/2)$, hence \[\pr{\nablah\tau(x),\nablah\tau(x)}=-\frac{1-\pr{p,x}^2}{1+\pr{p,x}^2}\le-\sin^2(\varepsilon/2)=:-c_0(\varepsilon)^{-2}.\]
		
		The compactness of $K$ allows to define
		\begin{align*}
			c_1:=\max_{K\cap\{\tau\ge0\}}\|\tau\|_2, && c_2:=\max_{K\cap\{\tau\ge0\}}\|T\|_2, && c_3:=\max_{K\cap\{\tau\ge0\}}\|\mathrm{Ric}\|_2.
		\end{align*}
		All constants only depend on $p,\varepsilon,K$ and meet the requirements of Theorem~\ref{teo:bart} and Theorem~\ref{teo:eck}, which concludes the proof.
	\end{proof}
	\begin{oss}
		$\hypu^{n,1}$ being a homogeneous space, one could think that $c_1,c_2,c_3$ in the proof are independent on the choice of $p$, and so are the constants $C_i$. This is not the case as $T$ is not invariant by isometries of $\hypu^{n,1}$, hence neither the induced Riemannian norm, so the constants $C_i$ do depend on the choice of $p$.
	\end{oss}
	
	\subsection{Global estimates}
We promote the local estimates of Proposition~\ref{pro:locest} to global bounds for the second fundamental form and its derivatives of any CMC entire spacelike graph in $\hypu^{n,1}$.

	\begin{teo}\label{teo:boundII}
		Let $L\ge0$, there exist constants $C_m(L,n)$, $m\in\N$, such that
		\begin{equation*}
			\sup_{\Sigma}|\nabla^{m}\sff|^2\le C_m(L,n),
		\end{equation*}
		for any entire spacelike graph $\Sigma$ in $\hypu^{n,1}$ with constant mean curvature $H\in[-L,L]$. 
	\end{teo}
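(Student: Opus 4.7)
My plan is to bootstrap the local estimate of Proposition~\ref{pro:locest} to the desired uniform bound by combining two ingredients: the homogeneity of $\hypu^{n,1}$ (whose isometry group acts transitively on pairs consisting of a point and a future-directed unit timelike tangent vector), and the universal two-sided cosmological time bound $\tau_\past+\tau_\future\ge\pi/2$ from Corollary~\ref{cor:cosmo}. The former will move any given $x\in\Sigma$ to a fixed base position, while the latter will ensure that a reference point in $D(\Sigma)$ in the past of $x$ at uniformly bounded Lorentzian distance is always available.

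First, I reduce via the time-reversing isometry $(x,t)\mapsto(x,-t)$ of $\hypu^{n,1}$, which flips the sign of $H$ while preserving $|H|\le L$. Since $\Sigma\sq\Omega(\pd\Sigma)$ and Corollary~\ref{cor:cosmo} gives $\max(\tau_\past(x),\tau_\future(x))\ge\pi/4$ for every $x\in\Sigma$, it suffices to prove the bound at $x\in\Sigma$ with $\tau_\past(x)\ge\pi/4$. For such an $x$, I choose $p\in I^-(x)\cap\Omega(\pd\Sigma)=I^-(x)\cap D(\Sigma)$ at Lorentzian distance $\pi/8$ from $x$, realized by a timelike geodesic; the existence of such $p$ follows from the definition of $\tau_\past$ together with the geodesic convexity of $\overline{\Omega(\pd\Sigma)}$. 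Denote by $v\in T_p\hypu^{n,1}$ the future-directed unit tangent of this geodesic at $p$, so that $\exp_p(\tfrac{\pi}{8}v)=x$.

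Next I fix, once and for all (independently of $\Sigma$ and $x$), a universal pair $(p_0,T_0)$ with $T_0\in T_{p_0}\hypu^{n,1}$ a future-directed unit timelike vector, let $x_0':=\exp_{p_0}(\tfrac{\pi}{8}T_0)$, and pick a compact neighbourhood $K_0\sq I^-(p_{0,+})$ of $x_0'$; set $\varepsilon_0:=\pi/16$. By transitivity of $\Isom(\hypu^{n,1})$ on such pairs, there is an isometry $\psi$ of $\hypu^{n,1}$ mapping $(p,v)\mapsto(p_0,T_0)$, hence $x\mapsto x_0'$ and $\Sigma\mapsto\Sigma':=\psi(\Sigma)$, still a CMC entire spacelike graph with mean curvature $H$ and satisfying $p_0\in D(\Sigma')$. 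Applying Proposition~\ref{pro:locest} to $\Sigma'$ with data $(p_0,\varepsilon_0,L,K_0)$ then yields
\[
|\nabla^m\sff_\Sigma|^2(x)=|\nabla^m\sff_{\Sigma'}|^2(x_0')\le C_m(p_0,\varepsilon_0,L,K_0)=:C_m(L,n),
\]
which is the desired bound (the first equality uses that $\psi$ is an isometry).

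The main obstacle is to convince oneself that this setup is genuinely uniform in $\Sigma$ and $x$: this reduces to the existence, for every point $x$ on every entire CMC spacelike graph with $|H|\le L$, of a past base point at Lorentzian distance $\pi/8$ lying inside $D(\Sigma)$, with a normal direction that can be isometrically normalized to a fixed one. This uniform existence is precisely what Corollary~\ref{cor:cosmo} supplies (after the time-reversal reduction), and the normalization is supplied by the transitivity of $\Isom(\hypu^{n,1})$ on unit timelike vectors. With $(p_0,T_0,K_0,\varepsilon_0)$ fixed universally, the constants produced by Proposition~\ref{pro:locest} depend only on $(L,n)$, concluding the argument.
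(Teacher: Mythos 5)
Your proposal is correct and follows essentially the same strategy as the paper's proof: use Corollary~\ref{cor:cosmo} to find, for each $x\in\Sigma$, a point of $D(\Sigma)$ at a fixed timelike distance from $x$, normalize this configuration to a fixed one $(p_0,\varepsilon_0,K_0)$ by an isometry (the paper allows a time-reversing one and absorbs the sign of $H$, whereas you perform the time-reversal reduction upfront), and then invoke Proposition~\ref{pro:locest} together with the isometry-invariance of $|\nabla^m\sff|$. The differing numerical choices ($\pi/8$, $\pi/16$ versus the paper's $\pi/5$, $\varepsilon<\pi/5$) are immaterial.
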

\begin{figure}[h]
	\includegraphics{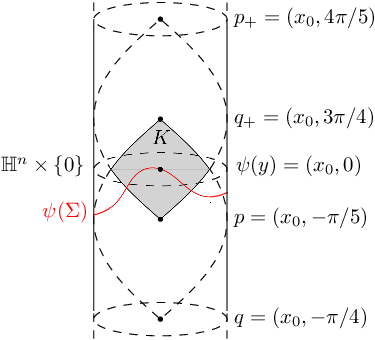}
	\caption{Setting of Proposition~\ref{teo:boundII}.}\label{fig:glob}
\end{figure}
	\begin{proof}
		In Proposition~\ref{pro:locest}, we showed that the constants $c_0,c_1,c_2$ only depend on the choice of $p$, $\varepsilon$ and $K$: here, we fix suitably $p$, $\varepsilon$ and $K$ and exploit the invariance the second fundamental form by the action of the isometry group, to have a global estimate.
		
		Fix a splitting $\hypu^{n,1}=\hyp^{n}\times\R$. Take an entire spacelike graph $\Sigma$ with constant mean curvature $H$ and fix a point $x\in\Sigma$. As remarked in Corollary~\ref{cor:cosmo}, \[\tau_\past(x)+\tau_\future(x)\ge\frac{\pi}{2}.\]
		It follows that there exists a timelike geodesic joining $x$ and $\pd\Omega(\pd\Sigma)$ whose lenght is at least $\pi/4$. In particular, there exists a point $y\in\Omega(\pd\Sigma)$ such that $\dist(x,y)=\pi/5$. Let $\phi\in\Isom\hypu^{n,1}$ such that $\phi(x)=(x_0,0)$, $\phi(y)=(x_0,-\pi/5)=:p$, whose existence is provided by maximality of $\Isom(\hypu^{n,1})$. (Observe that $\phi$ might not preserve the time-orientation, but in this case $\Sigma':=\phi(\Sigma)$ has constant mean curvature $-H$, which does not affect the result.)
		
		Denote $q:=(x_0,-\pi/4)$ and $K:=\overline{I^{+}(p)\cap I^{-}(q_+)}$, which is a compact set contained in $I^{-}(p_+)$ because $p\in I^{+}(q)$.
		
		By construction, $p\in\Omega(\pd\Sigma')=D(\Sigma')$, hence we can apply Proposition~\ref{pro:locest}, choosing $\varepsilon<\pi/5$, so that $\phi(x)=(x_0,0)\in I^+_\varepsilon(p)$: at $\phi(x)$, \[|\nabla^{m}\sff|^2\le C_m(L,p,\varepsilon,K,n).\] The second fundamental form is invariant by the action of $\Isom(\hypu^{n,1})$, $\Sigma$ and $x$ are arbitrary, $(p,\varepsilon,K)$ are fixed and that concludes the proof.
	\end{proof}

A first consequence of Theorem~\ref{teo:boundII} is 

\begin{repthmx}{teo:B}[Completeness]
	Any CMC entire spacelike graph in $\hypu^{n,1}$ is complete.
\end{repthmx}

Indeed, an entire graph with uniformly bounded second fundamental form in the Anti-de Sitter space is complete: see for example \cite[Proposition~6.21]{benbon}, \cite[Corollary~3.30]{ltw} or \cite[Lemma~3.11]{sst}.

As a second consequence, we deduce that any entire spacelike CMC graph in $\hypu^{n,1}$ has sectional curvature bounded both from below and above.
	\begin{cor}\label{cor:sectcurv}
		Let $L\ge0$. There exists a constant $K(L,n)$ such that for any entire spacelike graph $\Sigma$ with constant mean curvature $H\in[-L,L]$,
		\[|K_\Sigma(V)|\le K(L,n),\]
		for $K_\Sigma(V)$ the sectional curvature along a non-degenerate tangent 2-plane $V\sq T_p\Sigma$.
	\end{cor}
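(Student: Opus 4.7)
The plan is to combine the Gauss equation with the $m=0$ case of Theorem~\ref{teo:boundII}. Since $\hypu^{n,1}$ has constant sectional curvature $-1$ and the normal field $N$ to a spacelike hypersurface $\Sigma$ satisfies $\langle N,N\rangle=-1$, the Gauss equation for a spacelike hypersurface in a Lorentzian ambient takes the form
\begin{equation*}
K_\Sigma(v,w)\;=\;-1\;-\;\bigl(\sff(v,v)\sff(w,w)-\sff(v,w)^{2}\bigr),
\end{equation*}
for any pair $v,w\in T_p\Sigma$ which is orthonormal with respect to the induced Riemannian metric. Since $\Sigma$ is spacelike, every tangent $2$-plane is automatically non-degenerate, and one can always choose such an orthonormal basis of $V$, so this formula computes $K_\Sigma(V)$.

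Next I would estimate the right-hand side via the norm of $\sff$. Completing $v,w$ to an orthonormal frame $v_1,\dots,v_n$ of $T_p\Sigma$, the identity $|\sff|^{2}=\sum_{i,j}\sff(v_i,v_j)^{2}$ gives in particular
\begin{equation*}
\sff(v,v)^{2}+\sff(w,w)^{2}+2\,\sff(v,w)^{2}\;\le\;|\sff|^{2}(p).
\end{equation*}
Hence by the elementary inequality $2|ab|\le a^{2}+b^{2}$,
\begin{equation*}
\bigl|\sff(v,v)\sff(w,w)-\sff(v,w)^{2}\bigr|\;\le\;|\sff|^{2}(p).
\end{equation*}

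Finally I would apply Theorem~\ref{teo:boundII} with $m=0$: there is a constant $C_0(L,n)$ such that $|\sff|^{2}\le C_0(L,n)$ at every point of any entire spacelike CMC graph with $H\in[-L,L]$. Substituting into the Gauss equation and combining with the previous estimate yields $|K_\Sigma(V)|\le 1+C_0(L,n)$, so we may take $K(L,n):=1+C_0(L,n)$.

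This proof is essentially mechanical once Theorem~\ref{teo:boundII} is in hand; I do not foresee any real obstacle. The only small point to double-check is the sign convention in the Gauss equation for a spacelike hypersurface (the sign of the quadratic term in $\sff$ is opposite to the Riemannian case because the unit normal is timelike), but regardless of sign conventions the absolute value bound is unaffected.
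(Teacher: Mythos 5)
Your proof is correct and follows essentially the same route as the paper: the Gauss equation for a spacelike hypersurface combined with the $m=0$ uniform bound $|\sff|^2\le C_0(L,n)$ from Theorem~\ref{teo:boundII}, with the sign of the quadratic term indeed irrelevant for the absolute-value estimate. Your algebraic step via $2|ab|\le a^2+b^2$ even yields a slightly cleaner constant ($1+C_0$) than the paper's cruder entrywise bound, but the argument is the same in substance.
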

\begin{proof}
	Consider an orthonormal basis $v_1,v_2$ of $V$ and complete it to an orthonormal basis $v_i$ of $T_p\Sigma$. We recall that Gauss' equation is
	\[-1=K_{\hypu^{n,1}}(V)=K_{\Sigma}(V)-\sff(v_1,v_1)\sff(v_2,v_2)+\sff(v_1,v_2)^2.\]
	
	By Theorem~\ref{teo:boundII}, there exists a constant $C_0(|H|,n)$ such that $\sum h_{ij}^2\le C_0(|H|,n)$, $h_{ij}$ being the coefficients of $\sff$ in the orthonormal basis $v_i$. The proof follows by substituting in Gauss' equation the following estimates:
	\begin{align*}
		|\sff(v_l,v_k)|=|h_{lk}|\le1+h_{lk}^2\le\sum_{i,j=1}^n(1+h_{ij}^2)\le n^2+C_0(|H|,n).
	\end{align*}
\end{proof}
	
	\section{Existence}\label{sec:ex}
	In this section we will prove the first part of Theorem~\ref{teo:A}. The argument is a generalisation of the one used in \cite{bonsch}.
	\begin{repthmx}{teo:A}[Existence]
		Let $\Lambda$ be an admissible boundary in $\hypu^{n,1}$ and $H\in\R$. There exists a smooth entire spacelike graph $\Sigma$ with constant mean curvature $H$ and such that $\pd\Sigma=\Lambda$.	
	\end{repthmx}
	\begin{proof}
		\step{Equivalent statement}\label{step:ex1} We claim that it suffices to find an acausal Cauchy hypersurface $W$ for $\Omega(\Lambda)$ such that for any compact spacelike hypersurface $S$ with constant mean curvature $H$ and boundary contained in $W$, either $S\sq I^+(W)$ or $S\sq I^-(W)$.
		
		To prove the claim, we build a sequence of compact graphs with given constant mean curvature and prove that they smoothly converge to an entire graph, so that the limit has constant mean curvature.
		
		Fix a splitting $\hypu^{n,1}=\hyp^n\times\R$. For $r>0$, denote
		\begin{align*}
			B_r:=B_{\hyp^n}(0,r), && S_r:=W\cap(B_r\times\R).
		\end{align*}
		$S_r$ is the intersection between $W$ and the cylinder over $B_r$. By assumption, $W$ is an entire acausal graph, so $S_r$ is an acausal graph over $B_r$.
		
		Since $S_r$ is a compact acausal graph, by \cite[Theorem~4.1]{bartnik} there exists a compact smooth spacelike graphs $\Sigma_r$ with constant mean curvature $H$, such that $\pd\Sigma_r=\pd S_r$.
		
		By hypothesis, we can assume that infinitely many $\Sigma_r$ are contained in the same connected component of $\hypu^{n,1}\setminus W$. Without loss of generality, we assume $\Sigma_r\sq I^+(W)$. $\Sigma_r$ is the graph of a $1-$Lipschitz map $u_r$ defined on an open ball of $\sph_+^n$ (Lemma~\ref{lem:graph}). Moreover, $\Sigma_r$ is contained in $\Omega(\Lambda)$, then $|u_r|$ is uniformly bounded. By a diagonal process, we can extract a sequence $\Sigma_k=\gr u_k$ converging to an achronal entire graph $\Sigma=\gr u\sq\overline{\Omega(\Lambda)}$. In particular, $\pd\Sigma=\Lambda$: indeed,
			\[
			\pd\Sigma\sq\overline{\Omega(\Lambda)}\cap\pd\hypu^{n,1}=\Lambda,
			\]
		and the other inclusion follows by the fact that $\pd\Sigma$ and $\Lambda$ are graphs of functions $\pd\hyp^n\to\R$. 
		
		To prove the claim, we have to show that $\Sigma$ is smooth, spacelike and with constant mean curvature $H$, which are local properties. Hence, it suffices to prove it on $\Sigma\cap(B_R\times\R)$, for a fixed $R>0$. Since $\Sigma_k\sq I^+(W)$,
			\[
			K_R:=\overline{I^+(W)}\cap\overline{\Omega(\Lambda)}\cap(B_R\times\R)
			\]
		is a compact subset of $\overline{\Omega(\Lambda)}$ containing $\Sigma_k\cap(B_R\times\R)$, for all $k\in\N$.
			
		We recall that $I_\varepsilon(p)$ is the set $\{\dist(p,\cdot)>\varepsilon\}$ (see Definition~\ref{de:conoeps}). By definition of Cauchy hypersurface, \[\{I_\varepsilon^+(p),p\in I^-(W)\cap\Omega(\Lambda),\varepsilon>0\}\] is an open cover of $\Omega(\Lambda)$: indeed, it trivially covers $\overline{I^-(W)}\cap\Omega(\Lambda)$, while any point in the future of $W$ is connected to $W$ through a timelike past inextensible curve. We can then extract a finite subcover of $K_R$, namely $I_{\varepsilon_i}^+(p_i)$, $i=1,\dots,h$, and produce constants \[C_m(i,R)=C_m(p_i,\varepsilon_i,|H|,K_R),\] as in Proposition~\ref{pro:locest}: indeed, by Lemma~\ref{lem:invdom}, $K_R\sq I^-(p_+)$, for all $p\in\Omega(\Lambda)$. We define 
		\[C_m(R):=\max_{i=1,\dots, k}C_m(i,R),\quad m\ge-1.\]
			
		We claim that $p_i$ is eventually contained in $D(\Sigma_k)$, for all $i=1,\dots h$: hence, by Proposition~\ref{pro:locest}, the gradient function and the second fundamental form of $\Sigma_k$, together with all its derivatives, are uniformly bounded on $B_R\times\R$, for $k$ big enough. 
		
		To prove the claim, remark that $p_i$ are contained in $\Omega(\Lambda)=D(W)$, hence $I^+(p_i)$ meets $W$ in a precompact set. In particular, there exists $r>0$ such that $I^+(p_i)\cap W\sq B_r\times\R$, for all $i=1,\dots h$. In particular, if $\Sigma_k$ is a graph over $B_r$, which is the case for $k$ big enough, $I^+(p_i)\cap\Sigma_{k}$ is precompact, namely $p_i\in D(\Sigma_k)$, for all $i=1,\dots h$.
		
		The uniform bound on the gradient function ensures that $\gr (u|_{B_R})$ is spacelike, while the bounds on the derivatives of the second fundamental form imply bounds on all the derivatives of the $u_k$ on $B_R$, uniformly in $k$. We have already remarked that $|u_k|$ are uniformly bounded: hence, $\{u_{k}, k>\bar{k}\}$ is precompact in $C^{\infty}(B_R)$. Since $u_{k}$ converges to $u$ over $B_R$, $u|_{B_R}$ is smooth.
		
		\cite[Equation~(2.7)]{bartnikH} provides an explicit formula for the mean curvature of a graph $S=\gr f$, that is
			\begin{equation}\label{eq:H}
				H_S=\frac{1}{\nu_S}\left(\mathrm{div}_S(\varphi\nabla f)+\mathrm{div}_S T)\right),
			\end{equation}
		where $\mathrm{div}_S(X)=\sum_{i=1}^n\pr{\nablah_{v_i}X,v_i}$, for $v_i$ an orthonormal basis of $TS$, $X\in\Gamma(T\hypu^{n,1})$ and $\varphi:=\sqrt{-g(\pd_t,\pd_t)}$, which is known in the literature as \emph{tilt function}. The right hand side of Equation~\eqref{eq:H} is constant for $k\ge\bar{k}$, hence $H_\Sigma=H_{\Sigma_{k}}=H$ on $B_R\times\R$.
		
		Since the choice of $R$ was arbitrary, $\Sigma$ is a smooth spacelike graph with constant mean curvature $H$, which concludes the proof of the claim.
		
		\step{$H\ne0$}
		We need to exhibit a hypersurface as in the previous step.	For $H>0$, pick $\theta=\arctan(H/n)$ and choose $W=\barrp{\theta}$: Corollary~\ref{cor:diam-barriera} ensures that any compact spacelike hypersurface with constant mean curvature $H$ and whose boundary belongs to $W$ is contained in the future of $W$. The same argument applies for $H<0$, choosing $W=\barrf{\theta}$. Hence, by the previous step, there exists an entire CMC hypersurface $\Sigma_H$ bounding $\Lambda$, for any $H\ne0$.
	
		\step{$H=0$} For the maximal case, we choose one entire CMC graph just found, namely $W=\Sigma_H$, for $H\ne0$. This choice meets the conditions of the previous step by Proposition~\ref{lem:umbilical}, which concludes the proof.
	\end{proof}
	\begin{oss}
		Quite surprisingly, the maximal case is the most delicate to deal with, because, in general, $\pd_+\ch(\Lambda)=\barrp{0}$ and $\pd_-\ch(\Lambda)=\barrf{0}$ are not Cauchy hypersurfaces for $\Omega(\Lambda)$. Indeed, as soon as $\Lambda$ contains transverse lightlike segments, $\pd\ch(\Lambda)$ intersects $\pd\Omega(\Lambda)$. In that case, $\pd_\pm\ch(\Lambda)$ contains lightlike geodesic segments (Lemma~\ref{lem:light}), hence it is not an acausal hypersurface. For example, the convex core of all hypersurfaces described in Appendix~\ref{app:a} coincides with the invisible domain of their boundary (Remark~\ref{oss:convex}).
			
		To our knowledge, a direct way to overcome this problem is still to be found: indeed, \cite{bonsep,tamb} did not deal with degenerate boundaries, while \cite{ltw,sst} solve the Plateau's problem for non-degenerate boundaries and then use a compactness argument to deform them in solutions for degenerate ones.
	\end{oss}
	
	\section{A compactness result}\label{sec:comp}
The aim of this short section is to prove the following statement:

\begin{pro}\label{pro:compact}
	Let $\Sigma_k$ be a sequence of entire spacelike graphs with constant mean curvature $H_k$, contained in a precompact set of $\hypu^{n,1}\cup\pd\hypu^{n,1}$. Up to taking a subsequence, we can assume that $\Sigma_k$ converges to an entire achronal graph $\Sigma_\infty$ and $H_k$ converges to $H_\infty\in\R\cup\{\pm\infty\}$. 
	
	Then, exactly one of the following holds:
	\begin{enumerate}
		\item if $\pd\Sigma_\infty$ is not admissible, then $\Sigma_\infty$ is a totally geodesic lightlike hypersurface;
		\item if $\pd\Sigma_\infty$ is admissible and $H_\infty=\pm\infty$, then $\Sigma_\infty=\pd_\mp(\Omega(\pd\Sigma_\infty))$;
		\item if $\pd\Sigma_\infty$ is admissible and $H_\infty\in\R$, then $\Sigma_\infty$ is a CMC entire graph with constant mean curvature $H_\infty$.
	\end{enumerate}
	Moreover, in the last case, $\Sigma_k$ converges to $\Sigma_\infty$ smoothly as a graph, in any splitting.
\end{pro}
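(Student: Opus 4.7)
My plan splits along the trichotomy in the statement.

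\emph{Extraction of the limit and non-admissible case.} Fix a splitting $\hypu^{n,1}=\hyp^n\times\R$. Proposition~\ref{pro:spaceacausal} and Lemma~\ref{lem:graph} realise each $\Sigma_k$ as the graph of a strictly $1$-Lipschitz function $u_k\colon\hyp^n\to\R$, and precompactness of $\bigcup_k\Sigma_k$ in $\hypu^{n,1}\cup\pd\hypu^{n,1}$ bounds the $u_k$ in $L^\infty$. Being equi-Lipschitz on the closed hemisphere $\hyp^n\cup\pd\hyp^n$, Ascoli--Arzelà yields a subsequence converging uniformly to some $1$-Lipschitz $u_\infty$, so that $\Sigma_\infty=\gr u_\infty$ is an entire achronal graph and $\pd\Sigma_k\to\pd\Sigma_\infty$ as graphs on $\pd\hyp^n$. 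A further extraction makes $H_k\to H_\infty\in\R\cup\{\pm\infty\}$. If $\pd\Sigma_\infty$ is not admissible, the $1$-Lipschitz map $u_\infty|_{\pd\hyp^n}$ must have oscillation $\pi$ by Definition~\ref{de:adm}; hence $\osc(u_\infty)\ge\pi$, and Lemma~\ref{lem:osc} forces equality and delivers item~(1).

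\emph{Admissible boundary, finite $H_\infty$.} Fix $L>|H_\infty|$ so that $|H_k|\le L$ eventually. Theorem~\ref{teo:boundII} produces uniform bounds on $|\nabla^m\sff|$ for every $m\in\N$ on such $\Sigma_k$. To promote these intrinsic bounds to local $C^\infty$ control of the $u_k$, I combine them with the local estimates of Proposition~\ref{pro:locest}: for every $p\in\Omega(\pd\Sigma_\infty)$, Lemma~\ref{lem:monoton} applied to $\pd\Sigma_k\to\pd\Sigma_\infty$ ensures $p\in\Omega(\pd\Sigma_k)=D(\Sigma_k)$ for $k$ large, so the proposition uniformly bounds $\nu_{\Sigma_k}$ and $|\nabla^m\sff|$ on the part of $\Sigma_k$ lying in a fixed compact of $I^+_{\varepsilon}(p)\cap I^-(p_+)$. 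A covering argument together with the CMC equation~\eqref{eq:H} converts these into uniform $C^\infty$ bounds on $u_k$ over any compact $K\sq\hyp^n$ whose graph $\gr u_\infty|_K$ lies in $\Omega(\pd\Sigma_\infty)$. Precompactness in $C^\infty$ and uniqueness of the limit give smooth convergence on each such $K$; passage to the limit in~\eqref{eq:H} shows that $\Sigma_\infty$ is a smooth spacelike entire graph with constant mean curvature $H_\infty$, which is item~(3).

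\emph{Admissible boundary, infinite $H_\infty$.} Assume $H_\infty=+\infty$; the other case is symmetric via reversal of time orientation. For each $\theta\in(0,\pi/2)$, $H_k>n\tan\theta$ eventually, and the strict inequality rules out the umbilical alternative in Proposition~\ref{pro:bar}, so $\Sigma_k\sq I^-(\barrp{\theta}(\pd\Sigma_k))$. Continuity of the cosmological time $\tau_\past$ under $C^0$-perturbation of the admissible boundary (via Hausdorff continuity of $\ch(\pd\Sigma_k)\to\ch(\pd\Sigma_\infty)$ and Proposition~\ref{pro:benbon}) gives $\barrp{\theta}(\pd\Sigma_k)\to\barrp{\theta}(\pd\Sigma_\infty)$ in the Hausdorff sense on compact subsets of $\Omega(\pd\Sigma_\infty)$, so taking limits yields $\Sigma_\infty\sq\overline{I^-(\barrp{\theta}(\pd\Sigma_\infty))}$. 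Letting $\theta\nearrow\pi/2$ and using $\barrp{\pi/2}=\pd_-\Omega(\pd\Sigma_\infty)$ from Definition~\ref{de:w} forces $\Sigma_\infty\sq\overline{I^-(\pd_-\Omega(\pd\Sigma_\infty))}$; combined with $\Sigma_\infty\sq\overline{\Omega(\pd\Sigma_\infty)}$ and the fact that both $\Sigma_\infty$ and $\pd_-\Omega(\pd\Sigma_\infty)$ are entire achronal graphs sharing the boundary $\pd\Sigma_\infty$, the two must coincide, which is item~(2).

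\emph{Main obstacle.} The delicate step is the finite case: the bounds in Theorem~\ref{teo:boundII} are intrinsic and give no direct control on the tilt of $\Sigma_k$ with respect to the splitting, so turning them into $C^\infty_{\mathrm{loc}}$ bounds on the $u_k$ hinges on the gradient-function estimate of Proposition~\ref{pro:locest}, which in turn requires the boundary-continuity of the invisible domain through Lemma~\ref{lem:monoton}. A related subtlety in the infinite case is the Hausdorff convergence of the barriers $\barrp{\theta}(\pd\Sigma_k)$, which rests on continuity of $\tau_\past$ in the boundary datum.
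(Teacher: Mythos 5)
Your extraction of the limit via Ascoli--Arzelà and the non-admissible case match the paper, but the finite-$H_\infty$ case has a genuine gap. Proposition~\ref{pro:locest} only controls $\Sigma_k$ inside future cones $I^+_{\varepsilon}(p)$ of points $p\in D(\Sigma_k)$, so to obtain uniform estimates over a cylinder $B_R\times\R$ you must cover $\Sigma_k\cap(B_R\times\R)$ by finitely many such cones; this is impossible wherever the $\Sigma_k$ accumulate on the past boundary $\pd_-\Omega(\pd\Sigma_\infty)$, because no point of the invisible domain lies a definite time-distance in the past of that boundary. Your restriction to compacts $K$ with $\gr u_\infty|_K\sq\Omega(\pd\Sigma_\infty)$ concedes exactly this, and then the argument cannot close: a priori $\Sigma_\infty$ is only achronal and contained in $\overline{\Omega(\pd\Sigma_\infty)}$, and nothing in your proof excludes that it touches, or even partially coincides with, $\pd_-\Omega(\pd\Sigma_\infty)$ outside those compacts --- which is precisely what does happen when $H_\infty=\pm\infty$, so it cannot be dismissed without an argument. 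The paper supplies the missing ingredient with a barrier: after replacing $u_k$ by $u_k+a_k$, $a_k:=\max_{\pd\hyp^n}|u_\infty-u_k|\to0$, so that $\pd\Sigma_k$ lies in the future of $\pd\Sigma_\infty$, it sets $\h:=\sup_k|H_k|+\varepsilon$ and uses the entire $\h$-CMC graph $\Sigma_{\h}$ with boundary $\pd\Sigma_\infty$ (existence part of Theorem~\ref{teo:A}) together with Proposition~\ref{lem:umbilical} to get $\Sigma_k\sq I^+(\Sigma_{\h})$ for all $k$; this keeps the $\Sigma_k$ uniformly away from $\pd_-\Omega(\pd\Sigma_\infty)$, makes the finite covering by cones $I^+_{\varepsilon_i}(p_i)$ with $p_i\in I^-(\Sigma_{\h})\cap\Omega(\pd\Sigma_\infty)$ possible over every $B_R\times\R$, and thereby rules out degeneration of the limit. (Theorem~\ref{teo:boundII} does not help here: as you yourself note, the obstacle is the tilt/covering, which the intrinsic bounds do not address.)

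In the infinite case your route is plausible but rests on an unproven claim: Hausdorff continuity of $\ch(\pd\Sigma_k)$, of the cosmological time, and hence of the barriers $\barrp{\theta}(\pd\Sigma_k)$, under $C^0$-convergence of the boundary. Nothing of the sort is established in the paper, and proving it is a nontrivial side development. The paper sidesteps it with the same vertical-shift trick: after adding $a_k$, the boundary $\pd\Sigma_k$ lies in the future (resp.\ past) of $\pd\Sigma_\infty$, so Proposition~\ref{pro:bar}/Corollary~\ref{cor:diam-barriera} apply with the \emph{fixed} barriers $\barrf{\theta}$, $\barrp{\theta}$ of $\Lambda=\pd\Sigma_\infty$, and letting $\theta\to\pi/2$ gives $\Sigma_\infty=\pd_\mp\Omega(\pd\Sigma_\infty)$ with no continuity lemma. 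I suggest adopting this shift-and-fixed-barrier device in both cases. A smaller point: Lemma~\ref{lem:monoton} compares the invisible domains of two boundaries one strictly in the past of the other; it does not directly yield $p\in\Omega(\pd\Sigma_k)$ for large $k$, although that fact does follow easily from uniform convergence of the boundary graphs.
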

\begin{proof}
	In a splitting, $\Sigma_k$ are the graphs of $1-$Lipschitz functions $u_k$, which are uniformly bounded: indeed, any precompact set of $\hypu^{n,1}\cup\pd\hypu^{n,1}$ is contained in a suitable slice $(\hypu^{n}\cup\pd\hyp^{n})\times[a,b]$. By Ascoli-Arzelà theorem, up to extracting a subsequence, $u_k$ uniformly converges to a $1-$Lipschitz function $u_\infty\colon\hyp^n\to\R$, \emph{i.e.} $\Sigma_k$ tends to an entire achronal graph $\Sigma_\infty=\gr u_\infty$. In particular, $\Sigma_\infty\sq\overline{\Omega(\pd\Sigma_\infty)}$.
	
	\step{$\pd\Sigma_\infty$ not admissible} The boundary $\pd\Sigma_\infty$ either bounds a totally geodesic lightlike hypersurfaces or is an admissible boundary. In the former case, $\Sigma_\infty$ has to be a totally geodesic lightlike hypersurface (Lemma~\ref{lem:osc}), so we proved the first item.

	\step{$\pd\Sigma_\infty$ admissible, $H_\infty=\pm\infty$}
	Without loss of generality, $H_\infty=-\infty$. Denote $a_k:=\max_{\pd\hyp^n}|u_\infty-u_k|$, and replace $u_k$ by $u_k+a_k$, so that \[u_k|_{\pd\hyp^n}\ge u_\infty|_{\pd\hyp^n},\quad\forall k\in\N.\] In other words, $\pd\Sigma_k$ is in the future of $\pd\Sigma_\infty$: by Proposition~\ref{pro:bar}, $\Sigma_k\sq I^+(\barrf{\theta})$, for $\theta<-\arctan(H_k/n)$. It follows that $\Sigma_k$ is eventually contained in $I^+(\barrf\theta)$, for any $\theta\in(0,\pi/2)$. \[\Sigma_\infty\sq\overline{\Omega(\pd\Sigma_\infty)}\cap\bigcap_{\theta\in(0,\pi/2)}I^+\left(\barrf{\theta}\right)=\pd_+\Omega(\pd\Sigma_\infty),\]
	hence $\Sigma_\infty=\pd_+\Omega(\pd\Sigma_\infty)$ by entireness.
	
	\step{$\pd\Sigma_\infty$ admissible, $H_\infty\in\R$}
	Fix $\varepsilon>0$, denote $\h:=\sup_{k\in\N}|H_k|+\varepsilon$, which is finite as $H_k$ is bounded, and let $\Sigma_{\h}:=\gr u_{\h}$ be the unique entire spacelike graph with constant mean curvature $\h$ sharing the same boundary as $\Sigma_\infty$. As before, consider $a_k:=\max_{\pd\hyp^n}|u-u_k|$ and replace $u_k$ by $u_k+a_k$. By construction, $\pd\Sigma_k$ is in the future of $\pd\Sigma_\infty=\pd\Sigma_\h$ and $H_k<\h$. By the maximum principle (Proposition~\ref{lem:umbilical}) $\Sigma_k\sq I^+(\Sigma_\h)$, $\forall k\in\N$.
	
	In short, we will use $\Sigma_\h$ as a barrier, like in the existence proof: fix a radius $R>0$ and consider the sequence restricted to the closed cylider $B(0,R)\times\R$. \[K_R:=(B(0,R)\times\R)\cap I^+(\Sigma_{\h})\cap\Omega(\pd\Sigma_\infty)\]
	is a precompact open neighbourhood of $\Sigma_\infty\cap(B(0,R)\times\R)$ containing $\Sigma_k\cap(B(0,R)\times\R)$. As in the proof of existence, we cover $\overline{K_R}$ with a finite number of cones $I_{\varepsilon_i}^+(p_i)$, for $p_i\in I^-(\Sigma_{\h})\cap\Omega(\pd\Sigma_\infty)$, and use Proposition~\ref{pro:locest} with $L=\h$ to give a uniform bound on the gradient function and on the norm of the derivatives of the second fundamental form, in order to promote the uniform convergence to a smooth one.
	
	To conclude, one can use Equation~\eqref{eq:H} to prove that $\Sigma_\infty$ has constant mean curvature which is equal to $H_\infty$.
\end{proof}	
\subsection{A topological statement}\label{sub:top}
Denote $\mathcal{CMC}$ the space of CMC entire hypersurfaces in $\hypu^{n,1}$, equipped with the $C^\infty(\hyp^n)-$topology, and $\mathcal{B}$ the space of admissible boundaries, which is an open subspace of $\mathrm{Lip}(\sph^{n-1})$.

\begin{cor}\label{cor:compact}
	$\mathcal{CMC}$ is homeomorphic to $\mathcal{B}\times\R$.
\end{cor}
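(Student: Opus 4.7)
The plan is to exhibit the natural bijection and show it is a homeomorphism by using Theorem~\ref{teo:A} for bijectivity and Proposition~\ref{pro:compact} for continuity in the delicate direction.

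Define
\[
\Phi\colon\mathcal{B}\times\R\to\mathcal{CMC},\qquad(\Lambda,H)\mapsto\Sigma_{\Lambda,H},
\]
where $\Sigma_{\Lambda,H}$ is the unique entire spacelike hypersurface with asymptotic boundary $\Lambda$ and constant mean curvature $H$, whose existence and uniqueness are guaranteed by Theorem~\ref{teo:A}. The inverse map is
\[
\Phi^{-1}\colon\mathcal{CMC}\to\mathcal{B}\times\R,\qquad\Sigma\mapsto(\pd\Sigma,H_\Sigma).
\]

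Continuity of $\Phi^{-1}$ is immediate: if $\Sigma_k\to\Sigma_\infty$ in the $C^\infty(\hyp^n)$-topology, writing $\Sigma_k=\gr u_k$ and $\Sigma_\infty=\gr u_\infty$ in a splitting, the functions $u_k$ converge to $u_\infty$ smoothly, so their boundary values converge in the Lipschitz topology and the mean curvature, being a continuous expression in the first two derivatives of $u_k$ (see Equation~\eqref{eq:H}), converges to $H_\infty$.

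For continuity of $\Phi$, fix $(\Lambda,H)\in\mathcal{B}\times\R$ and let $(\Lambda_k,H_k)\to(\Lambda,H)$. Since admissibility is an open condition on $\mathrm{Lip}(\sph^{n-1})$, we may assume each $\Lambda_k$ is admissible. Set $\Sigma_k:=\Phi(\Lambda_k,H_k)$; in a splitting, $\Sigma_k=\gr u_k$ with $u_k|_{\pd\hyp^n}\to u_\infty|_{\pd\hyp^n}$ uniformly. Because $\Sigma_k\sq\overline{\Omega(\Lambda_k)}$ and $\osc(u_k)\le\osc(u_k|_{\pd\hyp^n})+2\pi$ is uniformly bounded, the sequence $(\Sigma_k)$ lies in a precompact subset of $\hypu^{n,1}\cup\pd\hypu^{n,1}$. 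Proposition~\ref{pro:compact} then applies: up to a subsequence, $\Sigma_k$ converges to an entire achronal graph $\Sigma_\infty$ with boundary $\Lambda$ (which is admissible), and $H_k\to H_\infty=H\in\R$. By case~(3) of Proposition~\ref{pro:compact}, $\Sigma_\infty$ is an entire spacelike CMC graph of mean curvature $H$, and the convergence is smooth as a graph. By the uniqueness statement of Theorem~\ref{teo:A}, $\Sigma_\infty=\Phi(\Lambda,H)$.

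Since every subsequence of $(\Sigma_k)$ admits a further subsequence $C^\infty$-converging to the same limit $\Phi(\Lambda,H)$, the whole sequence converges, proving continuity of $\Phi$.

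The only real subtlety is ensuring that the limit produced by Proposition~\ref{pro:compact} falls in case~(3) and not cases~(1) or~(2): case~(1) is excluded because $\pd\Sigma_\infty=\Lambda$ is admissible by hypothesis, and case~(2) is excluded because the $H_k$ converge to the finite value $H$. Both exclusions follow directly from the standing assumptions on $(\Lambda_k,H_k)\to(\Lambda,H)$, so no additional work is required.
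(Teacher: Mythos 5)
Your proposal is correct and takes essentially the same route as the paper: bijectivity of $\Sigma\leftrightarrow(\pd\Sigma,H_\Sigma)$ from Theorem~\ref{teo:A}, and Proposition~\ref{pro:compact} as the compactness engine for the delicate direction, which the paper phrases as ``continuous and proper, hence a homeomorphism'' and you unpack as a subsequence argument identifying the limit via uniqueness. The only imprecision is the passing claim that smooth convergence of the graphs gives convergence of the boundary traces in the Lipschitz norm — uniform $1$-Lipschitzness only yields uniform convergence of the traces, which is the topology on $\mathcal{B}$ one should (and the paper implicitly does) work with.
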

\begin{proof}
Consider the map
	\[
	\begin{tikzcd}[row sep=1ex]
		\mathcal{CMC}\arrow[r] & \mathcal{B}\times\R\\
		\Sigma\arrow[r,mapsto] & (\pd\Sigma,H_\Sigma),
	\end{tikzcd}
	\]
	
The correspondence is bijective due to Theorem~\ref{teo:A}, and is continuous and proper due to Proposition~\ref{pro:compact}, hence a homeomorphism.
\end{proof}

\begin{oss}
	The $C^{\infty}(\hyp^{n})-$topology is equivalent to the topology induced by $\mathrm{Lip}(\sph_+^n)$ on $\mathcal{CMC}$, and the latter compactifies $\mathcal{CMC}$. The boundary $\pd\mathcal{CMC}$ inside $\mathrm{Lip}(\sph_+^n)$ is described by Proposition~\ref{pro:compact}: a diverging sequence $\Sigma_k\sq\mathcal{CMC}$ converges either to a totally geodesic degenerate hypersurfaces or to the boundary of an invisible domain.
	
	The boundary of $\mathcal{B}$ in $\mathrm{Lip}(\sph^{n-1})$ consists of $1-$Lipschitz maps $f$ such that $\osc(f)=\pi$, namely boundaries of totally geodesic degenerate hypersurfaces, and we compactify $\R$ as $\R\cup\{\pm\infty\}$. It follows that the homeomorphism does not extend to the boundary of $\mathcal{B}\times\R$ in the product $\mathrm{Lip}(\sph^{n-1})\times\left(\R\cup\{\pm\infty\}\right)$. Indeed, consider the diverging sequence $(\pd\Sigma_k, H_{\Sigma_k})$: if $\pd\Sigma_k$ diverges in $\mathcal{B}$, then $\Sigma_k$ converges to a totally geodesic degenerate hypersurfaces, independently on the behaviour of $H_k$. If $\pd\Sigma_k$ converges to $\Lambda\in\mathcal{B}$, $\Sigma_k$ converges to $\pd_\pm\Omega(\Lambda)$. It follows that 
	\[
		\pd\mathcal{CMC}\cong\pd\mathcal{B}\cup\left(\mathcal{B}\times\{\pm\infty\}\right)\ne\pd(\mathcal{B}\times\R).
	\]
\end{oss}

	\section{Foliation}\label{sec:foliation}
	
We prove that entire CMC hypersurfaces analytically foliate their domain of dependence.
\begin{repthmx}{teo:C}
	Let $\Lambda$ be an admissible domain. Then $\{\Sigma_H\}_{H\in\R}$ is an analytic foliation of the invisible domain $\Omega(\Lambda)$, where $\Sigma_H$ is the unique properly embedded hypersurface with constant mean curvature equal to $H$ and boundary $\Lambda$.
\end{repthmx}
For this section, we fix an admissible boundary $\Lambda\sq\pd\hypu^{n,1}$. The section is organized as follows: first, we show that $\{\Sigma_H\}_{H\in\R}$ is a topological foliation of $\Omega(\Lambda)$. After that, we briefly present the plan to improve the regularity of the foliation. The technical computations are all contained in Subsection~\ref{sub:claim}. Finally, we prove Corollary~\ref{cor:D}.

	\subsection{Continuous foliation}\label{sub:cont}
The CMC hypersurfaces $\{\Sigma_H\}_{H\in\R}$ topologically foliate the invisible domain of $\Lambda$ if any point $p\in\Omega(\Lambda)$ is contained in a unique CMC entire hypersurface $\Sigma_H$.

The uniqueness follows from Proposition~\ref{lem:umbilical}: indeed, since they have the same boundary, $\Sigma_H$ does not intersect $\Sigma_K$, for $H\ne K$.

To prove that any point $p\in\Omega(\Lambda)$ belongs to a CMC entire hypersurface, denote
\[
H^\pm(p):=\left\{H\in\R,\,p\in I^\pm(\Sigma_H)\right\}.
\]
In the proof of Proposition~\ref{pro:compact}, we saw that $\Sigma_H$ approaches the boundary of $\pd\Omega(\Lambda)$ as $H$ diverges, namely for $H$ big enough, $p$ lies in the future of $\Sigma_H$ and in the past of $\Sigma_{-H}$. In other words $H^\pm(p)$ are not empty. By Proposition~\ref{lem:umbilical}, if $H\in H^+(p)$, then $[H,+\infty)\sq H^+(p)$. Conversely, if $H\in H^-(p)$, then $(-\infty,H]\sq H^-(p)$. It follows that \[\sup H^-(p)=\inf H^+(p)=:H(p).\] 

Finally, take a sequence $(H_k^\pm)_{k\in\N}\sq H^\pm(p)$ converging to $H(p)$: by Proposition~\ref{pro:compact}, $\Sigma_{H^\pm_k}$ converges to $\Sigma_{H(p)}$. Since $p\in I^\pm(\Sigma_{H^\pm_k})$ for all $k$, then
\[
p\in\overline{I^+(\Sigma_{H(p)})}\cap\overline{I^-(\Sigma_{H(p)})}=\Sigma_{H(p)}.
\]
\begin{oss}\label{oss:nonconvex}
	The existence of a continuous foliation provides examples of non-convex CMC entire hypersurfaces, in contrast with the flat case \cite[Corollary to Proposition~5]{trei}. The idea is the following: take an admissible boundary $\Lambda$ not asymptotic to a totally geodesic hypersurface, so that the maximal hypersurface $\Sigma_0$ is contained in the interior of $\ch(\Lambda)$. For $H$ small enough, $\Sigma_H$ intersects the interior of $\ch(\Lambda)$: if $\Sigma_H$ was convex, we could build a convex hypersurface strictly contained in the convex core, different from its boundary component, contradicting the minimality of $\ch(\Lambda)$. Furthermore, in Appendix~\ref{app:a}, we provide a class of boundaries which bound only non-convex CMC entire hypersurfaces (Remark~\ref{oss:convex}). 
\end{oss}

	\subsection{Regular foliation}\label{sub:smooth}	
In the proof of the existence part of Theorem~\ref{teo:A}, we proved that the leaves $\Sigma_H$ of the foliation are graph of smooth function on $\hyp^n$, while the map that associates $H\to\Sigma_H$ is smooth by Proposition~\ref{pro:compact}. 

Following the idea of \cite[Section~4]{bss}, we locally trivialize $\Omega(\Lambda)$ by showing that the mean curvature operator $\h$ is invertible at $\Sigma_H$, in the space of deformations of $\Sigma_H$ in $\hypu^{n,1}$. The key result needed for the proof it is the uniform bound on the norm of the derivatives of the second fundamental form (Theorem~\ref{teo:boundII}). 

By Equation~\eqref{eq:H}, if $\Sigma_H=\gr u_H$, for $u_H\in C^\infty(\hyp^{n})$, then $u_H$ satisfies the differential equation $L^H_{\hyp^n}u=0$, for
\begin{equation*}
	L_H^{\hyp^n}u=\mathrm{div}_S(\varphi\nabla u)+\mathrm{div}_S T-H\nu_S,
\end{equation*}
where $S=\gr u$, which is defined on the class of functions in $C^2(\hyp^n)$ which are $1-$Lipschitz functions with respect to the spherical metric. We recall that $\varphi=\sqrt{-g_{\hypu^{n,1}}(\pd_t,\pd_t)}$, $T=\pd_t/\phi$, and $\nu_S$ is the gradient function. The symbol of the differential operator $L^{\hyp^n}_H$ is a positive multiple of the symbol the Beltrami-Laplace operator on $S$, hence $L_H^{\hyp^n}$ is strictly elliptic. We claim that the coefficients are analytic: indeed, the divergence on $S$ can be written as
\[
\mathrm{div}_S(X)=\mathrm{div}_{\hypu^{n,1}}(X)-\pr{x,N},
\]
and an explicit computation gives
\[
\nu_S=\sqrt{1-u^2+\|\nabla u\|^2}.\]
Hence, $L^{\hyp^n}_H$ is a rational function of $u$, its first and second derivatives, $\varphi$ and $T$. By Theorem~\ref{teo:boundII} follows that the all derivatives of $u_H$ are bounded, hence $u_H\in C^{k,\alpha}(\hyp^n)$, for any $k\in\N$ and $\alpha\in(0,1)$. By \cite{hopf}, a $C^{2,\alpha}$ solution of a quasi-linear elliptic differential equation $L^{\hyp^n}_H u=0$ has the same regularity as the coefficients, namely $u_H$ is analytic. Equivalently, the leaves of the foliation are analytic.

For a fixed $H$, consider the Banach space $C^{k,\alpha}(\Sigma_H)$ (Definition~\ref{de:C2a}), for $k\in\N$ and $\alpha\in(0,1)$. Any $v\in C^{k,\alpha}(\Sigma_H)$ induces via the exponential map a deformation of $\Sigma_H$ in $\hypu^{n,1}$, which we denote $S_v$, defined by
\begin{equation*}
	\begin{tikzcd}[row sep=1ex]
		s_v\colon\Sigma_H\arrow[r,hookrightarrow] & \hypu^{n,1}\\
		p\arrow[r,mapsto] & \exp_p\left(v(p)N(p)\right).
	\end{tikzcd}
\end{equation*}
To be more explicit, in the quadric model, the map becomes
\begin{equation}\label{eq:expS}
	\left(\psi\circ s_v\right)(p)=\cos\left(v(p)\right)\psi(p)+\sin\left(v(p)\right)N(\psi(p)).
\end{equation}

In particular, for $v=0$, $S_v=\Sigma_H$, which is a spacelike entire graph. The uniform bound on $|\sff_{\Sigma_H}|$ ensures that there is an open neighbourhood $A^{k,\alpha}$ of $0\in C^{k,\alpha}(\Sigma_H)$ such that $S_v$ is a spacelike entire graph and $\pd S_v=\Lambda$, for any $v\in A^{k,\alpha}$ (Lemma~\ref{lem:A}). Thus, we define the mean curvature operator $\h\colon A^{k,\alpha}\to C^{k-2,\alpha}(\Sigma_H)$ such that $\h(v)(p)$ is the mean curvature of $S_v$ at the point $s_v(p)=\exp_p\left(v(p)N(p)\right)$.

Using \cite[Equation~(2.7)]{bartnikH}, $\h$ can be explicitly computed. Denote $\tau_H$ the submersion whose level sets are the constant normal graph over $\Sigma_H$, namely $\tau_H(x)=t$ if $x=\exp_p(tN(p))$, for some $p\in\Sigma_H$. One can compute the gradient of $\tau_H$ in the quadric model, obtaining that it is a unitary vector, hence the tilt function is just the constant function $\varphi_H\equiv1$. If $S$ is a $C^2-$spacelike hypersurface and it is a normal graph $S=\gr v$ over $\Sigma_H$, then
\begin{equation}\label{eq:Hoperator}
\h(v)=\frac{1}{\nu^H_S}\left(\mathrm{div}_S(\nabla v)+\mathrm{div}_S \nablah\tau_H\right),
\end{equation}
for $\nu^H_S=-\pr{\nablah\tau_H,N_S}$. Since $\Sigma_H$ is analytic, $\h$ is an analytic operator: one can prove it with the same argument as for $L_H^{\hyp^n}$.

We claim that $\h$ admits an analytic inverse in a neighbourhood of $v=0$ (Proposition~\ref{pro:invert}), then we define the path $v_h\sq A^{k,\alpha}$ such that $S_{v_h}=\Sigma_h$, which is well defined for $h$ in a neighbourhood of $H$ (Remark~\ref{oss:A}). The derivative of $v_h$ with respect to $h$ does not vanish at $H$ (Lemma~\ref{lem:notvanish}), that is the map
\begin{equation*}
	\begin{tikzcd}[row sep=1ex]
		S\colon\Sigma_H\times(H-\delta_{k,\alpha},H+\delta_{k,\alpha})\arrow[r]&\hypu^{n,1}\\
		(x,h)\arrow[r,mapsto]&s_{v_h}(x)
	\end{tikzcd}
\end{equation*}
is a local $C^k-$diffeomorphism onto an open neighborhood of $\Sigma_H$, \emph{i.e.} a local $C^k-$trivialization of $\Omega(\Lambda)$.
	\subsection{Proof of claims}\label{sub:claim}

We start introducing the Banach space $C^{k,\alpha}(\Sigma)$.
\begin{de}\label{de:C2a}
	Let $\Sigma$ be a complete Riemannian manifold, $k\in\N$, and $\alpha\in(0,1)$. We denote $C^{k,\alpha}(\Sigma)$ the completion of $C^{\infty}(\Sigma)$ with respect to the $(k,\alpha)-$H\"{o}lder norm, which is defined as
	\[
	\|v\|_{C^{k,\alpha}(\Sigma)}:=\max_{j\le k}\left(\sup_\Sigma |\nabla^j v|\right)+\sup_{d(x,y)<1}\frac{|\nabla^k v(x)-P_{y,x}\nabla^k v(y)|}{\mathrm{dist}(x,y)^\alpha},
	\]
	where $P_{y,x}$ is the parallel transport along the geodesic connecting $x$ and $y$.
\end{de}

\begin{lem}\label{lem:A}
	Let $\Sigma_H$ be a CMC entire hypersurface in $\hypu^{n,1}$ with constant mean curvature $H$. There exists an open neighbourhood $A^{k,\alpha}$ of $0$ in $C^{k,\alpha}(\Sigma_H)$ such that $S_v$ is a spacelike entire hypersurface and $\pd S_v=\pd\Sigma$.
\end{lem}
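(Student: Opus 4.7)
The lemma requires three properties of $S_v$: it is spacelike, it is an entire graph, and $\pd S_v = \Lambda$. The fundamental tool is Theorem~\ref{teo:boundII}, which provides a uniform bound $\|\sff_{\Sigma_H}\|_\infty \leq C$ on the non-compact leaf $\Sigma_H$. Together with $C^{k,\alpha}$-smallness of $v$ (which in particular bounds $\|v\|_\infty$ and $\|\nabla v\|_\infty$ by the same $\delta$), this gives the uniform geometric closeness of $s_v$ to $\mathrm{id}_{\Sigma_H}$ needed to run global arguments on $\Sigma_H$.

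\textbf{Spacelike character.} For $X \in T_p \Sigma_H$, differentiating Equation~\eqref{eq:expS} gives
\begin{equation*}
(\psi \circ s_v)_* X = -\sin(v)\,dv(X)\,\psi(p) + \cos(v)\,X + \cos(v)\,dv(X)\,N(\psi(p)) + \sin(v)\,\nablah_X N,
\end{equation*}
and a direct calculation of the induced first fundamental form yields
\begin{equation*}
g_{S_v}(X,Y) = \cos^2(v)\,g_{\Sigma_H}(X,Y) + \sin(2v)\,\sff_{\Sigma_H}(X,Y) + \sin^2(v)\,\pr{\nablah_X N,\nablah_Y N} - dv(X)\,dv(Y).
\end{equation*}
At $v = 0$ this reduces to $g_{\Sigma_H}$, which is Riemannian. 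For $\|v\|_{C^{k,\alpha}} \leq \delta$, the absolute values of the first, second and fourth contributions to $g_{S_v}(X,X)$ are bounded by $O(\delta)\,g_{\Sigma_H}(X,X)$ thanks to the uniform bound on $\sff_{\Sigma_H}$, while the third is non-negative. Choosing $\delta$ small enough gives $g_{S_v} \geq \tfrac{1}{2}\,g_{\Sigma_H}$ uniformly on $\Sigma_H$, so $S_v$ is spacelike.

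\textbf{Entire graph and asymptotic boundary.} In the splitting for which $\Sigma_H = \gr u_H$, set $F_v := \pi_{(p,P)} \circ s_v \colon \Sigma_H \to \hyp^n$. The map $F_0$ is a diffeomorphism, and uniform closeness of $dF_v$ to $dF_0$ (again relying on the bound on $\sff_{\Sigma_H}$) makes $F_v$ a local diffeomorphism with uniformly invertible differential. To establish properness I invoke the conformal compactification of Lemma~\ref{lem:conformal}: the AdS metric equals $\Omega^{2}(g_{\sph^n} - dt^2)$ with conformal factor $\Omega$ blowing up at $\pd\hypu^{n,1}$, so a unit AdS-timelike vector has conformal norm of order $\Omega^{-1}$, hence the conformal displacement $|s_v(p) - p|_{\mathrm{conf}}$ is of order $\|v\|_\infty/\Omega(p)$ and vanishes as $p \to \Lambda$. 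This immediately yields the continuous extension of $s_v$ to the identity on $\Lambda$ (so $\pd S_v = \Lambda$); projecting, it also forces $F_v(p) \to \pd\hyp^n$ as $p$ leaves compact subsets of $\Sigma_H$, so $F_v$ is proper. A proper local diffeomorphism from the connected manifold $\Sigma_H$ to the simply connected manifold $\hyp^n$ is a covering map, hence a diffeomorphism, and $S_v$ is an entire graph.

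\textbf{Main obstacle.} The genuinely global step is upgrading the pointwise closeness of $dF_v$ to $dF_0$ into uniform closeness on all of the non-compact $\Sigma_H$, needed both for $F_v$ to be a local diffeomorphism at every point and for the asymptotic control used in the properness argument. This uniformity is precisely what Theorem~\ref{teo:boundII} supplies via the global bound on $\sff_{\Sigma_H}$; without it, $F_v$ could develop critical points along sequences escaping to $\Lambda$, breaking the graph property despite $v$ being $C^{k,\alpha}$-small.
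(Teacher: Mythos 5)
Your proof takes a genuinely different route from the paper for the second half (entireness and boundary control), while the spacelike step is essentially identical to the paper's explicit computation of $s_v^*g$ combined with the uniform bound on $\sff_{\Sigma_H}$ from Theorem~\ref{teo:boundII}. Where you diverge: the paper establishes $\pd S_v=\Lambda$ by sandwiching $S_v$ between two nearby leaves $\Sigma_{H_1}$ and $\Sigma_{H_2}$ of the foliation, which requires a separate (and somewhat delicate) claim that $\Sigma_h$ is a normal graph over $\Sigma_H$ for $h$ near $H$, proved by contradiction using isometries and the compactness result Proposition~\ref{pro:compact}; once the sandwich holds, $S_v$ is properly immersed and spacelike, and the paper cites a lemma of \cite{bonsep} to conclude it is an entire graph. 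You instead go directly through the conformal compactification of Lemma~\ref{lem:conformal}, arguing that a bounded normal $\AdS$-displacement translates into a vanishingly small conformal displacement near $\pd\hypu^{n,1}$, so $s_v$ extends as the identity on $\Lambda$; properness of $F_v=\pi\circ s_v$ then follows, and a proper local diffeomorphism onto the simply connected $\hyp^n$ is a diffeomorphism. Your approach is more self-contained (it bypasses Proposition~\ref{pro:compact} and the external lemma) and arguably more geometric, whereas the paper's sandwich argument reuses machinery already in place. Two points deserve tightening: (i) the conformal estimate is glossed — the conformal factor $\Omega$ varies along the normal geodesic, so the claim "$|s_v(p)-p|_{\mathrm{conf}}=O(\|v\|_\infty/\Omega(p))$" is not immediate and should be backed by showing, e.g., that $\log\Omega$ is $\AdS$-Lipschitz so that $\Omega(\gamma(t))\ge\Omega(p)e^{-\|v\|_\infty}$ along the geodesic; and (ii) the discussion of "$dF_v$ uniformly close to $dF_0$" is misplaced — the fact that $F_v$ is a local diffeomorphism follows directly from $S_v$ being spacelike (the vertical direction is timelike, hence transverse to any spacelike tangent plane) and needs no comparison with $F_0$; what the uniform $\sff$ bound really controls is the spacelikeness itself and the size of $ds_v$ needed in the conformal estimate.
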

\begin{proof}
	We claim that if $v$ is sufficiently small in the $C^1(\Sigma)-$norm, then $S_v$ is spacelike. Incidentally, this also proves that $s_v$ is an immersion.
	
	The metric on $S_v$ can be explicitly computed using Equation~\eqref{eq:expS}: let $g_v$ be the metric on $S_v$, for $w\in T_p\Sigma$ unitary, it holds
	\begin{align*}
		(s_v^*g_v)(w,w)&=\cos(v)^2-d_pv(w)^2+\sff(w,w)\sin(2v)+\pr{B(w),B(w)}\sin(v)^2.
	\end{align*}
	Since $B(w)\in T\Sigma$, $\pr{B(w),B(w)}\ge0$. Moreover, $d_pv(w)^2\le\|d_pv\|^2$, for $\|\cdot\|$ the operator norm in $\mathrm{Hom}(T_p\Sigma,\R)$. Finally, $\sff(w,w)\le C_0(|H|,n)$ (Theorem~\ref{teo:boundII}). It follows that
	\[
		(s_v^*g_v)(w,w)\ge\cos(v)^2-\|d_pv\|^2-|\sin(2v)|C_0(|H|,n)=1+o\left(|v|\right)+o\left(\|d_pv\|^2\right),
	\]
	which proves the claim.
	
	To conclude, we claim that, for $h$ close enough to $H$, $\Sigma_h$ can be written as a normal graph over $\Sigma_H$. In particular, 
	there exist $H_1<H<H_2$ such that $\Sigma_{H_i}$ is the normal graph of $v_{H_i}$ over $\Sigma_H$. By Proposition~\ref{lem:umbilical}, $\Sigma_{H_1}\sq I^+(\Sigma)$ and $\Sigma_{H_2}\sq I^-(\Sigma)$, that is $v_{H_2}<0<v_{H_1}$: it follows that for any $v\in C^{k,\alpha}$ such that $v_{H_2}<v<v_{H_1}$, then $\pd S_v=\pd\Sigma$. Since $S_v$ is spacelike and properly immersed, is properly embedded by \cite[Lemma~4.5.5]{bonsep}.
	
	We prove the claim by contradiction: assume there exist $\varepsilon>0$ and a sequence $h_k\to H$, such that $\Sigma_{h_k}$ is not contained in the $\varepsilon-$normal neighborhood of $\Sigma_H$, that is there exists a sequence of points $p_k\in\Sigma_H$ such that $\dist(p_k,\Sigma_{h_k})>\varepsilon$, for any $k\in\N$. For any $k\in\N$, choose an isometry $f_k$ of $\hypu^{n,1}$ sending $p_k$ to $(x_0,0)$ and $N_{\Sigma_H}(p_k)$ to the normal vector to $\hyp^{n}\times\{0\}$ at $(x_0,0)$. Remark that $f_k(\Sigma_H)$ and $f_k(\Sigma_{h_k})$ share the same boundary for any $k\in\N$: by Proposition~\ref{pro:compact}, up to extacting a subsequence, they converge to the same acausal graph, which can be either an $H-$hypersurface or a totally geodesic degenerate hyperplane. The choice of the normal vector of $f_k(\Sigma_H)$ at $(x_0,0)$ prevents the latter to happen, hence 
	\[\dist(p_k,\Sigma_{h_k})=\dist\left((x_0,0),f_k(\Sigma_{h_k})\right)\to0,\]
	which contradicts the assuption, proves the claim and concludes the proof.	
\end{proof}
\begin{oss}\label{oss:A}
	In particular, the leaves of the foliation are contained in $A^{k,\alpha}$, if their mean curvature is sufficiently close to $H$.
\end{oss}

\begin{pro}\label{pro:invert}
	Let $\Sigma$ be a CMC entire spacelike hypersurface. The operator $\h$ on $C^{k,\alpha}(\Sigma)$ admits an analytic inverse in a neighbourhood of $v=0$.
\end{pro}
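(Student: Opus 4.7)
The plan is to invoke the analytic inverse function theorem in Banach spaces applied to $\h\colon A^{k,\alpha}\to C^{k-2,\alpha}(\Sigma_H)$. Two ingredients are needed: the analyticity of $\h$ near $v=0$, and the fact that its linearization $d\h_0$ is a topological isomorphism between the two Banach spaces.

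For analyticity, I would observe that Equation~\eqref{eq:Hoperator} expresses $\h(v)$ as a rational function of $v$, $\nabla v$ and $\nabla^2 v$, whose coefficients depend only on the geometric data of $\Sigma_H$ (namely $\sff_{\Sigma_H}$, $N$ and $\nablah\tau_H$). Since $\Sigma_H$ has been shown to be analytic in Subsection~\ref{sub:smooth}, these coefficients are analytic, and Theorem~\ref{teo:boundII} ensures they lie in $C^{k-2,\alpha}(\Sigma_H)$ with uniformly bounded derivatives. Standard calculus on Banach spaces then promotes this to real-analyticity of $\h$ in a neighbourhood of $0$.

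For the linearization, a direct variation computation starting from Equation~\eqref{eq:Hoperator}, together with the identities $\nu^H_{\Sigma_H}=1$ and $\nablah\tau_H|_{\Sigma_H}=N$, yields a second-order linear operator of the form
\[ d\h_0(v) = \Delta_{\Sigma_H} v + q\, v,\]
where $q$ is a bounded analytic function on $\Sigma_H$ expressible in terms of $|\sff_{\Sigma_H}|^2$ and $\mathrm{Ric}_{\hypu^{n,1}}(N,N)$. Corollary~\ref{cor:sectcurv} ensures that $\Sigma_H$ has bounded sectional curvature, and Theorem~\ref{teo:boundII} provides uniform bounds on all covariant derivatives of $\sff_{\Sigma_H}$; consequently $\Sigma_H$ has bounded geometry, and $d\h_0$ is a uniformly elliptic operator with coefficients in $C^{k-2,\alpha}(\Sigma_H)$. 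The invertibility then reduces to three classical ingredients on manifolds of bounded geometry: a uniform Schauder-type estimate
\[ \|v\|_{C^{k,\alpha}} \le C\bigl(\|d\h_0 v\|_{C^{k-2,\alpha}} + \|v\|_{C^0}\bigr),\]
triviality of the kernel on bounded functions, and surjectivity via the continuity method (deforming $d\h_0$ to a model self-adjoint operator and using the Schauder bound together with kernel triviality to run a connectedness argument). Once $d\h_0$ is shown to be a Banach space isomorphism, the analytic inverse function theorem produces the desired analytic inverse of $\h$ near $v=0$.

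I expect the delicate point to be the triviality of the kernel, precisely because $\Sigma_H$ is complete but non-compact, so one cannot rely on compactness to produce a maximum. The strategy I would try first is geometric: a nontrivial bounded $v\in\ker d\h_0$ would generate an infinitesimal one-parameter deformation of $\Sigma_H$ through spacelike hypersurfaces with the same boundary $\Lambda$ and mean curvature $H$ up to first order, contradicting the uniqueness part of Theorem~\ref{teo:A} combined with the strong maximum principle (Proposition~\ref{lem:umbilical}). An alternative, analytic fallback is to exploit the positive spectrum of $-\Delta_{\Sigma_H}$ (which one expects from the hyperbolic-type geometry of $\Sigma_H$ at infinity forced by the admissibility of $\Lambda$) together with an integration by parts argument against the positive solution $\partial_h v_h|_{h=H}$ of $d\h_0 w = 1$ produced by the continuous foliation of Subsection~\ref{sub:cont}.
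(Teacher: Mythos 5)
Your overall scheme (analytic inverse function theorem plus invertibility of the linearization $d\h_0$) is the same as the paper's, and both your analyticity discussion and the uniform interior Schauder estimate coming from bounded geometry are sound. The gap lies in how you propose to establish injectivity and surjectivity of $d\h_0$. The paper's proof hinges on the precise form of the linearization, $J=\Delta-n-|\sff|^2$ (quoted from \cite[Lemma~7.3]{sst}): the zeroth-order coefficient is $\le -n<0$, so constant multiples of $\|f\|_{C^{0,\alpha}}$ are super- and subsolutions, and the maximum principle gives a uniform $C^0$ bound for the Dirichlet solutions on an exhaustion $K_i$ of $\Sigma_H$; combined with interior Schauder estimates and Ascoli--Arzel\`a, this produces a global solution of $Ju=f$ together with the estimate $\|u\|_{C^{2,\alpha}}\le C\|f\|_{C^{0,\alpha}}$ \emph{without} the $\|u\|_{C^0}$ term, which yields injectivity, surjectivity and boundedness of $J^{-1}$ in one stroke. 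You leave the zeroth-order term as an unspecified $q$ and never use its sign, and this forces you into separate kernel-triviality and continuity-method steps, which is exactly where the proposal breaks down.

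Concretely: (i) your geometric argument for kernel triviality is circular --- a bounded $v\in\ker d\h_0$ is only an infinitesimal solution and does not integrate to an actual family of $H$-hypersurfaces with boundary $\Lambda$; producing such a family from a kernel element is precisely what the not-yet-available inverse function theorem would provide, so the uniqueness part of Theorem~\ref{teo:A} cannot be invoked at this stage. (ii) The spectral fallback is unsubstantiated: nothing in the paper gives $\inf\operatorname{spec}(-\Delta_{\Sigma_H})>0$; Corollary~\ref{cor:sectcurv} only gives a two-sided bound $|K_\Sigma|\le K(L,n)$, not strictly negative curvature, and the integration by parts against the bounded solution of $d\h_0 w=1$ is not justified on the complete non-compact $\Sigma_H$ for merely bounded functions. (iii) Even granting kernel triviality, the continuity method needs an a priori $C^0$ bound to close; your Schauder estimate retains the $\|v\|_{C^0}$ term, and the only mechanism in sight to remove it is again the sign of the zeroth-order coefficient. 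The repair is to compute $q=-n-|\sff|^2\le-n$ and then run the constant-barrier/exhaustion argument of the paper (Theorem~\ref{teo:boundII} bounding $|\sff|^2$, completeness from Theorem~\ref{teo:B}), after which your three steps collapse into one.
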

\begin{proof}
	To prove that $\h$ is invertible at $v=0$, we first linearize it: denote $J$ the linearization of the mean curvature operator $\h$ at $0$. Since $\h$ is analytic, by analytic inverse function theorem its local inverse is analytic, too. By \cite[Lemma~7.3]{sst},
	\begin{equation}\label{eq:J}
		J=\Delta-n-|\sff|^2,
	\end{equation}
	for $\Delta$ the Laplace-Beltrami operator on $\Sigma$.	Our goal is to build a bounded inverse $J^{-1}$ at $0$ in $C^{k,\alpha}(\Sigma)$. 
	
	\step{$k=2$} 
	The existence of an inverse is equivalent to prove that the differential problem $Ju=f$ has always solution, and that any solution satisfies a Schauder-type inequality
	\begin{equation}\label{eq:schau}
		\|u\|_{C^{2,\alpha}(\Sigma)}\le C\|f\|_{C^{0,\alpha}(\Sigma)},
	\end{equation}
	for some constant $C>0$ not depending on $f$. Indeed, Equation~\eqref{eq:schau} then implies that $J$ is injective, hence invertible, and $J^{-1}$ is bounded.
	First, we build a solution for $Ju=f$, for a fixed $f\in C^{0,\alpha}(\Sigma)$. Since $\Sigma$ is complete (Theorem~\ref{teo:B}), we can pull-back the problem on $\R^n$ via the exponential map, namely in normal coordinates around a point. By Equation~\eqref{eq:J}, $J$ is strictly elliptic: by \cite[Theorem~6.14]{pde} there exists a unique $C^{2,\alpha}(\overline{K_i})$ solution $u_i$ to the Dirichlet problem
	\begin{equation*}
		\begin{cases}
			Ju=f|_{K_i}\\
			u|_{\pd K_i}=0
		\end{cases}
	\end{equation*}
	for $\{K_i\}_{i\in\N}$ an exhaustion of compact sets of $\Sigma$. We claim that there exists a local version of Equation~\eqref{eq:schau}, namely
	\begin{equation}\label{eq:schau2}
		\|u_i\|_{C^{2,\alpha}(K_i)}\le C\|f\|_{C^{0,\alpha}(\Sigma)},
	\end{equation}
	where $C$ does not depend on $i$ nor $f$. Hence, we can apply Ascoli-Arzelà Theorem to extract a subsequence converging in $C^{2,\alpha}(\Sigma)$ to a global solution $u$. In particular, the limit $u$ satisfies Equation~\eqref{eq:schau}, concluding the proof.
	
	To prove the claim, fix $x\in\Sigma$. In normal coordinates around $x$, $J$ is a uniformly strictly elliptic operator on bounded sets of $\R^n$, in particular on the ball $B(0,2)$. By \cite[Theorem~6.2]{pde}, we obtain
	\begin{equation}\label{eq:schau3}
		\|u\|_{C^{2,\alpha}\left(B(0,1)\right)}\le\tilde{C}\left(\|u\|_{C^0\left(B(0,2)\right)}+\|f\|_{C^{0,\alpha}\left(B(0,2)\right)}\right),
	\end{equation}
	where $\tilde{C}$ depends on the uniform bounds of ellipticity of $J$ in $B(0,2)$, hence ultimately $\tilde{C}$ depends on $x$. Actually, by the uniform bound on the norm of the derivatives of $\sff$, the pull-back of $J$ is strictly elliptic uniformly with respect to $x$, hence one can choose $\tilde{C}$ holding for all $x\in\Sigma$. Moreover, for $v=c$ a constant function, \[|J(v)|=|c|(n+|\sff|^2)\le|c|\left(n+C_0(|H|,n)\right),\]
	for $C_0(|H|,n)$ as in Theorem~\ref{teo:boundII}. It follows that the constant functions
	\begin{align*}
		u_+:=\frac{\|f\|_{C^{0,\alpha}\left(B(0,2)\right)}}{n+C_0(|H|,n)} &&		u_-:=-\frac{\|f\|_{C^{0,\alpha}\left(B(0,2)\right)}}{n+C_0(|H|,n)}
	\end{align*}
are respectively a supersolution and a subsolution for $J$. By the strong maximum principle (\cite[Theorem~3.5]{pde}), $u_-<u<u_+$ on $B(0,2)$, namely
\[
	\|u\|_{C^0\left(B(0,2)\right)}<\frac{\|f\|_{C^{0,\alpha}\left(B(0,2)\right)}}{n+C_0(|H|,n)}\le\frac{\|f\|_{C^{0,\alpha}\left(\Sigma\right)}}{n+C_0(|H|,n)}.
\]

Substituting in Equation~\eqref{eq:schau3}, one obtains
\[
	\|u_i\|_{C^{2,\alpha}\left(B(0,1)\right)}\le\tilde{C}\left(\frac{1}{n+C_0(|H|,n)}+1\right)\|f\|_{C^{0,\alpha}(\Sigma)}=:C\|f\|_{C^{0,\alpha}(\Sigma)},
\]
which proves Equation~\eqref{eq:schau2}, hence the claim, concluding the proof for $k=2$.  

\step{$k>2$}
It suffices to repeat the argument above for the higher derivatives, remarking that $J$ commutes with the derivatives: let $\beta=(i_1,\dots,i_{|\beta|})$ be a multi index of lenght $|\beta|\le k-2$, that is $D^\beta=\pd_{i_1}\dots\pd_{i_{|\beta|}}$. Since $D^\beta u$ is a solution of $Jv=D^\beta f$, the same argument as above implies 
	\[\|D^\beta u\|_{C^{2,\alpha}(\Sigma)}\le C\|D^\beta f\|_{C^{0,\alpha}(\Sigma)},\]
hence 
	\[\|u\|_{C^{k,\alpha}(\Sigma)}\le\sum_{\beta,\,|\beta|\le k-2}\|D^\beta u\|_{C^{2,\alpha}(\Sigma)}\le\sum_{\beta,\,|\beta|\le k-2}C\|D^\beta f\|_{C^{0,\alpha}(\Sigma)}=C\|f\|_{C^{k-2,\alpha}(\Sigma)},\]
which proves that $J$ is invertible at $0$ in $C^{k,\alpha}$.
\end{proof}

The following lemma allows us to apply the analytic inverse function theorem to the smooth path $v_\bullet\colon(H-\delta,H+\delta)\to C^{k,\alpha}(\Sigma_H)$ such that $S_{v_h}=\Sigma_h$.
\begin{lem}\label{lem:notvanish}
	The derivative of $v_h$ with respect to $h$ does not vanish at $H$.
\end{lem}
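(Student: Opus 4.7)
The strategy is to derive a linear elliptic PDE for $v'_H$ and apply a maximum principle. By construction $S_{v_h}=\Sigma_h$ has constant mean curvature $h$, so the identity $\h(v_h)=h$ (the right-hand side interpreted as the constant function on $\Sigma_H$) holds for all $h$ close to $H$. Differentiating at $h=H$, using $v_H=0$ and the formula $J=D_0\h=\Delta-n-|\sff|^2$ from Equation~\eqref{eq:J}, I obtain
\begin{equation*}
\Delta v'_H-(n+|\sff|^2)\,v'_H=1\quad\text{on }\Sigma_H.
\end{equation*}
The lemma thus reduces to proving that any $C^{k,\alpha}(\Sigma_H)$ solution of this equation is nowhere zero.

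To fix the sign of $v'_H$, I invoke the strong maximum principle: for $h>H$, the entire hypersurfaces $\Sigma_h$ and $\Sigma_H$ share the boundary $\Lambda$ and have distinct mean curvatures, so Proposition~\ref{lem:umbilical} forces $\Sigma_h\sq I^-(\Sigma_H)$. Since the normal $N$ along $\Sigma_H$ is future-directed, this translates into $v_h<0$ for $h>H$ (and, symmetrically, $v_h>0$ for $h<H$), hence $v'_H\le 0$ everywhere on $\Sigma_H$.

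The core of the proof is to upgrade this to a strict inequality $v'_H<0$. Suppose, for contradiction, that $\sup_{\Sigma_H}v'_H=0$. If this supremum is attained at some $p_0$, then $\Delta v'_H(p_0)\le 0$ by the second-order condition, while the PDE gives $\Delta v'_H(p_0)=1+(n+|\sff|^2(p_0))\cdot 0=1$, a contradiction. If the supremum is not attained, I appeal to the Omori--Yau maximum principle, which applies because $\Sigma_H$ is complete by Theorem~\ref{teo:B} and its sectional, hence Ricci, curvature is uniformly bounded by Corollary~\ref{cor:sectcurv}; it produces a sequence $p_k\in\Sigma_H$ with $v'_H(p_k)\to 0$ and $\Delta v'_H(p_k)\le 1/k$. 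Substituting into the PDE and passing to the limit, the left-hand side tends to a quantity $\le 0$ while the right-hand side is $1$, again a contradiction. Therefore $\sup v'_H<0$, so $v'_H$ is bounded away from zero on all of $\Sigma_H$, which is precisely the non-vanishing statement needed to make $S$ a local diffeomorphism.

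The main obstacle is the non-compactness of $\Sigma_H$: without it, the classical strong maximum principle would immediately close the argument, but as stated one cannot assume the supremum is attained. This is precisely where the global estimates proved in Section~\ref{sec:reg} become indispensable, since completeness together with a uniform curvature bound are the exact hypotheses needed to invoke Omori--Yau.
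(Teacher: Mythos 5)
Your proof is correct, but it diverges from the paper's argument in the second half. Both you and the paper start by differentiating $\h(v_h)=h$ to get $J(v'_H)=1$ with $J=\Delta-n-|\sff|^2$. The paper then exploits the \emph{construction} inside the proof of Proposition~\ref{pro:invert}: it realizes $v'_H=J^{-1}(1)$ as the limit of solutions of Dirichlet problems $Jv=1$, $v|_{\pd K_i}=0$ on a compact exhaustion, gets $v'_H\le0$ from the weak maximum principle applied on each $K_i$ (using $0$ as a supersolution), and then upgrades to $v'_H<0$ with the strong maximum principle on compact pieces — so it never needs completeness or curvature bounds at this stage, only the ellipticity of $J$. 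You instead obtain the sign $v'_H\le 0$ geometrically, from the ordering of the leaves given by Proposition~\ref{lem:umbilical} and the fact that $N$ is future-directed, and then rule out $\sup v'_H=0$ by the second-derivative test at an interior maximum or, if the supremum is not attained, by Omori--Yau, which is legitimately available here since $\Sigma_H$ is complete (Theorem~\ref{teo:B}) with bounded curvature (Corollary~\ref{cor:sectcurv}); note that in the limiting step you also implicitly use the uniform bound $|\sff|^2\le C_0$ of Theorem~\ref{teo:boundII} to ensure $(n+|\sff|^2(p_k))v'_H(p_k)\to0$, which is worth saying explicitly. Your route buys a slightly stronger conclusion, namely $\sup_{\Sigma_H}v'_H<0$ (uniform negativity) rather than mere pointwise non-vanishing, and it depends only on the statement of Proposition~\ref{pro:invert} rather than on its proof; the paper's route has the virtue of staying entirely within local elliptic maximum principles on compact sets, consistent with its general strategy of avoiding Omori--Yau, and of not invoking the global curvature estimates at this point.
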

\begin{proof}
	By construction, $\h(v_h)=h$. Differentiating both sides, one obtains
	\[
	J\left(\frac{dv_h}{dh}\right)=1.
	\]
	
	We recall that, following the proof of Proposition~\ref{pro:invert}, we can write $dv_h/dh$ as the limit of functions $v_i$ which are solutions of the differential problem
		\begin{equation*}
		\begin{cases}
			Jv=1|_{K_i}\\
			v|_{\pd K_i}=0
		\end{cases}
	\end{equation*} 
for $K_i$ an exhaustion of compact set. 

The constant function $u=0$ is a supersolution for the equation, since $J(u)=0$. As already remarked, $J$ is a strictly elliptic operator on $\Sigma$ (Equation~\eqref{eq:J}), hence uniformly elliptic over compact set. By the weak maximum principle (\cite[Theorem~3.1]{pde}), the maximum of $v_i$ is reached at the boundary, namely $v_i\le0$ for any $i\in\N$, hence $dv_h/dh\le0$. We can then apply the strong maximum principle (\cite[Theorem~3.5]{pde}) to obtain $dv_h/dh<u=0$, which concludes the proof.
\end{proof}

	\subsection{Analytic foliation}\label{sub:anal}
In the previous section, we have proved that the $v_h$ is an analytic map, and that the path $h\to v_h$ is analytic in $C^{k,\alpha}$. Since the evaluation at a point $p\in\Sigma_H$ is an analytic operator, the map $h\mapsto v_h(p)$ is an analytic map. It follows that $v_\bullet(\cdot)$ is analytic both in the argument $p\in\Sigma$ and $h\in\R$. To prove that the map is jointly analytic, it suffices to pullback the problem using the exponential map, to see $v_\bullet(\cdot)$ as a map $\R^n\times\R\to\R$, and prove that the radius of convergence in both variables are uniformly bounded from below (\cite[Theorem~(I)]{anal}). 

Since $h\to v_h$ is an analytic path, the radius of convergence $r$ of $v_\bullet(p)$ at $h=H$ does not depend on $p$, hence $r/2$ is a uniform lower bound for the radius of convergence of $v_\bullet(p)$ at $h$, for $h\in(H-r/2,H+r/2)$, $p\in\Sigma_H$. Pick $p\in\Sigma_H$, we claim that there exists $\rho,\varepsilon>0$ such that $v_h$ has radius of convergence at least $\rho$ at $p$, for any $h\in(H-\varepsilon,H+\varepsilon)$. As a consequence, for any such $h$, the radius of convergence of $v_h(\cdot)$ at $q$ is bounded from below by $\rho/2$, for any $q\in B_\Sigma(p,\rho/2)$. It follows that
\[\delta:=\min\{r/2,\rho/2,\varepsilon\}\]
is a uniform lower bound for the radii of convergence of both $v_\bullet(q)$ at $h$ for $h\in(H-\varepsilon,H+\varepsilon)$, and $v_h(\cdot)$ at $q$, for $q\in B_\Sigma(p,\rho/2)$. We conclude that 
\[v_\bullet(\cdot)\colon B_\Sigma(p,\rho/2)\times(H-\varepsilon,H+\varepsilon)\to\Omega(\Lambda)\] 
is a local analytic trivialization of $\Omega(\Lambda)$. Since $p$ and $H$ are arbitrary, this concludes the proof.

We remark that by Equation~\eqref{eq:Hoperator}, any $v_h$ solves the analytic non-linear elliptic differential equation $L_h^{\Sigma_H}v=0$, for 
\begin{equation*}
	L_h^{\Sigma_H}v:=\Delta^{S_v} v+\mathrm{div}_{S_v}(\nablah\tau_H)-h\nu^{H}_{S_v},
\end{equation*}
Without loss of generality, we can assume $p=0$, so that the claim can be then proved by following the proof of analyticity of \cite{hopf}, which consists in building a complex extention of the solution $v$ in a neighbourhood of $0$ and prove that it is analytic on the set 
\[(R)_\gamma:=\{z=x+iy\in\C^n,\|x\|<R,\|y\|<\gamma(R-\|x\|)\},\]
(see \cite[Page~221]{hopf}), for $\gamma$ a constant depending continuosly on the symbol of the operator (see \cite[Equation~(6.8)]{hopf}), which analytically depends on the function $u$ and its derivatives up to the fourth. On the other hand, $R$ has to be bounded from above by many quantities, which continuously depend on the function $u$ and its derivatives up to the sixth (see \cite[Equations~(6.14),(7.9),(8.6),(8.7)]{hopf}). 

\emph{A priori}, $R=R(h)$ and $\gamma=\gamma(h)$. However, since the symbol of $L_h^{\Sigma_H}$ does not depend on $h$, as a function of $u$ and its derivatives up to the second, and $v_h$ is a $C^6-$foliation, for $h\in(H-\delta_{6,\alpha},H+\delta_{6,\alpha})$, we can find $\varepsilon<\delta_{6,\alpha}$ such that all forementioned quantities are uniformly bounded: it follows that \[\bar{R}:=\inf_{h\in(H-\varepsilon,H+\varepsilon)}R(h)>0,\qquad\bar{\gamma}:=\inf_{h\in(H-\varepsilon,H+\varepsilon)}\gamma(h)>0.\] 
Hence, the complex extention of $v_h$ is analytic on $(\bar{R})_{\bar{\gamma}}$, for all $h\in(H-\varepsilon,H+\varepsilon)$. Setting $\rho:=\min\{\bar{R},\bar{\gamma}\bar{R}\}/2$, the ball $B_{\C^n}(0,\rho)$ is contained in $(\bar{R})_{\bar{\gamma}}$, hence the radius of convergence of $v_h$ at $p$ is at least $\rho$, proving the claim and concluding the proof.

	\subsection{Maximal globally hyperbolic Cauchy complete $\AdS-$manifolds}
	
This section is meant to extend \cite[Theorem~1.5]{abbz} from maximal globally hyperbolic Cauchy \emph{compact} $\AdS-$manifolds to maximal globally hyperbolic Cauchy \emph{complete} $\AdS-$manifolds, namely

\begin{repcorx}{cor:D}
	Let $(M,g)$ be a maximal globally hyperbolic Cauchy complete Anti-de Sitter manifold. Then $(M,g)$ admits a (unique) globally defined CMC time function $\tau_{cmc}\colon M\to\R$.
\end{repcorx}

\begin{de}
A globally hyperbolic $\AdS-$manifold is called
\begin{itemize}
	\item \textit{Cauchy complete} if it admits a Cauchy hypersurface whose induced Riemannian metric is complete.
	\item \textit{maximal} if every isometric embedding $M\hookrightarrow N$ in another globally hyperbolic $\AdS-$manifold is an isometry.
\end{itemize}
\end{de}
\begin{de}
	A time function $\tau$ on an time-oriented Lorentzian manifold is called a \textit{CMC} time function if each level set $\tau^{-1}(H)$ is a hypersurface with constant mean curvature $H$.
\end{de}

\begin{oss}\label{oss:cmctime}
	The function $\tau_{cmc}\colon\Omega(\Lambda)\to\R$ which associates to each point $p\in\Omega(\Lambda)$ the unique $H$ such that $p\in\Sigma_H$ is a CMC time function: by definition, $\tau_{cmc}^{-1}(H)=\Sigma_H$, and it is strictly decreasing along future-directed time paths due to the strong maximum principle (Proposition~\ref{lem:umbilical}).
\end{oss}

The proof reduces to rephrase Theorem~\ref{teo:C} in this setting, using the classification provided by \cite[Proposition~6.3.1, Corollary~6.3.13]{benbon}:
\begin{pro}\label{pro:globhyp}
	Let $(M,g)$ be a maximal globally hyperbolic Cauchy complete $\AdS-$manifold, then its universal cover $\widetilde{M}$ isometrically embeds in $\hypu^{n,1}$ and its image is the invisible domain of an admissible boundary $\Lambda$. Moreover, $\Lambda$ is unique, up to isometry of $\hypu^{n,1}$.
	
	Conversely, $\Omega(\Lambda)$ is a maximal globally hyperbolic Cauchy complete $\AdS-$manifold, for any admissible boundary $\Lambda\sq\pd_\infty\hypu^{n,1}$.
\end{pro}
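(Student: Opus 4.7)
The plan is to prove the two implications separately, using a developing-map argument in the spirit of Mess. Both directions are essentially classical; what changes in the Cauchy-\emph{complete} setting (versus the Cauchy-\emph{compact} setting of \cite{bbz,abbz}) is that the relevant Cauchy hypersurfaces are now complete noncompact Riemannian manifolds, so the results of Sections~\ref{sec:causal}--\ref{sec:reg} have to substitute for compactness arguments.

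\emph{Forward direction.} Lift $(M,g)$ to its universal cover $\widetilde M$, which inherits a MGH Cauchy-complete $\AdS$-structure (completeness of the lifted Cauchy hypersurface follows since the deck action is by isometries). The $(X,G)$-structure with $X=\hypu^{n,1}$ and $G=\Isom(\hypu^{n,1})$ furnishes a developing map $D\colon\widetilde M\to\hypu^{n,1}$, unique up to post-composition by $\Isom(\hypu^{n,1})$. Fix a complete spacelike Cauchy hypersurface $\Sigma\sq\widetilde M$. Then $D|_\Sigma$ is a complete spacelike isometric immersion, and it is a classical fact that any such immersion into $\hypu^{n,1}$ is a properly embedded entire acausal graph. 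Its asymptotic boundary $\Lambda:=\pd D(\Sigma)$ is admissible by Corollary~\ref{cor:admspace} combined with Lemma~\ref{lem:osc}, and Proposition~\ref{cor:invdep} identifies $\Omega(\Lambda)$ with the domain of dependence of $D(\Sigma)$ in $\hypu^{n,1}$. Surjectivity and injectivity of the extension of $D$ onto $\Omega(\Lambda)$ are then established by the standard argument: each point of $\Omega(\Lambda)$ is crossed by a unique inextendible timelike curve through $D(\Sigma)$, which we pull back through $D$ using the fact that $D|_\Sigma$ is a bijection onto its image and that $D$ is a local isometry. Maximality of $(M,g)$ finally forces $D$ to be surjective, since $\Omega(\Lambda)$ itself provides a MGH $\AdS$-extension of $D(\Sigma)$.

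\emph{Converse and uniqueness.} For admissible $\Lambda$, the barrier hypersurfaces $\barrp{\theta}$ and $\barrf{\theta}$ of Definition~\ref{de:w} are spacelike Cauchy hypersurfaces for $\Omega(\Lambda)$, so $\Omega(\Lambda)$ is globally hyperbolic. Cauchy completeness is supplied by any CMC leaf $\Sigma_H$ from Theorem~\ref{teo:A}: completeness is the content of Theorem~\ref{teo:B}, and $\Sigma_H$ is Cauchy by Proposition~\ref{cor:invdep}. Maximality is proved by contradiction: an isometric embedding $\Omega(\Lambda)\hookrightarrow N$ into a larger MGH $\AdS$-manifold extends $\Sigma_H$ to a complete spacelike hypersurface of $N$, which is Cauchy for its own domain of dependence in $N$; global hyperbolicity of $N$ then forces $N=D_N(\Sigma_H)=\Omega(\Lambda)$. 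Uniqueness of $\Lambda$ up to $\Isom(\hypu^{n,1})$ follows because any isometry between two invisible domains lifts to an isometry of the ambient developing copies of $\hypu^{n,1}$ and, by rigidity of the $\AdS$-structure, is the restriction of an ambient isometry sending $\Lambda_1$ to $\Lambda_2$.

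\emph{Main obstacle.} The delicate point is to verify that the developing map is injective onto its image, a property not implied by being a local isometry from a globally hyperbolic manifold. The key is the interplay between global hyperbolicity of the target domain $\Omega(\Lambda)$ (each of its points is determined by a unique inextendible causal curve crossing $D(\Sigma)$) and completeness of the Cauchy hypersurface $\Sigma$, which is needed to guarantee that $D|_\Sigma$ is already injective as a map into $\hypu^{n,1}$. Once this injectivity is in place, the remainder of the argument is a routine application of the results on invisible domains and on CMC hypersurfaces developed earlier in the paper.
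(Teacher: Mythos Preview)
The paper does not actually prove this proposition: it simply records it as the classification result \cite[Proposition~6.3.1, Corollary~6.3.13]{benbon}, and uses it as a black box. So there is no in-paper proof to compare against; what you have written is a sketch of the argument that \cite{benbon} carries out.

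Your outline follows the correct Mess-type scheme, and you correctly identify the crux in your ``Main obstacle'' paragraph. However, two steps are still genuine gaps rather than routine. First, the ``classical fact'' that a complete spacelike isometric immersion into $\hypu^{n,1}$ is a properly embedded entire graph is precisely the technical heart of the classification in \cite{benbon}; it is not a one-liner, and it is where completeness of the Cauchy hypersurface is actually used. Citing it as classical without reference defers the whole content of the forward direction. Second, your maximality argument for the converse is circular as written: you embed $\Omega(\Lambda)\hookrightarrow N$ and then conclude $N=D_N(\Sigma_H)=\Omega(\Lambda)$, but the equality $D_N(\Sigma_H)=\Omega(\Lambda)$ requires developing $N$ back into $\hypu^{n,1}$ and knowing that the developed image of $\Sigma_H$ still has boundary $\Lambda$ --- i.e.\ you are reusing the forward direction for $N$, which should be made explicit.

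One minor point: invoking Theorems~\ref{teo:A} and~\ref{teo:B} to obtain a complete Cauchy hypersurface in $\Omega(\Lambda)$ is logically fine (they are proved earlier), but it is heavier machinery than \cite{benbon} uses; there the Cauchy-completeness of $\Omega(\Lambda)$ is obtained directly from the structure of the cosmological time, without any CMC theory.
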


\begin{oss}\label{oss:globhyp}
	It follows that any maximal globally hyperbolic Cauchy complete $\AdS-$manifold can be written as $\Omega(\Lambda)/\Gamma$, for $\Gamma$ a subgroup of $\Isom(\hypu^{n,1})$. Since $M$ is globally hyperbolic, $\Gamma$ consists of time-orientation preserving isometries: otherwise, $M$ would not be time-orientable, and in particular not globally hyperbolic.
\end{oss}

\begin{proof}[Proof of Corollary~\ref{cor:D}]
	Remark~\ref{oss:cmctime} proves the statement for $(M,g)$ simply connected. If $\pi_1(M)$ is not trivial, by Proposition~\ref{pro:globhyp}, $(M,g)=\Omega(\Lambda)/\Gamma$, for some $\Lambda$ admissible boundary and $\Gamma\sq\Isom(\hypu^{n,1})$. We claim that the CMC time function on $\Omega(\Lambda)$ is invariant over the orbits of $\Gamma$.
	
	First, remark that $\Lambda$ is $\Gamma-$invariant, hence $\Sigma_H$ is also $\Gamma-$invariant: indeed, for $g\in\Gamma$, $g(\Sigma_H)$ is a CMC entire hypersurface whose boundary is $g(\Lambda)=\Lambda$. Remark~\ref{oss:globhyp} ensures $g$ is time-orientation preserving, hence the mean curvature of $g(\Sigma_H)$ is $H$. By uniqueness, $g(\Sigma_H)$ coincides with $\Sigma_H$. Since the CMC time function on $\Omega(\Lambda)$ associates to a point the mean curvature of the unique CMC entire hypersurface it belongs to, $\tau_{cmc}$ is $\Gamma-$invariant, which concludes the proof.
\end{proof}

\appendix
	
	\section{Explicit bounds}\label{app:a}

Combining \cite[Theorem~1]{kkn} and Theorem~\ref{teo:B}, we can sharpen the bound $C_0(L,n)$ on the norm second fundamental form (Theorem~\ref{teo:boundII}).
	
	\begin{teo}\label{teo:expbound}
		Let $L\ge0$. For any properly embedded hypersurface $\Sigma$ with constant mean curvature $H\in[-L,L]$ in $\hypd^{n,1}$, the following holds:
		\begin{equation}\label{eq:bound}
			|\sff_\Sigma|^2\le n\left(1+\frac{L^2+L(n-2)\sqrt{L^2+4(n-1)}}{2(n-1)}\right).
		\end{equation}
		
		Moreover, if the maximum is reached at one point, $\sff_\Sigma$ is parallel.
	\end{teo}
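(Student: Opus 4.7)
The strategy is short and combines two ingredients already at our disposal: Theorem~\ref{teo:B} removes the completeness hypothesis from the properly embedded class, and \cite[Theorem~1]{kkn} supplies the explicit bound in the complete case.

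First I would transport the problem to $\hypu^{n,1}$. By Remark~\ref{oss:spaceUp}, a properly embedded spacelike hypersurface $\Sigma \sq \hypd^{n,1}$ is contained in some fundamental region $\mathcal{U}_p$, and via the isometric identification of Corollary~\ref{cor:embU} it corresponds to a properly embedded spacelike hypersurface in $\hypu^{n,1}$, in fact an entire spacelike graph by Proposition~\ref{pro:emb} and Proposition~\ref{pro:spaceacausal}. Theorem~\ref{teo:B} then ensures that the induced Riemannian metric on this graph, hence on $\Sigma$, is complete. The bound of Equation~\eqref{eq:bound} is now a direct consequence of \cite[Theorem~1]{kkn}, which states precisely this inequality for complete CMC spacelike hypersurfaces of $\hypd^{n,1}$ with $|H|\le L$.

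For the rigidity statement, I would rely on the Simons-type identity that underlies the proof of \cite[Theorem~1]{kkn}. On a spacelike CMC hypersurface of $\hypd^{n,1}$ the quantity $|\sff|^2$ satisfies an inequality of the form
\begin{equation*}
	\tfrac{1}{2}\Delta|\sff|^2 \ge |\nabla \sff|^2 + Q_{H,n}\bigl(|\sff|^2\bigr),
\end{equation*}
where $Q_{H,n}$ is a polynomial whose largest non-negative zero coincides with the right-hand side of \eqref{eq:bound}. If the bound is attained at an interior point $x_0$, then $|\sff|^2$ realizes its global maximum at $x_0$, and the strong maximum principle applied to the preceding inequality forces $|\sff|^2$ to be locally constant around $x_0$ with $\nabla \sff \equiv 0$; a connectedness argument on $\Sigma$ then promotes this to $\sff$ being parallel on the whole hypersurface. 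The main technical obstacle is the identification of the polynomial $Q_{H,n}$ and the verification that its largest non-negative root is exactly the value appearing in \eqref{eq:bound}; this is parallel in structure to the classical Simons computation in Riemannian signature, and is carried out in the Lorentzian setting as in \cite{ishi,kkn}.
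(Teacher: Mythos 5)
Your proposal is correct and follows the same route as the paper: Theorem~\ref{teo:B} upgrades ``properly embedded'' to ``complete,'' after which \cite[Theorem~1]{kkn} supplies both the explicit bound \eqref{eq:bound} and the rigidity conclusion, so the paper's proof is literally a one-line citation. Your re-derivation of the rigidity part via the Simons-type differential inequality is sound in outline but unnecessary, since that statement is already part of \cite[Theorem~1]{kkn}; likewise the explicit transfer between $\hypd^{n,1}$ and $\hypu^{n,1}$ is implicit in the paper's phrasing and does not change the argument.
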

The proof consists in replacing the word \textit{complete} with \textit{properly embedded} in the statement \cite[Theorem~1]{kkn}, which is possible because Theorem~\ref{teo:B} makes the two properties equivalent for CMC hypersurfaces.

In the following, we classify properly embedded hypersurfaces with parallel second fundamental form (Proposition~\ref{pro:parallel}) in order to present \cite[Theorem~2]{kkn} from a more geometric point of view (Proposition~\ref{pro:HpxHq}).

\subsection{Cylindrical hypersurfaces}

Consider a totally geodesic spacelike submanifold $M$ in $\hypd^{n,1}$, and let $M^{\perp}_+$ be its dual in the future, namely
\[M^{\perp}_+:=\bigcap_{x\in M} P_+(x).\]

Each pair $(x,y)\in M\times M^{\perp}_+$ is connected by the timelike geodesic \[\gamma_{xy}(\theta):=\cos(\theta)x+\sin(\theta)y.\]
\begin{de}
	Let $M$ be a totally geodesic spacelike submanifold of dimension $k\in\{0,\dots,n\}$, and $\theta\in(0,\pi/2)$. We define a cylindrical hypersurface \[\hyp(k,\theta):=\{\gamma_{xy}(\theta),x\in M,y\in M^{\perp}_+\}.\]
\end{de}
Let $M'$ be another totally geodesic spacelike submanifold of dimension $k$. One can easily check that any time preserving isometry sending $M$ to $M'$ sends $\hyp(k,\theta)$ to 
\[\{\gamma_{xy}(\theta),x\in M',y\in(M')^{\perp}_+\},\]
namely $\hyp(k,\theta)$ is well defined, up to isometry. Moreover a time reversing isometry fixing $M$ sends $\hyp(k,\theta)$ to $\hyp\left(n-k,\frac{\pi}{2}-\theta\right)$: indeed, it sends $M_+$ to $M_-$, and $(M_-)_+=M$.
\begin{oss}\label{oss:H=P}
	For $k=0,n$, we recover the equidistant hypersurfaces described in Subsection~\ref{sub:equidistant}: $P^-_\theta=\hyp(n,\theta)$ and $P^+_\theta=\hyp(0,\theta)$.
\end{oss}

The following lemmas describes the geometry of the cylindrical hypersurfaces. A direct computation gives:

\begin{lem}\label{lem:HpxHq}
	$\hyp(k,\theta)$ is a properly embedded spacelike hypersurface isometric to 
	\[(\cos\theta)\hyp^k\times(\sin\theta)\hyp^{n-k},\]
	whose boundary consists of lightlike segments connecting the boundaries of $\pd\hyp^k$ and $\pd\hyp^{n-k}$.
\end{lem}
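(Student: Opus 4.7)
The plan is to work with the explicit linear-algebraic description of $M$ and $M^{\perp}_+$, exhibit a natural diffeomorphism from the product $\hyp^k \times \hyp^{n-k}$ onto $\hyp(k,\theta)$, and analyse its differential and its asymptotic behaviour.

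First, I would fix notation. A totally geodesic spacelike submanifold $M$ of dimension $k$ in $\hypd^{n,1}$ is, by the discussion in Subsection~\ref{sub:quadric}, the intersection of $\hypd^{n,1}$ with a linear subspace $V\sq\R^{n,2}$ of dimension $k+1$ on which $\pr{\cdot,\cdot}$ has signature $(k,1)$. Then $V^{\perp}$ has signature $(n-k,1)$, and by the same argument that identifies $P_+(x)$ with a copy of $\hyp^n$ in Remark~\ref{oss:P+-}, the set $M^{\perp}_+=\bigcap_{x\in M}P_+(x)$ coincides with the future component of $\{y\in V^{\perp},\,\pr{y,y}=-1\}$, hence is isometric to $\hyp^{n-k}$.

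Second, I would study the map
\begin{equation*}
	\phi\colon M\times M^{\perp}_+\to\hypd^{n,1},\qquad \phi(x,y)=\cos(\theta)x+\sin(\theta)y.
\end{equation*}
Using $\pr{x,x}=\pr{y,y}=-1$ and $\pr{x,y}=0$ (since $x\in V$ and $y\in V^{\perp}$), one checks $\pr{\phi(x,y),\phi(x,y)}=-1$, so $\phi$ lands in $\hypd^{n,1}$. Differentiating, for $v\in T_xM\sq V$ and $w\in T_yM^{\perp}_+\sq V^{\perp}$, I get $d\phi(v,w)=\cos(\theta)v+\sin(\theta)w$, whence by orthogonality of $V$ and $V^{\perp}$:
\begin{equation*}
	\pr{d\phi(v,w),d\phi(v,w)}=\cos^2(\theta)\pr{v,v}+\sin^2(\theta)\pr{w,w}.
\end{equation*}
This is strictly positive when $(v,w)\ne 0$, proving both that $\phi$ is an immersion of codimension $0$ in $\hypd^{n,1}$ (hence a local diffeomorphism onto its image) and that the induced metric on $\hyp(k,\theta)$ is the product $\cos^2(\theta)g_{\hyp^k}\oplus\sin^2(\theta)g_{\hyp^{n-k}}$. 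In particular $\hyp(k,\theta)$ is spacelike, and isometric to $(\cos\theta)\hyp^k\times(\sin\theta)\hyp^{n-k}$.

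Third, injectivity is obtained by projecting onto $V$ and $V^{\perp}$: the identity $\phi(x_1,y_1)=\phi(x_2,y_2)$ forces $\cos(\theta)x_1=\cos(\theta)x_2$ and $\sin(\theta)y_1=\sin(\theta)y_2$, so $(x_1,y_1)=(x_2,y_2)$ since $\theta\in(0,\pi/2)$. Combined with the preceding step, $\phi$ is an embedding. Since $M$ and $M^{\perp}_+$ are complete and the induced metric is the product of complete Riemannian metrics, the image is properly embedded.

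Finally, for the asymptotic boundary, I would pass to the projective model. The conformal boundary $\pd\hypd^{n,1}$ is the set of isotropic lines in $\R^{n,2}$, and I analyse limits $[\cos(\theta)x_n+\sin(\theta)y_n]$ with $(x_n,y_n)\in M\times M^{\perp}_+$ going to infinity. Normalising, write $x_n/\|x_n\|\to a$ with $a$ isotropic in $V$ and $y_n/\|y_n\|\to b$ with $b$ isotropic in $V^{\perp}$, and consider the two scales. If only $x_n$ escapes one obtains a point $[a]\in\pd\hyp^k$; symmetrically for $y_n$, a point $[b]\in\pd\hyp^{n-k}$; and if both escape with a ratio $\|x_n\|/\|y_n\|\to\lambda\in(0,\infty)$, then the limit is $[\cos(\theta)\lambda a+\sin(\theta)b]$. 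Because $\pr{a,b}=0$ ($a\in V$, $b\in V^{\perp}$), $\pr{a,a}=\pr{b,b}=0$, such a linear combination is again isotropic, and as $\lambda$ varies in $(0,\infty)$ one sweeps out exactly the lightlike geodesic in $\pd\hypd^{n,1}$ joining $[a]$ and $[b]$ (see Equation~\eqref{eq:geod}, lightlike case, in the projective model). The expected obstacle is precisely this asymptotic analysis: one must keep careful track of the two independent divergence scales and rule out spurious limits outside $V\oplus V^{\perp}$, which is automatic here since $\phi$ takes values in $V\oplus V^{\perp}=\R^{n,2}$.
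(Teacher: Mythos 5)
The paper proves this lemma only by appeal to ``a direct computation'', and your proposal is essentially that computation carried out: you identify $M$ with (a sheet of) $\hypd^{n,1}\cap V$ for $V\sq\R^{n,2}$ of signature $(k,1)$, identify $M^{\perp}_+$ with the future sheet of the unit timelike vectors of $V^{\perp}$, and check that $\phi(x,y)=\cos(\theta)x+\sin(\theta)y$ is an injective immersion whose pullback metric is $\cos^2(\theta)g_{\hyp^k}\oplus\sin^2(\theta)g_{\hyp^{n-k}}$. The asymptotic analysis via the two divergence scales is also correct, and the right justification for the segments being lightlike is the one you implicitly use — $a\in V$ and $b\in V^{\perp}$ isotropic and orthogonal span a totally isotropic plane, whose projectivization is a lightlike segment of $\pd\hypd^{n,1}$ — rather than Equation~\eqref{eq:geod}, which concerns geodesics of $\hypd^{n,1}$ itself; likewise the immersion has codimension $1$, not $0$. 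These are slips, not gaps.

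The one step whose justification does not stand as written is proper embeddedness. An injective immersion is not automatically an embedding, and ``complete induced metric'' does not imply ``properly embedded'': there are complete embedded Riemannian submanifolds of Euclidean space that are not proper, and the gap between completeness and proper embeddedness is exactly the issue Theorem~\ref{teo:B} is devoted to, so it cannot be waved through here. Fortunately the fix is immediate from your explicit description: if $\phi(x_j,y_j)$ lies in a compact set $K\sq\R^{n,2}$, then applying the linear projections onto $V$ and $V^{\perp}$ shows that $\cos(\theta)x_j\in\pi_V(K)$ and $\sin(\theta)y_j\in\pi_{V^{\perp}}(K)$, so $x_j$ and $y_j$ range in compact subsets of the closed sets $M\sq V$ and $M^{\perp}_+\sq V^{\perp}$; hence $\phi$ is a proper map, its image is closed, and an injective proper immersion is a proper embedding. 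Replacing the appeal to completeness by this two-line argument makes the proof complete.
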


\begin{lem}\label{lem:horoII}
	$\hyp(k,\theta)$ has parallel second fundamental form. In particular, it has constant mean curvature $H=k\tan(\theta)-\frac{n-k}{\tan(\theta)}$.
\end{lem}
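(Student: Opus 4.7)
The plan is to use the explicit parametrization of $\hyp(k,\theta)$ from Definition~A.1 and to compute the shape operator directly in the ambient flat space $\R^{n,2}$; both the mean curvature formula and the parallelism of $\sff$ fall out of this computation.

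I parametrize a point of $\hyp(k,\theta)$ as $p=\cos\theta\cdot x+\sin\theta\cdot y$ with $x\in M$ and $y\in M^\perp_+$. Then $T_p\hyp(k,\theta)=\cos\theta\cdot T_xM\oplus\sin\theta\cdot T_yM^\perp_+$, and a direct verification shows that $N=\sin\theta\cdot x-\cos\theta\cdot y$ is a unit timelike vector orthogonal both to $T_p\hyp(k,\theta)$ and to $p$, with the overall sign fixed by the choice of time-orientation.

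Using the Gauss identity $\nablah_VW=D_VW-\pr{V,W}p$, where $D$ is the flat connection of $\R^{n,2}$, I compute the Weingarten operator on each factor separately. For $V=\cos\theta\cdot v$ with $v\in T_xM$, differentiating $N$ along a curve in $M$ yields $\nablah_VN=\sin\theta\cdot v=\tan\theta\cdot V$; for $V=\sin\theta\cdot w$ with $w\in T_yM^\perp_+$, differentiating along a curve in $M^\perp_+$ gives $\nablah_VN=-\cos\theta\cdot w=-\cot\theta\cdot V$. Cross terms vanish since $v$ lies in the $(k+1)$-dimensional subspace of $\R^{n,2}$ spanned by $M$, while $w$ lies in its orthogonal complement. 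Hence the shape operator $B$ is diagonal with $k$ eigenvalues $\tan\theta$ on the $M$-directions and $n-k$ eigenvalues $-\cot\theta$ on the $M^\perp_+$-directions, and summing yields $H=k\tan\theta-(n-k)\cot\theta$.

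For the parallelism of $\sff$, I invoke the product description from Lemma~A.4: $\hyp(k,\theta)$ is Riemannian-isometric to $(\cos\theta)\hyp^k\times(\sin\theta)\hyp^{n-k}$, and the tangent splitting $T\hyp(k,\theta)=T^{(1)}\oplus T^{(2)}$ into the tangent directions of the two factors is the canonical parallel splitting of a Riemannian product, hence preserved by the Levi-Civita connection $\nabla$. Since $B$ acts as a constant scalar on each of these parallel factors (by the previous step), it commutes with parallel transport, so $\nabla B=0$ and consequently $\nabla\sff=0$. The main subtlety is matching the sign of $N$ against the fixed time-orientation of $\hypd^{n,1}$, which is routine by reducing to the case $k=n$ and comparing $N$ with $\pd/\pd t$ in a splitting; the rest is a diagonal direct computation.
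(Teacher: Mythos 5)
Your mean curvature computation is essentially the paper's: the proof of Lemma~\ref{lem:horoII} says only ``by a direct computation, analogous as the one contained in the proof of Lemma~\ref{lem:equi}'' and records $B=(\tan\theta\,\mathrm{Id}_k,-\cot\theta\,\mathrm{Id}_{n-k})$, which is exactly the flat-ambient Weingarten computation you carry out explicitly. Where you genuinely diverge is the parallelism of $\sff$. The paper dispatches it in one line by observing that $\mathrm{O}(k,1)\times\mathrm{O}(n-k,1)\subseteq\mathrm{O}(n,2)$ acts transitively on $\hyp(k,\theta)$; read literally this is not enough (transitivity does not force an invariant tensor to be parallel), so the paper is implicitly invoking the fact that the product of hyperbolic spaces is a Riemannian \emph{symmetric} space, where geodesic symmetries kill all odd-degree invariant tensors such as $\nabla\sff$. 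You instead use the De Rham parallel splitting $T\hyp(k,\theta)=T^{(1)}\oplus T^{(2)}$ of the product metric from Lemma~\ref{lem:HpxHq} together with the fact that $B$ is a constant scalar on each factor, which gives $\nabla B=0$ directly. Your version is more self-contained and makes explicit what the paper leaves to the reader; the paper's version is shorter but relies on symmetric-space machinery that it does not name. One small caution: with $y\in M^\perp_+$ future-directed, $\gamma_{xy}'(\theta)=-\sin\theta\,x+\cos\theta\,y$ is the future-pointing normal, so your $N=\sin\theta\,x-\cos\theta\,y$ is its negative; this matches the sign conventions the paper is effectively using in \eqref{eq:horoII} and in Remark~\ref{oss:H=P}, so your final formula agrees with the statement, but the claim that $N$ as you wrote it is future-directed deserves a second look rather than being waved off as routine.
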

\begin{proof}
	To prove that $\sff$ is parallel, it suffices to remark that 
	\[\Isom(\hyp^{k})\times\Isom(\hyp^{n-k})\cong\mathrm{O}(k,1)\times\mathrm{O}(n-k,1)\sq\mathrm{O}(n,2)\cong\Isom(\hypd^{n,1}),\]
	acts transitively on $\hyp(k,\theta)$. By a direct computation, analogous as the one contained in the proof of Lemma~\ref{lem:equi}, it holds
	\begin{equation}\label{eq:horoII}
		B=\left(\tan\theta\mathrm{Id}_k,-\frac{1}{\tan\theta}\mathrm{Id}_{n-k}\right),
	\end{equation}
	which concludes the proof.
\end{proof}

\begin{oss}\label{oss:convex}
	For $k\in\{1,\dots n-1\}$, one can prove $\ch(\Lambda_k)=\overline{\Omega(\Lambda_k)}$, for $\Lambda_k:=\pd\hyp(k,\cdot)$. It follows that cylindrical hypersurfaces are not convex, for $k\ne0,n$. By contradiction, assume $\hyp(k,\theta)$ is convex, for $\theta\in(0,\pi/2)$: hence, either $I^-(\hyp(k,\theta))\cap I^+(\pd_-\ch(\Lambda_k))$ or $I^+(\hyp(k,\theta))\cap I^-\left(\pd_+\ch(\Lambda_k)\right)$ is a convex set containing $\Lambda_k$ and strictly contained in $\ch(\Lambda_k)$, contradicting the minimality of the convex hull.
\end{oss}

\subsection{Achieving the bound} We exhibit the only CMC entire hypersurfaces achieving the bound of Theorem~\ref{teo:expbound}.

We recall some results of pseudo-Riemannian geometry, which we state for $\hypd^{n,1}$, but hold in complete generality. Since the second fundamental form $\sff$ and the shape operator $B$ are dual with respect to the metric, $\sff$ is parallel if and only if $B$ is parallel. Moreover, $B$ is a symmetric $(1,1)-$tensor, hence diagonalizable with respect to an orthonormal basis.
\begin{lem}\label{lem:parallel}
	Let $\Sigma$ be a spacelike hypersurface of $\hypd^{n,1}$ with parallel shape operator $B$,
	\begin{enumerate}
		\item the eigenvalues of $B$ are constant along $\Sigma$;\label{it:p1}
		\item the eigenspaces of $B$ are parallel. More precisely, let $V_\lambda$ be the distribution such that $V_\lambda(x)\sq T_x\Sigma$ is the eigenspace of $\lambda$ at $x$: then $\nabla_w V_\lambda\sq V_\lambda$, for any $w\in T_x\Sigma$;\label{it:p2}
		\item the eigenspaces of $B$ are integrable.\label{it:p3}
	\end{enumerate} 
\end{lem}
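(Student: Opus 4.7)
The plan is to use the standard machinery of parallel tensors, exploiting that $\nabla B = 0$ together with the symmetry of $B$. All three items are formal consequences; I would organize the argument to deduce \eqref{it:p1}, then \eqref{it:p2} and \eqref{it:p3} in order.

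For \eqref{it:p1}, I would observe that every algebraic invariant of $B$ is a parallel function on $\Sigma$, hence locally constant. In particular, all coefficients of the characteristic polynomial of $B$ are constant on (each connected component of) $\Sigma$. The eigenvalues, being the roots of this polynomial counted with multiplicity, form the same unordered $n$-tuple at every point of $\Sigma$.

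For \eqref{it:p2}, the key observation is that parallel transport intertwines $B$. Indeed, if $V(t)$ is parallel along a curve $\gamma$, then $\nabla_{\dot\gamma}(BV) = (\nabla_{\dot\gamma}B)V + B(\nabla_{\dot\gamma}V) = 0$, so $BV(t)$ is also parallel. Consequently, if $v \in V_\lambda(\gamma(0))$, its parallel transport along $\gamma$ remains a $\lambda$-eigenvector. Differentiating this invariance at $t=0$ along arbitrary directions $w \in T_x\Sigma$ yields $\nabla_w V_\lambda \sq V_\lambda$. Note that by \eqref{it:p1} the dimension of $V_\lambda$ is constant, so it is a genuine smooth distribution on $\Sigma$.

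Finally, \eqref{it:p3} is an immediate consequence of \eqref{it:p2}: since $\nabla$ is torsion-free, $[X,Y] = \nabla_X Y - \nabla_Y X$ is a section of $V_\lambda$ whenever $X$ and $Y$ are, so $V_\lambda$ is involutive and integrability follows from Frobenius' theorem. The argument is entirely formal and I do not anticipate any serious obstacle; the only point requiring care is to ensure that $V_\lambda$ has constant rank so that Frobenius applies, but this is exactly what \eqref{it:p1} guarantees.
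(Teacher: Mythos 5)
Your proof is correct, and its overall strategy --- constancy of the spectrum, then parallelism of the eigendistributions, then involutivity plus Frobenius --- is the same as the paper's; the differences lie in how the first two items are obtained. For \eqref{it:p1} the paper fixes a local unit eigenvector field $X$ with $B(X)=\lambda X$ and computes $Y(\lambda)=\nabla_Y\pr{B(X),X}=2\lambda\pr{\nabla_Y X,X}=0$, whereas you note that $\nabla B=0$ forces the traces $\mathrm{tr}(B^k)$, hence the coefficients of the characteristic polynomial, to be locally constant; your variant is slightly more robust, since it does not presuppose a smooth unit eigenvector field with smooth eigenvalue where multiplicities might a priori coalesce, and it yields constancy of the multiplicities --- hence the constant rank of $V_\lambda$ needed for Frobenius --- for free. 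For \eqref{it:p2} the paper substitutes the constancy of $\lambda$ into the parallelism identity $\nabla_Y(B(X))=B(\nabla_Y X)$ to get $B(\nabla_Y X)=\lambda\nabla_Y X$ in one line, while you express the same fact via parallel transport: $\nabla B=0$ makes parallel transport commute with $B$, so it preserves $V_\lambda$ (using that $\lambda$ is the same constant at both endpoints), and the difference-quotient characterization of the covariant derivative then keeps $\nabla_w X$ in the closed subspace $V_\lambda$; the two arguments are equivalent, the paper's being more computational and yours making the transport-invariance and constant-rank points explicit. Item \eqref{it:p3} is handled identically in both: torsion-freeness gives $[X,Y]=\nabla_X Y-\nabla_Y X\in V_\lambda$, and Frobenius concludes.
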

\begin{proof}
	We recall that $B$ is parallel if and only if 
	\begin{equation}\label{eq:Bparallel}
		\nabla_Y B(X)=B(\nabla_Y X),\qquad \forall X,Y\in\Gamma(T\Sigma).
	\end{equation}
	Let $X\in\Gamma(T\Sigma)$ be a unitary eigenvector field, namely $B(X)=\lambda X$, for $\lambda\in C^\infty(\Sigma)$. To prove \eqref{it:p1}, it suffices to show that $Y(\lambda)=0$, for all $Y\in\Gamma(T\Sigma)$. By Equation~\eqref{eq:Bparallel}, one obtains 
	\begin{align*}
		Y(\lambda)=\nabla_Y\pr{B(X),X}=\pr{B(\nabla_Y X),X}+\pr{B(X),\nabla_Y X}.
	\end{align*}
	Since $B$ is symmetric with respect to the metric,
	\begin{align*}
		Y(\lambda)=2\pr{\nabla_Y X,B(X)}=2\lambda\pr{\nabla_Y X,X},
	\end{align*}
	which vanishes since $X$ is unitary.
	
	To prove \eqref{it:p2}, substitute \eqref{it:p1} in Equation~\eqref{eq:Bparallel} to obtain
	\[
	B(\nabla_Y X)=\nabla_Y B(X)=Y(\lambda)X+\lambda\nabla_Y X=\lambda\nabla_Y X,
	\] 
	that is $\nabla_Y X$ is an eigenvector field with respect to $\lambda$, \emph{i.e.} $\nabla_Y X\in V_\lambda$. 
	
	Finally, \eqref{it:p3} follows directely by Frobenius Theorem: indeed, for any $X,Y\in V_\lambda$
	\[[X,Y]=\nabla_X Y-\nabla_Y X,\]
	which belongs to $V_\lambda$ by \eqref{it:p2}, concluding the proof.
\end{proof}
\begin{pro}\label{pro:parallel}
 	Cylindrical hypersurfaces are the only properly embedded hypersurfaces in $\hypd^{n,1}$ with parallel second fundamental form.
\end{pro}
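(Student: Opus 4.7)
By Lemma~\ref{lem:parallel}, the eigenvalues $\lambda_1,\dots,\lambda_r$ of the shape operator are constant on $\Sigma$ and the eigenspaces $V_{\lambda_i}$ form parallel, integrable distributions. In particular $|\sff|^2$ is constant, so combining proper embeddedness with bounded second fundamental form (as in \cite{benbon,ltw,sst}, recalled in the introduction) makes $\Sigma$ complete in its induced metric. Passing to the universal cover $\tilde\Sigma$, the de Rham decomposition theorem yields a Riemannian splitting $\tilde\Sigma=M_1\times\cdots\times M_r$, where each $M_i$ is a maximal integral leaf of $V_{\lambda_i}$ and carries the induced metric.

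Gauss' equation for a spacelike hypersurface of $\hypd^{n,1}$ with timelike unit normal gives, for orthonormal eigenvectors $e_i\in V_{\lambda_i}$, $e_j\in V_{\lambda_j}$, the identity $K_\Sigma(e_i,e_j)=-1-\lambda_i\lambda_j$ (the sign is directly consistent with Lemma~\ref{lem:equi}). When $i\neq j$, the product structure forces the mixed sectional curvatures to vanish, whence $\lambda_i\lambda_j=-1$; in particular, if $r\geq 3$ the relations $\lambda_1\lambda_2=\lambda_1\lambda_3=-1$ would give $\lambda_2=\lambda_3$, a contradiction, so $r\leq 2$. When $r=1$, $\Sigma$ is totally umbilical, and the classical classification (consistent with Lemma~\ref{lem:equi} and Remark~\ref{oss:H=P}) identifies it with a totally geodesic copy of $\hyp^n$ or with $\hyp(0,\theta)$ or $\hyp(n,\theta)$. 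When $r=2$, write $\lambda_1=\tan\theta$, $\lambda_2=-\cot\theta$ for a unique $\theta\in(0,\pi/2)$, and set $k=\dim V_{\lambda_1}$: Gauss' equation inside each factor gives constant sectional curvatures $-\sec^2\theta$ and $-\csc^2\theta$, so $M_1\cong(\cos\theta)\hyp^k$ and $M_2\cong(\sin\theta)\hyp^{n-k}$—exactly the intrinsic geometry of $\hyp(k,\theta)$ recorded in Lemma~\ref{lem:HpxHq}.

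The remaining step—promoting this intrinsic identification to an ambient-isometric one—is the main obstacle. My plan is to apply the fundamental theorem of isometric immersions into pseudo-Riemannian space forms: two simply connected Riemannian manifolds with matching first and second fundamental forms admit isometric immersions into $\hypd^{n,1}$ which are congruent under a global isometry of $\hypd^{n,1}$. Applied to $\tilde\Sigma$ (via the immersion $\tilde\Sigma\to\Sigma\hookrightarrow\hypd^{n,1}$) and to the inclusion $\hyp(k,\theta)\hookrightarrow\hypd^{n,1}$ (itself simply connected), this produces an ambient isometry $\phi$ sending the former onto the latter; injectivity of the inclusion $\hyp(k,\theta)\hookrightarrow\hypd^{n,1}$ forces the deck group of $\tilde\Sigma\to\Sigma$ to act trivially, yielding $\phi(\Sigma)=\hyp(k,\theta)$. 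A more geometric alternative, which avoids the uniqueness theorem, is to observe directly that each integral leaf of $V_{\lambda_i}$ is a totally umbilical submanifold of $\hypd^{n,1}$—totally geodesic in $\Sigma$ by parallelism of $V_{\lambda_i}$, with extrinsic $\sff$ along $N$ equal to $\lambda_i$ times the induced metric—and to reconstruct the cylindrical embedding explicitly by assembling the pair of transverse umbilical leaves through a single point into the $\hyp(k,\theta)$ construction of Subsection~4.1.
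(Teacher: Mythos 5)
Your proof is correct and follows essentially the same route as the paper: Lemma~\ref{lem:parallel}, completeness, the de Rham decomposition, Gauss' equation to force $\lambda_i\lambda_j=-1$ on mixed planes and identify the factors with scaled hyperbolic spaces, and the fundamental theorem of hypersurfaces to conclude congruence with $\hyp(k,\theta)$ via Equation~\eqref{eq:horoII}. The only difference is cosmetic: since a properly embedded spacelike hypersurface is an entire graph, hence diffeomorphic to $\R^n$ and simply connected, the paper applies de Rham directly to $\Sigma$ and so avoids your universal-cover/deck-group detour (and it gets completeness from Theorem~\ref{teo:B} rather than from the bound on $|\sff|$, though both work).
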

\begin{proof}
	Let $\Sigma$ be a properly embedded hypersurface of $\hypd^{n,1}$ with parallel second fundamental form. In particular, it is an entire graph, hence diffeomorphic to $\R^n$. By the fundamental theorem of immersed hypersurfaces, it suffices to prove that $\Sigma$ has the same induced metric as $\hyp(k,\theta)$, for a suitable choice of $k=0,\dots,n$ and $\theta\in(0,\pi/2)$, and that they have the same shape operator.
	
	The eigenspaces $V_{\lambda_i}$ of $B$ are integrable (Lemma~\ref{lem:parallel}) and parallel, moreover, $\Sigma$ is a CMC hypersurface, hence complete (Theorem~\ref{teo:B}). By De Rham Decomposition Theorem, $\Sigma$ is isometric to the product of the integral submanifolds $M_i$ of $V_{\lambda_i}$ (see \cite[Chapter~III, Section~6]{kobanom1}). Moreover, $M_i$ is a complete totally geodesic submanifold of $\Sigma$.
	
	Since $\Sigma$ is a product, the sectional curvature vanishes along any tangent 2-plane which is not contained in an eigenspace $V_{\lambda_i}$. On the other hand, by Gauss equation, the sectional curvature along the tangent 2-plane $\Span\{v_i,v_j\}$, for $v_i\in V_{\lambda_i}$ and $v_j\in V_{\lambda_j}$, is
	\begin{equation*}
		K_\Sigma\left(\Span\{v_i,v_j\}\right)=-1-\lambda_i\lambda_j.
	\end{equation*}
	It follows that $M_i$ is a simply connected complete manifold with constant sectional curvature $-1-\lambda_i^2<0$ and dimension $k_i=\dim V_{\lambda_i}$, hence it is isometric to $\cos(\theta_i)\hyp^{k_i}$, for $\theta_i=\arctan(\sqrt{\lambda_i})$ (compare with Lemma~\ref{lem:equi}). Moreover, since the sectional curvature vanishes for $i\ne j$, $B$ has at most two eigenvalues, and in that case $\lambda_2=-1/\lambda_1$. 
	
	It follows that $\Sigma$ the metric and the shape operator of $\Sigma$ coincide with the ones of $\hyp(k_1,\theta_1)$ (Equation~\eqref{eq:horoII}), which concludes the proof.
\end{proof}

\begin{pro}\label{pro:HpxHq}
	Let $\Sigma$ be a properly embedded hypersurface with constant mean curvature $H$. Assume there exists $x\in\Sigma$ such that \[|\sff_\Sigma|^2(x)=n\left(1+\frac{H^2+|H|(n-2)\sqrt{H^2+4(n-1)}}{2(n-1)}\right).\]
	\begin{itemize}
		\item If $H=0$, then $\Sigma\cong\hyp\left(k,\arctan(\sqrt{(n-k)/k})\right)$, for some $k=1,\dots, n-1$;
		\item otherwise, $\Sigma\cong\hyp(1,\theta_H)$,
	\end{itemize}
where $\tan(\theta_H)$ is the positive solution of $t^2-Ht-(n-1)=0$.
\end{pro}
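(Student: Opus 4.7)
The proof combines the rigidity in Theorem~\ref{teo:expbound} with the classification in Proposition~\ref{pro:parallel}, and then reduces to optimization over the discrete parameter $k$. Since $|\sff_\Sigma|^2(x)$ attains the bound, the last sentence of Theorem~\ref{teo:expbound} yields that $\sff_\Sigma$ is parallel, and Proposition~\ref{pro:parallel} gives $\Sigma \cong \hyp(k,\theta)$ for some $k \in \{0,\dots,n\}$ and $\theta \in (0,\pi/2)$. By Lemma~\ref{lem:horoII}, the shape operator has eigenvalues $\tan\theta$ with multiplicity $k$ and $-\cot\theta$ with multiplicity $n-k$, giving
\begin{equation*}
H = k\tan\theta - (n-k)\cot\theta
\qquad\text{and}\qquad
|\sff|^2 = k\tan^2\theta + (n-k)\cot^2\theta.
\end{equation*}

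In the maximal case $H=0$, the constraint forces $\tan^2\theta = (n-k)/k$, so $k \in \{1,\dots,n-1\}$ and $\theta = \arctan\sqrt{(n-k)/k}$; substituting yields $|\sff|^2 = (n-k) + k = n$, which matches the value of the bound at $H=0$. Hence every $\hyp\bigl(k,\arctan\sqrt{(n-k)/k}\bigr)$ with $k \in \{1,\dots,n-1\}$ attains the bound, proving the first case.

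For $H \ne 0$, by the time-reversal symmetry $\hyp(k,\theta) \leftrightarrow \hyp(n-k,\pi/2-\theta)$ (which negates $H$ while preserving $|\sff|^2$ and the Riemannian isometry class), we may assume $H > 0$. Setting $t = \tan\theta$, the CMC equation becomes $kt^2 - Ht - (n-k) = 0$, with unique positive root $t_k = (H+\sqrt{H^2+4k(n-k)})/(2k)$. Substituting and using $kt_k^2 = Ht_k + (n-k)$ produces, for $k < n$,
\begin{equation*}
|\sff|^2(k) = n + \frac{(n-2k)Ht_k + H^2}{n-k},
\end{equation*}
together with $|\sff|^2(n) = H^2/n$. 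A direct evaluation shows that $|\sff|^2(1)$ coincides with the right-hand side of the bound in Theorem~\ref{teo:expbound}, and the corresponding $\tan\theta_H = t_1$ is exactly the positive root of $t^2 - Ht - (n-1) = 0$, as claimed.

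The main technical step—which I expect to be the principal obstacle—is to show that $k=1$ is the unique maximizer, i.e.\ $|\sff|^2(k) < |\sff|^2(1)$ for every $k \in \{2,\dots,n\}$. The cleanest route is to treat $k$ as a continuous parameter in $[1,n]$, apply implicit differentiation to $kt^2 - Ht - (n-k) = 0$ to compute $dt_k/dk$, and verify that $\tfrac{d}{dk}|\sff|^2(k) < 0$ on $(1,n)$; alternatively one can clear denominators and reduce the inequality to a polynomial comparison in $H$ that factors transparently. Geometrically, this monotonicity reflects the fact that, with $H$ fixed, lowering the multiplicity $k$ of the positive eigenvalue forces that eigenvalue to grow in order to compensate the contribution of the $n-k$ negative eigenvalues, making the spectrum of $B$ as anisotropic as possible and thereby maximizing $|\sff|^2$.
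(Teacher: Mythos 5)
Your proposal is correct and follows essentially the same route as the paper: the rigidity statement in Theorem~\ref{teo:expbound} gives parallel $\sff$, Proposition~\ref{pro:parallel} identifies $\Sigma$ with a cylindrical hypersurface $\hyp(k,\theta)$, and one then evaluates $|\sff_{\hyp(k,\theta)}|^2$ under the CMC constraint and picks out the equality cases (your reduction to $H>0$ by time reversal just repackages the paper's ``$k=1,n-1$'' conclusion). The only step you leave unexecuted --- the strict inequality $|\sff|^2(k)<|\sff|^2(1)$ for $k\ge2$ --- is precisely the comparison the paper also asserts without computation, and your plan does go through: writing $|\sff|^2(k)=n+H\left(t_k-t_k^{-1}\right)$ and observing that $k=(Ht+n)/(t^2+1)$ is strictly decreasing in $t$ on the relevant range $t\ge H/n$ shows that $t_k$, hence $|\sff|^2(k)$, is strictly decreasing in $k$.
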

\begin{proof}
	 Denote $S$ the bound. By Theorem~\ref{teo:expbound}, if the bound is achieved at one point, then the second fundamental form is parallel, hence $\Sigma$ is a cylindrical hypersurface by Proposition~\ref{pro:parallel}. One can then compute explicitely the norm of the second fundamental form of $\hyp(k,\theta)$, using Equation~\eqref{eq:horoII}.

	 Denote $S(k,\theta):=|\sff_{\hyp(k,\theta)}|^2$ and $S$ the bound in the statement. For $k=0,n$, namely $\Sigma=P^\pm_\theta$, $S(k,\theta)=H^2/n<S$. For $k\ne0,n$, one obtains
	 \[
	 	S(k,\theta)=n+\frac{nH^2+|H|(n-2k)\sqrt{H^2+4k(n-k)}}{2k(n-k)}.
	 \]
	 In the maximal case, $S(k,\theta)=S$ for any $k\in\{1,\dots,n-1\}$. For $H\ne0$, $S(k,\theta)=S$ if and only if at $k=1,n-1$, concluding the proof.
\end{proof}

\bibliographystyle{alpha}
\bibliography{CMC_biblio.bib}

\end{document}